\documentclass{amsart}
\usepackage[nohug]{ptdiagrams}
\usepackage{amscd}
\usepackage[mathscr]{eucal}
\usepackage{amsfonts}
\usepackage{amsmath}
\usepackage{amsthm}
\usepackage{amssymb}
\usepackage{latexsym}
\usepackage{tikz-cd}
\usepackage{textcomp}
\usepackage{tabularx,array}
\usepackage[all]{xy}
\newfont{\msam}{msam10}

\newtheorem{theorem}[]{Theorem}
\newtheorem{proposition}[]{Proposition}
\newtheorem{corollary}[]{Corollary}
\newtheorem{lemma}[]{Lemma}
\theoremstyle{definition}
\newtheorem{definition}[]{Definition}
\newtheorem{remark}[]{Remark}
\newtheorem{example}[]{Example}

\newtheorem{prop}[theorem]{Proposition}
\newtheorem{cor}[theorem]{Corollary}

\let\nc\newcommand

\def\bthm{\begin{theorem}}
\def\ethm{\end{theorem}}
\def\blemma{\begin{lemma}}
\def\elemma{\end{lemma}}
\def\bproof{\begin{proof}}
\def\eproof{\end{proof}}
\def\bprop{\begin{proposition}}
\def\eprop{\end{proposition}}
\def\bcor{\begin{corollary}}
\def\ecor{\end{corollary}}
\nc{\la}{\label}
\def\Z{\mathbb{Z}}
\def\N{\mathbb{N}}
\def\Q{\mathbb{Q}}
\def\c{\mathbb{C}}
\def\M{\mathcal{M}}

\def\L{\mathbb{L}}
\def\R{\mathbb{R}}

\def\FT{\mathcal{C}}
\def\eFT{\widetilde{\mathcal{C}}}
\def\Mor{\mathtt{Mor}}

\def\mfa{\mathfrak{a}}
\nc{\cC}{\mathcal{C}}
\def\Com{\mathtt{Com}}

\def\Alg{\mathtt{Alg}}

\def\DGL{\mathtt{DGL}}
\def\Mod{\mathtt{Mod}}

\def\cAlg{\mathtt{CAlg}}

\def\Sets{\mathtt{Sets}}
\def\DGA{\mathtt{DGA}}

\def\cDGA{\mathtt{DGCA}}
\def\DGL{\mathtt{DGL}}
\def\DGC{\mathtt{DGC}}

\def\iDGA{\mathtt{DGA}^{\ob}}
\def\iDGC{\mathtt{DGC}^{\ob}}
\def\iAlg{\mathtt{Alg}^{\ob}}
\def\s{\mathtt{s}}

\def\D{{\mathscr D}}
\def\C{{\mathscr C}}

\def\Ho{{\mathtt{Ho}}}
\def\cb{\mathbf{\Omega}}
\def\bB{\boldsymbol{B}}
\def\Tw{\mathtt{Tw}}
\nc{\Ob}{{\rm Ob}}
\nc{\Hom}{{\rm{Hom}}}
\nc{\HOM}{\mathbf{Hom}}
\nc{\DER}{\underline{\rm{Der}}}
\nc{\END}{\underline{\rm{End}}}
\nc{\bSym}{\mathbf{\Lambda}}
\nc{\Ext}{{\rm{Ext}}}
\nc{\Rep}{{\rm{Rep}}}
\nc{\DRep}{{\rm{DRep}}}
\nc{\NCRep}{\widetilde{\rm{Rep}}}
\nc{\RAct}{{\rm{RAct}}}
\nc{\bs}{\backslash}
\nc{\cn}{{\tt{Cone}}}
\nc{\n}{\natural}
\nc{\colim}{{\tt{colim}}}
\nc{\B}{{\mathrm{B}}}
\nc{\Ba}{\overline{\mathrm{B}}}
\nc{\bC}{\overline{C}}
\nc{\EXT}{\underline{\rm{Ext}}}
\nc{\TOR}{\underline{\rm{Tor}}}
\def\H{\mathrm H}
\def\HD{\mathrm{HD}}
\def\HO{\mathrm{HO}}
\def\HRi{\mathrm{HR^{\ob}_{\bullet}}}
\def\HRim{\mathrm{HR^{\ob}_{m}}}
\def\SR{\mathrm{Tr}_{\rm sym}}
\def\HOR{\mathrm{Tr}_{\rm hOct}}
\def\rH{\overline{\mathrm{H}}}

\nc{\End}{{\rm{End}}}
\nc{\GL}{{\rm{GL}}}
\nc{\gl}{{\mathfrak{gl}}}
\nc{\PGL}{{\rm{PGL}}}
\nc{\SL}{{\rm{SL}}}
\nc{\PSL}{{\rm{PSL}}}
\nc{\ad}{{\rm{ad}}}
\nc{\Ad}{{\rm{Ad}}}
\nc{\dlim}{\varinjlim}
\nc{\plim}{\varprojlim}

\newcommand{\cA}{{\mathcal A}}

\newcommand{\Tor}{{\rm{Tor}}}

\newcommand{\Sym}{{\rm{Sym}}}

\newcommand{\id}{{\rm{Id}}}

\newcommand{\Tr}{{\rm{Tr}}}

\newcommand{\ev}{{\rm{ev}}}

\newcommand{\g}{\mathfrak{g}}

\newcommand{\into}{\,\hookrightarrow\,}

\newcommand{\onto}{\,\twoheadrightarrow\,}
\newcommand{\sonto}{\,\stackrel{\sim}{\twoheadrightarrow}\,}
\def\com{\mathtt{Com}}

\def\sComm{\mathrm{sComm}}
\def\sMod{\mathrm{sMod}}
\def\scAlg{\mathrm{sCAlg}}

\newcommand{\rar}{\rightarrow}

\numberwithin{equation}{section}
\numberwithin{theorem}{section}
\numberwithin{lemma}{section}
\numberwithin{proposition}{section}
\numberwithin{corollary}{section}
\numberwithin{example}{section}
\numberwithin{remark}{section}
\numberwithin{definition}{section}

\DeclareMathOperator*{\Moplus}{\text{\raisebox{0.25ex}{\scalebox{0.8}{$\bigoplus$}}}}

\newcommand{\bL}{\boldsymbol{\Lambda}}
 \newcommand{\HA}{B_{\rm hOct}(A)}
\newcommand{\HR}{B_{\rm hOct}(R)}
 \newcommand{\HH}{B_{\rm hOct}}
 \nc{\HI}{HR^{\ob}}
  \newcommand{\p}{\varphi}
\nc{\mc}{\mathcal}
\nc{\ob}{\ast}

\newcommand{\sinto}{\,\stackrel{\sim}{\into}\,}

\nc{\fG}{\mathfrak{G}}
\nc{\fM}{\mathfrak{M}}
\nc{\sSet}{\mathtt{sSet}}
\nc{\sGr}{\mathtt{sGr}}
\nc{\sGrh}{\mathtt{sGr}_h}
\nc{\hocolim}{{\rm hocolim}}
\nc{\SPi}{{\rm SP}^{\infty}}
\nc{\mtwo}{\mu_2}
\nc{\Ztwo}{\underline{\Z/2}}
\nc{\GLm}{{\rm GL}_n^{\!\sigma}}
\nc{\Psicyc}{\Psi_{\rm cyc}}
\nc{\Psidih}{\Psi_{\rm dih}}
\nc{\Psisym}{\Psi_{\rm sym}}
\nc{\Psihyp}{\Psi_{\rm hOct}}
\nc{\tPsihyp}{\widetilde{\Psi}_{\rm hOct}}
\title{Representation homology of involutive algebras and hyperoctaherdral homology}
\author{Yuri Berest}
\address{Department of Mathematics,
Cornell University, Ithaca, NY 14853-4201, USA}
\email{berest@math.cornell.edu}
\author{Ajay C. Ramadoss}
\address{Department of Mathematics,
Indiana University,
Bloomington, IN 47405, USA}
\email{ajcramad@indiana.edu}
\author{John Wood}
\address{Department of Mathematics,
Indiana University,
Bloomington, IN 47405, USA}
\email{wood31@iu.edu}

\begin{document}

\begin{abstract}
In this paper, we introduce and study the (involutive) representation homology of involutive algebras, following the development of representation homology of associative algebra in \cite{BKR,BFPRW}. Our main motivation is to understand the hyperoctahedral homology of involutive algebras. We show that over a field of characteristic $0$, the hyperoctahedral homology of an involutive algebra is isomorphic (as a graded commutative algebra) to its one dimensional involutive representation homology. This result enables the explicit computation of the hyperoctahedral homology of the universal enveloping algebras of involutive (DG) Lie algebras. For group algebras $k[\Gamma]$ equipped with the involution $g \mapsto g^{-1}$, we relate the involutive representation homology of $k[\Gamma]$ to the representation homology of the group $\Gamma$ with coefficients in $\mu_2$, and we show that the latter is the homology of the loop space of the mod $2$ Dold-Thom space of $B\Gamma$; more generally, we compute the representation homology of $\Gamma$ with coefficients in an arbitrary diagonalizable group scheme. As a consequence, the hyperoctahedral homology of $k[\Gamma]$ is concentrated in degree $0$ in characteristic $0$, for every $\Gamma$.
\end{abstract}
\maketitle

\section{Introduction}
The present paper is a sequel to  \cite{BR22c} and \cite{BR22}, where we study the relation between cyclic, symmetric and representation homology of associative algebras. Our goal is to extend the results of {\it loc. cit.} to  {\it involutive  algebras} (i.e., the associative $k$-algebras $A$ equipped  with an anti-automorphism $\sigma: A \to A$  such that $\sigma^2 = \id$). For such algebras, the cyclic and symmetric homology are replaced  by {\it dihedral} and {\it hyperoctahedral} homology respectively, while the representation homology  introduced in \cite{BKR} is replaced by {\it involutive representation homology} which we will define here following \cite{BR14}.

The four homology theories mentioned above --- cyclic, dihedral, symmetric and hyperoctahedral --- arise from the same general construction due to Fiedorowicz and Loday \cite{FL91} (see also \cite{FT87, Kr87}). Recall that a {\it crossed simplicial group} is a family of groups $\{G_{n}\}_{n \geq 0}$ together with a small category $\Delta G$ that contains the simplicial category $\Delta$, has the same objects $[n]$, $ n \geq 0$, as $\Delta$, and is characterized by two properties: ${\rm Aut}_{\Delta G}([n]) \cong G_{n}^{\rm op}$ for all $n$, and every morphism of $\Delta G$ factors uniquely as a morphism of $\Delta$ followed by an automorphism\footnote{The idea of a categorical crossed product of $\Delta$ with a family of groups was formalized by Loday and Fiedorowicz in \cite{FL91}; the same structure appears in the work of Krasauskas \cite{Kr87} under the name of {\it skew-simplicial groups}.}. The basic examples are the cyclic category $\Delta C$ of Connes \cite{C83} associated with the cyclic groups $\{C_{n+1}\}_{n \geq 0}$, the dihedral category $\Delta D$ associated with the dihedral groups $\{D_{n+1}\}_{n \geq 0}$, the symmetric category $\Delta S$ associated with the symmetric groups $\{S_{n+1}\}_{n \geq 0}$, and the hyperoctahedral category $\Delta H$ associated with the hyperoctahedral groups $\{H_{n+1} = (C_2)^{n+1} \rtimes S_{n+1}\}_{n \geq 0}$. Following \cite{FL91}, a crossed simplicial group $\Delta G$ gives rise to a homology theory on the category of functors $E: \Delta G \to {\rm Mod}_k$ (or $E: \Delta G^{\rm op} \to {\rm Mod}_k$): one defines
$$ {\rm HG}_\bullet(E)\,:=\, {\rm Tor}^{\Delta G}_\bullet(k, E)\,,$$
where $k$ is the trivial (constant) $\Delta G^{\rm op}$-module. To obtain a homology theory for algebras one has to feed this construction with a functor built from an algebra $A$, i.e. a {\it bar construction} over $\Delta G$. The prototypical example is Connes' {\it cyclic bar construction} $B^{\rm cyc}A: \Delta C^{\rm op} \to {\rm Mod}_k$ of an associative algebra $A$, which extends the standard simplicial module $B^{\rm hoch}A: \Delta^{\rm op} \to {\rm Mod}_k$ computing Hochschild homology; by analogy with the 'Tor' formula $\,{\rm HH}_\bullet(A) = {\rm Tor}^{\Delta^{\rm op}}_\bullet(k,B^{\rm hoch}A)$, Connes defined cyclic homology as ${\rm HC}_{\bullet}(A) := {\rm Tor}^{\Delta C^{\rm op}}_\bullet(k,B^{\rm cyc}A)$. If $A$ is involutive, then $B^{\rm cyc}A$ extends naturally to a dihedral module $B^{\rm dih}A:\Delta D^{\rm op} \to {\rm Mod}_k$, the {\it dihedral bar construction} of $A$, and ${\rm HD}_{\bullet}(A) := {\rm Tor}^{\Delta D^{\rm op}}_\bullet(k,B^{\rm dih}A)$ is the dihedral homology of $A$ (see \cite[Section 6.3.7]{L}). 
For the symmetric and hyperoctahedral categories the situation is different: as observed in \cite{FL91} (see \cite[Ex. 6.3.2]{L}), for any {\it contravariant} functor $E$ on $\Delta S$ or $\Delta H$, the Tor groups ${\rm Tor}^{\Delta S^{\rm op}}_\bullet(k, E)$ and ${\rm Tor}^{\Delta H^{\rm op}}_\bullet(k, E)$ reduce to ${\rm Tor}^{\Delta^{\rm op}}_\bullet(k, E)$, so that no contravariant bar construction can detect the action of the symmetric and hyperoctahedral groups. It was Fiedorowicz \cite{F} who noticed that the situation changes if one uses {\it covariant} bar constructions: for any associative algebra $A$, there is a natural functor $B_{\rm sym}A:\Delta S \to {\rm Mod}_k$, the {\it symmetric bar construction} of $A$, and if $A$ is involutive, $B_{\rm sym}A$ extends to a functor $B_{\rm hOct}A: \Delta H \to {\rm Mod}_k$, the {\it hyperoctahedral bar construction} of $A$. These bar constructions give rise to the homology theories
$$ {\rm HS}_\bullet(A)\,:=\,{\rm Tor}^{\Delta S}_\bullet(k, B_{\rm sym}A)\,,\qquad {\rm HO}_\bullet(A)\,:=\,{\rm Tor}^{\Delta H}_\bullet(k, B_{\rm hOct}A)\,, $$
called {\it symmetric homology} (defined for arbitrary associative algebras) and {\it hyperoctahedral homology} (defined for involutive associative algebras). 

Despite the formal resemblance to cyclic and dihedral homology, symmetric and hyperoctahedral homology are much more complicated and harder to compute. Following the program outlined in \cite{AF07}, the first computations of symmetric homology were done by S. Ault \cite{Au1, Au2}, while the systematic study of hyperoctahedral homology was initiated by D. Graves  \cite{G21, G22}. In \cite{BR22c}, the first two authors showed that, for 
associative algebras $A$ over a field of characteristic $0$, the symmetric homology ${\rm HS}_\bullet(A)$  is naturally isomorphic to the one-dimensional {\it representation homology} ${\rm HR}_\bullet(A, k)$ defined in \cite{BKR}. Using the results of the earlier work on representation homology (see \cite{BR, BFR, BFPRW, BRYI, BRYII, BR22}), this allowed us to compute ${\rm HS}_\bullet(A)$ explicitly in many interesting cases, including the symmetric algebras $ A = \Sym_k(V) $ and the universal enveloping algebras $A = U\mfa$ of Lie algebras, and to settle some old conjectures of Ault and Fiedorowicz \cite{AF07} on symmetric homology in characteristic $0$. 
The goal of the present paper is to extend this approach and computations to the hyperoctahedral homology, which requires, first of all, introducing the involutive representation homology.
Part of our motivation comes from a very interesting recent work of G. Angelini-Knoll, D. Chan, T. Gerhardt, M. Merling and M. P\'eroux \cite{AMP, ACGMP} who introduced topological (spectral) refinements of homology theories associated with crossed simplicial groups, and extended to this setting the isomorphism between symmetric and representation homology discovered in \cite{BR22c}.

In the rest of the Introduction, we outline our definition of involutive representation homology, its relation to dihedral homology, and then state our main results. Unlike in (most of) Sections~\ref{Sec2}, \ref{SecTr} and \ref{Sec3}, where $k$ will be a field of characterstic $0$, we will work here with simplicial algebras defined over an arbitrary commutative ring $k$.
Let $ \iAlg_k $ and $ \cAlg_k $ denote the categories of involutive associative and commutative algebras (with $1$), both defined over a fixed commutative ring $k$. 
For a fixed integer $n \geqslant 1$, fix an involution on the algebra $\M_n(k)$ of $ (n\times n)$-matrices with entries in $k$. Consider the matrix functor $\M_n(\mbox{--}):\, \cAlg_k \to \iAlg_k\,$ that takes a commutative $k$-algebra $C$ to the involutive $k$-algebra $\M_n(C)$ of $ (n\times n)$-matrices with entries in $C$ (with involution induced by the involution on $\M_n(k)$). It is easy to prove (following, e.g., \cite{Co79} and \cite[Prop. 2.1]{BKR} for a general result) that $\M_n(\mbox{--})$ has a left adjoint
\begin{equation}
\la{adj1}
(\,\mbox{--}\,)^{\ob}_n:\ \iAlg_k \to \cAlg_k
\end{equation}
called the {\it $n$-dimensional representation functor} on $ \iAlg_k $. This terminology is motivated by the fact that for any algebra $A$, $\,(A)^{\ob}_n = {\mathcal{O}}[\Rep^{\ob}_n(A)] \,$ is the coordinate ring of the affine
moduli scheme $ \Rep^{\ob}_n(A) $ parametrizing the involution preserving $n$-dimensional $k$-linear representations of $A$. Now, to define representation homology we simply `derive' the functor \eqref{adj1},
following the standard procedure in homotopical algebra: we prolong the adjunction $\,(\,\mbox{--}\,)^{\ob}_n: \iAlg_k \rightleftarrows \cAlg_k: \M_n(\mbox{--})\,$ to simplicial algebras, using degreewise extension:
\begin{equation}
\la{adj2}
(\,\mbox{--}\,)_n:\ \s\iAlg_k \,\rightleftarrows\, \scAlg_k\ : \M_n(\mbox{--})
\end{equation}
and then replace the adjoint functors \eqref{adj2} with their universal homotopy approximations represented by derived functors. The key point here is that \eqref{adj2} is a Quillen adjunction with respect to the standard projective model structures on simplicial 
algebras. Hence the derived functors of \eqref{adj2} exist and are well behaved: in particular, by Quillen's Adjunction Theorem, they form an adjoint pair at the level homotopy categories:
$$
\L(\,\mbox{--}\,)^{\ob}_n:\ \Ho(\s\iAlg_k) \,\rightleftarrows\, \Ho(\scAlg_k)\, : \M_n(\mbox{--})\ . 
$$
For a (simplicial) involutive $k$-algebra $ A $, we can now define its {\it $n$-dimensional involutive representation homology} by
\begin{equation}
\la{RHDefAlg} 
{\rm HR}^{\ob}_\bullet(A, k^n)\, := \, \pi_\bullet[(QA)_n]\,,
\end{equation}
where $\, QA \sonto A\,$ is a cofibrant simplicial resolution
of $A$. By definition, $ {\rm HR}_\bullet(A, n) $ is a graded commutative $k$-algebra, with $\, {\rm HR}_0(A, n) \cong (A)^{\ob}_n\,$, that does not depend on the choice of the
resolution $QA$. For $ n = 1 $, the matrix functor is just the inclusion $ \scAlg_k \into \s\iAlg_k $, and hence its left adjoint --- the $1$-dimensional derived representation functor $  \L(\,\mbox{--}\,)^{\ob}_1: \,\Ho(\s\iAlg_k) \to \Ho(\scAlg_k) $ --- coincides with the derived involutiv abelianization (`commutativization') of algebras, i.e., the derived functor of the functor sending an involutive algebra $A$ to the quotient of its abelianization $A_{\rm ab}$ by the ideal generated by image of $\id-\sigma$, where $\sigma$ is the involution induced on $A_{\rm ab}$.

Now, to relate representation homology to dihedral homology we recall that 
$\, {\rm HD}_0(A) = A/([A,A]+{\rm Im}(\id-\sigma))\,$ for any $ A \in \iAlg_k$, and observe that there is an obvious trace map $\,\Tr_n(A):\ {\rm HD}_0(A)\,\to\,(A)^{\ob}_n\,$
%
%
induced by the character of the universal $n$-dimensional representation of $A$. It turns out that this classical character map extends naturally to higher dihedral homology taking values in involutive representation homology: i.e., for any $n\ge 1$, there is a canonical graded
$k$-linear map
\begin{equation}
\la{DTr}
\Tr_n(A)_\bullet:\ {\rm HD}_\bullet(A)\,\to\,{\rm HR}^{\ob}_\bullet(A, k^n)
\end{equation}
called the {\it involutive derived character map of $n$-dimensional representations of $A$}. 
The map \eqref{DTr} was originally constructed in \cite{BKR} for associative DG algebras (without involution) over a field $k$ of characteristic zero. The analogous construction for involutive associative DG algebras over a field of characteristic $0$ appears in Section \ref{SecTr}. However, working simplicially, we can construct
\eqref{DTr} for $A$ defined over any field (or even an arbitrary commutative ring $k$, provided $A$ is flat over $k$; see Section~\ref{S3.1}). For $n=1$, we will simplify the notation \eqref{DTr} writing $\Tr_1(A)_\bullet$ as
\begin{equation}
\la{Tr*}
\Tr(A)_\bullet:\ {\rm \HD}_\bullet(A)\,\to\,{\rm HR}^{\ob}_\bullet(A, k)
\end{equation}

Next, recall that, for any involutive algebra $A$, there is also a natural map
\begin{equation}
 \la{HCS}
 i^*:\ {\rm HD}_\bullet(A)\,\to\,{\rm HO}_\bullet(A)
\end{equation}
relating dihedral homology to hyperoctahedral homology via the inclusion of categories $ i: \Delta D^{\rm op} \into \Delta H $ (The existence of this map has been noticed already in \cite{F} (see, {\it loc. cit.}, Remark 1.4), and it is studied in \cite[Sec. 3.3]{G21}; we will review its construction in Section~\ref{S3.1} below.) Our first observation (see Proposition~\ref{factorTr}) is that the derived character map \eqref{Tr*} factors 
through \eqref{HCS}, inducing a natural linear map that relates hyperoctahedral homology to representation homology:
\begin{equation}
\la{Triso}
\HOR(A)_\bullet:\ \HO_\bullet(A)\, \to\, \HRi(A, k)\ . 
\end{equation}
Finally, to state our main theorem we recall a result of Graves (see \cite[Cor. 4.5]{G22}) showing that the $ \HO_\bullet(A) $ for any $A$ admits natural 
homology operations and a Pontryagin product making it a graded commutative $k$-algebra. 

\newtheorem*{T1a}{Theorem~\ref{T1a}}
\begin{T1a}
Let $k$ be a field of characteristic $0$. For any simplicial involutive associative algebra $A \in \s\iAlg_k$, the map \eqref{Triso} is an isomorphism of graded commutative $k$-algebras
$$  \HO_\bullet(A)\,\cong\, \HRi(A, k)\ . $$
\end{T1a}

While Theorem \ref{T1a} may be viewed as a formal extension of \cite[Theorem 3.1]{BR22}, it enables us to translate involutive analogs (in Sections \ref{Sec2},\ref{SecTr} and \ref{Sec3}) of known facts about representation homology in \cite{BKR,BFPRW} to hyperoctahedral homology. An important example is the computation of the hyperoctahedral homology of the universal enveloping algebra of an involutive Lie algebra. Recall (see Definition \ref{iliealg}) that an involutive (DG) Lie algebra $\mfa$ (over $k$) is a Lie algebra with a square identity $k$-linear map $\sigma:\mfa \to \mfa$ (of complexes) such that $-\sigma$ is a (DG) Lie algebra automorphism of $\mfa$. It is easy to show that if $\mfa$ is involutive, then the involution of the tensor algebra $T_k\mfa$ which restricts to the given involution on $\mfa$ descends to the universal enveloping algebra $U\mfa$. Further the Chevalley-Eilenberg chain coalgebra $\C(\mfa;k)$, which is cocommutative, acquires a skew-involution whose co-restriction to $\mfa[1]$ is the involution on $\mfa$. This skew involution induces a square identity linear operator on the reduced Chevalley-Eilenberg homology $\rH_\bullet(\mfa;k)$. We denote its $(+1)$-eigenspace by $\rH^{\rm ev}_\bullet(\mfa;k)$.
\newtheorem*{houainv}{Theorem~\ref{houainv}}
\begin{houainv}
Let $\mfa$ be an involutive DG Lie algebra over a field$k$ of characteristic $0$. Equip $U\mfa$ with the involution induced by that on $\mfa$. Then there is an isomorphism of graded commutative $k$-algebras 
$$
{\rm HO}_\bullet(U\mfa) \,\cong\, \Lambda_k[\rH^{\rm ev}_{\bullet+1}(\mfa;k)]\,
$$
where $\Lambda_k$ stands for the graded symmetric algebra over $k$.
\end{houainv}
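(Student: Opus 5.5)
By Theorem~\ref{T1a}, applied to the involutive algebra $U\mfa$ (a DG algebra over a field of characteristic $0$, which we may regard as a simplicial involutive algebra via Dold--Kan normalization), there is an isomorphism of graded commutative algebras ${\rm HO}_\bullet(U\mfa)\cong \HRi(U\mfa,k)$. So the plan is to compute the one-dimensional involutive representation homology of $U\mfa$. I will work in the DG setting, where $\L(\mbox{--})^{\ob}_1$ is the derived functor of involutive commutativization $A\mapsto A_{\rm ab}/(\id-\sigma)$.

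\textbf{Cofibrant model.} I would build it from the Koszul duality between Lie algebras and cocommutative coalgebras. Set $R:=\cb(\C(\mfa;k))$, the cobar construction of the Chevalley--Eilenberg coalgebra. It is the tensor algebra $T_k(\bar\C(\mfa;k)[-1])$ on the desuspended coaugmentation coideal, with a differential. The canonical map $R\to U\mfa$ is a quasi-isomorphism; this is the classical fact that $\cb\C(\mfa)\simeq U\mfa$. The skew-involution on $\C(\mfa;k)$ should induce an involutive anti-automorphism of $R$ compatible with the one on $U\mfa$. I must check signs: an anti-involution on $\C$ with $\Delta\circ\sigma=\tau\circ(\sigma\otimes\sigma)\circ\Delta$ gives an anti-automorphism of the cobar construction. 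Then $R$ is a free involutive algebra on an involutive complex $V=\bar\C(\mfa;k)[-1]$, which is cofibrant in the involutive model structure in characteristic $0$, since $\Z/2$-invariants are exact.

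\textbf{Compute $(R)^{\ob}_1$.} The commutativization of $T(V)$ is $\Lambda_k(V)$, and the further quotient by $\id-\sigma$ is $\Lambda_k(V^{\rm ev})$, where $V^{\rm ev}$ is the $(+1)$-eigenspace. The key point is the differential. On $\cb\C$ the differential is $d_1+d_2$, where $d_1$ is induced by the internal differential of $\C$ and $d_2$ by the reduced coproduct. After passing to the commutative quotient, the quadratic part $d_2$ maps to symmetrized products. Because $\C(\mfa;k)$ is cocommutative, the reduced coproduct is symmetric, and in the graded commutative algebra the symmetric terms from $\bar\Delta(x)$ cancel pairwise by the sign rule for $s^{-1}$. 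Hence $(R)^{\ob}_1\cong(\Lambda_k(V^{\rm ev}),d_1)$. I expect the main obstacle to be these signs, together with checking that the involution on $V$ is exactly the desuspended skew-involution, so that its $+1$-eigenspace computes $\rH^{\rm ev}$ rather than the $-1$-part. I would first verify the cancellation on $\C(\mfa)$ for an abelian $\mfa$, where everything is explicit.

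\textbf{Conclusion.} Since $k$ has characteristic $0$, homology commutes with $\Lambda_k$ and with taking $\Z/2$-eigenspaces. Therefore
$$H_\bullet(R^{\ob}_1)\cong\Lambda_k\bigl[H_\bullet(\bar\C(\mfa;k)[-1])^{\rm ev}\bigr]=\Lambda_k[\rH^{\rm ev}_{\bullet+1}(\mfa;k)].$$
This is an isomorphism of graded commutative algebras, and combining it with Theorem~\ref{T1a} gives the statement.
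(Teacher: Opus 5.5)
Your proposal is correct and follows essentially the paper's route: resolve $U\mfa$ by $\cb(\C(\mfa;k))$ (Lemma~\ref{indinvs}), use cocommutativity to kill the quadratic part of the differential so that $(\cb\,\C(\mfa;k))^{\ob}_1\cong(\Lambda_k(\overline{C}^+[-1]),d_1)$, take homology in characteristic $0$, and transfer to ${\rm HO}_\bullet$ via Theorem~\ref{T1a} in its DG form (Proposition~\ref{tA}). The only packaging difference is that the paper obtains your direct computation as the case $n=1$ of Theorem~\ref{invmain}, where $\gl_1^{+,\ast}(\overline{C})\cong\overline{C}^+$ has zero cobracket.

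On the signs you flagged: the paper's skew involution satisfies $\Delta\circ\sigma=-\tau\circ(\sigma\otimes\sigma)\circ\Delta$, with the minus sign. This is exactly what makes the involution on $\cb(C)$ restrict to $\sigma$ itself on $\overline{C}[-1]$ (Proposition~\ref{skinvtoinv}), so the $+1$-eigenspace you take is indeed $\rH^{\rm ev}$. Also, the Dold--Kan transfer needs $\mfa$ (hence $U\mfa$) to be nonnegatively graded, as assumed in Section~\ref{S3.3}.
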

If $V$ is a finite dimensional $k$-vector space equipped with a $k$-linear automorphism $\sigma$ of order $2$, then $\sigma$ extends to an involution on the symmetric algebra $\Sym(V)$. Let $V_+$ and $V_-$ denote the $+1$ and $-1$ eigenspaces of the $\sigma$ action on $V$ respectively. As a consequence of Theorem \ref{houainv}, we have
\newtheorem*{hosym}{Corollary~\ref{hosym}}
\begin{hosym}
There is an isomorphism of graded commutative $k$-algebras
$${\rm HO}_\bullet(\Sym(V))\,\cong\, \bigwedge_k\left[\big(\Lambda^{\rm odd}(V_+[1]) \otimes \Lambda(V_-[1])\big)[-1]\right]\,,$$
where $\Lambda^{\rm odd}$ stands for the direct sum of odd graded symmetric powers (over $k$).
\end{hosym}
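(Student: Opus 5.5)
The plan is to view $\Sym(V)$ as the universal enveloping algebra of the abelian Lie algebra $\mfa = V$ (zero bracket, concentrated in degree $0$), and then apply Theorem~\ref{houainv}. The first thing to fix is the involution on $\mfa$. The requirement is that $-\sigma_{\mfa}$ be a Lie automorphism, which holds for any linear map on an abelian Lie algebra. The involution induced on $U\mfa = \Sym(V)$ is the anti-automorphism extending $\sigma_{\mfa}$; since $\Sym(V)$ is commutative, anti-automorphisms are the same as automorphisms. So we take $\sigma_{\mfa} = \sigma$ on $V$, and the induced involution on $U\mfa$ is exactly the given one on $\Sym(V)$. Theorem~\ref{houainv} then gives
$${\rm HO}_\bullet(\Sym(V)) \cong \Lambda_k[\rH^{\rm ev}_{\bullet+1}(V;k)],$$
and it remains to compute the $(+1)$-eigenspace of the skew-involution on reduced Chevalley--Eilenberg homology.

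For an abelian Lie algebra the Chevalley--Eilenberg differential vanishes, so $\C(V;k) = \Lambda(V[1])$ and $\rH_\bullet(V;k) = \Lambda^{\ge 1}(V[1])$. I will identify the induced operator on $\Lambda^m(V[1])$ using the description in the paper: it is a skew-involution, i.e.\ an anti-coalgebra map whose co-restriction to $V[1]$ is $\sigma$. The model case is the standard involution $x\mapsto -x$ on $U\mfa$ with $\sigma_{\mfa} = -\id$. By analogy, on a monomial $x_1\wedge\cdots\wedge x_m$ the operator should act as $\sigma x_m\wedge\cdots\wedge\sigma x_1$, up to a sign $\epsilon_m$. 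Reversing the order of $m$ elements of odd degree in $V[1]$ contributes $(-1)^{m(m-1)/2}$. I would settle $\epsilon_m$ by checking against the paper's definition in Section~\ref{Sec2}. The expected outcome, consistent with the formula in the statement, is that the total operator on $\Lambda^a(V_+[1])\otimes\Lambda^b(V_-[1])$ is the scalar $(-1)^{a+1}$. That is, the combined sign from reversal and from $\epsilon_m$ should reduce to $-(-1)^{a}$ when $\sigma=\id$ (so $b=0$), and the $V_-$ factors each contribute $-1$ relative to the $V_+$ factors. This bookkeeping of signs is the main obstacle, and I would verify it carefully on $m=1,2$ before trusting a general formula.

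Once the signs are fixed, decompose
$$\Lambda(V[1]) = \Lambda(V_+[1])\otimes\Lambda(V_-[1]).$$
The $(+1)$-eigenspace of the reduced part is then $\Lambda^{\rm odd}(V_+[1])\otimes\Lambda(V_-[1])$. This space automatically lies in the reduced part, since $a$ odd forces $a\ge 1$. Finally, shift by $[-1]$ to account for the index $\bullet+1$, and substitute into $\Lambda_k$ to obtain the stated formula. The corollary writes $\bigwedge_k$ for $\Lambda_k$, the graded symmetric algebra, so the two agree. Because $V$ is finite dimensional, all the spaces involved are finite dimensional in each degree, so no completion issues arise.
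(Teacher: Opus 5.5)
Your reduction is the paper's: $\Sym(V)=U\mfa$ for the abelian Lie algebra $\mfa=V$ with $\sigma_{\mfa}=\sigma$, then Theorem~\ref{houainv}, then the $(+1)$-eigenspace of the skew involution on $\rH_\bullet(\mfa;k)=\Lambda^{\ge 1}(V[1])$. But that last computation is the whole content of the corollary, and you do not carry it out. You adopt the scalar $(-1)^{a+1}$ on $\Lambda^a(V_+[1])\otimes\Lambda^b(V_-[1])$ because it is ``consistent with the formula in the statement'', which is circular.

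Your ansatz $x_1\wedge\cdots\wedge x_m\mapsto\epsilon_m\,\sigma x_m\wedge\cdots\wedge\sigma x_1$ is harmless in form, but it puts the entire answer into the undetermined constants $\epsilon_m$. Checking $m=1,2$ cannot fix them for all $m$. Moreover, the choice your antipode analogy suggests, $\epsilon_m=1$, gives the scalar $(-1)^{m(m-1)/2+b}$, which is wrong: for $\sigma=\id$ it would keep $\Lambda^4(V[1])$ and discard $\Lambda^3(V[1])$. (Also, the relevant definitions are the notion of skew involution in Section~\ref{Sec3} and Lemma~\ref{indinvs}, not Section~\ref{Sec2}.)

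The missing step is short. Since $\C(\mfa;k)=\Lambda(V[1])$ is graded cocommutative, $\tau\circ\Delta=\Delta$. Also $\tau$ commutes with $\sigma\otimes\sigma$, because $\sigma$ has degree $0$. Hence the skew-involution identity $\Delta\circ\sigma=-\tau\circ(\sigma\otimes\sigma)\circ\Delta$ says precisely that $-\sigma$ is a coalgebra automorphism of $\Lambda^{\ge 1}(V[1])$ with its reduced coproduct. This coalgebra is cofree conilpotent cocommutative on $V[1]$, and $-\sigma$ has corestriction $-\sigma_V$ on $V[1]$ (and $0$ on $\Lambda^{\ge 2}(V[1])$). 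It follows that $-\sigma=\Lambda(-\sigma_V)$, i.e.
\[
\sigma|_{\Lambda^m(V[1])}\,=\,(-1)^{m-1}\,\Lambda^m(\sigma_V)\ .
\]
The reversal of order plays no role, since it is absorbed by (co)commutativity; the only sign is the factor $-1$ in the skew condition, which after iteration yields $(-1)^{m-1}$. The paper does the same computation on the graded dual $E=\Lambda(V^*[-1])$, where the skew involution satisfies $\sigma(x_1\cdots x_n)=(-1)^{n-1}\sigma(x_1)\cdots\sigma(x_n)$.

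On $\Lambda^a(V_+[1])\otimes\Lambda^b(V_-[1])$ this gives $(-1)^{a+b-1}(-1)^b=(-1)^{a-1}$. So $\rH^{\rm ev}_\bullet(\mfa;k)\cong\Lambda^{\rm odd}(V_+[1])\otimes\Lambda(V_-[1])$, and the rest of your argument then goes through as written.
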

In particular, if $k[x_1,\ldots,x_n]$ is equipped with the involution that fixes $x_i$ for $1 \leq i \leq p$ and sends $x_j$ to $-x_j$ for $p <j \leq n$, then 
$${\rm HO}_\bullet(k[x_1,\ldots,x_n])\,\cong\, \bigwedge_k\left[\big(\Lambda^{\rm odd}(sx_1,\ldots,sx_p) \otimes \Lambda(sx_{p+1},\ldots,sx_n)\big)[-1]\right]\,,$$
where $sx_i$ denotes $x_i$ placed in homological degree $1$.

Another consequence of Theorem \ref{houainv} is the explicit computation of ${\rm HO}_\bullet(U\gl_n(k))$ for any involution on $U\gl_n(k)$ induced by a trace-preserving involution on $\M_n(k)$ (for example, any conjugate by an invertible matrix of the transpose). Such an involution also restricts to an involution on $\mathfrak{sl}_n(k)$ and hence, on $U\mathfrak{sl}_n(k)$. For any such involution, we have
\newtheorem*{hougl}{Corollary~\ref{hougl}}
\begin{hougl}
For all $ n\ge 1$, there are isomorphisms of graded commutative $k$-algebras
\begin{align*}
   {\rm HO}_\bullet(U\gl_n(k)) &\cong\,\bigwedge_k\left[\big(\Lambda^{\rm odd}(\xi_1,\xi_5,\ldots,\xi_{4\lfloor \frac{n-1}{4}\rfloor+1}) \otimes \Lambda(\xi_3,\xi_7,\ldots,\xi_{4\lfloor \frac{n-3}{4}\rfloor+3})\big)[-1]\right]\,,\\
    {\rm HO}_\bullet(U\mathfrak{sl}_n(k)) &\cong\,\bigwedge_k\left[\big(\Lambda^{\rm odd}(\xi_5,\ldots,\xi_{4\lfloor \frac{n-1}{4}\rfloor+1}) \otimes \Lambda(\xi_3,\xi_7,\ldots,\xi_{4\lfloor \frac{n-3}{4}\rfloor+3})\big)[-1]\right]\ .
\end{align*}  
\end{hougl}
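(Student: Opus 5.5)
We deduce Corollary~\ref{hougl} from Theorem~\ref{houainv} applied to $\mfa=\gl_n(k)$ and $\mfa=\mathfrak{sl}_n(k)$, regarded as DG Lie algebras concentrated in degree $0$. The theorem reduces the problem to computing the reduced Chevalley--Eilenberg homology $\rH_\bullet(\mfa;k)$ together with the square-identity operator induced by the skew involution, and then extracting its $(+1)$-eigenspace.

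First, the homology. By the classical theorem of Chevalley--Eilenberg, Hopf and Koszul, $\H_\bullet(\gl_n(k);k)\cong\Lambda_k(\xi_1,\xi_3,\ldots,\xi_{2n-1})$ and $\H_\bullet(\mathfrak{sl}_n(k);k)\cong\Lambda_k(\xi_3,\ldots,\xi_{2n-1})$. Here the primitive class $\xi_{2i-1}$ is dual to the cocycle $(x_1,\ldots,x_{2i-1})\mapsto\sum_{\tau\in S_{2i-1}}\mathrm{sgn}(\tau)\,\Tr(x_{\tau(1)}\cdots x_{\tau(2i-1)})$. Since $\C(\mfa;k)$ is cocommutative and the skew involution is compatible with the coalgebra structure, the induced operator on homology respects the Hopf structure. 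It is therefore determined by its action on the primitive generators $\xi_{2i-1}$, and it is an algebra (anti)automorphism of the exterior algebra on these generators.

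Second, we compute the sign on each $\xi_{2i-1}$. The involution on $\mfa$ is $\sigma(x)=x^{*}$, so $-\sigma$ is the Lie automorphism; the trace preserving involution satisfies $\Tr(y^*)=\Tr(y)$ and $(xy)^*=y^*x^*$. The skew involution on $\Lambda(\mfa[1])$ acts by $\sigma$ on each factor, possibly together with a reversal-type sign from the definition in the paper. We evaluate the cocycle above on $(x_1^*,\ldots,x_{m}^*)$ with $m=2i-1$:
\begin{equation*}
\Tr(x_{\tau(1)}^*\cdots x_{\tau(m)}^*)=\Tr\big((x_{\tau(m)}\cdots x_{\tau(1)})^*\big)=\Tr(x_{\tau(m)}\cdots x_{\tau(1)}).
\end{equation*}
Reversing the order of $m$ letters is a permutation of sign $(-1)^{m(m-1)/2}$. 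For $m=2i-1$ this gives $(-1)^{(i-1)(2i-1)}=(-1)^{i-1}$. So $\xi_{2i-1}$ has eigenvalue $+1$ when $m\equiv 1\pmod 4$ and $-1$ when $m\equiv 3\pmod 4$. This matches the pattern in the statement: $\xi_1,\xi_5,\ldots$ play the role of $V_+$, and $\xi_3,\xi_7,\ldots$ the role of $V_-$.

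Third, we take the $(+1)$-eigenspace of $\rH_\bullet$ on the exterior algebra. Write it as $\Lambda(W_+)\otimes\Lambda(W_-)$, where $W_+$ is spanned by the $\xi_{4j+1}$ and $W_-$ by the $\xi_{4j+3}$. The operator on a monomial is not simply the product of the signs of its generators: the skew involution is an anti-automorphism, so it contributes a further reordering sign. This extra sign is what makes the answer "odd part in $W_+$ tensored with everything in $W_-$", exactly as in Corollary~\ref{hosym}. Indeed, Corollary~\ref{hosym} treats the formally identical situation of $\Lambda(V_+[1])\otimes\Lambda(V_-[1])$ with odd generators and the same skew involution. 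The cleanest route is to transport that computation: identify the Hopf algebra $\H_\bullet(\mfa;k)$, with its skew involution, with $\H_\bullet$ of an abelian Lie algebra $V$ having $V_\pm$ equal to the shifts $W_\pm[-1]$. This identification is legitimate because both objects are free graded commutative on primitives with the same eigenvalues, and the operator is determined on primitives.

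Finally, we substitute into Theorem~\ref{houainv}, shifting the degree by $-1$ and taking the free graded symmetric algebra; the $\mathfrak{sl}_n$ case follows by dropping $\xi_1$. The main obstacle is the second step together with the precise sign convention of the skew involution on $\C(\mfa;k)$. We must check that, under the paper's convention (in Definition~\ref{iliealg} and the construction of the skew involution), the primitive $\xi_{2i-1}$ gets exactly the sign $(-1)^{i-1}$, and not the opposite sign coming from the extra $-$ in $-\sigma$. To settle this, we would test the case $\xi_1$ directly: $\xi_1$ is represented by the trace, and $\Tr(x^*)=\Tr(x)$, so $\xi_1$ must be $+1$. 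That case fixes the global convention.
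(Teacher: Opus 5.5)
Your route is the paper's: Theorem~\ref{houainv}, then the cocycle computation $\sigma\tilde\omega_{2i-1}=(-1)^{i-1}\tilde\omega_{2i-1}$ from trace preservation and the sign of the reversal permutation, then the eigenvalue count of Corollary~\ref{hosym}.

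The gap is your sentence ``This matches the pattern in the statement''. The residues mod $4$ match, but the endpoints do not, and this is exactly where a check was needed. Your computation puts $\xi_{2i-1}$, $1\le i\le n$, into $W_+$ for $i$ odd and into $W_-$ for $i$ even. So your two lists end at $\xi_{4\lfloor (n-1)/2\rfloor+1}$ and $\xi_{4\lfloor n/2\rfloor-1}$. The statement ends them at $\xi_{4\lfloor (n-1)/4\rfloor+1}$ and $\xi_{4\lfloor (n-3)/4\rfloor+3}$, which are the largest admissible subscripts $\le n$ rather than $\le 2n-1$. The $W_+$ endpoints already differ for every $n\ge 3$.

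Concretely, take $n=3$. Then $\H^5(\gl_3(k);k)=k\,\omega_5$ with $\sigma\omega_5=\omega_5$ (your formula with $i=3$), so $\rH^{\rm ev}_5(\gl_3(k);k)=k\,\xi_5\neq 0$, and Theorem~\ref{houainv} gives $\HO_4(U\gl_3(k))\neq 0$. But the displayed formula for $n=3$ is $\bigwedge_k[(\Lambda^{\rm odd}(\xi_1)\otimes\Lambda(\xi_3))[-1]]$. This is free on generators of degrees $0$ and $3$, hence zero in degree $4$.

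So your argument, like the paper's (whose proof carries the same bounds), establishes the formula with all subscripts running up to $2n-1$, not the formula as printed. For $\mathfrak{sl}_n$ it gives the same lists without $\xi_1$. You should have compared the endpoints with $\H_\bullet(\gl_n(k);k)\cong\Lambda_k(\xi_1,\ldots,\xi_{2n-1})$.

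Two smaller points. First, the sign convention you worry about is forced, so no test on $\xi_1$ is needed:
\begin{itemize}
\item $-\sigma$ is the coalgebra automorphism of $\C(\mfa;k)=\Lambda(\mfa[1])$ induced by the Lie automorphism $-\sigma|_{\mfa}$.
\item Hence $\sigma$ acts on $\Lambda^m(\mfa[1])$ by $(-1)^{m-1}\sigma^{\otimes m}$, which is just $\sigma^{\otimes m}$ for $m=2i-1$.
\item The extra sign on a monomial of length $\ell$ in the generators is $(-1)^{\ell-1}$. It comes from the minus sign in the skew condition, not from any reordering.
\end{itemize}
Second, cocommutativity alone does not make the operator on $\H_\bullet(\mfa;k)$ compatible with the product there, since that product is not induced from the DG coalgebra $\C(\mfa;k)$. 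The paper avoids this by working with the cup product on $\H^\bullet(\gl_n(k);k)$. There $\sigma(xy)=-\sigma(x)\sigma(y)$ and $\sigma\omega_{2i-1}=(-1)^{i-1}\omega_{2i-1}$ hold on the nose, and one then dualizes.
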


Our last application of Theorem~\ref{T1a} concerns group algebras. 
To compute the hyperoctahedral homology of such algebras we prove a new general result on representation homology of spaces, which is of independent interest. Recall that 
representation homology can be defined for any pointed connected topological space $X$ with coefficients in an affine algebraic group scheme $G$ over a commutative ring $k$. Following \cite{BRYI, BRYII}, we denote this homology
by $ {\rm HR}_\bullet(X, G) $, which is, by definition, a graded commutative associative $k$-algebra.
In \cite{BR22}, we showed that, for $G = \mathbb{G}_m(k)$, the multiplicative group of $k$, there is a natural isomorphism  (\cite[Theorem 1.1]{BR22})
\begin{equation} 
\label{DoldTh}
{\rm HR}_\bullet(X, \,\mathbb{G}_m(k))\,\cong\,\H_\bullet\big(\Omega\,\SPi(X);\,k\big)\,,
\end{equation}
where $ \SPi(X) $ is the Dold-Thom space (infinite symmetric product) of $X$. By a well-known theorem of Kan, the pointed connected homotopy types can be modelled by simplicial groups:
if we take $ X = B \Gamma $ for such a group $\Gamma $, then 
the left hand side of \eqref{DoldTh} is isomorphic to the representation homology of the simplicial algebra $ k[\Gamma]$, at least when $k$ is a field of characteristic $0$ (see \cite[Theorem 2.1]{BR22c}). In this case, the symmetric trace 
$ {\rm HS}_\bullet(k[\Gamma]) \to {\rm HR}_\bullet(k[\Gamma], k)$
is represented by a natural map of spaces studied in \cite{BR22}:
\begin{equation}
\label{Symtoptr}
\Omega \,\Omega^{\infty} \Sigma^{\infty}(B \Gamma)\,\to\,
\Omega\,\SPi(B \Gamma)
\end{equation}
In this paper, we extend these results to the involutive case.
First, we generalize the isomorphism \eqref{DoldTh}
to arbitrary abelian groups $\cA$ as follows. For a fixed basering $k$, we associate to $\cA$ the diagonalizable group scheme $ D_k(\cA) := {\rm Spec}\,k[\cA]$, where $k[\cA]$ is the group algebra of $\cA$ over $k$. On the other hand, for a pointed connected space $(X,*)$, we consider the function space
\begin{equation}
\label{spxa}
\SPi(X, \cA) := \{f: X \to \cA\ : \ f(\ast) = 0\quad \mbox{and}\quad |\,{\rm supp}(f)\,| < \infty\}
\end{equation}
equipped with the weak (colimit) topology for the subspace filtration by functions supported on finite configurations of points of fixed cardinality. This space is the classical {\it McCord model}\, \cite{McC69}\, for the Dold-Thom space of $X$ with coefficients in $\cA$ in the sense that 
$\,\pi_i\,\SPi(X;\cA) \cong \tilde{\H}_i(X; \cA)\,$ for all $ i \ge 0 $; it is an abelian topological monoid 
that is natural (bifunctorial) in $X$ and $\cA$ (see \cite{Ku03}). In Section~\ref{Sgroups}, we prove
\newtheorem*{Tgroups}{Theorem~\ref{Tgroups}}
\begin{Tgroups}
Let $k$ be an arbitrary commutative ring. For any abelian group $\cA$ and any pointed connected space $X$, there is a natural isomorphism
\begin{equation} 
\la{isoDA}
{\rm HR}_\bullet(X, D_k(\cA))\,\cong\,\H_\bullet\big(\Omega\,\SPi(X, \cA);\,k\big)\ , 
\end{equation}
where $\Omega\,\SPi(X, A)$ is the based loop space of the McCord-Dold-Thom space \eqref{spxa}. Consequently,
$$ {\rm HR}_\bullet(X, D_k(\cA))\,\cong\,\H_\bullet\Big(\prod_{n \geq 1}K\big(\tilde{\H}_n(X, \cA),\,n-1\big);\,k\Big)\,. 
$$
\end{Tgroups}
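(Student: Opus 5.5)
The plan is to follow the strategy of \cite[Theorem 1.1]{BR22} for $\mathbb{G}_m$, replacing $\mathbb{Z}$ by $\cA$. By Kan's theorem we model $X$ as $B\Gamma$ for a simplicial group $\Gamma$, free in each degree (Kan loop group $\mathbb{G}X$). Representation homology is then computed by
$$
{\rm HR}_\bullet(X, G) \cong \pi_\bullet\,\mathcal{O}[\Rep_G(\Gamma)] ,
$$
where $\Rep_G(\Gamma_n) = \Hom(\Gamma_n, G)$ is taken degreewise. For $G = D_k(\cA)$ and a free group $F$, a homomorphism $F \to D_k(\cA)$ of group schemes corresponds to a homomorphism of Hopf algebras $k[\cA] \to \mathcal{O}(G^{\times r})$. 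Since $D_k(\cA)$ is commutative, $\Rep_{D_k(\cA)}(F)$ depends only on $F_{\rm ab}$, and $\mathcal{O}[\Rep_{D_k(\cA)}(F)] \cong k[\Hom(F_{\rm ab},\cA)^\vee]$. More canonically, the functor-of-points computation identifies $\mathcal{O}[\Hom(\Gamma_n, D_k(\cA))]$ with the group algebra $k[\Gamma_{n,\rm ab}\otimes_\Z \cA]$. Indeed, for $F$ free on $r$ generators, $\mathcal{O}(D_k(\cA)^r) = k[\cA]^{\otimes r} = k[\cA^r] = k[F_{\rm ab}\otimes\cA]$, naturally in $F$. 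This gives a natural isomorphism of simplicial commutative algebras
$$
\mathcal{O}[\Rep_{D_k(\cA)}(\mathbb{G}X)] \cong k[(\mathbb{G}X)_{\rm ab}\otimes_\Z \cA] .
$$

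Next we identify homotopy types. The simplicial abelian group $M := (\mathbb{G}X)_{\rm ab}\otimes\cA$ is degreewise free over $\cA$ (a direct sum of copies of $\cA$). Its homotopy groups are $\pi_i M \cong \H_i((\mathbb{G}X)_{\rm ab};\cA)$, and since $(\mathbb{G}X)_{\rm ab}$ is a free model for $\tilde{C}_\bullet(X)[-1]$, this is $\tilde{\H}_{i+1}(X;\cA)$. On the other hand, the group algebra $k[M]$ of a simplicial group is the simplicial chain algebra of $M$, so $\pi_\bullet k[M] \cong \H_\bullet(|M|;k)$ as graded algebras, with the Pontryagin product. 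It remains to identify $|M|$ with $\Omega\,\SPi(X,\cA)$. First, the Dold--Thom/McCord theorem identifies $\SPi(X,\cA)$, naturally and up to weak equivalence of abelian monoids, with $|\cA\otimes \tilde{\Z}[X]|$, where $X$ is taken as a simplicial set. Second, Kan's construction gives $\bar W((\mathbb{G}X)_{\rm ab}) \simeq \Z\tilde{[X]}$ (the abelianization of $\mathbb{G}X$ is a loop-model of the reduced free abelian group on $X$). Tensoring with $\cA$ commutes with $\bar W$ because the terms are degreewise free. This gives $\bar W M \simeq \cA\otimes\tilde\Z[X]$, and hence $|M| \simeq \Omega\,\SPi(X,\cA)$ as $H$-spaces. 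Naturality in $X$ and $\cA$ holds at each step.

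The main obstacle is making the identification of $|M|$ with $\Omega\,\SPi(X,\cA)$ natural and compatible with products. The plan is to route everything through simplicial abelian groups, invoking McCord's theorem only at the last step. An alternative is to compute homotopy groups and use that both sides are products of Eilenberg--MacLane spaces, but that yields only non-natural isomorphisms, so it serves only for the final statement.

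For the consequence, we use that every simplicial abelian group is weakly equivalent, as a simplicial set (indeed as an $H$-space), to a product of Eilenberg--MacLane spaces of its homotopy groups. With $\pi_{n-1}|M| = \tilde{\H}_n(X;\cA)$, this gives $|M| \simeq \prod_{n\ge1}K(\tilde{\H}_n(X,\cA),n-1)$. Applying $\H_\bullet(-;k)$ (the Künneth theorem is not needed, since we compute the homology of the product space directly) gives the second formula.
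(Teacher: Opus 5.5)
Your argument is correct. It reaches the key simplicial abelian group $M=(\mathbb{G}X)_{\rm ab}\otimes\cA=\mathbb{A}(X)\otimes\cA$ by a more direct route than the paper.

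\textbf{The paper's route.} It works with the definition ${\rm HR}_\bullet(\Gamma,H)={\rm Tor}^{\fG}_\bullet(k[\Gamma],\underline{H})$ for homotopy simplicial groups. It uses the Shapiro lemma to write this as the homology of the Bousfield--Kan space $X^{\cA}_\Gamma=|\hocolim_{\fG_{\cA}^{\rm op}}p^\ast\Gamma|$. It then identifies that homotopy colimit with $\mathbb{A}(X)\otimes\cA$ via homotopy coends and the left Quillen functor $(\mbox{--})_{{\rm ab},R}$. Because that step needs the right adjoint to be a forgetful functor, the paper first treats $\cA=R$ the additive group of a ring, then finitely generated $\cA$, then passes to filtered colimits.

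\textbf{Your route.} You evaluate the derived representation algebra on the cofibrant model $\mathbb{G}X$. You observe that for the commutative group scheme $D_k(\cA)$, the representation functor of any group $F$ is represented by $k[F_{\rm ab}\otimes\cA]$, since $\Hom(F,\Hom(\cA,B^\times))=\Hom(F_{\rm ab}\otimes\cA,B^\times)$. So the derived representation algebra is literally the group algebra $k[M]$, and its homotopy is $\H_\bullet(|M|;k)$ with the Pontryagin product. The final step, $|M|\simeq\Omega|\overline{W}M|$ and $\overline{W}M\simeq\widetilde{\Z}[X]\otimes\cA$, is the same as in the paper.

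\textbf{What each approach buys.} Yours is uniform in $\cA$, needs no ring trick or colimit argument, and makes the algebra structure transparent. It does require the standard comparison between the ${\rm Tor}^{\fG}$ definition used in the paper and $\pi_\bullet\mathcal{O}[\Rep_G(\mathbb{G}X)]$. That comparison holds because $k[F]$ is flat over $\fG^{\rm op}$ for free $F$, including infinitely generated $F$ as in $\mathbb{G}X$. The paper's route yields a natural equivalence of the specific space $X^{\cA}_\Gamma$ with $\Omega\,\SPi(B\Gamma;\cA)$. This is what is used afterwards to realize $\tPsihyp^\ast$ and the trace maps by maps of homotopy colimits (Corollary~\ref{grspaces}).

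\textbf{Two small corrections.}
\begin{enumerate}
\item $\overline{W}$ commutes with $-\otimes\cA$ for purely formal reasons, since each simplicial level is a finite direct sum. Degreewise freeness is instead what guarantees that the equivalence $\widetilde{\Z}[X]\to\overline{W}\mathbb{A}(X)$ stays a weak equivalence after tensoring with $\cA$.
\item Drop the expression $k[\Hom(F_{\rm ab},\cA)^\vee]$ in favor of $k[F_{\rm ab}\otimes\cA]$, which is the correct formula and is natural in arbitrary $F$ by the functor-of-points computation above.
\end{enumerate}
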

We remark that the second isomorphism of Theorem~\ref{Tgroups}
follows from the first, because any path-connected abelian topological monoid is a product of Eilenberg–MacLane spaces (see \cite[Cor. 4K.7]{Hat02}). 

For $ \cA = \Z $, the isomorphism \eqref{isoDA}  specializes to 
 \eqref{DoldTh}.
The case relevant to involutive algebras is $ \cA = \Z/2$. Indeed, if $\Gamma$ is a (simplicial) group, its group algebra $k[\Gamma]$ carries the canonical involution defined on generators by $g \mapsto g^{-1}$. When
$k$ is a field of characteristic zero, we prove  (see  Proposition~\ref{grcomp}) 
\begin{equation} 
\la{isoGLm}
\HRi(k[\Gamma], k^n) \,\cong\, 
{\rm HR}_\bullet(B\Gamma, \GLm(k))\ ,\quad \forall \, n \ge 1 \ ,
\end{equation}
where  $\,\GLm(k) := \{g \in \GL_n(k)\,:\,\sigma(g)=g^{-1}\} $ denotes the subgroup of $ \GL_n(k) $ determined by a chosen involution\footnote{Recall (see \cite[Thm. 1]{AA}) that every $k$-linear involution on $\M_n(k) $ has the form $ \sigma(X) = J^{-1} X^T J $ with $ J \in \GL_n(k) $ satisfying $ J^{T} = \pm J$. For $J$ symmetric,  $\GLm(k) = {\rm O}(J)$, while  for $J$ skew-symmetric, $\,\GLm(k) = {\rm Sp}(J)$.}\, $\sigma$ on $\M_n(k)$.
For $n=1$, we have $\, \GL_1^{\sigma}(k) = \mu_2 = D_k(\Z/2) $; hence, combining the isomorphism \eqref{isoDA} for $ \cA = \Z/2 $ with \eqref{isoGLm} yields
\begin{equation}
\HRi(k[\Gamma], k) \,\cong\, {\rm HR}_\bullet(B\Gamma, \mtwo) 
\,\cong\,\H_\bullet\big(\Omega\,\SPi(B\Gamma, \Z/2);\,k\big)
\end{equation}
It turns out that with above isomorphisms,
the hyperoctahedral trace map \eqref{Triso} for $ A = k[\Gamma] $ is induced on homology by a natural map of spaces
\begin{equation}
\label{Hyptoptr}
E(\Z/2)_{+} \wedge_{\Z/2} \Omega \,\Omega^{\infty} \Sigma^{\infty}(B \Gamma) \, \to\, \Omega\,\SPi(B\Gamma, \Z/2)
\end{equation}
where the domain is the bar construction $B(C_\infty E, C_1, \Gamma)$  of \cite[Theorem 7.20]{G21} computing ${\rm HO}_\bullet(k[\Gamma])$. This is our generalization of \eqref{Hyptoptr}.

Now, if $k$ is a field of characteristic $0$, we can use the second isomorphism of Theorem~\ref{Tgroups} (for $ \cA = \Z/2$) together with the  isomorphism of Theorem~\ref{T1a} to conclude 
\newtheorem*{Cgroups}{Corollary~\ref{Cgroups}}
\begin{Cgroups}
let $k$ be a field of characteristic zero. For any (homotopy) simplicial group $\Gamma$, there is an isomorphism of graded commutative $k$-algebras
$$ 
{\rm HO}_\bullet(k[\Gamma])\,\cong\, k[\H_1(B\Gamma;\Z/2)]\,. 
$$
In particular, $\,{\rm HO}_{i}(k[\Gamma])\,$ vanishes for all $ i > 0 $.
\end{Cgroups}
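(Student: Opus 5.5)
The plan is to chain three isomorphisms already set up in the Introduction. First, Theorem~\ref{T1a} gives an isomorphism of graded commutative algebras $\HO_\bullet(k[\Gamma]) \cong \HRi(k[\Gamma],k)$. Here $k[\Gamma]$ is viewed as a simplicial involutive algebra with involution $g\mapsto g^{-1}$. For a homotopy simplicial group we first replace $\Gamma$ by a weakly equivalent simplicial group. Both sides are homotopy invariant: $\HO_\bullet$ because $\Gamma\mapsto k[\Gamma]$ preserves weak equivalences and $\HO_\bullet$ is computed by a derived Tor; $\HRi$ because it is a derived functor. Second, the case $n=1$ of Proposition~\ref{grcomp} identifies $\HRi(k[\Gamma],k)$ with ${\rm HR}_\bullet(B\Gamma,\mu_2)$. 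This uses $\GL_1^\sigma(k)=\mu_2=D_k(\Z/2)$, since the only involution on $\M_1(k)=k$ is the identity and so $\sigma(g)=g^{-1}$ means $g^2=1$.

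Third, I would apply the second isomorphism of Theorem~\ref{Tgroups} with $\cA=\Z/2$ and $X=B\Gamma$:
$$
{\rm HR}_\bullet(B\Gamma,\mu_2)\,\cong\,\H_\bullet\Big(\prod_{n\ge 1} K\big(\tilde\H_n(B\Gamma;\Z/2),\,n-1\big);\,k\Big).
$$
The key computation is that in characteristic $0$ the factors with $n\ge 2$ are $K(V,n-1)$ with $V$ an $\mathbb F_2$-vector space and $n-1\ge 1$. Their rational (indeed $k$-) homology is trivial, because $\tilde\H_\bullet(K(V,m);\Z)$ is $2$-torsion, or equivalently because $K(V,m)$ is a filtered colimit of $K(\Z/2^r,m)$'s, each $k$-acyclic. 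To pass to the infinite product, I would use that the product is weakly equivalent to the weak product, i.e.\ the colimit of finite products. Each finite product is $k$-acyclic by K\"unneth, and homology commutes with filtered colimits. The $n=1$ factor is $K(\H_1(B\Gamma;\Z/2),0)$, which is the discrete group $\H_1(B\Gamma;\Z/2)$, so its $k$-homology is the group algebra $k[\H_1(B\Gamma;\Z/2)]$ in degree $0$.

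It remains to check that the algebra structures match. The isomorphisms of Theorem~\ref{T1a} and Proposition~\ref{grcomp} are stated as algebra isomorphisms. In Theorem~\ref{Tgroups} the product on the right comes from the abelian monoid structure on $\SPi(X,\cA)$, which passes to loops as the Pontryagin product. The splitting into Eilenberg--MacLane spaces can be chosen as H-spaces, since the monoid is abelian (Hatcher, Cor.~4K.7). Hence on $\pi_0$ it induces the group-algebra multiplication on $k[\pi_0\,\Omega\SPi(B\Gamma,\Z/2)] = k[\tilde\H_1(B\Gamma;\Z/2)]$. In degree $0$ this can be confirmed directly: ${\rm HR}_0(B\Gamma,\mu_2)=\mathcal O[\Hom(\pi_1,\mu_2)]=k[\Hom(\pi_1,\Z/2)^\vee]$, which is $k[\H_1(B\Gamma;\Z/2)]$ when $\H_1$ is finite, and follows by a colimit argument in general.

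I expect the main obstacle to be exactly this compatibility of products, together with the infinite-product issue: a homotopy-theoretic splitting need not respect the ring structure. I would sidestep most of it by noting that once the higher homology vanishes, the whole algebra equals its degree $0$ part. The degree $0$ part is $\HRi_0(k[\Gamma],k)=(k[\pi_0\Gamma])^{\ob}_1$, which can be computed by hand. It is the quotient of $k[\pi_0\Gamma]_{\rm ab}$ by the ideal generated by $g-g^{-1}$, i.e.\ $k[\pi_0\Gamma_{\rm ab}/2]=k[\H_1(B\Gamma;\Z/2)]$. This gives the algebra isomorphism directly, and the vanishing of $\HO_i(k[\Gamma])$ for $i>0$ follows.
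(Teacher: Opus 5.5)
Your proposal is correct and follows essentially the same route as the paper. You reduce to a strict simplicial group, apply Theorem~\ref{T1a}, then Proposition~\ref{grcomp} with $n=1$, then Theorem~\ref{Tgroups} for $\cA=\Z/2$ (packaged in the paper as Corollary~\ref{grchar0}), and finally use that $K(V,j)$, $j\ge 1$, has trivial $k$-homology for torsion $V$. Fixing the algebra structure by the degree-zero computation $(k[\pi_0\Gamma])^{\ob}_1\cong k[(\pi_0\Gamma)_{\rm ab}\otimes\Z/2]$ is a clean way to avoid tracking products through the intermediate isomorphisms; the paper's extra identification of the composite with $\tPsihyp^\ast$ is not needed here. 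One small correction: $\tilde\H_\bullet(K(V,m);\Z)$ is $2$-primary torsion, not literally killed by $2$ (e.g.\ $\H_4(K(\Z/2,2);\Z)\cong\Z/4$), and that weaker fact is all your argument uses.
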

%


This paper is organized as follows. In Sections \ref{Sec2} we introduce and obtain basic results regarding the representation homology of involutive DG algebras mostly following the analogous results for the representation homology of associative DG algebras in \cite{BKR}. In Section \ref{simpdgcomp}, we compare the DG and simplicial definitions of involutive representation homology, showing the they are equivalent over a field of characteristic $0$ (Proposition \ref{simp}). In Section \ref{SecTr}, we construct derived character maps relating dihedral homology to involutive representation homology. In Section \ref{Sec3}, we show that the involutive representation homology of an augmented, involutive DG algebra is isomorphic to the Chevalley-Eilenberg homology of a current DG Lie coalgebra constructed from its Koszul dual (Theorem \ref{invmain}). The statements and proofs of our main results appear in Section \ref{S3}. In Section \ref{S3.1}, we show that the derived character map \eqref{Tr*} factors through hyperoctahedral homology (Proposition \ref{factorTr}). In Section \ref{S3.2}, we prove Theorem \ref{T1a}. The key fact behind this result is Proposition \ref{pC}. In Section \ref{S3.3}, we prove Theorem \ref{houainv} and obtain Corollary \ref{hosym} and Corollary \ref{hougl} as consequences thereof. Finally, in Section \ref{Sgroups}, we turn to group algebras: we extend the dihedral and hyperoctahedral bar constructions to homotopy simplicial groups, construct the dihedral and hyperoctahedral character maps, prove Theorem \ref{Tgroups} and deduce Corollary \ref{Cgroups}. 

\subsection*{Historical remark} The definition of representation homology of involutive algebras  that we use in this paper (see Sections~\ref{Sec2}) is borrowed from the unpublished preprint \cite{BR14} of the first two authors. 
Motivated by the  construction of representation homology  in \cite{BKR}, the goal of \cite{BR14} was to introduce and study the derived extension of skein algebras of $3$-manifolds that play a fundamental role in knot theory and geometric topology (see, e.g., \cite{PS}).
(Among other things, it was shown in \cite{BR14} that for
the complex group algebra $A = \c[\pi_1(M)]$ of the fundamental group of a compact $3$-manifold $M$ equipped with canonical involution $ g \mapsto g^{-1}$, the invariant part $ {\rm HR}^{\ob}_0(A, \c^2)^{{\rm GL}_2} $ of the $0$-dimensional involutive representation homology of $A$ coincides with the classical skein algebra of $M$. The main theorem of  \cite{BR14} then provides a derived version of a theorem of Brumfiel and Hilden \cite{BH}, identifying the skein algebra of an involutive algebra $A$  with its universal trace quotient.)
This historical background partly explains our use of the model-categorical language in the present paper. 
Although representation homology of involutive algebras could be naturally (probably, more concisely) defined in the setting of $\infty$-categories, we decided to keep the original framework of \cite{BR14} as it appears to be most suitable for the proof of Theorem \ref{invmain} and our computations in Section~\ref{S3}.

\subsection*{AI Disclosure}{
We used Claude (Fable 5.1) to provide an independent verification of the proof of 
Theorem~\ref{Tgroups}. As mentioned in the Introduction, this theorem is a generalization of \cite[Theorem 1.1]{BR22} to arbitrary abelian group coefficients (McCord spaces). We have verified that the arguments of \cite{BR22} extend {\it mutatis mutandis} to this more general situation and take full responsibility for this content. All results in other sections were obtained without any AI assistance.
}

\section{Derived representation schemes for involutive algebras} \la{Sec2}
In this section, we construct the (derived) representation scheme parametrizing involution preserving representations of an involutive DG algebra. The constructions and basic results of this section, as well as the proofs thereof, parallel those of~\cite[Section 2]{BKR} (also see \cite[Section 3]{BFR}).

\subsection{The classical representation functor}

\begin{definition}
An {\it involution} on a $k$-algebra $A$ is a $k$-algebra homomorphism $\sigma\,:\, A \rar A^{\mathrm{op}}$ such that $\sigma^{\mathrm{op}} \circ \sigma\,=\,\id$. Equivalently, $\sigma$ is a square identity $k$-linear map from $A$ to itself such that for all $a,b \,\in\,A$, $\sigma(ab)\,=\,\sigma(b)\sigma(a)$. 
\end{definition}

Let $\iAlg_k$ denote the category of $k$-algebras equipped with involution (which we shall also refer to as involutive algebras). Let $\M_n(k)$ denote the algebra of $n \times n$ matrices with entries in $k$. Let $\sigma$ be an involution on $\M_n(k)$ (a choice of which we fix for the rest of this section). For any $A\,\in\,\iAlg_k$, it turns out that there is an affine scheme $\Rep^{\ob}_n(A)$ parametrizing {\it involution preserving} representations of $A$ in $\M_n(k)$. To be precise, the involution $\sigma$ induces an involution on $\M_n(B)$ for any $B \,\in \,\cAlg_k$. One therefore obtains a functor
$$\M_n(\mbox{--})\,:\,\cAlg_k \rar \iAlg_k\,,\,\,\,\, B \mapsto \M_n(B)\,\text{.}$$
The functor $\M_n(\mbox{--})$ can be shown to have a left adjoint $(\,\mbox{--}\,)^{\ob}_n\,:\,\iAlg_k \rar \cAlg_k$ (see Lemma \ref{prep1} below). In particular, $A^{\ob}_n$ represents the functor
$$\Rep_n^{\ob}(A)\,:\,\cAlg_k \rar \mathtt{Sets}\,,\,\,\,\, B \mapsto \Hom_{\iAlg_k}(A, \M_n(B))\,\text{.}$$
In other words, $A^{\ob}_n$ is the algebra $k[\Rep^{\ob}_n(A)]$ of functions on the affine scheme $\Rep^{\ob}_n(A)$.

\subsection{The derived representation functor} \la{SDRep}

The notion of an involutive algebra extends to the differential graded (DG for short) setting: an {\it involution} on a DG algebra $A$ is a $k$-linear map $\sigma\,:\, A \rar A$ such that $\sigma^2\,=\,\id$ and $\sigma(ab)\,=\,{(-1)}^{|a||b|} \sigma(b)\sigma(a)$. More concisely,
an involution on $A$ is simply a DG algebra isomorphism $\sigma\,:\,A \rar A^{\mathrm{op}}$ such that
$$ \sigma^{\mathrm{op}} \circ \sigma\,=\,\id \,\text{.}$$
Denote the category of (unital) DG algebras (resp., $k$-augmented DG algebras) with involution by $\iDGA_k$ (resp., $\iDGA_{k/k}$).

\subsubsection{The matrix algebra functor and its adjoint}

 For an involutive DG algebra $S$, let $\iDGA_S$ denote the category of
morphisms $S \rar A$ of involutive DGAs. Consider the natural functor
\begin{equation} \la{G} \M_n(\mbox{--}) \,:\, \iDGA_k \rar \iDGA_{\M_n(k)}\,,\,\,\,\,\, A \mapsto \M_n(A)\simeq M_n(k)\otimes A \,,\,\,\,\,\, a\in A \mapsto I_n\otimes a \in M_n(k)\otimes A \,\text{.}\end{equation}
Here, the involution on $\M_n(A)$ is the tensor product of the involutions on $\M_n(k)$ and $A$ respectively. We abuse notation and use the symbol $\sigma$ to denote the
involution on $A$ as well as the involutions on $\M_n(k)$ and $\M_n(A)$.

\begin{lemma} \la{eqcat}
 The functor~\eqref{G} is an equivalence of categories.
\end{lemma}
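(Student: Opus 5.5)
We construct an explicit quasi-inverse by taking matrix-unit centralizers, as in the non-involutive Morita argument of \cite[Section 2]{BKR}, and then check that it is compatible with involutions. Given an object $f: \M_n(k) \to B$ of $\iDGA_{\M_n(k)}$, set $e_{ij} := f(E_{ij})$, where $E_{ij}$ are the matrix units of $\M_n(k)$, and define
$$ \Phi(B) \,:=\, \{\, b \in B \,:\, b\, e_{ij} = e_{ij}\, b \ \ \text{for all } i,j \,\}, $$
the graded centralizer of the image of $\M_n(k)$. Since $\M_n(k)$ sits in degree $0$ and the $e_{ij}$ are closed, $\Phi(B)$ is a DG subalgebra of $B$, and $\Phi$ is clearly functorial.

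First we recall the classical fact, which is purely about the underlying DG algebras and ignores involutions. The map
$$\M_n(k)\otimes \Phi(B) \to B,\qquad E_{ij}\otimes c \mapsto e_{ij}c,$$
is an isomorphism of DG algebras under $\M_n(k)$. Its inverse sends $b$ to $\sum_{i,j} E_{ij}\otimes \big(\sum_{l} e_{li}\, b\, e_{jl}\big)$, and both compositions reduce to the identity by the matrix unit relations. Likewise, $\Phi(\M_n(A)) = I_n\otimes A \cong A$, because the centralizer of $\M_n(k)$ in $\M_n(k)\otimes A$ is $k\cdot I_n\otimes A$. These identifications are natural in $B$ and $A$, so $\M_n(-)$ and $\Phi$ are mutually quasi-inverse on the level of DG algebras under $\M_n(k)$.

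The remaining work, and the only point that is genuinely new, is to check the involutions. We need three things: that $\Phi(B)$ is $\sigma_B$-stable, that the isomorphism above intertwines $\sigma\otimes\sigma_{\Phi(B)}$ with $\sigma_B$, and that the unit and counit are maps of involutive algebras.

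For stability, $f$ is involutive, so $\sigma_B(f(\M_n(k))) = f(\M_n(k))$. Let $b\in\Phi(B)$ and $m \in \M_n(k)$. Since $b$ commutes with $f(\sigma(m))$ and $\sigma_B$ is a sign-twisted anti-automorphism, we get
$$\sigma_B(b)\, f(m) \,=\, \sigma_B\big(f(\sigma m)\, b\big) \,=\, \sigma_B\big(b\, f(\sigma m)\big) \,=\, f(m)\,\sigma_B(b).$$
No sign appears because $m$ has degree $0$. Hence $\Phi(B)$ is $\sigma_B$-stable and is an involutive DG algebra.

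For compatibility with the isomorphism, we compute
$$\sigma_B(e_{ij}c) \,=\, \sigma_B(c)\,\sigma_B(e_{ij}) \,=\, \sigma_B(c)\, f(\sigma E_{ij}) \,=\, f(\sigma E_{ij})\,\sigma_B(c).$$
The last equality holds because $\sigma_B(c)\in\Phi(B)$. The right-hand side is the image of $\sigma(E_{ij})\otimes\sigma_B(c)$, which is exactly the tensor-product involution on $\M_n(k)\otimes\Phi(B)$; this also matches the convention used in \eqref{G}. Finally, the identification $\Phi(\M_n(A))\cong A$ respects involutions, because $\sigma(I_n)=I_n$.

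Since every map in sight is involutive and natural, $\Phi$ is a quasi-inverse of $\M_n(-)$, and the functor \eqref{G} is an equivalence of categories. The only delicate point is the Koszul sign bookkeeping in the compatibility computation, and it is harmless because $\M_n(k)$ is concentrated in degree $0$.
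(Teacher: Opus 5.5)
Your proposal is correct and follows essentially the same route as the paper: the quasi-inverse is the centralizer functor $(\mbox{--})^{\M_n(k)}$, and you check $\sigma$-stability of the centralizer and the intertwining identity $\sigma(\iota(m)a)=\iota(\sigma(m))\sigma(a)$ exactly as the paper's proof does. The only difference is that you write out the explicit inverse $b\mapsto \sum_{i,j}E_{ij}\otimes\sum_l e_{li}\,b\,e_{jl}$ of the multiplication map, whereas the paper cites \cite[Lemma 2.1]{BKR} for the underlying non-involutive isomorphism.
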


\begin{proof}

We claim that the inverse of $\M_n(\mbox{--})$ is the functor
\begin{equation} \la{Ginv} (\mbox{--})^{\M_n(k)} \,:\, \iDGA_{\M_n(k)} \rar
\iDGA_k\,,\,\,\,\, A \mapsto A^{\M_n(k)} \,\text{.}\end{equation}
Here, $A^{\M_n(k)}$ denotes the DG subalgebra of elements in $A$ commuting with all elements in $\M_n(k)$. Observe that $(\mbox{--})^{\M_n(k)}$ is well defined: if $A \in \iDGA_{\M_n(k)}$ with involution $\sigma$, $\sigma$ induces
an involution on $A^{\M_n(k)}$: indeed, if $[m\,,\,a]=0$ for all $m \in \M_n(k)$, $[\sigma(m)\,,\,\sigma(a)]\,=\,-\sigma([m\,,\,a])=0$
for all $m \in \M_n(k)$. Hence, $a \in A^{\M_n(k)}$ implies that $\sigma(a) \in A^{\M_n(k)}$. 
That $(\mbox{--})^{\M_n(k)} \circ \M_n(\mbox{--}) \,=\,\id$
amounts to checking that $(\M_n(B))^{\M_n(k)} \cong B$. The left hand side is n by n matrices $b=\Sigma b_{ij}$ with coefficients in $B$ such that $[b,e_{k\ell}]=0$ for every elementary n by n matrix $e_{k\ell}$. For any $k\not = \ell$, $[b,e_{k\ell}]=0$ implies $b_{kk}=b_{\ell\ell}$ and $b_{k\ell}=0$, showing that $b$ can be written as $b_0\otimes \id\in B\otimes \M_n(k)$. This subalgebra is easily seen to be isomorphic to B. 
To show that $\M_n(\mbox{--}) \circ (\mbox{--})^{\M_n(k)} \,=\,\id$, we need to check that the natural map
$$ \varphi\,:\,\M_n(k) \otimes A^{\M_n(k)} \rar A\,,\,\,\,\, m \otimes a \mapsto \iota(m).a $$
is an isomorphism of involutive DG-algebras. That it is an isomorphism of DG-algebras is shown in the proof
of~\cite[Lemma 2.1]{BKR}. Further,
$$\sigma(\iota(m).a)\,=\, \sigma(a)\sigma(\iota(m)) \,=\, \sigma(a)\iota(\sigma(m)) \,=\, \iota(\sigma(m))\sigma(a) \,,$$
the last equality because $a \in A^{\M_n(k)}$. Hence, $\varphi$ is an isomorphism of involutive DG-algebras as desired.
\end{proof}

Let $S$ be an involutive DGA. Fix a homomorphism $\rho:S \rar \M_n(k)$ of involutive DGAs. This defines a natural functor $\rho^*\,:\,\iDGA_{\M_n(k)} \rar \iDGA_S$ and exhibits $M_n(k)$ as an element of $\iDGA_S$ by pre-composition. Hence the coproduct $(\mbox{--}) \ast_S \M_n(k)$ is well-defined as an element of $\iDGA_{k}$. We also note that is it naturally an involutive algebra over $\M_n(k)$, so it is well-defined in  $\iDGA_{\M_n(k)}$.

\begin{lemma} \la{freeprod}
The functors
$$ (\mbox{--}) \ast_S \M_n(k)\,:\, \iDGA_S \rightleftarrows \iDGA_{\M_n(k)}  \,:\,\rho^* $$
form an adjoint pair.
\end{lemma}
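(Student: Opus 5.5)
The plan is to exhibit the adjunction through a natural bijection
$$\Hom_{\iDGA_{\M_n(k)}}(A \ast_S \M_n(k),\, B)\;\cong\;\Hom_{\iDGA_S}(A,\, \rho^* B),$$
natural in $A \in \iDGA_S$ and $B \in \iDGA_{\M_n(k)}$. This reduces to the non-involutive statement, \cite[Lemma 2.2]{BKR} (equivalently, the universal property of the pushout of DG algebras), together with a check that involutions are respected on both sides.

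First, make the involutive structure on the coproduct explicit. For $A \in \iDGA_S$ with structure map $f: S \to A$, the algebra $A \ast_S \M_n(k)$ is the pushout of $A \leftarrow S \to \M_n(k)$ in DG algebras. I would define its involution by the universal property. The maps $A \xrightarrow{\sigma_A} A^{\rm op} \to (A\ast_S\M_n(k))^{\rm op}$ and $\M_n(k) \xrightarrow{\sigma} \M_n(k)^{\rm op} \to (A\ast_S\M_n(k))^{\rm op}$ agree on $S$, because $f$ and $\rho$ commute with involutions. Since $(-)^{\rm op}$ of a pushout is the pushout of the opposites, they induce $\sigma: A\ast_S\M_n(k) \to (A\ast_S\M_n(k))^{\rm op}$. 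Then $\sigma^{\rm op}\circ\sigma$ restricts to the identity on both $A$ and $\M_n(k)$, so it is the identity by uniqueness. Hence the coproduct is an involutive DG algebra, and the canonical maps from $A$ and from $\M_n(k)$ are involutive. This is the one step needing real care: one must confirm that the involution is well defined on the amalgamated product, including the Koszul signs in the graded opposite.

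Next, the bijection. Given an involutive $\M_n(k)$-algebra map $\phi: A \ast_S \M_n(k) \to B$, restrict it to $A$. This gives a DG algebra map $A \to B$ that commutes with involutions and is compatible with the $S$-structures, since $\rho^*B$ has $S$-structure $S \xrightarrow{\rho} \M_n(k) \to B$. Conversely, an $S$-map $\psi: A \to \rho^* B$ together with the structure map $\iota: \M_n(k) \to B$ agree on $S$, so they induce a unique DG algebra map $\phi: A\ast_S\M_n(k) \to B$ under $\M_n(k)$.

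It remains to see that $\phi$ is involutive. Both $\sigma_B\circ\phi$ and $\phi^{\rm op}\circ\sigma$ are DG algebra maps $A\ast_S\M_n(k)\to B^{\rm op}$. They agree on $A$ because $\psi$ is involutive, and on $\M_n(k)$ because $\iota$ is involutive. By the universal property of the pushout they are therefore equal. The two constructions are inverse by uniqueness in the pushout, and naturality in $A$ and $B$ is immediate from the construction.
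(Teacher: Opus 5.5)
Your proof is correct and follows the same route as the paper: the bijection comes from the universal property of $A\ast_S\M_n(k)$ applied to the pair $(\psi,\iota)$, with restriction to $A$ as the inverse. The paper simply asserts that $A\ast_S\M_n(k)$ is the coproduct in the involutive category. You additionally check that the underlying DG pushout carries a canonical involution and that the induced map $\phi$ is involutive, which fills in a step the paper leaves implicit.
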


\begin{proof}

Given $A\in \iDGA_{S}$ and $B\in \iDGA_{\M_n(k)}$, with structure maps $P: S\to A$ and $Q: \M_n(k)\to B$, we want to show that there is a natural bijection of sets $$\Hom_{\iDGA_{\M_n(k)}}  (A\ast_S M_n(k),B)= \Hom_{\iDGA_S}(A,\rho^*(B))\ .$$

An element $f\in \Hom_{\iDGA_S}(A,\rho^*(B))$ is a map $f\in \Hom_{\iDGA_k}(A,B)$ such that $f\circ P = Q\circ \rho$. Since $A\ast_S \M_n(k)$ is the coproduct, a pair of maps $f: A\to B$ and $Q: \M_n(k)\to B$ such that $f\circ p=Q\circ \rho$ determine a unique map $A\ast_S \M_n(k)\to B$. This yields a natural map of sets 

\begin{equation} \la{forw} \Hom_{\iDGA_S}(A,\rho^*(B))\to \Hom_{\iDGA_{\M_n(k)}}(A\ast_S\M_n(k),B)\ . \end{equation}

Conversely, for any $\p \in \Hom_{\iDGA_{\M_n(k)}}(A \ast_S \M_n(k), B)$, its restriction to $A$ via the homomorphism $A \to A \ast_S \M_n(k)$ in $\iDGA_S$ determines a unique homomorphism of involutive DG algebras $A \to B$ over $S$. This gives a natural map of sets 

\begin{equation} \la{back}\Hom_{\iDGA_{\M_n(k)}}(A\ast_S\M_n(k),B) \to \Hom_{\iDGA_S}(A,\rho^*(B))\ . \end{equation}

It is easy to check that the maps \eqref{forw} and \eqref{back} are mutual inverses. 


\end{proof}

  Let $\cDGA^{\ob}_k$ denote the category of commutative DGAs with involution. We now introduce the functors

\begin{eqnarray}
\la{r1} \sqrt[n]{S \backslash -}\,:\, \iDGA_S \rar \iDGA_k \,, \,\,\,\, S \backslash A \,\mapsto\, (A \ast_S \M_n(k))^{\M_n(k)}\\
 \la{r2} (S \backslash -)^{\ob}_{n}\,:\, \iDGA_S \rar \cDGA_k\,,\,\,\,\, S \backslash A \,\mapsto\, (\sqrt[n]{ S \backslash A})_{{{\mathrm{ab}}},{\sigma}}
\end{eqnarray}
Here,
$$(\mbox{--})_{{{\mathrm{ab}}}, \sigma}\,:=\, (\mbox{--})_{\sigma} \circ (\mbox{--})_{{\mathrm{ab}}}$$
where
$(\mbox{--})_{{\mathrm{ab}}}\,:\,\iDGA_k \rar \cDGA^{\ob}_k $ denotes Abelianization (i.e, taking quotient by the two sided commutator ideal) and $(\mbox{--})_{\sigma} \,:\,
\cDGA^{\ob}_k \rar \cDGA_k$ denotes quotient by the ideal generated by the image of $\id -\sigma$. For $B \in \cDGA_k$, equip $\M_n(B)$ with the involution induced by the given involution on $\M_n(k)$ and the trivial involution on $B$. \
This gives a functor $$\M_n(\mbox{--})\,:\, \cDGA_k \rar\iDGA_S\,,\,\,\, B \mapsto \M_n(B)\,\text{.}$$ 

\begin{proposition} \la{prep}\la{S2P1}
$\mathrm{(a)}$ The functors $$ \sqrt[n]{S \backslash \mbox{--}}\,:\,\iDGA_S \rightleftarrows \iDGA_k \,:\, \M_n(\mbox{--})$$
form an adjoint pair.

$\mathrm{(b)}$ The functors
  $$ (S \backslash -)^{\ob}_{n}\,:\, \iDGA_S \rightleftarrows \cDGA_k \,:\, \M_n(\mbox{--}) $$
  form an adjoint pair.
\end{proposition}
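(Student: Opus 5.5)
The plan is to obtain both adjunctions by composing the adjunctions and equivalences already established, and then adding one elementary universal property for the involutive abelianization. One bookkeeping point comes first: in (a) the right adjoint should be read as $\M_n(\mbox{--}):\iDGA_k \to \iDGA_S$, that is, $B \mapsto \M_n(B)$ regarded as an $S$-algebra via $S \xrightarrow{\rho} \M_n(k) \to \M_n(B)$. With that reading, the functor $\M_n(\mbox{--})$ of \eqref{G}, followed by $\rho^*$, is exactly this right adjoint.

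For (a), I would write the left adjoint as a composite. By definition,
\[
\sqrt[n]{S\backslash A} \;=\; (\mbox{--})^{\M_n(k)}\circ \bigl((\mbox{--})\ast_S \M_n(k)\bigr)(A).
\]
Lemma~\ref{freeprod} shows that $(\mbox{--})\ast_S\M_n(k)$ is left adjoint to $\rho^*$. Lemma~\ref{eqcat} shows that $(\mbox{--})^{\M_n(k)}$ is an equivalence with quasi-inverse $\M_n(\mbox{--})$, so it is in particular left adjoint to $\M_n(\mbox{--})$. Composing, for every $A\in\iDGA_S$ and $B\in\iDGA_k$ we get natural bijections
\[
\Hom_{\iDGA_k}\bigl(\sqrt[n]{S\backslash A},B\bigr)\cong \Hom_{\iDGA_{\M_n(k)}}\bigl(A\ast_S\M_n(k),\M_n(B)\bigr)\cong \Hom_{\iDGA_S}\bigl(A,\rho^*\M_n(B)\bigr).
\]
This is the asserted adjunction. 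The only care needed is that the equivalence in Lemma~\ref{eqcat} really gives an adjunction with natural unit and counit; here the unit $A \to \M_n(A^{\M_n(k)})$ is the inverse of the isomorphism $\varphi$ in its proof.

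For (b), I would show that $(\mbox{--})_{\mathrm{ab},\sigma}:\iDGA_k\to\cDGA_k$ is left adjoint to the inclusion $\cDGA_k\into\iDGA_k$, where a commutative DGA is given the trivial involution. One checks the following in turn:
\begin{itemize}
\item A commutative DGA with the identity map is involutive, because $\sigma(ab)=(-1)^{|a||b|}ba=ab$.
\item Any morphism of involutive DGAs $f:A\to B$ with $B$ commutative kills commutators, hence factors through $A_{\mathrm{ab}}$.
\item Since $f\sigma=\sigma_B f=f$, the map $f$ also kills $\mathrm{Im}(\id-\sigma)$, so it factors uniquely through $(A_{\mathrm{ab}})_\sigma$.
\item The commutator ideal is $\sigma$-stable, since $\sigma[a,b]=\pm[\sigma b,\sigma a]$. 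The ideal generated by $\mathrm{Im}(\id-\sigma)$ is $\sigma$-stable and $d$-stable, because $\sigma$ commutes with $d$. Both quotients are therefore well-defined DG algebras, and the first is involutive.
\end{itemize}
Finally I would compose this adjunction with (a). The involution on $\M_n(B)$ for $B\in\cDGA_k$ is precisely $\M_n$ applied to $B$ with trivial involution, so
\[
\Hom_{\cDGA_k}\bigl((S\backslash A)^{\ob}_n,B\bigr)\cong\Hom_{\iDGA_k}\bigl(\sqrt[n]{S\backslash A},B\bigr)\cong\Hom_{\iDGA_S}\bigl(A,\M_n(B)\bigr).
\]

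I expect no serious obstacle. The main point to get right is the convention issue in (a): one must make sure $\M_n(\mbox{--})$ lands in $\iDGA_S$ via $\rho$, and that the tensor-product involution on $\M_n(B)$ agrees with the one used in Lemma~\ref{eqcat}.
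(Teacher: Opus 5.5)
Your proposal is correct and follows the paper's proof. As in the paper, you get (a) by composing the adjunction of Lemma~\ref{freeprod} with the equivalence of Lemma~\ref{eqcat}, and (b) by further composing with the left adjoint of the trivial-involution inclusion $\cDGA_k \into \iDGA_k$. The only difference is that the paper factors this last left adjoint as $(\mbox{--})_{\mathrm{ab}}$ followed by $(\mbox{--})_\sigma$ and states the two adjunctions without proof, whereas you check the combined universal property of $(\mbox{--})_{\mathrm{ab},\sigma}$ directly.
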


\begin{proof}

The functor $\M_n(\mbox{--}): \iDGA_k \rar \iDGA_S$ is the composition of the functors
$$\begin{diagram} \iDGA_k & \rTo^{\M_n(\mbox{--})} & \iDGA_{\M_n(k)} & \rTo^{\rho^*} & \iDGA_S \end{diagram}\,\text{.}$$
By lemma~\ref{freeprod}, the left adjoint of the second functor in the above composition is the functor $(\mbox{--}) \ast_S \M_n(k)\,:\,\iDGA_S \rar \iDGA_{\M_n(k)}$. By Lemma~\ref{eqcat}, the inverse (and hence, left adjoint) of the first functor in the above composition is the functor $(\mbox{--})^{\M_n(k)}$. Hence, the left adjoint of $\M_n(\mbox{--})\,:\,\iDGA_k \rar \iDGA_S$ is the composition of functors
$$\begin{diagram} \iDGA_S & \rTo^{(\mbox{--}) \ast_S \M_n(k)} & \iDGA_{\M_n(k)} & \rTo^{(\mbox{--})^{\M_n(k)}} & \iDGA_k \end{diagram}\,,$$
which is exactly the functor $\sqrt[n]{S \backslash \mbox{--}}$. This proves (a).

It remains to show (b). The functor $\M_n(\mbox{--})\,:\,\cDGA_k \rar \iDGA_S$ is the composition of functors
$$ \begin{diagram} \cDGA_k & \rTo^{(1)} & \cDGA^{\ob} & \rTo^{(2)} & \iDGA_k & \rTo^{\M_n(\mbox{--})} & \iDGA_S \end{diagram}\,,$$
where the functor labelled $(1)$ takes $B \in \cDGA_k$ to $B$ equipped with the trivial involution and where the functor labelled $(2)$ is the forgetful functor. The left adjoint of $(2)$ is the functor $(\mbox{--})_{\mathrm{ab}}$. The left adjoint of $(1)$ is $(\mbox{--})_{\sigma}$. Hence, (a) implies that the left adjoint of the functor $\M_n(\mbox{--})\,:\cDGA_k \rar \iDGA_S$ is the functor
$$ (S \backslash -)^{\ob}_{n}\,:=\, (\mbox{--})_{\sigma} \circ (\mbox{--})_{\mathrm{ab}} \circ \sqrt[n]{S \backslash \mbox{--}} \,\text{.}$$ This proves (b).
\end{proof}

Note that if $S \in \iAlg_k$, then the functors $\sqrt[n]{S \backslash \mbox{--}}: \iDGA_S \rar \iDGA_k$ and $(S \backslash \mbox{--})_n^{\ob}:\iDGA_S \rar \cDGA_k$ restrict to the corresponding categories of `honest' algebras, yielding functors 
$\sqrt[n]{S \backslash \mbox{--}}:\iAlg_S \to \iAlg_k$ and $(S \backslash \mbox{--})_n^{\ob}:\iAlg_S \to \cAlg_k$ respectively. Since $\iAlg_S$ (resp., $\iAlg_k$, $\cAlg_k$) is a full subcategory of $\iDGA_S$ (resp., $\iDGA_k$, $\cDGA_k$), Proposition \ref{prep} immediately implies

\begin{lemma} \la{prep1}
$\mathrm{(a)}$ The functors $$ \sqrt[n]{S \backslash \mbox{--}}\,:\,\iAlg_S \rightleftarrows \iAlg_k \,:\, \M_n(\mbox{--})$$
form an adjoint pair.

$\mathrm{(b)}$ The functors
  $$ (S \backslash -)^{\ob}_{n}\,:\, \iAlg_S \rightleftarrows \cAlg_k \,:\, \M_n(\mbox{--}) $$
  form an adjoint pair.

\end{lemma}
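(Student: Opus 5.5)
The plan is to deduce Lemma~\ref{prep1} directly from Proposition~\ref{prep}. The key observation is that all functors involved preserve the subcategories of honest (non-DG) algebras, i.e.\ DG algebras concentrated in degree $0$ with zero differential. Concretely, for $B \in \cAlg_k$, the involutive algebra $\M_n(B)$ is concentrated in degree $0$, so it lies in $\iAlg_S$.

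For $A \in \iAlg_S$ I would then check three things in turn.
\begin{itemize}
\item The coproduct $A \ast_S \M_n(k)$ computed in $\iDGA_S$ is concentrated in degree $0$. It is a quotient of the free product of degree-$0$ algebras, and its involution is determined by those on $A$ and $\M_n(k)$. Hence it coincides with the coproduct computed in $\iAlg_S$.
\item The centralizer $(A \ast_S \M_n(k))^{\M_n(k)}$ is a subalgebra of a degree-$0$ algebra, so it is again in degree $0$. This shows $\sqrt[n]{S\backslash A} \in \iAlg_k$.
\item The abelianization and the quotient by the ideal generated by ${\rm Im}(\id-\sigma)$ are quotients, so they stay in degree $0$. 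Thus $(S\backslash A)^{\ob}_n \in \cAlg_k$.
\end{itemize}

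With these in hand, the adjunction bijections of Proposition~\ref{prep} restrict to the honest categories. For (a), given $A \in \iAlg_S$ and $B \in \iAlg_k$, Proposition~\ref{prep}(a) gives the natural bijection
$$\Hom_{\iDGA_k}(\sqrt[n]{S\backslash A},B)\cong\Hom_{\iDGA_S}(A,\M_n(B)).$$
Since $\iAlg_k \subset \iDGA_k$ and $\iAlg_S \subset \iDGA_S$ are full subcategories, both sides are equal to the corresponding Hom sets in $\iAlg_k$ and $\iAlg_S$. Naturality is inherited from Proposition~\ref{prep}. Part (b) follows in exactly the same way from Proposition~\ref{prep}(b), using the fullness of $\cAlg_k \subset \cDGA_k$.

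The only point requiring any care is the first verification, namely that the DG coproduct of degree-$0$ algebras is again in degree $0$. This is clear from the usual construction of the amalgamated free product as a quotient of the tensor algebra on the underlying module. Everything else is formal: a left adjoint restricted to a full subcategory that it maps into a full subcategory, whose right adjoint also preserves these subcategories, remains a left adjoint.
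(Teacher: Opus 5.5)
Your proposal is correct and follows the paper's own argument. The paper observes that $\sqrt[n]{S\backslash\mbox{--}}$ and $(S\backslash\mbox{--})^{\ob}_n$ restrict to honest algebras, and that $\iAlg_S$, $\iAlg_k$, $\cAlg_k$ are full subcategories of their DG counterparts, so Proposition~\ref{prep} immediately gives both adjunctions. Your checks that the amalgamated free product, the centralizer, and the two quotients stay in degree $0$ simply spell out the restriction step that the paper leaves implicit.
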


\subsubsection{Model structure} \la{MoStr}

By \cite[Section 5.2]{B14} (also see \cite[Section 3.1]{B12}), the category $\iDGA_S$ is the category of algebras over an operad (over $k[\Z_2]$, hence over $k$). Hence, by \cite[Theorem 4.1.1]{H}, $\iDGA_k$ has a model structure whose weak equivalences are quasi-isomorphisms and whose fibrations are degreewise surjections (hence, for any $S \in \iDGA_k$, $\iDGA_S$ acquires the inherited model structure). The category $\cDGA_k$ is equipped with a similar model structure. The proof of \cite[Prop. B.2]{BKR} goes through with minor modifications in the involutive setting. For later use in this paper, we recall below the following analog of a part of {\it loc. cit.}. Letting $\Omega := k[t,dt]$ with $|t|=0$ and $d(t):=dt$ denote the de-Rham algebra of the affine line $\mathbb A_k^1$ (equipped with the trivial involution). For any map of complexes $H:C \to D \otimes \Omega$, we let $H(a):C \to D$ denote the composition of $H$ with the quotient map $D \otimes \Omega \onto D \otimes \Omega/(t-a)$ for any $a \in k$. Clearly, $H(0)$ and $H(a)$ are homotopic as maps of complexes for any $a \in k$ (see \cite[Remark B.4.4]{BKR}). 
\bprop
\la{lhom}
Let $ f,\, g:\, A \to B $ be two morphisms in $ \iDGA_S $, where $A$ is a cofibrant object in $ \iDGA_S $.
The following conditions are equivalent.\\
$(i)$ $\, f \sim g\,$ in $ \iDGA_S \, $.\\
$(ii)$\ There exists a morphism $h:A \rar B \otimes \Omega$ in $\iDGA_S $ such that $h(0)=f$ and $h(1)=g$.\\
\eprop
In addition, the proof of \cite[Prop. B.3]{BKR} goes through with minor modifications to show that
\bprop \la{dgproper}
The model category $\iDGA_k$ is proper.
\eprop

We now return to the adjunction of Lemma \ref{prep1} $(b)$. Clearly, the functor $\M_n(\mbox{--})\,:\,\cDGA_k \rar \iDGA_S$ preserves fibrations (degreewise surjections) and weak equivalences (quasi-isomorphisms).

\begin{theorem} \la{trep}
 (a) The functors $(S\backslash \mbox{--})^{\ob}_n\,:\,  \iDGA_S \rightleftarrows \cDGA_k \,:\, \M_n(-)$ form a Quillen pair.\\
  (b) The functor $(S \backslash \mbox{--})^{\ob}_n$ has a total left derived functor
      $$\L(S \backslash \mbox{--})^{\ob}_n\,:\,\Ho(\iDGA_S) \rar \Ho(\cDGA_k) \,,\,\,\,\, (S \backslash A) \mapsto Q(S \backslash A)_n\,,\,\,\,\, \gamma f \mapsto \gamma(\tilde{f}^{\ob}_n) \,\text{.}$$
       Here, $Q(S \backslash A)$ denotes any choice of cofibrant resolution of $(S \backslash A)$ in $\iDGA_S$ and
$\tilde{f}$ denotes any cofibrant lifting of $f$ in $\iDGA_S$. \\
(c) For any $S \backslash A$ in $\iDGA_S$ and for any $B \in \cDGA_k$,
\begin{equation*}
\la{S2E15}
\Hom_{\Ho(\cDGA_k)}(\L(S \bs A)^{\ob}_n,\, B) \cong
\Hom_{\Ho(\iDGA_S)}(A,\M_n(B))\ .
\end{equation*}
\end{theorem}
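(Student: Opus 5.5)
The plan is to run the standard Quillen-adjunction argument, as in \cite[Theorem 2.2]{BKR}, for the adjoint pair of Lemma~\ref{prep1}(b) (in its DG form, Proposition~\ref{prep}(b)).

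For (a) it suffices to show that the right adjoint $\M_n(\mbox{--})\colon \cDGA_k \to \iDGA_S$ preserves fibrations and acyclic fibrations. The model structures on $\iDGA_S$ (inherited from $\iDGA_k$, \S\ref{MoStr}) and on $\cDGA_k$ are transferred from complexes. In both, fibrations are degreewise surjections and weak equivalences are quasi-isomorphisms. A morphism in $\iDGA_S$ is a fibration, respectively a weak equivalence, exactly when its underlying map in $\iDGA_k$ is one. So we only need to check that for a surjective, respectively quasi-isomorphic, map $f\colon B \to B'$ of commutative DG algebras, the map $\M_n(f) = \id_{\M_n(k)} \otimes f$ is again surjective, respectively a quasi-isomorphism, as a map of complexes. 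This holds because $\M_n(k)$ is a free $k$-module of finite rank, so $\M_n(B)\cong B^{\oplus n^2}$ as complexes. The involution plays no role in this check: it only has to be compatible with $\M_n(f)$, which is automatic since $f$ commutes with the trivial involution on $B$. This is the remark made just before the theorem. By the definition of a Quillen pair, $(S\backslash \mbox{--})^{\ob}_n$ then preserves cofibrations and acyclic cofibrations, which proves (a).

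For (b) we invoke Quillen's theorem on derived functors. A left Quillen functor sends weak equivalences between cofibrant objects to weak equivalences, by Ken Brown's lemma. Hence $\L(S\backslash\mbox{--})^{\ob}_n := (S\backslash\mbox{--})^{\ob}_n \circ Q$ is a well-defined total left derived functor, independent of the choice of cofibrant replacement up to canonical isomorphism in $\Ho(\cDGA_k)$. On a morphism $\gamma f$, take a lift $\tilde f\colon QA \to QA'$. Two such lifts are left homotopic, since $QA$ is cofibrant and $QA'\to A'$ is an acyclic fibration. Left Quillen functors preserve left homotopy between maps out of cofibrant objects, so $\gamma(\tilde f^{\ob}_n)$ does not depend on the lift. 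One can also see this concretely via Proposition~\ref{lhom}: a homotopy $h\colon QA \to QA'\otimes\Omega$ is adjoint to a map into $\M_n(QA'\otimes \Omega)$, and so induces a homotopy $(QA)_n \to (QA')_n\otimes \Omega$.

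For (c) we use the adjunction part of Quillen's theorem: a Quillen pair induces an adjunction $\L(S\backslash\mbox{--})^{\ob}_n \dashv \mathbb{R}\M_n(\mbox{--})$ on homotopy categories. Every object of $\cDGA_k$ is fibrant, since fibrations are surjections. Moreover $\M_n(\mbox{--})$ preserves all weak equivalences, as shown in (a). So $\mathbb{R}\M_n(\mbox{--})$ is just $\M_n(\mbox{--})$ applied objectwise, which gives the stated bijection. The only step needing real care is (a), and specifically confirming that the model structure on $\iDGA_S$ has weak equivalences and fibrations detected on underlying complexes, so that the involution imposes no extra conditions. That is exactly what the operadic description over $k[\Z_2]$ from \cite{B14}, \cite{H} guarantees.
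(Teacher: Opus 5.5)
Your proposal is correct and is essentially the paper's proof: the paper also observes that $\M_n(\mbox{--})$ preserves fibrations and weak equivalences, invokes \cite[Lemma A.1]{BKR} to get the Quillen pair, and deduces (b) and (c) from Quillen's fundamental theorem \cite[Theorem A.2]{BKR}, which is what you do with the details filled in. One small slip is in your optional concrete aside: $h$ itself is not adjoint to anything, since $QA'\otimes\Omega$ is not commutative. You must first compose $h$ with $\pi_n\otimes\id\colon QA'\otimes\Omega\to\M_n((QA')_n)\otimes\Omega\cong\M_n((QA')_n\otimes\Omega)$ and then take the adjoint to get $(QA)_n\to(QA')_n\otimes\Omega$; your main argument does not depend on this aside.
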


\begin{proof}
Since the functor $\M_n(\mbox{--})$ preserves fibrations and weak equivalences, Lemma A.1 of~\cite{BKR} implies that the functors $(S \backslash \mbox{--})^{\ob}_n\,:\,  \iDGA_S \rightleftarrows \cDGA_k \,:\, \M_n(-)$ form a Quillen
pair. Quillen's fundamental theorem (see Theorem A.2 of~\cite{BKR}) then implies parts (b), (c) of the desired theorem.
\end{proof}

\begin{definition}
We define the (relative) {\it derived representation scheme} (parametrizing involution preserving representations) of $A$ by
$$\DRep^{\ob}_n(S \backslash A)\,:=\, \L(S \bs A)^{\ob}_n \,\in\,\Ho(\cDGA_k) $$
and define the (relative) {\it representation homology} by
$$\mathrm{HR}^{\ob}_{\bullet}(S \bs A, k^n)\, :=\,\mathrm{H}_{\bullet}(\L(S \bs A)^{\ob}_n)\,\text{.}$$
\end{definition}

When $S\,=\,k$, we omit the notation $S \bs \mbox{--}$ and denote the functors $(S \bs \mbox{--})^{\ob}_n$, $\L(S \bs \mbox{--})^{\ob}_n$ and $\mathrm{HR}^{\ob}_{\bullet}(S \bs \mbox{--}, k^n)$  by $(\,\mbox{--}\,)^{\ob}_n$,  $\L(\mbox{--})^{\ob}_n$ and  $\mathrm{HR}^{\ob}_{\bullet}(\mbox{--}, k^n)$ respectively.

\subsubsection{Augmented involutive DGAs}

Let $\iDGA_{k/k}$ denote the category of involutive DG $k$-algebras that are {\it augmented} over $k$. Let $A \,\in\, \iDGA_{k/k}$. Applying the functor $(\mbox{--})^{\ob}_n$ to the augmentation morphism $\varepsilon_A\,:\, A \rar k$, one sees that $A^{\ob}_n$ is also augmented over $k$. The functor $(\mbox{--})^{\ob}_n$ therefore restricts to a functor 
\begin{equation} \la{augrepfunct} (\mbox{--})^{\ob}_n\,:\,\iDGA_{k/k} \rar \cDGA_{k/k}\,,\,\,\,\, A \mapsto A^{\ob}_n \,\text{.} \end{equation} 
The right adjoint of the functor~\eqref{augrepfunct} is the functor 
\begin{equation} \la{mataug} \bar{\M}_n(\mbox{--})\,:\,\cDGA_{k/k} \rar \iDGA_{k/k}\,,\,\,\,\, B \mapsto \bar{\M}_n(B)\,:=\,k \oplus \M_n(\bar{B}) \,,\end{equation}
where the involution on the augmentation ideal of $\bar{\M}_n(B)$ is induced by the chosen involution on $\M_n(k)$.

The categories $\iDGA_{k/k}$ and $\cDGA_{k/k}$ have model structures arising out of the model structures on $\iDGA_k$ and $\cDGA_k$   respectively. A morphism is a cofibration (resp., fibration/weak equivalence) in $\iDGA_{k/k}$ (resp., $\cDGA_{k/k}$) iff it is a cofibration (resp., fibration/weak equivalence) in $\iDGA_{k}$ (resp., $\cDGA_{k}$). Since the functor $\bar{\M}_n(\mbox{--})$ clearly preserves weak equivalences (quasi-isomorphisms) and fibrations (degreewise surjections), the adjoint pair 
$$ (\,\mbox{--}\,)^{\ob}_n\,:\,\iDGA_{k/k} \leftrightarrows \cDGA_{k/k} \,:\,\bar{\M}_n(\mbox{--}) $$ 
is Quillen. Consequently, the functor $(\mbox{--})^{\ob}_n\,:\,\iDGA_{k/k} \rar \cDGA_{k/k}$
 has a (total) left derived functor 
 $$\L(\mbox{--})^{\ob}_n\,:\,\Ho(\iDGA_{k/k}) \rar \Ho(\cDGA_{k/k})\,,\,\,\,\, A \mapsto (QA)^{\ob}_n\,,$$
 where $QA \stackrel{\sim}{\rar} A$ is any cofibrant resolution of $A$ in $\iDGA_{k/k}$. Since any cofibrant resolution of $A$ in $\iDGA_k$ is also a cofibrant resolution in $\iDGA_{k/k}$, the functor $\L(\mbox{--})^{\ob}_n$ is just the ``restriction" of the corresponding functor in the unaugmented setting. We therefore, continue to denote 
 $\L A^{\ob}_n\,\in\,\Ho(\cDGA_{k/k})$ by $\DRep^{\ob}_n(A)$ and leave other related notation unchanged in the augmented setting.

\subsection{Basic properties}

We now discuss certain basic properties of derived representation functors for involutive DG algebras.

\subsubsection{Base change and a cofibration spectral sequence}

Suppose that $S \stackrel{f}{\rar} A \stackrel{g}{\rar} B$ in $\iDGA_k$. Fix an involution preserving algebra homomorphism $\rho_A\,:\, A \rar \M_n(k)$. Let $\rho:=\rho_A \circ f$. The proof of Theorem 2.3 of~\cite{BKR} goes through word for word (modulo the obvious minor changes) to give the following theorem. After the statement of the theorem we reproduce a lemma from \cite{BKR} and provide the proofs of both results for the reader's convenience.

\begin{theorem}
\la{hopushout}
There is a homotopy commutative diagram
\begin{equation}
\la{drepd2}
\begin{diagram}[small, tight]
\L(S \bs A)^{\ob}_n           &  \rTo^{\,\L(f)_n^{\ob}\,}        & \L(S \bs B)^{\ob}_n\\
\dTo^{\L(\rho)_n} &                    & \dTo \\
k                &  \rInto            & \L( A \bs B)^{\ob}_n
\end{diagram}
\end{equation}
which is a cocartesian square in $ \Ho(\cDGA_k) $.
\end{theorem}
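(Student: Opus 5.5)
We will follow the strategy of \cite[Theorem 2.3]{BKR}. First choose cofibrant models compatibly. Factor $f: S \to A$ in $\iDGA_k$ as a cofibration $S \into QA$ followed by an acyclic fibration $QA \sonto A$. Then factor the composite $QA \to A \xrightarrow{g} B$ as a cofibration $QA \into QB$ followed by an acyclic fibration $QB \sonto B$. Now $QA$ is a cofibrant resolution of $S\bs A$ in $\iDGA_S$, and $QB$ is a cofibrant resolution of $S\bs B$ in $\iDGA_S$. Likewise $QB$ (under $QA$) is a cofibrant resolution of $A \bs B$: the projective model structure on $\iDGA_A$ is Quillen equivalent to that on $\iDGA_{QA}$ via base change along $QA \sonto A$, which is where properness (Proposition~\ref{dgproper}) enters. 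Moreover $\rho_A$ pulls back to $\rho_{QA}: QA \to \M_n(k)$. With these choices, $\L(S\bs A)^\ob_n = (S\bs QA)^\ob_n$, $\L(S\bs B)^\ob_n=(S\bs QB)^\ob_n$ and $\L(A\bs B)^\ob_n = (QA\bs QB)^\ob_n$. The left vertical arrow is the map $(S\bs QA)^\ob_n \to (QA\bs QA)^\ob_n = k$ classifying $\rho_{QA}$.

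The key step is to show that the strict square
\[
\begin{diagram}[small]
(S\bs QA)^\ob_n & \rTo & (S\bs QB)^\ob_n\\
\dTo & & \dTo\\
k & \rTo & (QA\bs QB)^\ob_n
\end{diagram}
\]
is a pushout in $\cDGA_k$. We will check this on represented functors. For $C \in \cDGA_k$, Proposition~\ref{prep}(b) identifies $\Hom((QA\bs QB)^\ob_n, C)$ with the set of involutive maps $QB \to \M_n(C)$ whose restriction to $QA$ equals $QA \xrightarrow{\rho_{QA}} \M_n(k) \to \M_n(C)$. This is exactly the fibre product
\[
\Hom((S\bs QB)^\ob_n, C)\times_{\Hom((S\bs QA)^\ob_n,C)} \Hom(k,C),
\]
since a map $(S\bs QA)^\ob_n\to C$ factors through $k$ precisely when the corresponding representation of $QA$ is $\rho_{QA}$ composed with $\M_n(k)\to\M_n(C)$. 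By Yoneda the square is a pushout. Equivalently, $(\mbox{--})^\ob_n$ is a left adjoint, and so commutes with the relevant colimits. This gives $(QA\bs QB)^\ob_n \cong (S\bs QB)^\ob_n \otimes_{(S\bs QA)^\ob_n} k$.

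Next, the top map $(S\bs QA)^\ob_n \to (S\bs QB)^\ob_n$ is a cofibration in $\cDGA_k$, because it is the image of the cofibration $QA\into QB$ in $\iDGA_S$ under the left Quillen functor of Theorem~\ref{trep}(a). In the model category $\cDGA_k$, which is proper like $\iDGA_k$ by the same argument, a strict pushout along a cofibration of a diagram of this kind is a homotopy pushout. Hence the square is cocartesian in $\Ho(\cDGA_k)$. We will then verify homotopy commutativity and independence of choices: different resolutions are related by weak equivalences under $S$ and over $A,B$, and Theorem~\ref{trep}(b) handles the comparison, using Proposition~\ref{lhom} for the uniqueness of lifts up to homotopy.

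The main obstacle we expect is justifying that $QB$, viewed under $QA$ rather than under $A$, legitimately computes $\L(A\bs B)^\ob_n$. This requires the Quillen equivalence $\iDGA_{QA}\simeq\iDGA_A$ induced by the weak equivalence $QA\to A$, which in turn needs left properness of $\iDGA_k$, i.e.\ Proposition~\ref{dgproper}, together with compatibility of $\rho$ under this base change. We will also need to take care that $(QA\bs\mbox{--})^\ob_n$ and $(A\bs\mbox{--})^\ob_n$ agree on derived functors.
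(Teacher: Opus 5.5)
Your proposal is correct and follows essentially the paper's argument. The strict pushout is Lemma~\ref{S2L9}, proved by the same Yoneda computation; the top map is a cofibration because $(S\bs\mbox{--})^{\ob}_n$ is left Quillen; and properness of $\cDGA_k$ makes the strict pushout a homotopy pushout. The only difference is cosmetic: the paper settles your ``main obstacle'' concretely by taking $Q(A\bs B) := A \sqcup_{Q(S\bs A)} Q(S\bs B)$, which is a resolution of $B$ in $\iDGA_A$ by properness of $\iDGA_k$, and then pasting pushout squares; this is exactly what your base-change Quillen equivalence $A\sqcup_{QA}(\mbox{--})$ amounts to.
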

\blemma
\la{S2L9}
The commutative diagram
\begin{equation}
\la{drepd11}
\begin{diagram}[small, tight]
(S\bs A)^{\ob}_n               &  \rTo^{\,(f)^{\ob}_n  \,}            & (S\bs B)^{\ob}_n  \\
\dTo^{(\varrho)^{\ob}_n  } &                                   & \dTo \\
k                       &  \rInto                           & (A\bs B)^{\ob}_n  
\end{diagram}
\end{equation}
is a cocartesian square in $ \cDGA_k $.
\elemma
\bproof
To simplify the notation set $ \C := \cDGA_k $ and denote by $ \C^\D $ the category of diagrams of shape $ \{\ast \leftarrow \ast \rightarrow \ast \}\,$ in $ \C $. The pushout construction defines a functor $\, \colim:\,\C^\D \to \C \,$ which is left adjoint to the diagonal functor
$ \Delta:\,\C \to \C^\D $ ({\it cf.} Example A.2 in \cite{BKR}). We need to show that
\begin{equation}
\la{coliso}
(A\bs B)^{\ob}_n   \cong \colim[\,k \xleftarrow{\,\varrho^{\ob}_n  \,} (S\bs A)^{\ob}_n    \xrightarrow{\,f^{\ob}_n  \,} (S\bs B)^{\ob}_n   \,]\ .
\end{equation}
To this end, we denote the above colimit by $ \colim(\varrho^{\ob}_n  ,\,f^{\ob}_n  ) $,
choose an arbitrary $ C \in \cDGA_k $ and identify
\begin{eqnarray*}
\Hom_{\C}(\colim(\varrho^{\ob}_n  ,\,f^{\ob}_n  ),\,C) \,\, & \cong & \Hom_{\C^{\D}}((\varrho^{\ob}_n  ,\,f^{\ob}_n  ),\,\Delta C)\\
                          &\cong  & \{\alpha \in \Hom_{\C}((S \bs B)^{\ob}_n  ,\,C)\ :\ \alpha\,f^{\ob}_n   = \varrho^{\ob}_n  \}\\
                          &\cong& \{\tilde{\alpha} \in \Hom_{\iDGA_S}(B,\,\M_n(k) \otimes C)\ :\ \tilde{\alpha}\,f = \varrho\}\\
                          &\cong& \Hom_{\DGA_A^{\ob}}(B,\,\M_n(k) \otimes C) \\
                          &\cong& \Hom_{\cDGA_k}((A \bs B)^{\ob}_n  ,\,C)\ ,
\end{eqnarray*}
where the third and the last bijections are given by Proposition~\ref{prep}$(b)$.
The isomorphism \eqref{coliso} follows now by Yoneda's Lemma.
\eproof

\begin{proof}[Proof of Theorem ~\ref{hopushout}]
By Lemma~\ref{S2L9}, there is an isomorphism of functors $\, \DGA_A^{\ob} \to \cDGA_k \,$:
\begin{equation}\la{lccol0}
(A\bs\, \mbox{--}\,)^{\ob}_n    \cong \colim[\,k \xleftarrow{} (S\bs A)^{\ob}_n   \xrightarrow{\,(S\bs\, \mbox{--}\,)^{\ob}_n  \,} (S\bs \, \mbox{--}\,)^{\ob}_n   \,]\ .
\end{equation}
By uniqueness of the derived functors, \eqref{lccol0} induces an isomorphism of
functors $\, \Ho(\DGA_A^{\ob}) \to \Ho(\cDGA_k)
\,$:
\begin{equation}\la{lccol}
\L(A\bs \, \mbox{--}\,)^{\ob}_n    \cong \L\colim[\,k \xleftarrow{\,\varrho^{\ob}_n  \,} (S\bs A)^{\ob}_n  \xrightarrow{\,(S\bs\, \mbox{--}\,)^{\ob}_n  \,} (S\bs \, \mbox{--}\,)^{\ob}_n   \,] \ .
\end{equation}
Now, to compute $\,\L \colim \,$ in \eqref{lccol} we should replace $ f:\, A \to B $ by
its cofibrant resolution $ i:\, A \to Q(A\bs B) $ in $ \iDGA_A $; then
\begin{equation}
\la{lccol1}
\L\colim[\,k \xleftarrow{\,\varrho^{\ob}_n  \,} (S\bs A)^{\ob}_n    \xrightarrow{\,f^{\ob}_n  \,} (S\bs B)^{\ob}_n   \,]
\cong \gamma\,\colim[\,k \xleftarrow{\,\varrho^{\ob}_n  \,} (S \bs A)^{\ob}_n    \xrightarrow{\,i^{\ob}_n  \,} (S\bs Q(A \bs B))^{\ob}_n  \,]\ ,
\end{equation}
where the last `$ \colim $' is taken in $ \cDGA_k $ and $\,\gamma: \cDGA_k \to \Ho(\cDGA_k) \,$.

We will use a specific resolution $ i:\, A \to Q(A\bs B) $ in \eqref{lccol1}, which we construct in the following way.
First, we choose a cofibrant resolution $\,p_{S\bs A}:\, Q(S\bs A) \sonto A $ of $ A $ in $\,\iDGA_S\,$.
Then, by ~\cite[Prop. 3.1]{FHT}, there is a cofibration
$\,\tilde{f}:\, Q(S\bs A) \into  Q(S \bs B) \,$ in $ \DGA_k^{\ob} $ and a surjective quasi-isomorphism
$\,p_{S\bs B}:\, Q(S\bs B) \sonto B $ such that $\, p_{S\bs B} \circ \tilde{f} =
f \circ p_{S\bs A} \,$. We take $ p_{S\bs B} $ to be a cofibrant resolution of $ B $ in $ \iDGA_S$.
Now, we define $\, i: A \to  Q(A\bs B) $ by pushing out $ \tilde{f} $ along $ p_{S\bs A} $ in $ \iDGA_S \,$:
\begin{equation}
\la{drepd3}
\begin{diagram}[small, tight]
Q(S \bs A)                  &  \rInto^{\ \tilde{f}\ }           & Q(S \bs B) \\
\dOnto^{\,p_{S \bs A}\,}   &                                   & \dOnto^{\tilde{\varphi}} \\
A                           &  \rInto^{i\ }                     & Q(A\bs B)
\end{diagram}
\end{equation}
By the universal property of pushouts, there is a (unique) morphism $\,p_{A \bs B}:\,Q(A\bs B) \to B \,$ 
in $ \iDGA_S $ such that $\,p_{A \bs B} \circ i = f \,$ and  $\,p_{A\bs B} \circ \tilde{\varphi} = p_{S\bs B}\,$. We claim that $ p_{A \bs B} $ is a cofibrant resolution 
of $ f $ in $ \iDGA_A $. Indeed, by \cite[Prop. 3.14]{DS}, $\,i\,$ is a cofibration in $ \iDGA_k $, 
so $ Q(S \bs A) $ is a cofibrant object in $ \iDGA_S $. On the other hand,  $\, \tilde{\varphi} \,$  is a pushout of a quasi-isomorphism along a cofibration in $ \iDGA_k $, hence, by Proposition~\ref{dgproper}, this map is a quasi-isomorphism as 
well. Finally, $\,p_{A\bs B} \circ \tilde{\varphi} = p_{S\bs B}\,$ implies that
$ p_{A\bs B} $ is a surjective quasi-isomorphism, since so is $ p_{S \bs B} $.

Now, since $\,(S\bs \, \mbox{--}\,)^{\ob}_n  : \iDGA_S \to \cDGA_k\,$ is a left adjoint functor,
it preserves colimits: hence, it follows from \eqref{drepd3} that
\begin{equation}
\la{drepd33}
(S \bs Q(A\bs B))^{\ob}_n    \cong \colim[\,(S\bs A)^{\ob}_n   \xleftarrow{} Q(S\bs A)^{\ob}_n   \xrightarrow{\tilde{f}^{\ob}_n  } Q(S\bs B)^{\ob}_n   \,]
\end{equation}
On the other hand, since $\,(S\bs \, \mbox{--}\,)^{\ob}_n   \,$ is a left Quillen functor, it preserves cofibrations:
hence $ \tilde{f}^{\ob}_n   $ in \eqref{drepd33} is a cofibration in $ \cDGA_k $ since so is $ \tilde{f} $ in \eqref{drepd3}.
Combining \eqref{lccol}, \eqref{lccol1} and \eqref{drepd33}, we get
\begin{eqnarray*}
\L(A\bs B\,)^{\ob}_n  
&\cong&
\gamma\,\colim[\,k \xleftarrow{ \tilde{\varrho}^{\ob}_n   } Q(S\bs A)^{\ob}_n    \xrightarrow{\tilde{f}^{\ob}_n  } Q(S\bs B)^{\ob}_n   \,] \nonumber \\*[1ex]
&\cong&
\colim[\,k \xleftarrow{ \gamma \tilde{\varrho}^{\ob}_n   } \gamma\,Q(S\bs A)^{\ob}_n  
\xrightarrow{\gamma \tilde{f}^{\ob}_n  } \gamma\, Q(S\bs B)^{\ob}_n   \,] \la{drepd34} \\*[1ex]
&\cong &
\colim[\,k \xleftarrow{\,\L(\varrho)^{\ob}_n  \,} \L(S\bs A)^{\ob}_n    \xrightarrow{\,\L(f)^{\ob}_n  \,} \L(S\bs B)^{\ob}_n   \,] \la{drepd35} \ ,
\end{eqnarray*}
where the second isomorphism is a consequence of ~\cite[Prop. A.2]{BKR}, and the third holds by Theorem~\ref{trep}.
Note that ~\cite[Prop. A.2]{BKR} applies in our situation, since $ \cDGA_k $ is a proper model category
({\it cf.} Proposition~\ref{dgproper}). 
\end{proof}

Let ${\iDGA_k}^+$ denote the model category of nonnegatively graded involutive DGAs. The proof of Theorem 2.4 of~\cite{BKR} also goes through to give us the following result.

\begin{theorem}
\la{EMsps}
Given $\, S \xrightarrow{} A \xrightarrow{} B\,$ in ${\iDGA_k}^+$ and a representation $ A \to \M_n(k) $,
there is an Eilenberg-Moore spectral sequence with
$$
E^2_{\ast,\,\ast} = \Tor_{\ast,\,\ast}^{\mathrm{HR}^{\ob}_\bullet(S\bs A, k^n)}(k,\,\mathrm{HR}^{\ob}_\bullet(S \bs B, k^n))
$$
converging to $ \mathrm{HR}^{\ob}_{\bullet}(A\bs B, k^n) $.
\end{theorem}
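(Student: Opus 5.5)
We derive the spectral sequence from the homotopy pushout of Theorem~\ref{hopushout}, imitating the proof of \cite[Theorem 2.4]{BKR}. Work with the strict model used there. Choose a cofibrant resolution $p_{S\bs A}:Q(S\bs A)\sonto A$ in ${\iDGA_S}^+$. Extend $f$ to a cofibration $\tilde f:Q(S\bs A)\into Q(S\bs B)$ with $p_{S\bs B}:Q(S\bs B)\sonto B$, and form the pushout $Q(A\bs B)$ exactly as in the proof of Theorem~\ref{hopushout}. Since $(S\bs\mbox{--})^{\ob}_n$ is a left adjoint, we then get a strict pushout in $\cDGA_k^+$:
$$
Q(A\bs B)^{\ob}_n\;\cong\; k\otimes_{Q(S\bs A)^{\ob}_n} Q(S\bs B)^{\ob}_n .
$$
Here $\tilde f^{\ob}_n$ is a cofibration of nonnegatively graded commutative DG algebras, being the image of a cofibration under a left Quillen functor. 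Theorem~\ref{hopushout} identifies this strict pushout with the derived pushout, which computes $\L(A\bs B)^{\ob}_n$.

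The next step is to convert this tensor product into a derived tensor product. Put $R:=Q(S\bs A)^{\ob}_n$ and $M:=Q(S\bs B)^{\ob}_n$. In characteristic $0$, a cofibration $R\into M$ in $\cDGA_k^+$ is a retract of a semifree extension $R\otimes\Lambda(V)$ with $V$ nonnegatively graded. Hence $M$ is semifree, in particular K-flat, as a DG $R$-module. It follows that
$$
k\otimes_R M\;\simeq\; k\otimes^{\L}_R M .
$$
This identifies $\mathrm{HR}^{\ob}_\bullet(A\bs B,k^n)$ with $\Tor^R_\bullet(k,M)$, the homology of the derived tensor product.

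Finally, we apply the standard Eilenberg--Moore (Künneth) spectral sequence for differential Tor over a connective commutative DGA. Filter a semifree resolution of $k$ over $R$ by the length of its bar construction, or equivalently use the Eilenberg--Moore filtration of the two-sided bar construction $B(k,R,M)$. The $E^1$ page is then the bar complex of $\H_\bullet(R)$ with coefficients in $\H_\bullet(M)$. Hence
$$
E^2=\Tor^{\H_\bullet(R)}_{\ast,\ast}(k,\H_\bullet(M))
=\Tor^{\mathrm{HR}^{\ob}_\bullet(S\bs A,k^n)}_{\ast,\ast}\bigl(k,\mathrm{HR}^{\ob}_\bullet(S\bs B,k^n)\bigr).
$$
Everything is nonnegatively graded, so the filtration is bounded below and exhaustive in each total degree. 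This gives convergence to $\H_\bullet(k\otimes^{\L}_R M)=\mathrm{HR}^{\ob}_\bullet(A\bs B,k^n)$.

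We expect the main obstacle to be the flatness step. One must check that the involutive representation functor sends cofibrations in ${\iDGA_S}^+$ to cofibrations in $\cDGA_k^+$, and that these are K-flat module extensions. The first part follows from the Quillen pair of Theorem~\ref{trep}, since the right adjoint $\M_n(\mbox{--})$ preserves surjections and quasi-isomorphisms. The second part is the usual retract-of-semifree argument in characteristic $0$. Nonnegativity of gradings is what guarantees both this argument and the convergence of the spectral sequence.
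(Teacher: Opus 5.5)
Your proposal is correct and follows essentially the same route as the paper. Both arguments use the resolutions from the proof of Theorem~\ref{hopushout} to realize $\L(A\bs B)^{\ob}_n$ as the strict pushout $k\otimes_{Q(S\bs A)^{\ob}_n}Q(S\bs B)^{\ob}_n$ of connective commutative DG algebras, and then invoke the standard Eilenberg--Moore spectral sequence.

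The only difference is in how flatness is secured. The paper chooses almost free resolutions (via \cite[Prop.~3.1]{FHT}), so that the representation algebras are themselves almost free, and then cites \cite[Theorem~7.6]{McC}. You instead use that cofibrations in $\cDGA_k^+$ are retracts of semifree extensions, and build the spectral sequence explicitly from the bar filtration. Strictly, this makes $M$ a retract of a semifree $R$-module rather than semifree, but that still gives K-flatness, which is all you need.
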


\begin{proof}
To construct this spectral sequence we will use the cofibrant resolutions
constructed in the proof of Theorem~\ref{hopushout}.
In addition, we will assume that $ Q(R\bs S) $ is chosen to be almost free
in $ \DGA_R^{\ob,+} $ ({\it cf.}  \cite[Prop. 3.1]{FHT}).
Then the pushout diagram \eqref{drepd3} gives the pushout
in the category $ \cDGA_k^+ $:
\begin{equation}
\la{dred}
\begin{diagram}[small, tight]
Q(R\bs S)^{\ob}_n     &  \rTo^{\tilde{f}^{\ob}_n  } & Q(R \bs A)^{\ob}_n   \\
\dTo^{\tilde{\varrho}^{\ob}_n  } &          & \dTo \\
k             &  \rInto             & Q(S \bs A)^{\ob}_n  
\end{diagram}
\end{equation}
where $\, \tilde{\varrho}^{\ob}_n   :\,Q(R\bs S)^{\ob}_n   \to  k \,$ is the map adjoint to
$\,\tilde{\varrho} := \varrho \circ p_{R\bs S}: Q(R\bs S) \to \M_n(k) \,$.
It follows from \eqref{dred} that
$$
k \otimes_{Q(R \bs S )^{\ob}_n  } Q(R \bs A)^{\ob}_n   \cong Q(S\bs A)^{\ob}_n   \ .
$$
Moreover, for the almost free DG resolutions $\, Q(R\bs S),\,Q(R \bs A)\,$ and $\, Q(S \bs A)\,$,
the commutative DG algebras $ Q(R\bs S)^{\ob}_n  ,\,Q(R \bs A)^{\ob}_n   $ and $ Q(S \bs A)^{\ob}_n   $
are almost free. The existence of the Eilenberg-Moore spectral is now standard (see, e.g.,
\cite{McC}, Theorem~7.6).
\end{proof}

\subsubsection{$0$-th homology}

Suppose that $S, A \,\in\,\iAlg_k$. The following proposition shows that $\DRep^{\ob}_n(S \bs A)$ is indeed the ``higher derived functor" of $\Rep^{\ob}_n(S \bs A)$ from the homological point of view. Its proof coincides with that of~\cite[Theorem 2.5]{BKR} modulo a few modifications.

\begin{proposition} \la{zerohom}
$$ {\rm HR}^{\ob}_0(S \bs A, k^n)\,\cong\, k[\Rep^{\ob}_n(S \bs A)]\,\text{.}$$
\end{proposition}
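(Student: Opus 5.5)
The plan is to follow the strategy of \cite[Theorem 2.5]{BKR}. The first step is to choose a convenient cofibrant resolution of $S\bs A$ in $\iDGA_S$. Since $S$ and $A$ are concentrated in degree $0$, we can take $QA \sonto A$ to be an almost free (semi-free) involutive resolution $S\ast T(V) \to A$ in $\iDGA_S^+$. Here $V = \bigoplus_{i\ge0} V_i$ is a non-negatively graded $k[\Z_2]$-module, and the involution on $S\ast T(V)$ is induced by those on $S$ and $V$. Such resolutions exist by the small object argument with generating cofibrations of free involutive type (as in \cite[Prop. 3.1]{FHT}, adapted to the operad over $k[\Z_2]$).

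The key observation is that $\H_0$ commutes with the left adjoint $(S\bs\mbox{--})^{\ob}_n$ on such resolutions. In degree $0$ we have $\H_0(QA) = (S\ast T(V_0))/\langle d(V_1)\rangle \cong A$. Because all objects are non-negatively graded, $\H_0$ of a DG (involutive, commutative) algebra $R$ is $R_0/d(R_1)$. We check this is a left adjoint. As functors $\iDGA_S^+ \to \iAlg_S$ and $\cDGA_k^+\to\cAlg_k$, $\H_0$ is left adjoint to the inclusion of algebras concentrated in degree $0$. Indeed, a map $R\to B$ with $B$ in degree $0$ factors uniquely through $R_0/dR_1$.

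We then compare the two composites of right adjoints. One composite is the inclusion $\cAlg_k \into \cDGA_k^+$ followed by $\M_n(\mbox{--})$. The other is $\M_n(\mbox{--})\colon \cAlg_k\to\iAlg_S$ followed by the inclusion $\iAlg_S\into\iDGA^+_S$. These agree on the nose. Taking left adjoints gives a natural isomorphism
$$\H_0\big((S\bs R)^{\ob}_n\big)\,\cong\,(S\bs \H_0(R))^{\ob}_n$$
for all $R\in\iDGA^+_S$. Here the right-hand side uses Lemma~\ref{prep1}(b) and the left-hand side uses Proposition~\ref{prep}(b). Applying this to $R=QA$ gives ${\rm HR}^{\ob}_0(S\bs A,k^n)=\H_0((S\bs QA)^{\ob}_n)\cong (S\bs A)^{\ob}_n = k[\Rep^{\ob}_n(S\bs A)]$.

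The main obstacle is making sure the derived functor may be computed in the non-negatively graded category $\iDGA^+_S$, rather than in the unbounded $\iDGA_S$ used in Theorem~\ref{trep}. We handle this by noting that a cofibrant resolution in $\iDGA^+_S$ built from free generators in non-negative degrees is also cofibrant in $\iDGA_S$, since it is a retract of a cell object with the same generating cofibrations. By Theorem~\ref{trep}(b), the derived functor is then independent of the choice of resolution.

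A secondary check is that the unit and counit maps above are compatible with the involutions. In particular, $(\mbox{--})_{\mathrm{ab},\sigma}$ commutes with passing to $\H_0$; both are quotient constructions, so this follows because colimits commute with colimits.
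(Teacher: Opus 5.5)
Your proposal is correct and follows essentially the same route as the paper, which establishes the natural bijections $\Hom_{\cAlg_k}((S\bs A)^{\ob}_n,C)\cong\Hom_{\iAlg_S}(A,\M_n(C))\cong\Hom_{{\iDGA_S}^+}(Q(S\bs A),\M_n(C))\cong\Hom_{\cDGA_k^+}(Q(S\bs A)^{\ob}_n,C)\cong\Hom_{\cAlg_k}(\H_0[Q(S\bs A)^{\ob}_n],C)$ for $C\in\cAlg_k$ and concludes by Yoneda; this is exactly your ``left adjoints of equal composites of right adjoints'' argument written out on hom-sets, using the fact that maps from a non-negatively graded DG algebra into one concentrated in degree $0$ factor through $\H_0$. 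Your remark that a non-negatively graded semi-free resolution is also cofibrant in the unbounded $\iDGA_S$ makes explicit a point the paper uses without comment, since its chain of bijections passes from $\iDGA_S$ to the non-negatively graded category.
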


\begin{proof}

Fix a cofibrant resolution $Q(S\bs A) \to S\bs A \in \iDGA_S$. Then, for any $C\in \cAlg_k^*$, there are natural isomorphisms

\begin{eqnarray*}
\Hom_{\cAlg_k^*}((S\bs A)_n,\,C) \,\,& \cong & \Hom_{\iAlg_S}(A,\,\M_n(k) \otimes C)\\
                          &\cong  & \Hom_{\DGA^{\ast,+}_S}(Q(S \bs A),\, \M_n(k) \otimes C)\\
                          &\cong& \Hom_{\cDGA^{\ast,+}_k}(Q(S \bs A)_n ,\, C)\\
                          &\cong& \Hom_{\cAlg_k^*}(\H_0[Q(S \bs A)_n],\, C)\ ,
\end{eqnarray*}
The first isomorphism is a degree 0 (underived) application of \ref{prep}(a).
The second isomorphism was due to the fact that $\M_n(k) \otimes C$ is concentrated in degree 0.
The third isomorphism is \ref{prep}(b)
The fourth ismorphism is again a statement about $C$ being concentrated in degree 0.

\end{proof}

\subsubsection{Invariants} \la{secinv}
Let $G$ be the subgroup of $\GL_n(k)$ consisting of matrices $g$ satisfying $\sigma(g^{-1})=\lambda g$ for some scalar
$\lambda$ (depending on $g$), with $\sigma$ the specified involution. Note that for any $A \in \iDGA_k$, $G$ acts on $A_n$: the automorphism corresponding to
$g \in G$ is adjoint to the composite map
$$ A \stackrel{\pi}{\rar} \M_n(k) \otimes A \xrightarrow{\text{Ad}_g \otimes \id} \M_n(A) $$
of involutive DGAs. One may therefore, consider the $G$-invariant subfunctor $(-)_n^{\ast,G}\,:\, \iDGA_k \rar \cDGA_k$
of $(-)^{\ob}_n$. Further, the Quillen adjunction in Theorem~\ref{trep} (c) gives a $G$-action on $\L A^{\ob}_n$. One may therefore,
consider the
$G$-invariant subfunctor $[\L (-)^{\ob}_n]^{G}$ of $\L (-)^{\ob}_n$.

Proposition~\ref{lhom} enables us to imitate the proof of~\cite[Theorem 2.6]{BKR} to obtain:

\begin{theorem} \la{inv}\la{S2P4}
$(a)$ The functor $(-)_n^{\ast,G}$ has a total left derived functor
$$
\L[(\,-\,)_n^{\ast, G}]:\,\Ho(\iDGA_k) \to \Ho(\cDGA_k)\ .
$$

$(b)$ There is an isomorphism of functors
$\,
\L[(\,-\,)_n^{\ast,G}] \cong [\L(\,-\,)^{\ob}_n]^{G}
\,$.
 \end{theorem}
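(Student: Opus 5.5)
The plan is to follow the proof of \cite[Theorem 2.6]{BKR}, using Proposition~\ref{lhom} to control homotopies between maps out of cofibrant objects. I will also use that $k$ has characteristic $0$, so that taking $G$-invariants is exact on rational $G$-modules. The main point is that $(-)_n^{\ast,G}$ sends weak equivalences between cofibrant objects of $\iDGA_k$ to quasi-isomorphisms. Once this holds, part (a) follows from the standard existence criterion for total left derived functors: set $\L[(-)_n^{\ast,G}](A) := (QA)^{\ast,G}_n$ for a cofibrant resolution $QA\sonto A$.

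\textbf{Step 1: homotopic maps give homotopic maps after taking invariants.} Let $f\sim g: A\to B$ with $A$ cofibrant. By Proposition~\ref{lhom} there is a morphism $h: A\to B\otimes\Omega$ with $h(0)=f$ and $h(1)=g$.
\begin{itemize}
\item Applying $(-)^{\ob}_n$ gives $h^{\ob}_n: A^{\ob}_n\to (B\otimes\Omega)^{\ob}_n$.
\item Since $\Omega$ is commutative with trivial involution, there is a natural map $(B\otimes\Omega)^{\ob}_n\to B^{\ob}_n\otimes\Omega$. It is adjoint to $B\otimes\Omega\to \M_n(B^{\ob}_n)\otimes\Omega=\M_n(B^{\ob}_n\otimes\Omega)$. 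This map is compatible with evaluation at $t=a$ and is $G$-equivariant, because $G$ acts only through the $\M_n(k)$ factor.
\item The composite $H: A^{\ob}_n\to B^{\ob}_n\otimes\Omega$ restricts to $A^{\ast,G}_n\to B^{\ast,G}_n\otimes\Omega$, since $G$ acts trivially on $\Omega$ and invariants commute with $-\otimes\Omega$.
\end{itemize}
Hence $f^{\ast,G}_n$ and $g^{\ast,G}_n$ are homotopic as maps of complexes, and so they agree on homology.

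\textbf{Step 2: weak equivalences between cofibrant objects.} Let $w: A\to B$ be a weak equivalence with $A,B$ cofibrant. Every object of $\iDGA_k$ is fibrant, because fibrations are surjections. Hence $w$ is a homotopy equivalence, with homotopy inverse $w'$ satisfying $ww'\sim\id$ and $w'w\sim\id$. By Step 1 and functoriality, $w^{\ast,G}_n$ is a quasi-isomorphism with inverse $w'^{\ast,G}_n$ on homology. This proves (a).

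\textbf{Step 3: part (b).} For cofibrant $QA$, by definition $[\L A^{\ob}_n]^G = \H$-level invariants of $(QA)^{\ob}_n$. The claim is then that the inclusion of invariants induces
$$\H_\bullet\big((QA)^{\ast,G}_n\big)\cong \H_\bullet\big((QA)^{\ob}_n\big)^G .$$
This is where reductivity, or exactness of invariants, is needed, and I expect it to be the main obstacle. The natural approach is to show the $G$-action on $(QA)^{\ob}_n$ is rational. For a free, or almost free, resolution this holds because $(QA)^{\ob}_n$ is a polynomial algebra on matrix coordinates of the generators, on which $G$ acts algebraically by conjugation. One caveat is that $G$ need not be reductive: it contains scalars and $\GLm$, which are orthogonal or symplectic groups up to scalars. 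The scalar part acts trivially by conjugation, so its invariants are exact. For the rest I would either invoke linear reductivity of $\mathrm O(J)$ and $\mathrm{Sp}(J)$ in characteristic $0$ over a suitable $k$, or reduce to the conjugation action of this reductive group. Given exactness, invariants commute with homology, and the isomorphism is natural in $A$. Independence of the resolution then follows from Step 2, and this yields the isomorphism of functors in (b).
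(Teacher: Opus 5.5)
Your proposal is correct and follows essentially the paper's argument. For (a) it uses Brown's criterion combined with Proposition~\ref{lhom} and the involutive analogue of \cite[Lemma 2.5]{BKR}, which you make explicit via the $G$-equivariant map $(B\otimes\Omega)^{\ob}_n\to B^{\ob}_n\otimes\Omega$, and for (b) exactness of $G$-invariants in characteristic $0$. The only differences are cosmetic: you treat arbitrary weak equivalences between cofibrant objects through a homotopy inverse, whereas the paper treats acyclic cofibrations via a retraction $p$ with $p\,i=\id$ and $i\,p\sim\id$. Your hedge in Step 3 is unnecessary, because $G$, being the similitude group of the form $J$, is itself reductive.
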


\begin{proof}[Proof of Theorem~\ref{inv}]
$(a)$ By ~\cite{BKR} Lemma A.2, it suffices to show that $\,(-)_n^{\ast,G}$ maps any acyclic cofibration $\,i: A \sinto B \,$ between cofibrant objects in $\iDGA_k$ to a weak equivalence in $\cDGA_k$. By ~\cite{BKR} A4.8, given $\,i: A \sinto B \,$, there is a map $\,p:B \rar A\,$ in $\iDGA_k$ such that $\,pi=\id_A\,$ and $ip$ is homotopic to $\id_B$. Hence, $p^{G} \circ i^G= \id_{(A)_n^{\ast,G}}$. By Proposition \ref{lhom}, there is a morphism $\,h:B \rar B \otimes \Omega $ in $\iDGA_k$ with $\,h(0)=\id_B\,$ and $\,h(1)=ip\,$. By the obvious analog (in the involutive setting) of \cite[Lemma 2.5]{BKR}, there is a morphism $h^G:(B)_n^{\ast,G} \rar (B)_n^{\ast,G} \otimes \Omega $ such that $h^G(0)=\id_{(B)_n^{\ast,G}}$ and $h^G(1)=i^G \circ p^G$. Finally, by Proposition~\ref{lhom}, $i^G \circ p^G$ induces the identity map at the level of homology. Hence $i^G$ is a weak equivalence in $\cDGA_k$. This proves $(a)$.

$(b)$ The functor $ \L[(-)_n^{\ast,G}] $ maps $\,\gamma A \in \Ho(\iDGA_k)\,$ to the class of the invariant subalgebra $ (QA)_n^{\ast,G} $ of $ (QA)_n^{\ob}$, where $ QA $ is a cofibrant replacement of $A$. By $(a)$, this map is well-defined, i.e. does not depend on the choice of $ QA $. On the other hand, $ \mathrm{HR}_{\bullet}^{\ob}(\mbox{--})_n^{\ast,G} $ maps $ \gamma A $ to $\mathrm{HR}_{\bullet}^{\ob}( [\gamma(QA)^{\ob}_n])^G $. Since $k$ is of characteristic $0$ and since $\GL(V)$ is reductive, $\,\mathrm{HR}_{\bullet}^*(\gamma[((QA)_n^{\ob})^G]) \cong [\mathrm{HR}_{\bullet}^*(\gamma((QA)_n^{\ob}))]^G \,$ for all $A \in \iDGA_k$. This proves part $(b)$.
\end{proof}
 
 We often denote the homotopy commutative DG algebra $\L A^{\ast, G}_n$ by $\DRep^{\ob}_n(A)^G$.  By Theorem~\ref{inv}, its homology is exactly $\mathrm{HR}_{\bullet}^{\ob}(A, k^n)^G$.

\subsection{The tangent complex}


\subsubsection{}
Fix a (involution preserving) representation $\rho\,:\,A \rar \M_n(k)$. This corresponds to a homomorphism
$\rho\,:\,A_n^* \rar k$ of commutative DG algebras (by Propositon~\ref{prep}). Note that the complex $\mathrm{Der}_{\bullet}^{\rho}(A,\M_n(k))$ of $\rho$-derivations
from $A$ to $\M_n(k)$ is equipped with a $\Z_2$ action given by
$$\tau.D(x)\,=\, \sigma_{\M_n(k)}(D(\sigma_A(x)))\,,\,\,\,\forall\,x\,\in\, A $$
where $\tau$ is the generator of $\Z_2$.

\begin{prop} \la{ptan1}
 The tangent complex to the DG-scheme $\mathrm{Spec }A_n^*$ at the closed point $\rho$ is precisely
the subcomplex $\mathrm{Der}^{\rho}_{\bullet}(A,\M_n(k))^{\Z_2}$ of elements of $\mathrm{Der}^{\rho}_{\bullet}(A,\M_n(k))$
that are fixed by the above $\Z_2$-action.
\end{prop}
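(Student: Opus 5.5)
The plan is to identify the tangent complex with derivations via the adjunction of Proposition~\ref{prep}(b), using square-zero extensions. Recall that the tangent complex of $\mathrm{Spec}\,A_n^{\ob}$ at the closed point $\rho: A_n^{\ob}\to k$ is $\mathrm{Der}^{\rho}_\bullet(A_n^{\ob},k)$. Its degree-$m$ part consists of $k$-linear derivations of degree $m$, with the usual commutator differential. Equivalently, these are morphisms of commutative DG algebras
\[
A_n^{\ob}\to k\oplus k\varepsilon_m
\]
lifting $\rho$, where $k\varepsilon_m$ is the square-zero extension with $\varepsilon_m$ of degree $m$. To treat the differential, one uses the square-zero extension by the cone (or by $k[m]$ together with its shift). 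This is handled exactly as in the non-involutive case, \cite[Theorem 2.12]{BKR}, so I will concentrate on the graded identification and naturality. Give $k\oplus k\varepsilon_m$ the trivial involution; it then lies in $\cDGA_k$.

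First I apply Proposition~\ref{prep}(b) with $S=k$. Commutative DG algebra maps $A_n^{\ob}\to B$ with $B=k\oplus k\varepsilon_m$ correspond bijectively to involutive DG algebra maps
\[
A\to \M_n(B)=\M_n(k)\oplus \M_n(k)\varepsilon_m .
\]
Here the involution is $\sigma_{\M_n(k)}\otimes \id$. Maps lifting $\rho$ on the left correspond to maps of the form $\rho+D\varepsilon_m$ on the right. Such a map is multiplicative precisely when $D$ is a $\rho$-derivation $A\to\M_n(k)$ of degree $m$; this is the standard computation in \cite{BKR}.

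Next I check the involution condition. Compatibility with involutions means $\sigma_{\M_n(B)}\circ(\rho+D\varepsilon_m)=(\rho+D\varepsilon_m)\circ\sigma_A$. The $\rho$-part holds automatically since $\rho$ is involutive. The $\varepsilon_m$-part reads
\[
\sigma_{\M_n(k)}(D(x))=D(\sigma_A(x)),
\]
which, using $\sigma^2=\id$, is exactly $\tau.D=D$. The one delicate point is the sign. Since $\varepsilon_m$ is central with trivial involution, the Koszul sign in $\sigma(ab)=(-1)^{|a||b|}\sigma(b)\sigma(a)$ applied to $a=D(x)$, $b=\varepsilon_m$ cancels against the sign from commuting $\varepsilon_m$ past $D(x)$. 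So no extra sign appears, and the action matches the $\Z_2$-action $\tau$ defined before the statement. I also verify that $\tau.D$ is again a $\rho$-derivation: applying $\sigma$ twice reverses products twice, and the signs cancel.

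Finally, the bijections are natural and $k$-linear, since they are compatible with addition of derivations via the fibered product $B\times_k B$. They are also compatible with the differential: $\tau$ commutes with $[d,-]$ because $\sigma_A$ and $\sigma_{\M_n(k)}$ are chain maps. Hence $\mathrm{Der}^\rho_\bullet(A_n^{\ob},k)\cong \mathrm{Der}^\rho_\bullet(A,\M_n(k))^{\Z_2}$ as complexes. I expect the main obstacle to be the sign bookkeeping in the graded involution check above, together with making the differential compatibility rigorous via square-zero extensions by cones.
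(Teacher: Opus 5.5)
Your proposal is correct and follows the paper's proof essentially verbatim. Both arguments identify degree-wise derivations with maps into the square-zero extension $k[\epsilon]/\epsilon^2$, transport them across the adjunction of Proposition~\ref{prep}(b) to involutive maps $A\to \M_n(k)\otimes k[\epsilon]/\epsilon^2$, read off the fixed-point condition $\sigma_{\M_n(k)}\circ D = D\circ\sigma_A$, and check compatibility with differentials separately. One harmless indexing slip: with $\varepsilon_m$ of degree $m$, the corresponding derivations have degree $-m$, not $m$ (the paper writes $\mathrm{Der}_{-i}$ with $|\epsilon|=i$).
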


\begin{proof}
 Indeed, the required tangent complex is the complex $\mathrm{Der}^{\rho}_{\bullet}(A_n^*,k)$. Note that for a fixed (homological)
degree $i$, $\mathrm{Der}_{-i}^{\rho}(A_n^*,k)\,=\,\Hom_{\cDGA_k}^{\rho}(A_n^*,k[\epsilon]/\epsilon^2)$ where $\epsilon$ is
homogenous of degree $i$ and where the superscript $\rho$ indicates that we consider only those homomorphisms which equal $\rho\,:A_n^* \rar k$
after composition with the augmentation $k[\epsilon]/\epsilon^2 \rar k$.

By Proposition~\ref{prep}, we have the identification
$$ \Hom_{\cDGA_k}^{\rho}(A_n^*,k[\epsilon]/\epsilon^2)\,=\,\Hom_{\iDGA_k}^{\rho}(A,\M_n(k) \otimes k[\epsilon]/\epsilon^2)\,\text{.}$$
On the other hand,
$\Hom_{\iDGA_k}^{\rho}(A,\M_n(k) \otimes k[\epsilon]/\epsilon^2)$ is precisely the set of $\rho$-derivations $D$ of degree $-i$
satisfying
$$ D \circ \sigma_A\,=\,\sigma_{\M_n(k)} \circ D\,\text{.}$$
Thus, $\mathrm{Der}^{\rho}_{\bullet}(A_n^*,k)\,\cong\,\mathrm{Der}^{\rho}_{\bullet}(A,\M_n(k))^{\Z_2}$ as graded vector spaces.
That the above identification commutes with differentials is easily verified.
The desired proposition is now proven.
\end{proof}

 \subsection{Representation homology of simplicial involutive algebras} \la{simpdgcomp}

The constructions we've given above in the DG setting can be mimicked in the simplical setting, thereby defining representation homology of a simplicial involutuve algebra (see below). To compare the simplicial and DG constructions, we need to recall that involutive algebras are algebras over a $k$-linear operad (see Section \ref{MoStr}). This enables us to make use of the Monoidal Dold-Kan correspondance developed in the appendix of ~\cite{BRYII} to prove Proposition \ref{simp} below. 

\begin{definition} \la{defsialg}
A simplicial algebra with involution is a simplicial object in $\iAlg_k$, i.e, a functor $\Delta^{\rm op} \to \iAlg_k$. Equivalently, it is a simplicial associative algebra $A$ equipped with an {\it involution}, i.e., a simplicial algebra isomorphism $\sigma\,:\,A \rar A^{\mathrm{op}}$ such that
$$ \sigma^{\mathrm{op}} \circ \sigma\,=\,\id \,\text{.}$$
Denote the category of (unital) simplicial algebras (resp., $k$-augmented simplicial algebras) with involution by $\s\iAlg_k$ (resp., $\s\iAlg_{k/k}$). Note that $A \in \s\iAlg_k$ if and only if 
\end{definition}

Since $\iAlg_k$ (resp., $\cAlg_k$) is the category of simplicial algebras over an operad, the category $\s\iAlg_k$ (resp., $\s\cAlg_k$) has a model structure in which the fibrations (resp., weak-equivalences) are fibrations (resp., weak-equivalences) of the underlying simplicial sets (see \cite[Sec. II.4, Theorem 4]{Q1}). 

\begin{prop} \la{simpadj}
\la{simpad}   
$(i)$ The adjunction in Lemma~\ref{prep1} extends simplicially to a Quillen adjunction $(\mbox{--})_n^{\ob}: \s\iAlg_k\, \leftrightarrows \, \s\cAlg_k\,:\M_n(\mbox{--})$.\\

$(ii)$ Consequently, there is an adjunction between homotopy categories 
$$\L (\mbox{--})_n^{\ob}: \Ho(\s\iAlg_k)\, \leftrightarrows \, \Ho(\s\cAlg_k): \M_n(\mbox{--})\ .$$
\end{prop}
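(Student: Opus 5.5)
The plan is to apply Lemma~\ref{prep1}(b) (with $S=k$) degreewise, and then check Quillen's criterion using the facts that weak equivalences and fibrations are created in simplicial sets.

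For part $(i)$, given $A\in\s\iAlg_k$, define $(A)^{\ob}_n$ as the simplicial commutative algebra $[p]\mapsto (A_p)^{\ob}_n$. Its face and degeneracy maps are obtained by applying the functor $(\mbox{--})^{\ob}_n:\iAlg_k\to\cAlg_k$ to those of $A$. Likewise, for $B\in\s\cAlg_k$ we set $\M_n(B)_p:=\M_n(B_p)$, with the involution induced from the fixed involution on $\M_n(k)$. The bijections
$$\Hom_{\cAlg_k}((A_p)^{\ob}_n,B_p)\cong\Hom_{\iAlg_k}(A_p,\M_n(B_p))$$
are natural in both variables. A simplicial map is a compatible family of degreewise maps, and naturality guarantees that the degreewise bijections respect the simplicial structure maps. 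Hence they assemble into a bijection
$$\Hom_{\s\cAlg_k}((A)^{\ob}_n,B)\cong\Hom_{\s\iAlg_k}(A,\M_n(B)),$$
which gives the simplicial adjunction. No extra check is needed here, since any adjunction between categories prolongs to the diagram categories $\mathrm{Fun}(\Delta^{\rm op},-)$.

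To show that this adjunction is Quillen, it suffices to show that the right adjoint $\M_n(\mbox{--})$ preserves fibrations and acyclic fibrations. In both model structures these are detected on underlying simplicial sets, equivalently on underlying simplicial $k$-modules (Quillen's transferred structure, \cite[Sec. II.4, Theorem 4]{Q1}). As a simplicial $k$-module, $\M_n(B)\cong B^{\oplus n^2}$, and the involution plays no role in this identification. A map $f:B\to B'$ that is a Kan fibration of underlying simplicial groups has $\M_n(f)=f^{\oplus n^2}$. This is again a Kan fibration, because a map of simplicial groups is a fibration iff it is surjective onto the identity component of the target, or equivalently because the normalized-chain criterion is preserved under finite direct sums. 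Weak equivalences are preserved because $\pi_\bullet(B^{\oplus n^2})=\pi_\bullet(B)^{\oplus n^2}$. So $\M_n(\mbox{--})$ is right Quillen.

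Part $(ii)$ then follows from Quillen's adjunction theorem (\cite[Theorem A.2]{BKR}). The total left derived functor $\L(\mbox{--})^{\ob}_n$ is given by $A\mapsto (QA)^{\ob}_n$ for a cofibrant replacement $QA$. Since $\M_n(\mbox{--})$ preserves all weak equivalences, it descends directly to homotopy categories, and the derived functors are adjoint. The only step needing care is the fibration check. I would settle it by invoking the fact that fibrations in simplicial algebras over an operad are the maps whose underlying simplicial-set maps are Kan fibrations, and then noting that finite products of Kan fibrations are Kan fibrations.
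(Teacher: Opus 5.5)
Your proposal is correct and follows essentially the same route as the paper. Both prolong the adjunction of Lemma~\ref{prep1} levelwise to simplicial objects and show that $\M_n(\mbox{--})$ is right Quillen via the identification $\M_n(B)\cong B^{n^2}$ on underlying simplicial sets: finite products of Kan fibrations are Kan fibrations, and weak equivalences are preserved. Part $(ii)$ then follows from Quillen's adjunction theorem.
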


\begin{proof}
Since $\s\iAlg_k=\Hom_{Cat}(\Delta^{op},\iAlg_k)$ and $\s\cAlg_k=\Hom_{Cat}(\Delta^{op},\cAlg_k)$, the first assertion in Part $(i)$ follows from the fact that for a small category $I$, an adjunction $F\, : C \leftrightarrows D:\, G$ induces an adjunction $F\, : \Hom_{Cat}(I,C) \leftrightarrows \Hom_{Cat}(I,D):\, G$. It remains to verify that this adjunction is Quillen. For this, we check that the right adjoint respects fibrations and acyclic fibrations. Indeed, a morphism $f: A \to B$ in $\s\cAlg_k$ that is a Kan fibration of simplicial sets is sent to $\M_n(f):\M_n(A) \to \M_n(B)$. Since $\M_n(A)$ is naturally isomorphic to $A^{n^2}$ as a simplicial set and since a cartesian product of Kan fibrations is a Kan fibration, $\M_n(f)$ is a Kan fibration of simplicial sets. Thus, $\M_n(\mbox{--})$ preserves fibrations. Since $\M_n(\mbox{--})$ also preserves weak-equivalences, it preserves acyclic fibrations as well. This shows that the above adjunction is Quillen, proving Part $(i)$. Part $(ii)$ follows immediately from Part $(i)$.  
\end{proof}
\begin{definition} \la{defsrep}
For $A\in \s\iAlg_k$, we define $\mathrm{HR}_{\bullet}^{\ob}(A, k^n)^{\s}=\pi_{\bullet}(\L(A)_n^{\ob})$.
\end{definition}
Note that there are fully faithful functors $\iAlg_k \hookrightarrow \iDGA_k$ and $\iAlg_k \hookrightarrow \s\iAlg_k$, where the second inclusion views an involutive algebra as a discrete simplicial involutive algebra. 

\begin{proposition} \la{simp}
For any $A\in \iAlg_k$, there is a natural isomorphism 
$$ \mathrm{HR}_{\bullet}^{\ob}(A, k^n)\cong \mathrm{HR}_{\bullet}^{\ob}(A, k^n)^{\s} \,.$$
\end{proposition}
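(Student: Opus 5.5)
The plan is to compare the two derived functors through the monoidal Dold--Kan correspondence of \cite[Appendix]{BRYII}, applied to the operads governing involutive associative algebras and commutative algebras. Here $k$ is a field of characteristic $0$, which is what that comparison requires. Write $N$ for the normalized chains functor and $\Gamma$ for its inverse. For algebras over a $k$-linear operad $\mathcal P$ in characteristic $0$, \cite{BRYII} provides a Quillen equivalence between $\s\mathcal P$-algebras and nonnegatively graded DG $\mathcal P$-algebras. This equivalence is implemented by $N$ (lax monoidal via the Eilenberg--Zilber shuffle map) together with a left adjoint $Q_{\mathcal P}$. Moreover, $N$ preserves and reflects weak equivalences, identifying $\pi_\bullet$ with $\H_\bullet$. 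Taking $\mathcal P$ to be the operad of involutive algebras (over $k[\Z_2]$, see Section~\ref{MoStr}) gives $\Ho(\s\iAlg_k)\simeq\Ho({\iDGA_k}^+)$, and taking $\mathcal P=\Com$ gives $\Ho(\s\cAlg_k)\simeq\Ho(\cDGA_k^+)$.

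First I would check that in the DG setting nonnegative grading does no harm. A cofibrant resolution of a discrete $A\in\iAlg_k$ can be chosen almost free and nonnegatively graded (as in \cite[Prop. 3.1]{FHT}), and such a resolution is also cofibrant in $\iDGA_k$. Hence $\L(A)^{\ob}_n$ may be computed in ${\iDGA_k}^+$.

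Next I would show that the two derived functors correspond under these equivalences, i.e.\ that the square
\[
\Ho(\s\iAlg_k)\xrightarrow{\ \L(\mbox{--})^{\ob}_n\ }\Ho(\s\cAlg_k),\qquad \Ho({\iDGA_k}^+)\xrightarrow{\ \L(\mbox{--})^{\ob}_n\ }\Ho(\cDGA_k^+)
\]
commutes up to natural isomorphism, with the vertical maps given by $N$. Passing to right adjoints reduces this to commutation of the right adjoints. By Proposition~\ref{simpad}(ii) and Theorem~\ref{trep}(c), these right adjoints are both $\M_n(\mbox{--})$. Since $\M_n(B)=\M_n(k)\otimes B$ and $N$ is $k$-linear, there is a natural isomorphism $N(\M_n(B))\cong\M_n(N B)$ of involutive DG algebras for $B\in\s\cAlg_k$. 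The point to check is that the shuffle product on $N(\M_n(k)\otimes B)$ is $\M_n(k)\otimes$ (shuffle product on $NB$), which holds because $\M_n(k)$ is constant. The involution also matches, since $\sigma$ acts on the constant factor $\M_n(k)$ together with the identity on $B$. Because right adjoints between homotopy categories determine the left adjoints up to unique isomorphism, we obtain $N\,\L(A)^{\ob}_n\simeq\L(N A)^{\ob}_n$ in $\Ho(\cDGA_k^+)$.

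Finally, for discrete $A$ we have $NA=A$ concentrated in degree $0$, so
\[
\pi_\bullet\L(A)^{\ob}_n\cong\H_\bullet N\L(A)^{\ob}_n\cong\H_\bullet\L(A)^{\ob}_n,
\]
which is the claim, and naturality in $A$ follows from naturality of each step. The main obstacle I expect is the homotopy-level matching of the right adjoints. $N$ is only lax monoidal, so one must use the version of \cite{BRYII} in which $N$ itself (not only $\Gamma$) is a right Quillen functor between the algebra categories, or else work with the derived functor of $N$. I would first try verifying directly that $N$ commutes with $\M_n(\mbox{--})$ strictly on objects, as above, and then invoke the Quillen equivalence so that this strict commutation of right Quillen functors yields commutation of the derived left adjoints.
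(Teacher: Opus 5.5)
Your proposal is correct and follows essentially the paper's argument. Both rest on the monoidal Dold--Kan Quillen equivalence $N^{\ast}\dashv N$ of \cite[Theorem A.1]{BRYII} and on the compatibility $N(\M_n(B))\cong\M_n(NB)$; you verify this compatibility, and it is exactly what justifies the paper's (unproved) commutative square $N^{\ast}\circ(\mbox{--})^{\ob}_n\cong(\mbox{--})^{\ob}_n\circ N^{\ast}$. The only difference is cosmetic: the paper uses that square directly on the cofibrant resolution $N^{\ast}(QA)$ of $A$, whereas you pass to homotopy categories and invoke uniqueness of derived left adjoints. Since $N$ is indeed the right Quillen functor in \cite{BRYII}, the obstacle you anticipate does not arise.
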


\begin{proof}
Since $\iAlg_k$ is the category of algebras over an operad, the monoidal Dold-Kan correspondence (see \cite[Theorem A.1]{BRYII}) implies that there is a Quillen equivalence 

$$ N^{\ast}\,:\,{\iDGA_k}^+ \rightleftarrows \s\iAlg_k\,:\,N\,,\qquad N^{\ast}\,:\,\cDGA_k^+ \rightleftarrows \s\cAlg_k\,:\,N$$

where ${\iDGA_k}^+$ (resp., $\cDGA_k^+$) stands for the category of involutive DG algebras (resp., commutative DG algebras) concentrated in nonnegative homological degree and $N$ stands for the normalized chain complex functor. These Quillen equivalences fit into a commutative diagram of functors 
\[
\begin{diagram}[small]
 {\iDGA_k}^+ & \rTo^{N^{\ast}} & \s\iAlg_k\\
  \dTo^{(\mbox{--})^{\ob}_n} & & \dTo_{(\mbox{--})^{\ob}_n}\\
 \cDGA_k^+ & \rTo^{N^{\ast}} & \s\cAlg_k\\
\end{diagram}\ .
\]
View $A\in \iAlg_k$ as a (discrete) object in $\s\iAlg_k$. Then $N^{\ast}(QA)$ is a cofibrant resolution of $A$ in $\s\iAlg_k$ for any cofibrant resolution $QA$ of $A$ in ${\iDGA_k}^+$. Hence, 
\begin{align*}
\mathrm{HR}_\bullet^{\ob}(A,k^n)^{\s} & \cong  \pi_\bullet[(N^{\ast}(QA))^{\ob}_n]\\
                                    & \cong \pi_\bullet[N^{\ast}((QA)^{\ob}_n))]\\
                                    &\cong \H_\bullet[NN^{\ast}((QA)^{\ob}_n))]\\
                                    & \cong \H_\bullet[(QA)^{\ob}_n]\\
                                    & \cong \mathrm{HR}_\bullet^{\ob}(A,k^n)
\end{align*}
This proves the desired proposition.
\end{proof}

\section{Relative dihedral homology and traces} \la{SecTr}

In the first part of this section, we recall the definition of relative dihedral homology (see \cite[Sec. 5.2]{L} and \cite{Go}). We then recall the construction of dihedral homology as a nonabelian derived functor on the category of involutive associative algebras (see \cite[Section 3]{Go}). This construction parallels the original Feigin-Tsygan construction of cyclic homology as a nonabelian derived functor on the category of algebras (see \cite{FT85,FT87}). Our exposition, which follows our approach to relative cyclic homology \cite{BFR,BKR}, is slightly more general than in \cite{Go} (for instance, we work over an arbitrary involutive DG algebra) and our proofs are different from the proofs therein. Subsequently, in Section \ref{TraceM}, we construct canonical trace maps $\,\Tr(S \bs A)_m:\,\mathrm{HD}_{m-1}(S \bs A) \to {\rm HR}^{\ob}_m(S \bs A, k^n)\,$ relating the dihedral homology of an involutive $S$-algebra
$ A \in \iAlg_S $ to its representation homology. 

Recall (see \cite[Sec. 5.2]{L} and \cite{Go}) that if $A \in \iAlg_k$, that the (absolute) {\it dihedral homology} of $A$, denoted by ${\rm HD}_\bullet(A)$ is the homology of the complex 
\begin{equation} \la{absdih} {\rm CD}_\bullet(A)\,:=\, \big( {\rm C}_{\bullet}(A)/({\rm Im}(1-t)+{\rm Im}(1-y)), {b}\big)\,,\end{equation}
where ${\rm C}_\bullet$ stands for the Hochschild chain complex of $A$, $b$ stands for the (differential induced by) the Hochschild boundary, and the operators $t$ and $y$ act on Hochschild chains by
\[ t(a_0 \otimes \ldots \otimes a_n) = (-1)^n a_n \otimes a_0 \otimes \ldots \otimes a_{n-1}\,,\qquad y(a_0 \otimes \ldots \otimes a_n)= (-1)^{\frac{n(n+1)}{2}} \sigma(a_0) \otimes \sigma(a_n) \otimes \ldots \otimes \sigma(a_1)\ .\]
This definition has an obvious generalization to arbitrary $A \in \iDGA_k$. For $A \in \iDGA_k$, the {\it reduced} dihedral homology $\overline{\rm HD}_\bullet(A)$ is the homology of the complex 
${\rm CD}_\bullet(A)/{\rm CD}_\bullet(k)$

\subsection{The dihedral functor}
For $A \in \iDGA_k$ let $A_{\natural} := \frac{A}{[A,A] + \mathrm{Im}(\id -\sigma)} \in \com_k$, where $[A,A]$ is the commutator subspace and $\mathrm{Im}(\id -\sigma)$ is all elements not fixed by the involution. The assignment $A\mapsto A_{\natural}$ is functorial, even being seen to have a right adjoint described by giving a trivial involution and then taking the free with involution from that involutive chain complex.

Since this is a functor, it acts on morphisms $(f\, : \, S\mapsto A) \to (f_{\natural}\, : \, S_{\natural}\to A_{\natural})$. As a result, it also respects triangles of composable morphisms:
\[
\begin{diagram}[small, tight]
  &       &     S     &        &  & & &       &     S_{\natural}     &        &\\
  &\ldTo  &           &  \rdTo &   & \implies & &\ldTo  &           &  \rdTo &  \\
A &       & \rTo^{f}  &        & B & &A_{\natural} &       & \rTo^{f_{\natural}}  &        & B_{\natural}
\end{diagram}
\]
If we fix an $S\in \iDGA_k$, then a map $f\, :\, S\mapsto A$ in involutive DGAs is an element in $\iDGA_S$. Indeed, we will often denote a map $S\mapsto A$ by $S\bs A$. If we wish to associate a chain complex to an element of this category, we can exploit $f\mapsto f_{\natural}$ to get a morphism in chain complexes, and then we can take the cone construction to get a chain complex.

We define the involutive Feigin-Tsygan functor
$$ \FT\,:\, \iDGA_S \rar\com_k \,,\,\,\,\,\,\, S \bs A \mapsto \cn(S_{\natural} \rar A_{\natural}) \,\text{.}$$
Letting $\Omega := k[t,dt]$ with
$|t|=0$ and $d(t):=dt$ denote the De-Rham algebra of the affine line $\mathbb A_k^1$ (equipped with the trivial involution), and using our definition of $A_{\natural}$ instead of $A/[A,A]$ in ~\cite{BKR}, we find the proof of their lemma 3.1 goes through exactly as written. It is reproduced for the reader's convenience.

\begin{lemma} \la{goodhom}

If $h: A \rar B \otimes \Omega$ is a morphism in $\iDGA_S$ such that $h(0)=f$ and $h(1)=g$, then
$\FT(f) \sim \FT(g)$ in $\com_k$.

\end{lemma}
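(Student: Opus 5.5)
We will construct an explicit chain homotopy between $\FT(f)$ and $\FT(g)$, following the proof of \cite[Lemma 3.1]{BKR} and keeping track of the involution. We write $\mathtt{ev}_a : B \otimes \Omega \to B$ for evaluation at $t=a$. Then $f = \mathtt{ev}_0 \circ h$ and $g = \mathtt{ev}_1 \circ h$.

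The first step is to pass to $(\mbox{--})_\natural$. Since $\Omega$ is graded commutative and carries the trivial involution, the natural map
$$(B \otimes \Omega)_\natural \to B_\natural \otimes \Omega$$
is well defined. Indeed, $[B\otimes\Omega, B\otimes\Omega]$ maps into $[B,B]\otimes\Omega$, and ${\rm Im}(\id-\sigma)$ maps into ${\rm Im}(\id-\sigma)\otimes \Omega$. Composing $h_\natural$ with this map gives
$$\tilde h : A_\natural \to B_\natural \otimes \Omega .$$
This is a map of complexes with $\tilde h(0) = f_\natural$ and $\tilde h(1) = g_\natural$. Since $h$ is a morphism over $S$, we also have $\tilde h \circ i_\natural = j_\natural \otimes 1$, where $i : S \to A$ and $j : S \to B$ are the structure maps. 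This last relation follows because $h \circ i = j \otimes 1$.

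The second step is the standard homotopy formula on $\Omega$. Define $\int_0^1 : B_\natural \otimes \Omega \to B_\natural$ of degree $+1$ by
$$\textstyle\int_0^1 (b \otimes t^m) = 0, \qquad \int_0^1 (b \otimes t^m\,dt) = \pm \frac{b}{m+1}.$$
Stokes' formula then gives $d\int_0^1 \pm \int_0^1 d = \mathtt{ev}_1 - \mathtt{ev}_0$. Setting $s := \int_0^1 \circ \tilde h$ produces a chain homotopy $f_\natural \sim g_\natural$ on $A_\natural$. On the image of $S_\natural$ this homotopy is zero, because $\tilde h\circ i_\natural = j_\natural\otimes 1$ has no $dt$-component.

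The third step transfers this to the cones. Write $\cn(S_\natural \to A_\natural) = A_\natural \oplus S_\natural[1]$. Define the homotopy $H = (s, 0)$, so that it acts by $s$ on the $A_\natural$-summand and by zero on the $S_\natural[1]$-summand. Because $s\circ i_\natural = 0$, the cross terms in $dH + Hd$ coming from the cone differential vanish. Hence $dH + Hd = \FT(g) - \FT(f)$, and therefore $\FT(f)\sim\FT(g)$ in $\com_k$.

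The main obstacle is the first step. One must check that $(B\otimes\Omega)_\natural\to B_\natural\otimes\Omega$ is compatible with the involution quotient; this uses that the involution on $\Omega$ is trivial. One must also verify the compatibility of $\tilde h$ with the maps from $S$. Beyond this, the argument involves only sign bookkeeping.
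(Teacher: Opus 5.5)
Your argument is correct and is essentially the paper's proof. Like you, the paper identifies $(B\otimes\Omega)_\natural$ with $B_\natural\otimes\Omega$ and uses the integration homotopy $\ev_1\sim\ev_0$ on $\Omega$; the only difference is cosmetic: it first forms the cone map $\FT(h)\colon \cn(S_\natural\to A_\natural)\to\cn(S_\natural\to B_\natural\otimes\Omega)$ and proves $\widetilde{\ev}_0\sim\widetilde{\ev}_1$ on the target cone, whereas you pull the homotopy back to $A_\natural$ before passing to cones, and in both versions the homotopy extends to the cone for the reason you isolate, namely that $S_\natural$ lands in $B_\natural\otimes 1$, which the integral kills.
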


\bproof
We recall that $\, \Omega = k[t] \oplus k[t] dt \,$ denotes the algebraic de Rham complex of $ \mathbb{A}^1_k $, which is a commutative
DG algebra. The natural maps $\,B \into B \otimes \Omega \hookleftarrow \Omega \,$ induce then an isomorphism of complexes $\,B_\n \otimes \Omega \xrightarrow{\sim} (B \otimes \Omega)_\n \,$.
Identifying these complexes, we have the commutative diagram
\begin{equation*}
\begin{diagram}[small, tight]
S_{\natural} & \rTo  & B_{\natural} \otimes \Omega\\
             & \rdTo & \dTo_{\id \otimes \ev_a}\\
             &       & B_{\natural}
\end{diagram}
\end{equation*}
which induces a map of c\^{o}nes $\,\widetilde{\ev}_a:\,
\cn(S_\n \to B_\n \otimes \Omega)\,\to \,\cn(S_\n \to B_\n)\,$
for each $ a \in k $. (Here, $ \ev_a:\, \Omega \to k $ stands for
the evaluation map $\,f(t) + g(t) dt \mapsto f(a)\,$, where $ a \in k $.)

If we now apply $ \FT $ to $ h $, we get a morphism
$$
\FT(h):\,\cn(S_\n \to A_\n)\,\to \,\cn(S_\n \to B_\n \otimes \Omega)\ ,
$$
such that $\,\FT(h)(0) := \widetilde{\ev}_0 \circ \FT(h) = \FT(f) \,$ and $\,\FT(h)(1) := \widetilde{\ev}_1 \circ \FT(h) =
\FT(g) \,$. To show that $\,\FT(f) \sim \FT(g) \,$ it thus suffices to show that
$\,\widetilde{\ev}_1 \sim \widetilde{\ev}_0\,$. But this last equivalence is immediate from
the fact that, for all $ a \in k \,$, $\,\ev_a \sim \ev_0\,$ as morphisms $ \Omega \to k $;
the corresponding homotopy $ \Omega \to k[1] $ is given by $\,f(t)+ g(t)\,dt \mapsto \int_0^a\,g(\tau)\,d\tau\,$.
\eproof

By Proposition~\ref{lhom}, the proof of Theorem 3.1 of~\cite{BKR} goes through (with obvious modifications) to prove its involutive
analog:

\begin{theorem}\la{dihedral}
 The functor $\FT\,:\,\iDGA_S \rar \com_k$ has a total left derived functor
$$ \L \FT\,:\, \Ho(\iDGA_S) \rar \mathcal D(k)\,,\,\,\,\, \gamma S \bs A \mapsto \gamma \FT(S \bs QA) $$
where $S \bs QA $ is a cofibrant resolution of $S \bs A$ in $\iDGA_S$.
\end{theorem}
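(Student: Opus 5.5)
To prove Theorem~\ref{dihedral} I will follow the standard criterion for existence of total left derived functors, \cite[Lemma A.2]{BKR} (the form of Ken Brown's lemma used already in the proof of Theorem~\ref{inv}). The target $\com_k$ is viewed with its quasi-isomorphisms, so that its localization is $\mathcal D(k)$. By that criterion it suffices to show one thing: $\FT$ sends every acyclic cofibration $i: A \sinto B$ between cofibrant objects of $\iDGA_S$ to a quasi-isomorphism. Once this is established, $\L\FT(\gamma\, S\bs A) = \gamma\,\FT(S\bs QA)$ is well defined and independent of the choice of cofibrant resolution $QA$. On morphisms it is given by applying $\FT$ to cofibrant lifts, and the formula in the statement follows.

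To verify the criterion, fix such an $i: A \sinto B$. Since $A$ is cofibrant and every object of $\iDGA_S$ is fibrant (fibrations are degreewise surjections), \cite[A4.8]{BKR} gives a retraction $p: B \to A$ in $\iDGA_S$ with $p\circ i = \id_A$ and $i\circ p \sim \id_B$ (left homotopy). Because $B$ is cofibrant, Proposition~\ref{lhom} converts this left homotopy into a morphism $h: B \to B\otimes\Omega$ in $\iDGA_S$ with $h(0) = \id_B$ and $h(1) = i\circ p$. Lemma~\ref{goodhom} then yields a chain homotopy $\FT(\id_B)\sim \FT(i)\circ\FT(p)$. Since $\FT$ is a functor, we also have $\FT(p)\circ\FT(i) = \id$. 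Hence $\FT(i)$ is a homotopy equivalence of complexes, and in particular a quasi-isomorphism.

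All the homotopical input is already contained in Proposition~\ref{lhom} and Lemma~\ref{goodhom}, so the only step that needs real care is applying Proposition~\ref{lhom}. Its hypotheses are that the source is cofibrant and that $f$ and $g$ are parallel maps in $\iDGA_S$, i.e.\ under $S$. This holds here because both $\id_B$ and $i\circ p$ are maps under $S$: $p$ is a morphism in $\iDGA_S$. One should also check that the isomorphism $B_\natural\otimes\Omega \cong (B\otimes\Omega)_\natural$ used in Lemma~\ref{goodhom} remains valid with the involution. This is true because $\Omega$ carries the trivial involution, so quotienting by $\mathrm{Im}(\id-\sigma)$ commutes with $-\otimes\Omega$.
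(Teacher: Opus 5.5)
Your proposal is correct and follows essentially the same route as the paper. You reduce via Brown's lemma to acyclic cofibrations $i\colon A\to B$ between cofibrant objects, take a retraction $p$ with $p\,i=\id_A$ and $i\,p\sim\id_B$, use Proposition~\ref{lhom} to produce $h\colon B\to B\otimes\Omega$, and then apply Lemma~\ref{goodhom} to conclude that $\FT(i)$ is a chain homotopy equivalence. Your additional check that $(B\otimes\Omega)_\natural\cong B_\natural\otimes\Omega$ remains valid with the involution, because $\Omega$ carries the trivial involution, verifies a point the paper uses without comment.
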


\begin{proof}
The proof is analogous to the proof of Theorem~\ref{inv}. By Brown's Lemma (~\cite[Proposition A.2]{BKR}), it suffices to show that $\,\FT\,$ maps
acyclic cofibrations $\,i: A \sinto B \,$ between cofibrant objects in $\iDGA_S $ to weak equivalences
(quasi-isomorphisms) in $\Com(k) $. Given $\,i: A \sinto B \,$, there is a map $\,p:\,B \to A\,$ in $\iDGA_S$ such that
$\,p\,i=\id_A\,$ and $\,i\,p \sim \id_B\,$ ({\it cf.} \cite[Proposition A.1]{BKR}). Hence $ \FT(p) \,\circ\, \FT(i) = \id_{\FT(A)} $, and
by Proposition~\ref{lhom}, there is a morphism $\,h: B \rar B \otimes \Omega $ in $\iDGA_S $
with $\,h(0)=\id_B\,$ and $\,h(1)=i\,p\,$. Lemma~\ref{goodhom} now implies that $ \FT(ip) \sim \id_{\FT(B)} $.
Hence $ \FT(i p) $ induces the identity map on homology, and therefore $ \FT(i) $ is a quasi-isomorphism.
\end{proof}

We define the (relative) {\it dihedral homology} by setting
$$\mathrm{HD}_{m-1}(S \bs A)\,:=\, \mathrm{H}_m(\L\FT(S \bs A)) \,\text{.}$$
We note that when $S$ and $A$ are involutive algebras concentrated in degree $0$, our definition coincides (upto shift)
with that of~\cite{Go}. The proof of \cite[Thm 3.2]{Go} extends to the DG setting to yield the following minor generalization. 

\begin{prop} \la{HDAto0}
For $A \in \iDGA_k$, there is an isomorphism of homologies 
\[ {\rm HD}_\bullet(A \bs 0)\,\cong\, {\rm HD}_\bullet(A)\ .\]
\end{prop}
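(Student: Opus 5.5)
To prove Proposition~\ref{HDAto0}, I would compute $\L\FT(A\bs 0)$ with an explicit cofibrant resolution of $A \to 0$ in $\iDGA_A$. The model is the Feigin--Tsygan/Goodwillie argument for cyclic homology. First I replace $A$ by a cofibrant resolution $QA \sonto A$ in $\iDGA_k$. This is harmless: both sides are invariant under quasi-isomorphism. For the right-hand side this is standard, since dihedral homology is the $\Z_2$-coinvariant part of cyclic homology in characteristic $0$ via the operator $y$. For the left-hand side it follows from the base change property of $\L\FT$, i.e.\ Theorem~\ref{dihedral} applied relative to $QA \to A$. So we may assume $A$ is cofibrant, hence almost free, $A = (T_k V, d)$ with $V$ a $k[\Z_2]$-complex.

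The natural resolution of $A \to 0$ is the ``cone'' algebra $B := A \ast T_k(sA')$. Concretely, I take $B = T_k(V \oplus V[1])$ with differential extending $d$ on $V$ and sending $sv \mapsto v - s(dv)$. The involution on $V[1]$ is chosen so that the inclusion $A \into B$ is involutive; up to a sign dictated by the Koszul rule, it is induced from $\sigma$ on $V$. One checks that $B$ is acyclic (contractible to $k$; the extra unit is handled by the usual augmented/reduced convention, or by using $k$ in place of $0$). One also checks that $A \into B$ is a cofibration in $\iDGA_A$. Then
$$\L\FT(A\bs 0) \simeq \cn(A_\n \to B_\n).$$
Since $B$ is acyclic, $\bar B_\n$, the reduced dihedral quotient of $B$, has homology computed by reduced dihedral homology of $k$, which vanishes. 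This step uses Goodwillie's/Feigin--Tsygan's result that for free algebras $T(W)$, $T(W)_\n$ computes cyclic homology. I would apply it $\Z_2$-equivariantly and take coinvariants, which is exact in characteristic $0$.

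The heart of the argument is identifying $\cn(A_\n \to B_\n)$ with a shift of the dihedral complex ${\rm CD}_\bullet(A)$. Following \cite{BKR} (the proof of the analogous statement for $\HC$), I would filter $B_\n$ by the number of $s$-letters. The quotient $B_\n/A_\n$ splits by word length in $sV$. The part that is linear in $s$-letters is identified, via $x\,sa \mapsto$ Hochschild-type chains, with the mixed complex $(A\otimes \bar A^{\otimes \bullet})$ modulo cyclic permutations and the involution $y$. The differential $d(sv)=v-s(dv)$ produces exactly the Hochschild boundary $b$. The key check, and the only genuinely new point compared with the nonequivariant case, is this: the involution on $B$, restricted to the cyclic words $sa_0\, sa_1 \cdots sa_n$, acts by $\sigma$ on letters and reverses the word. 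Together with the Koszul signs from the shifted generators, it reproduces precisely the sign $(-1)^{n(n+1)/2}$ in the operator $y$. Granting this, the quotient of cyclic words by $\mathrm{Im}(1-\sigma)$ is ${\rm CD}_\bullet(A)$ up to shift by one. The remaining part of the filtration, where words also involve $V$, is contractible by the same homotopy as in the cyclic case, and that homotopy is $\Z_2$-equivariant. This gives $\H_m(\L\FT(A\bs 0)) \cong {\rm HD}_{m-1}(A)$.

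I expect the main obstacle to be this sign and compatibility verification. One must show that the involution on the generators $sV$ can be chosen so that three things hold simultaneously: $d$ commutes with $\sigma$, $A \into B$ is involutive, and the induced action on cyclic words is exactly $y$ rather than $y$ twisted by $(-1)^n$. If the naive choice produces the twisted version, I would instead compose with the automorphism $sv \mapsto -sv$ in odd word length. Alternatively, I would replace $\Z_2$-coinvariants by the $(-1)$-eigenspace and use the identification of the two via the standard sign change. Either route then finishes the proof by passing to $\Z_2$-coinvariants of Goodwillie's quasi-isomorphism, which is exact since ${\rm char}\,k=0$.
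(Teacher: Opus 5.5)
Your proposal has a genuine gap at the first step: $B$ is not a resolution of $A\to 0$. Here $0$ is the zero algebra, the terminal object of $\iDGA_k$. A cofibrant replacement of $A\bs 0$ in $\iDGA_A$ is therefore an extension $A\into B$ with $\H_\bullet(B)=0$, whereas your $B$ has $\H_\bullet(B)=k$.

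At best such a $B$ resolves an augmentation $A\to k$, which a general $A\in\iDGA_k$ need not have. Then $\cn(A_\n\to B_\n)$ computes $\L\FT(A\bs k)$. By Lemma~\ref{LtP} this differs from $\L\FT(A\bs 0)$ by $\L\FT(k\bs 0)$, which is not acyclic: its first homology is $k$. Your own intermediate step shows the discrepancy. If $\overline{B}_\n$ is acyclic, then $\cn(A_\n\to B_\n)\simeq\overline{A_\n}[1]$, and you land in the reduced group $\overline{\HD}_{m-1}(A)$ rather than $\HD_{m-1}(A)$. So ``using $k$ in place of $0$'' changes the statement.

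There are two further problems with $B$:
\begin{itemize}
\item The formula $d(sv)=v-s(dv)$ does not define a differential. Since $dv\in T_kV$, you must extend $s$ as a degree-one derivation. Then $ds+sd$ is a degree-zero derivation that is the identity on generators, so it multiplies words of length $\ell$ by $\ell$. Hence $d^2(sv)=\sum_\ell(1-\ell)(dv)_\ell$, which is nonzero unless $dv\in V$.
\item Filtering by the number of $s$-letters does not produce Hochschild chains. The part of $T_k(V\oplus sV)_\n$ linear in $s$-letters is $sV\otimes T_kV$ modulo rotation (noncommutative one-forms), not $A^{\otimes(n+1)}$.
\end{itemize}

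The missing idea is the resolution the paper itself uses in the proof of Lemma~\ref{3.3}: $A\into A\langle x\rangle:=A\ast k\langle x\rangle$ with $|x|=1$, $dx=1$ and $\sigma(x)=x$. The choice $\sigma(x)=-x$ is impossible, since $d\sigma(x)=\sigma(1)=1$. This extension has zero homology because $1=dx$, and it is a cofibration in $\iDGA_A$ for every $A$. So no cofibrant replacement of $A$ is needed; your appeal to Theorem~\ref{dihedral} for that reduction would not suffice anyway, since that theorem fixes the base.

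Commutators and $\id-\sigma$ preserve the number of $x$'s, so $A_\n$ is the $x$-free direct summand and a subcomplex. Hence $\cn(A_\n\to A\langle x\rangle_\n)\simeq A\langle x\rangle_\n/A_\n$. The map $a_0\otimes\cdots\otimes a_n\mapsto x a_0\,x a_1\cdots x a_n$ identifies $A\langle x\rangle_\n/A_\n$ on the nose with ${\rm CD}_\bullet(A)$ shifted up by one:
\begin{itemize}
\item graded commutators give $\mathrm{Im}(1-t)$, since moving the odd letter $xa_n$ past the rest gives $(-1)^n$;
\item $dx=1$ gives $\pm b$;
\item $\mathrm{Im}(\id-\sigma)$ gives $\mathrm{Im}(1-y)$. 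Reversing the $n+1$ odd letters $x$ produces exactly $(-1)^{n(n+1)/2}$, and the two rotations needed to reach $x\sigma(a_0)\,x\sigma(a_n)\cdots x\sigma(a_1)$ contribute $(-1)^n(-1)^n=1$.
\end{itemize}
Thus $\H_m(\L\FT(A\bs 0))\cong\H_{m-1}({\rm CD}_\bullet(A))=\HD_{m-1}(A)$, with no sign twist, no equivariant Goodwillie argument and no passage to coinvariants. This is the Feigin--Tsygan argument that the paper imports from \cite[Thm.~3.2]{Go}.
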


\subsection{The fundamental exact sequence for relative dihedral homology }

In order to study the properties of the dihedral functor, it is convenient to extend it to the category
$\Mor(\iDGA_k)$ of morphisms of involutive DG-algebras. Define
$$\widetilde{\FT}\,:\,\Mor(\iDGA_k) \rar \Com_k \,,\,\,\,\,(S \stackrel{f}{\rar} A) \mapsto \cn(S_{\natural} \stackrel{f_{\natural}}\rar A_{\natural})\,\text{.}$$
The arguments in Section 3.2 of~\cite{BKR} go through with the obvious modifications to give us the following results.

\begin{theorem} \la{mordih}

The functor $\widetilde{\FT}$ has a total left derived functor
$$\L\widetilde{\FT}\,:\,\Ho(\Mor(\iDGA_k)) \rar \mathcal D(k)\,,\,\,\,\, (S \bs A) \mapsto \FT(Q(QS \bs A)) \,\text{.}$$
 Here, $QS$ is a cofibrant resolution of $S$ in $\iDGA_k$ and $Q(QS \bs A)$ is a cofibrant resolution of $A$ in
$\iDGA_{QS}$.  Unpacking notation, in ~\cite{BKR} this was equivalently written 
\begin{equation}
\la{isff}
\L\eFT(S \bs A) = \cn[(QS)_\n \to Q(QS\bs A)_\n]\ ,
\end{equation}
\end{theorem}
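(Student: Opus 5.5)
The plan is to mimic the proof of Theorem~\ref{dihedral}, now in the category $\Mor(\iDGA_k)$ with a suitable model structure. I would use the projective (or Reedy) model structure on arrows $S \to A$. Weak equivalences and fibrations are defined objectwise. The cofibrant objects are then exactly the arrows $S \to A$ with $S$ cofibrant in $\iDGA_k$ and $S \to A$ a cofibration. Hence a cofibrant replacement of $(S \xrightarrow{f} A)$ is $QS \into Q(QS\bs A)$. Here $QS \sonto S$ is a cofibrant resolution in $\iDGA_k$, and $Q(QS\bs A)$ is a cofibrant resolution of the composite $QS \to S \to A$ in $\iDGA_{QS}$. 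This is precisely the formula \eqref{isff}. Properness of $\iDGA_k$ (Proposition~\ref{dgproper}) and the lifting properties inherited from $\iDGA_k$ give this model structure, as in \cite[Sec. 3.2]{BKR}.

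Next, by Brown's Lemma (\cite[Proposition A.2]{BKR}), it suffices to show that $\eFT$ sends acyclic cofibrations between cofibrant objects of $\Mor(\iDGA_k)$ to quasi-isomorphisms. Let $(u,v):(S\to A) \sinto (S'\to A')$ be such a map. Then $u: S \sinto S'$ is an acyclic cofibration between cofibrant objects of $\iDGA_k$. The map $A \amalg_S S' \to A'$ is also an acyclic cofibration, and so is $A \to A'$. The cone construction gives a map of short exact sequences
\[ 0 \to A_\n \to \cn(S_\n \to A_\n) \to S_\n[1] \to 0 \]
to the corresponding sequence for $S' \to A'$. By the five lemma it therefore suffices to show that $(-)_\n$ sends acyclic cofibrations between cofibrant objects of $\iDGA_k$ to quasi-isomorphisms. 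This is the absolute case ($S=k$) of Theorem~\ref{dihedral}. There the same argument applies: a retraction $p$ with $ip$ homotopic to the identity, realized by a path object $B \to B\otimes\Omega$ via Proposition~\ref{lhom}, together with the isomorphism $(B\otimes \Omega)_\n\cong B_\n\otimes\Omega$ used in Lemma~\ref{goodhom}. Note that $k_\n = k$, so the cone over $k$ causes no harm.

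Finally, the derived functor is computed on the cofibrant replacement. It is $\eFT(QS \to Q(QS\bs A)) = \cn[(QS)_\n \to Q(QS\bs A)_\n] = \FT(Q(QS\bs A))$, where $\FT$ is taken relative to $QS$. This gives the displayed formula. The main obstacle I expect is the first step: checking carefully that the arrow category of $\iDGA_k$ has a model structure whose cofibrant objects and replacements are as described, compatibly with the involution. I would deduce it from the general fact that a diagram category indexed by $\{0\to 1\}$ (a Reedy category) in a model category carries the Reedy structure. I would also check that cofibrant resolutions in $\iDGA_{QS}$ give Reedy cofibrant replacements.
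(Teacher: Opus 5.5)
Your proposal is correct and follows essentially the paper's argument. You apply Brown's Lemma in the Reedy (= projective) structure on $\Mor(\iDGA_k)$ and check that both components of an acyclic cofibration between cofibrant arrows are acyclic cofibrations between cofibrant objects. The paper justifies this for the second component, which you only assert: it pushes out $u$ along $S \into A$ and uses two-out-of-three. You then conclude with the standard retraction/path-object argument for $(-)_\n$ and a comparison of cones, as the paper does. Properness of $\iDGA_k$ is not needed for the existence of this model structure, which exists on arrows in any model category.
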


\begin{proof}
We verify the assumptions of Brown's Lemma for $ \eFT $. Let $ A \to B $ be a
cofibrant object in $ \Mor(\iDGA_k)$, which means that $ A \to B $ is a cofibration
in $ \iDGA_k $, with $A$ being a cofibrant object.
Let
\begin{equation*}
\begin{diagram}[small, tight]
A                 & \rTo^{\alpha}    &  C         \\
\dTo          &                      & \dTo   \\
B                 & \rTo^{\beta}     &  D
\end{diagram}
\end{equation*}
be an acyclic cofibration in $ \Mor(\iDGA_k) $, which means that
$ \alpha $ is an acyclic cofibration, $\, \beta $ is a weak equivalence, and
the induced map $\, B \coprod_A C \to D \,$ is a cofibration in $ \iDGA_k $.
Since $ A \into B $ is a cofibration, the pushout $ B \to B \coprod_A C $ of $ \alpha $
along $ A \to B $ is an acyclic cofibration. Since
$\,\beta:\, B \to B \coprod_A C \to D $ is acyclic, the cofibration
$\, B \coprod_A C \to D \,$ is also acyclic. It follows that
$ \beta $ itself is an acyclic cofibration. The standard argument
(see the proof of Theorem~\ref{mordih} above) implies now that $ \alpha_\n $ and
$ \beta_\n $ are quasi-isomorphisms. Hence, the pair
$(\alpha_\n,\,\beta_\n) $ induces a quasi-isomorphism
$\,\eFT(A \bs B) \xrightarrow{\sim} \eFT(C \bs D) \,$.
By Brown's Lemma, $\, \L\eFT $ exists and is given by formula
\eqref{isff}, which is independent of the choice of resolutions.
\end{proof}

\begin{lemma} \la{3.2}\la{LtP}
Given involutive DG-algebra maps $R \stackrel{f}{\rar} S \stackrel{g}{\rar} A$, there is a distinguished triangle
in $\mathcal D(k)$ of the form:
$$ \L\widetilde{\FT}(S \bs R) \rar \L\widetilde{\FT}(R \bs A) \rar \L\widetilde{\FT}(S \bs A) \rar \L\widetilde{\FT}(S \bs R)[1]\,\text{.}$$
 \end{lemma}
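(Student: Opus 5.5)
Notation: write $\L\eFT(S\bs A)$ for the derived functor of Theorem~\ref{mordih} evaluated on the morphism $S\to A$. The plan follows the argument in \cite[Sec. 3.2]{BKR}: replace the composable pair $R \xrightarrow{f} S \xrightarrow{g} A$ by a pair of cofibrations between cofibrant objects, and show that on such a model the desired triangle is induced by a short exact sequence of cones.

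\emph{Step 1 (resolutions).} Choose a cofibrant resolution $QR \sonto R$ in $\iDGA_k$. Using the factorization axiom (as in the proof of Theorem~\ref{hopushout}, via \cite[Prop. 3.1]{FHT}), factor $QR \to S$ as a cofibration $QR \into QS$ followed by an acyclic fibration $QS \sonto S$. Then factor $QS \to A$ as a cofibration $QS \into QA$ followed by an acyclic fibration $QA \sonto A$. Now $QS$ is a cofibrant resolution of $S$ in $\iDGA_{QR}$, and $QA$ is a cofibrant resolution of $A$ both in $\iDGA_{QS}$ and in $\iDGA_{QR}$, since a composite of cofibrations is a cofibration. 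By formula \eqref{isff} and the independence of the choice of resolutions in Theorem~\ref{mordih}, we get
\[
\L\eFT(R\bs S)\simeq \cn(QR_\n\to QS_\n),\quad \L\eFT(R\bs A)\simeq \cn(QR_\n\to QA_\n),\quad \L\eFT(S\bs A)\simeq \cn(QS_\n\to QA_\n).
\]
(The first term of the displayed triangle in the statement should be read as $\L\eFT(R\bs S)$.)

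\emph{Step 2 (triangle of cones).} For any composable maps of complexes $X \xrightarrow{u} Y \xrightarrow{v} Z$ there is a standard octahedral-axiom distinguished triangle
\[
\cn(u)\to\cn(vu)\to\cn(v)\to\cn(u)[1].
\]
The first map is $\id_X\oplus v$ and the second is $u\oplus\id_Z$. One can check this either directly, by exhibiting a homotopy equivalence between $\cn(v)$ and the mapping cone of $\cn(u)\to\cn(vu)$, or by noting that the sequence is termwise split exact up to a contractible summand. Apply this with $X=QR_\n$, $Y=QS_\n$, $Z=QA_\n$ and the maps induced by $(\mbox{--})_\n$. This gives the triangle in $\mathcal D(k)$.

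\emph{Step 3 (well-definedness).} The main point requiring care is that the identifications in Step 1 are canonical in $\mathcal D(k)$. This comes from the fact that Theorem~\ref{mordih} provides total derived functors on $\Ho(\Mor(\iDGA_k))$. Any two choices of the resolution tower are connected by weak equivalences of towers, and these induce quasi-isomorphisms compatible with the maps of cones in Step 2; this uses again that $(\mbox{--})_\n$ sends acyclic cofibrations between cofibrant objects to quasi-isomorphisms, as in Theorem~\ref{dihedral}. I expect this bookkeeping to be the only real obstacle. The key input is that $QA$, built over $QS$, is simultaneously a valid cofibrant resolution over $QR$, which holds because cofibrations compose. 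The involution plays no role beyond the fact that $(\mbox{--})_\n$ is a functor on $\iDGA_k$.
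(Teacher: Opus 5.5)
Your proposal is correct and follows essentially the same route as the paper: you resolve $R\to S\to A$ by cofibrations $QR\into QS\into QA$ between cofibrant objects, identify the three values of $\L\eFT$ with cones of the induced maps on $(\mbox{--})_\n$ via \eqref{isff}, and apply the octahedral axiom to $(QR)_\n\to(QS)_\n\to(QA)_\n$; your reading of the first vertex as $\L\eFT(R\bs S)$ matches the paper's $\cn(\tilde g_\n)$. The well-definedness bookkeeping in your Step~3 goes beyond what the statement needs, since it only asserts the existence of a triangle with these vertices up to isomorphism in $\mathcal D(k)$, and the paper omits it.
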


\begin{proof}
Fix a cofibrant resolution $\, p_R: QR \sonto R \,$ in $ \iDGA_k $. Then choose cofibrant resolutions $\, p_S: QS \sonto S \,$ of $ g \circ p_R $ in $ \iDGA_{QR} $ and
$p_A: QA \sonto A\,$ of $ f\circ p_S $ in $ \iDGA_{QS}$. Note that both $QS$ and $QA$ are cofibrant
in $ \iDGA_k $, and there are liftings $ \tilde{g}: QR \into QS $ and  $ \tilde{f}: QS \into QA $ which are cofibrations.
Now, by Octahedron Axiom, associated to $ (QR)_\n \xrightarrow{\tilde{g}} (QS)_\n \xrightarrow{\tilde{f}} (QA)_\n $
is a distinguished triangle in $ \D(k)$ of the form
$$
\la{isct}
\cn(\tilde{g}_\n) \to \cn(\tilde{f}_\n \circ \tilde{g}_\n) \to \cn(\tilde{f}_\n) \to \cn(\tilde{g}_\n)[1] \ .
$$
By \eqref{isff}, $\,\L\eFT(g) = \cn(\tilde{g}_\n)$, and similarly for $f$ and $ f \, g $. The above triangle is thus isomorphic to \eqref{isct}.
\end{proof}

\begin{lemma} \la{3.3}
 For any $S \bs A$ in $\iDGA_k$, $\L\widetilde{\FT}(S \bs A) \,\cong\, \L\FT(S \bs A)$.
\end{lemma}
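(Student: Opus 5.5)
We compare the two functors on a cofibrant model, following the proof of the analogous statement in \cite[Section 3.2]{BKR}. By Theorem~\ref{mordih}, $\L\eFT(S\bs A)=\cn[(QS)_\n\to Q(QS\bs A)_\n]$. Here $QS\sonto S$ is a cofibrant resolution in $\iDGA_k$, and $Q(QS\bs A)$ is a cofibrant resolution of $A$ in $\iDGA_{QS}$. By Theorem~\ref{dihedral}, $\L\FT(S\bs A)=\cn[S_\n\to Q(S\bs A)_\n]$, with $Q(S\bs A)$ a cofibrant resolution of $A$ in $\iDGA_S$.

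The plan is to build a comparison map and then show it is a quasi-isomorphism.

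\emph{Construction of the comparison.} Take the pushout $Q(S\bs A):=S\ast_{QS}Q(QS\bs A)$ along $p_S\colon QS\sonto S$. The map $QS\into Q(QS\bs A)$ is a cofibration, so by properness of $\iDGA_k$ (Proposition~\ref{dgproper}) the map $Q(QS\bs A)\to Q(S\bs A)$ is a quasi-isomorphism. Hence $Q(S\bs A)$ is a cofibrant resolution of $A$ in $\iDGA_S$; this is the same construction as in the proof of Theorem~\ref{hopushout}. The map $(p_S)_\n$ and the pushout map then induce a morphism of cones
\[
\cn[(QS)_\n\to Q(QS\bs A)_\n]\;\longrightarrow\;\cn[S_\n\to Q(S\bs A)_\n].
\]

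\emph{Reduction to the identity $\L\eFT(S\bs 0)\simeq 0$.} Apply Lemma~\ref{LtP} to $QS\to S\to Q(S\bs A)$, and also to the morphism of triples from $QS\to QS\to Q(QS\bs A)$. Since $\L\eFT(QS\bs QS)=0$, the comparison map is a quasi-isomorphism exactly when the relative term is killed. That is, it suffices to show that $(-)_\n$ applied to the cocartesian square gives a homotopy cocartesian square of complexes, or equivalently that
\[
\cn[(QS)_\n\to Q(QS\bs A)_\n]\;\to\;\cn[S_\n\to (S\ast_{QS}Q(QS\bs A))_\n]
\]
is a quasi-isomorphism.

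\emph{Main step: a cell-by-cell check.} This is the step I expect to be the main obstacle. Write $Q(QS\bs A)=QS\ast T(V)$ as a semifree extension with involution-stable generators. I would check directly that
\[
(QS\ast T(V))_\n\cong (QS)_\n\oplus\bigl(\text{a functor of }QS\text{ and }V\bigr),
\]
and that $-\ast_{QS}S$ converts the second summand into its analog over $S$. Concretely, the second summand is the quotient of $\bigoplus_{n\ge1}(QS\otimes V)^{\otimes n}$ by cyclic rotations and by the involution $y$. This quotient is built from the Hochschild/dihedral-type complex of the $QS$-bimodule tensors.

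Two facts are then needed. First, for $QS$ cofibrant, tensoring these terms over $QS$ with the quasi-isomorphism $QS\to S$ preserves quasi-isomorphism, because the terms are free as bimodules. Second, taking $\Z_2\ltimes C_n$-coinvariants preserves quasi-isomorphism, because $k$ has characteristic $0$. Together these give the claim.

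The delicate point is the second summand when the generators of $V$ are not cofibrant over $QS$ in a naive sense. I would handle it by filtering by the number of $V$-letters and using a spectral sequence.

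If that becomes messy, I would instead follow the earlier reduction of \cite{BKR} via Proposition~\ref{HDAto0} and the long exact sequence, identifying both sides with relative dihedral homology. In this route the $\Z_2$ averaging is again available because $k$ has characteristic $0$.
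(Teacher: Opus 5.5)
Your main step, which you flagged yourself, has a genuine gap. The splitting $(QS\ast T(V))_\n\cong (QS)_\n\oplus(\cdots)$ holds only as graded spaces. In a semifree extension the differential of a generator $v$ is an arbitrary element of $QS\ast T(V_{<v})$, so the complement of $(QS)_\n$ is not a subcomplex. Your two facts (K\"unneth for tensor powers, exactness of dihedral coinvariants in characteristic $0$) can therefore only be applied on an associated graded.

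The filtration you propose for this, by the number of $V$-letters, is not compatible with $d$. A generator's differential may contain words with several $V$-letters (e.g.\ $dv_2=v_1v_1v_1$), which raises the count, while components of $dv$ lying in $QS$ lower it. So neither the increasing nor the decreasing version consists of subcomplexes, and there is no spectral sequence with the $E^1$-term you want.

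The preceding ``reduction'' does not help either.
\begin{itemize}
\item As written, its heading is false: $\L\eFT(S\bs 0)\simeq QS\langle x\rangle_\n/(QS)_\n$ with $dx=1$ computes the dihedral homology of $S$ up to shift (cf.\ Proposition~\ref{HDAto0}).
\item Lemma~\ref{LtP} applied to $QS\to S\to Q(S\bs A)$ only yields $\L\eFT(QS\bs A)\simeq\L\eFT(S\bs A)$, which says nothing about $\L\FT$.
\item Your ``equivalent'' homotopy-cocartesian reformulation is just the original claim.
\end{itemize}
Finally, your fallback via ``the long exact sequence'' is circular if it means Theorem~\ref{t3.3}, since the paper deduces that theorem from this very lemma.

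Your route can be repaired by filtering by cell layers. Write $V=\bigcup_p V_p$ with $d(V_p)\subset B_{p-1}:=QS\ast T(V_{p-1})$, and attach one layer at a time. Filter $(QS\ast T(V_p))_\n/(B_{p-1})_\n$ by the number of letters from a complement $W_p$ of $V_{p-1}$ in $V_p$ only; $d$ cannot raise this count. The associated graded pieces are dihedral coinvariants of $(W_p\otimes B_{p-1})^{\otimes m}$. Compare them with their analogues over $S$ via the quasi-isomorphism $B_{p-1}\to S\ast_{QS}B_{p-1}$ (Proposition~\ref{dgproper}). Induction on $p$, the five lemma on the short exact sequences of cones, and a colimit then finish the argument.

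The paper avoids general semifree extensions altogether. It checks the natural transformation $\L\eFT\to\L\FT$ on $S\bs 0$, using the one-generator resolution $S\langle x\rangle$ with $dx=1$ and $\sigma(x)=x$. There the filtration by the number of $x$'s is valid, because $dx$ contains no $x$, so any quasi-isomorphism $R\to S$ induces $\L\FT(R\bs 0)\simeq\L\FT(S\bs 0)$. It then builds the $\L\FT$-triangle for $S\to A\to 0$ directly, as in the proof of Lemma~\ref{LtP}, using $Q(S\bs A)\langle x\rangle$. Comparing this with the $\L\eFT$-triangle of Lemma~\ref{LtP}, it concludes for $S\bs A$ by the five lemma.
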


\begin{proof}
The inclusion functor $i\, : \, \iDGA_S \to \Mor(\iDGA_k)$ maps weak equivalences to weak equivalences and hence descends to the homotopy categories $\overline{i}\, :\, \Ho(\iDGA_S)\to \Ho(\Mor(\iDGA_k))$. The universal property of derived functors then yields a natural transformation 
$$\L \widetilde{\FT}\circ \overline{i}\to \L \FT$$
Our claim is that this is an isomorphism of functors.

Let $ p: Q \sonto S $ be a cofibrant resolution of $S$ in $\iDGA_k$. Then $\L\widetilde{\FT}(S\bs 0) \cong Q\langle x \rangle_\n/Q_\n\,$
and $\,\L\FT(S\bs 0) \cong S\langle x \rangle_\n/S_\n\,$, where $ dx =1 $, and
$ p $ induces an isomorphism $ \L\eFT(S\bs 0) \stackrel{\sim}{\to} \L\FT(S\bs 0)$. More generally,
if $ p: R \to S $ is any quasi-isomorphism in $ \iDGA_k $, then $ p $ induces
$ \L\FT(R\bs 0) \stackrel{\sim}{\to} \L\FT(S\bs 0)$.
Hence, for $ S \to A \to 0 $, the same argument as in the proof of Lemma~\ref{LtP} shows that there is an exact triangle

$$ \L\FT(S \bs R) \rar \L\FT(R \bs A) \rar \L\FT(S \bs A) \rar \L\FT(S \bs R)[1]\,\text{.}$$

\end{proof}

By Lemma~\ref{3.2} and Lemma~\ref{3.3}, we obtain (via a different proof) the following result proven in~\cite{Go}.

\begin{theorem} \la{t3.3}
 Given involutive DG-algebra maps $R \stackrel{f}{\rar} S \stackrel{g}{\rar} A$, there is a long exact sequence
$$ .. \rar \mathrm{HD}_n(S \bs R) \rar \mathrm{HD}_n(R\bs A) \rar \mathrm{HD}_n(S \bs A) \rar \mathrm{HD}_{m-1}(S \bs R) \rar ...$$
of relative dihedral homologies.
\end{theorem}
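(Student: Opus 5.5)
The long exact sequence will come from Lemma~\ref{3.2} and Lemma~\ref{3.3} together with the long exact homology sequence of a distinguished triangle in $\mathcal D(k)$.

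First, fix the composable maps $R \stackrel{f}{\rar} S \stackrel{g}{\rar} A$ in $\iDGA_k$. Lemma~\ref{3.2} gives a distinguished triangle
$$ \L\widetilde{\FT}(S \bs R) \rar \L\widetilde{\FT}(R \bs A) \rar \L\widetilde{\FT}(S \bs A) \rar \L\widetilde{\FT}(S \bs R)[1] $$
in $\mathcal D(k)$. It is built by choosing cofibrant resolutions $QR \into QS \into QA$ with cofibrations $\tilde g,\tilde f$. By \eqref{isff} its terms are the cones $\cn(\tilde g_\n)$, $\cn(\tilde f_\n\circ\tilde g_\n)$ and $\cn(\tilde f_\n)$, and the Octahedron Axiom applied to $(QR)_\n \to (QS)_\n \to (QA)_\n$ makes it distinguished.

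Second, replace each term by $\L\FT$ of the corresponding relative object. Lemma~\ref{3.3} identifies $\L\widetilde{\FT}(X\bs Y)$ with $\L\FT(X\bs Y)$ through the natural transformation $\L\widetilde{\FT}\circ\overline{i}\to\L\FT$. Because this identification is natural, the transported triangle is still distinguished; isomorphic triangles are distinguished. This is the step I expect to need the most care. The isomorphisms of Lemma~\ref{3.3} must be compatible with the connecting morphism $\cn(\tilde f_\n)\to\cn(\tilde g_\n)[1]$, and not merely with the objects. I would check this directly on the explicit cone models. For cofibrant $QS$ over $QR$, the resolution used for $\L\FT(S\bs A)$ in $\iDGA_S$ may be taken to be the pushout of $QA$ along $QS\sonto S$, and on these models all the maps in the triangle are induced by maps of cones.

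Third, apply homology to the triangle to get a long exact sequence of the groups $\H_m(\L\FT(-))$. Then reindex using the definition $\mathrm{HD}_{m-1}(X\bs Y)=\H_m(\L\FT(X\bs Y))$. The shift is uniform across all three terms, so the connecting map $\H_m(\L\FT(S\bs A))\to\H_{m-1}(\L\FT(S\bs R))$ becomes $\mathrm{HD}_{m-1}(S\bs A)\to\mathrm{HD}_{m-2}(S\bs R)$. This yields the displayed sequence, with degree-lowering connecting homomorphism and the index conventions of the statement.
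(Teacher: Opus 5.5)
Your proposal is correct and follows the paper's argument: take the distinguished triangle of Lemma~\ref{3.2}, identify its terms via $\L\widetilde{\FT}\cong\L\FT$ (Lemma~\ref{3.3}), then take homology and reindex by $\mathrm{HD}_{m-1}=\H_m(\L\FT)$, which also fixes the stray $\mathrm{HD}_{m-1}$ in the statement to $\mathrm{HD}_{n-1}$. The compatibility check you flag as delicate is not needed for an existence statement like this one: transporting the triangle along the objectwise isomorphisms of Lemma~\ref{3.3} automatically gives an isomorphic, hence distinguished, triangle, and that is all the long exact sequence requires and all the paper uses.
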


\begin{proof}
Apply homology to the distinguished triangle in Lemma~\ref{3.2}. 
\end{proof}

Applying Theorem \ref{t3.3} to the sequence of involutive DG algebra maps $k \to A \to 0$ we obtain

\begin{cor} \la{t3.3c}
For $A \in \iDGA_k$, there is a (natural) isomorphism of homologies for all $n$
\[ {\rm HD}_{n-1}(k \bs A)\,\cong\, \overline{\rm HD}_n(A)\ .\]
\end{cor}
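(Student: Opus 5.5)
We apply Theorem~\ref{t3.3} to the composable maps $k \to A \to 0$ in $\iDGA_k$, i.e.\ with $R = k$, $S = A$ and target $0$. This gives a long exact sequence
$$ \cdots \to \mathrm{HD}_{n}(k\bs A) \to \mathrm{HD}_{n}(k\bs 0) \to \mathrm{HD}_{n}(A\bs 0) \to \mathrm{HD}_{n-1}(k\bs A) \to \mathrm{HD}_{n-1}(k \bs 0) \to \cdots $$
Here we use the triangle of Lemma~\ref{3.2} together with Lemma~\ref{3.3} to replace $\L\eFT$ by $\L\FT$. Note that the triangle in Lemma~\ref{3.2} is ordered as $\L\eFT(R\bs S)\to\L\eFT(R\bs A)\to\L\eFT(S\bs A)$; the indices in the printed statement are slightly garbled, and we use this correct form.

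We then identify the terms. By Proposition~\ref{HDAto0}, $\mathrm{HD}_\bullet(A\bs 0)\cong \mathrm{HD}_\bullet(A)$ and $\mathrm{HD}_\bullet(k\bs 0)\cong \mathrm{HD}_\bullet(k)$. Moreover, the map $\mathrm{HD}_\bullet(k\bs 0)\to\mathrm{HD}_\bullet(A\bs 0)$ is the one induced by the unit $k\to A$, by naturality of the isomorphism of Proposition~\ref{HDAto0}. At chain level this is visible in the model $\L\FT(S\bs 0)\simeq S\langle x\rangle_\n/S_\n$ used in the proof of Lemma~\ref{3.3}, which is functorial in $S$.

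The key observation is that the unit map $\mathrm{HD}_\bullet(k)\to \mathrm{HD}_\bullet(A)$ is split injective. Since $A$ is unital, write $\eta:k\to A$; the algebra $A$ is augmented, or at least $\mathrm{CD}_\bullet(k)\to\mathrm{CD}_\bullet(A)$ is split. Indeed, $\mathrm{CD}_\bullet(k)$ is a direct summand of $\mathrm{CD}_\bullet(A)$, being the subcomplex spanned by tensors of scalars, and this is precisely how $\overline{\mathrm{CD}}_\bullet(A)=\mathrm{CD}_\bullet(A)/\mathrm{CD}_\bullet(k)$ is defined. Hence the long exact sequence breaks into short exact sequences
$$0\to \mathrm{HD}_n(k)\to \mathrm{HD}_n(A)\to \mathrm{HD}_{n-1}(k\bs A)\to 0.$$
Alternatively, and more cleanly, we work at the level of triangles: the cone of $\L\FT(k\bs 0)\to\L\FT(A\bs 0)$ is $\L\FT(k\bs A)[1]$, and, after identifying the first map with $\mathrm{CD}_\bullet(k)\to\mathrm{CD}_\bullet(A)$, which is injective, this cone is quasi-isomorphic to the quotient $\overline{\mathrm{CD}}_\bullet(A)$.

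The main obstacle is making this identification compatible with Proposition~\ref{HDAto0}. The isomorphism there (from \cite{Go}) must be natural and must send the map induced by $k\to A$ to the obvious inclusion of dihedral complexes. With that granted, taking homology of the quotient gives $\mathrm{HD}_{n-1}(k\bs A)\cong\overline{\mathrm{HD}}_n(A)$, up to the degree shift built into the definition $\mathrm{HD}_{m-1}:=\H_m\L\FT$, which we track carefully. Naturality in $A$ follows because every step is functorial.
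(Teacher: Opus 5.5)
Your strategy is the paper's. Its entire proof is to apply Theorem~\ref{t3.3} to $k\to A\to 0$ and identify the terms by Proposition~\ref{HDAto0}, and your corrected form of the triangle and of the long exact sequence is right.

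However, the step you call the key observation is false in the generality of the statement, which allows non-augmented $A\in\iDGA_k$: the unit map $\HD_\bullet(k)\to\HD_\bullet(A)$ need not be injective. Take the Weyl algebra $A_1=k\langle x,\partial\rangle/(\partial x-x\partial-1)$ with involution $x\mapsto x$, $\partial\mapsto-\partial$. There the class of $1$ in $\mathrm{CD}_0(A_1)$ is the boundary $-b(x\otimes\partial)$, so $\HD_0(k)=k\to\HD_0(A_1)=0$ is zero. Your short exact sequence $0\to\HD_n(k)\to\HD_n(A)\to\HD_{n-1}(k\bs A)\to 0$ therefore already fails for $n=0$. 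The subcomplex $\mathrm{CD}_\bullet(k)$ is a graded direct summand of $\mathrm{CD}_\bullet(A)$, but in general it is not a direct summand \emph{as a complex}, which is what splitting on homology would require.

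The argument you offer as an alternative is the correct one and should be the proof. Rotating the triangle gives $\L\FT(k\bs A)[1]\cong\cn\bigl(\L\FT(k\bs 0)\to\L\FT(A\bs 0)\bigr)$. Identify this map with $\mathrm{CD}_\bullet(k)[1]\to\mathrm{CD}_\bullet(A)[1]$, which is injective at chain level for $A\neq 0$ because in characteristic $0$ taking dihedral coinvariants is exact. The cone is then $\overline{\mathrm{CD}}_\bullet(A)[1]$, so $\HD_{n-1}(k\bs A)=\H_n(\L\FT(k\bs A))\cong\overline{\HD}_n(A)$, with no injectivity on homology needed.

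You rightly single out the compatibility of Proposition~\ref{HDAto0} with the map induced by $k\to A$. It is needed in any version of the argument, and the paper also uses it without comment. Over a field it is enough to have it on homology, since morphisms in $\mathcal D(k)$ are determined by their effect on homology.
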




\subsection{Trace construction}\la{TraceM}
For any involution $\sigma$ on $\M_n(k)$, the natural map
$\, k \into \M_n(k) \onto \M_n(k)_\n \,$ is an isomorphism of complexes, which we can use to
identify $\, \M_n(k)_\n \cong k \,$. This isomorphism takes the (class of) a matrix to its trace. Fix $\, S \in \iDGA_k \,$ and an involutive DG algebra map
$\,\varrho:\,S \to \M_n(k) \,$.
For an involutive $S$-algebra $ A \in \iDGA_S $, consider the relative DG representation
scheme $ \Rep_n^{\ob}(S \bs A) $, and let $\,\pi_n^{\ob}:\,A \to \M_n(k) \otimes (S \bs A)_n^{\ob} \,$
denote the universal representation of $A$ corresponding to the identity map
in the adjunction of Proposition~\ref{S2P1}$(b)$. Consider the morphism of complexes
\begin{equation}
\la{E1S4}
A \xrightarrow{\pi_n^{\ob}} \M_n(k) \otimes (S \bs A)_n^{\ob} \xrightarrow{\Tr_n \otimes \id } (S \bs A)_n^{\ob}\ .
\end{equation}
Since $ \pi_n^{\ob} $ is a map of involutive $S$-algebras, and the $S$-algebra structure on $ \M_n(k) \otimes (S \bs A)_n^{\ob} $
is of the form $ \varrho \otimes \id $, \eqref{E1S4} induces a map
$\, \Tr_n \circ \pi_n^{\ob}: \, A_\n \to (S \bs A)_n^{\ob} $, which fits in the commutative diagram
\begin{equation}
\la{D1S4}
\begin{diagram}[small]
  S_\n                  &  \rTo^{}  & A_\n  \\
\dTo^{\Tr_n \circ \varrho} &           & \dTo_{\Tr_n  \circ \pi_n^{\ob}} \\
  k                     &  \rTo^{}  & (S \bs A)_n^{\ob}
\end{diagram}
\end{equation}
This, in turn, induces a morphism of complexes
\begin{equation}
\la{E2S4}
\cn(S_\n \to A_\n) \to \overline{(S \bs A)}_n^{\ob}\ ,
\end{equation}
where we write $\, \overline{R} = R/k\cdot 1_R \,$ for a unital DG algebra $R$.
The family of morphisms \eqref{E2S4} defines a natural transformation of functors
$\,\Tr_n:\, \FT \to \overline{(\mbox{---})}_n^{\ob}\,$, which descends to a natural
transformation of the derived functors by the universal property of derived functors:
\begin{equation}
\la{E3S4}
\Tr_n :\, \L \FT \to  \overline{\L(\mbox{---})}_n^{\ob} \ .
\end{equation}

Next, observe that, for any (non-acyclic) unital DG algebra $ R $ (with or without involution),
\begin{equation}
\la{hvan}
\H_{n}(\overline{R}) \cong
\left\{
\begin{array}
{lcl}
\overline{\H_0(R)} & , & n = 0 \\*[1ex]
\H_{n}(R)           & , & n \not= 0
\end{array}
\right.
\end{equation}
This follows from the long homology sequence arising from
$ 0 \to k \to  R \to \overline{R} \to 0 $ and the fact that $\H_n(k)=0$ for $n\not =0$
Hence, if $ A \in \iAlg_S $ is an ordinary involutive algebra, applying the natural transformation
\ref{E3S4} to $ S \bs A $ and using \eqref{hvan}, we can define
\begin{equation}
\la{char}
\Tr_n(S \bs A)_m:\ \mathrm{HD}_{m-1}(S\bs A) \to \HRim(S \bs A, k^n)\ ,\ n \ge 1\ .
\end{equation}
Assembled together, these trace maps define a homomorphism of
graded commutative algebras
\begin{equation}
\la{lchar}
\bL \Tr_n(S \bs A)_\bullet :\ \bL(\mathrm{HD}(S\bs A)[1]) \to \mathrm{HR}^{\ob}_{\bullet}(S \bs A, k^n)
\ ,
\end{equation}
where $ \bL $ denotes the graded symmetric algebra of a graded $k$-vector space $W$.

\subsection{The absolute case}
\la{abscase}

We examine the trace maps \eqref{char} and \eqref{lchar} in the special case
when $S = k$. In this case, by Corollary \ref{t3.3c}, the maps \eqref{char} relate the reduced dihedral homology of $A$ to the (absolute) representation
homology:
\begin{equation}
\la{char1}
\Tr_n(A)_m:\ \overline{\mathrm{HD}}_{m}(A) \to \mathrm{HR}^{\ob}_{m}(A,k^n)\ ,\ n \ge 1\ .
\end{equation}

Now, for each $m$, there is a natural map $ \mathrm{HD}_m(A) \to \overline{\mathrm{HD}}_m(A)$. Combining this map with \eqref{char1}, we get
%
\begin{equation}
\la{char2}
\Tr_n(A)_m:\ \mathrm{HD}_{m}(A) \to \mathrm{HR}^{\ob}_{m}(A, k^n)\,,\qquad \forall \,m\geq 0 \ .
\end{equation}

Notice that whem $m=0$, the map \eqref{char2} becomes (see Proposition \ref{zerohom}) the map $\, \Tr(A)_0:\, A_\n \to (A)_n^{\ob}\,$ induced by the compopsite map $\,A \xrightarrow{\pi_n^{\ob}} \M_n(k) \otimes  (A)_n^{\ob}  \xrightarrow{\Tr \otimes \id} (A)_n^{\ob}\,$.

The linear maps \eqref{char2} define an algebra homomorphism
\begin{equation}
\la{lchar1}
\bL \Tr(A)_\bullet :\ \bL[\mathrm{HD}(A)] \to \mathrm{HR}_{\bullet}(A, k^n)\ .
\end{equation}
Since, for $n\ge 1$, \eqref{char2} factor through reduced dihedral homology, \eqref{lchar1} induces
\begin{equation}
\la{trhomr}
\overline{\Tr}(A)_\bullet :\ \Sym(\mathrm{HD}_0(A)) \otimes \bL(\overline{\mathrm{HD}}_{\ge 1}(A)) \to \mathrm{HR}^{\ob}_\bullet(A, k^n)\ ,
\end{equation}
where $ \overline{\mathrm{HD}}_{\ge 1}(A) := \bigoplus_{m \ge 1} \overline{\mathrm{HD}}_{m}(A) $.
\bprop
The image of the maps \eqref{lchar1} and \eqref{trhomr} is contained in $ \mathrm{HR}^{\ob}_\bullet(A, k^n)^{G}$, where $G$ is the subgroup of ${\rm GL}_n(k)$ from Section \ref{secinv}
\eprop
\bproof
Recall that, when $V$ is concentrated in degree $0$, we have the invariant subfunctor of $ (\mbox{---})_n^{\ob} $:
$$
((\mbox{---})_n^{\ob})^{G} :\ \DGA_k \to \cDGA_k\ ,\quad A \mapsto ((A)_n^{\ob})^{G}\ .
$$
It is clear from \eqref{E1S4} that the natural transformation $ \Tr_n$ maps $ \FT $ to this subfunctor:
\begin{equation}
\la{E8S4}
\Tr_n:\ \FT \to \overline{(\mbox{---})}_n^{\ob, G}
\end{equation}
Now, by Theorem~\ref{inv}, $\, ((\mbox{---})_n^{\ob})^G $ has a total left derived functor, which is isomorphic to the invariant subfunctor $ (\L(\mbox{---})_n^{\ob}) ^G$ of $ \L(\mbox{---})_n^{\ob} $. Hence,
\eqref{E8S4} descends to the homotopy categories, i.e. induces a natural transformation
$\, \Tr_n^{\ob}:\, \L\FT \to \overline{(\L(\mbox{---})}_n^{\ob})^G\,$. On the homology level, this gives
\begin{equation}
\la{E9S4}
\bL \Tr_n^{\ob}(A)_\bullet :\ \bL(\mathrm{HD}(A)) \to \mathrm{HR}^{\ob}_{\bullet}(A, k^n)^{G}\ ,
\end{equation}
which is the statement of proposition.
\eproof

\section{Representation homology and Lie cohomology} \la{Sec3}

In this section, we show (see Theorem \ref{invmain}) that for any augmented involutive DG algebra $A$, ${\rm DRep}^{\ob}_n(A)$ is isomorphic as a (homotopy) commutative algebra to the Chevalley-Eilenberg algebra of the DG Lie coalgebra $\gl_n^{+,\ast}(\overline{\bB}A)$ obtained by taking the $+1$-eigenspace of the skew involution induced on the current Lie coalgebra $\gl_n^\ast(\overline{\bB}A)$ by he involutions on $A$ and on $\M_n(k)$. Here, $\overline{\bB}A$ stands for the reduced bar construction of $A$. This result is the involutive analog of one of the main results of \cite{BFPRW} (see {\it loc. cit.}, Theorem 3.1).

\subsection{Skew involutive DG coalgebras and involutive DG algebras}

\subsubsection{The cobar-bar formalism}

Let $\Delta_{-1}\,:\,k[-1] \rar k[-1] \otimes k[-1]$ be the map taking $s^{-1}$ to $-s^{-1} \otimes s^{-1}$, where $s^{-1}$ denotes $1 \in k$ viewed as an element of $k[-1]$.  Recall that for $C \in \DGC_{k/k}$, the cobar construction
$\cb(C)$ is the tensor algebra $T(\overline{C}[-1])$ equipped with differential $d_1+d_2$ where $d_1$ is the derivation induced by the differential on $\overline{C}[-1]$ and $d_2|_{\overline{C}[-1]}$ is given by the composite map
\begin{equation} \la{d2}
\begin{diagram} k[-1] \otimes\overline{C} & \rTo^{\Delta_{-1} \otimes \Delta} & k[-1] \otimes k[-1] \otimes \overline{C} \otimes \overline{C} & \rTo^{{\rm Sw}}_{\cong} & (k[-1] \otimes C)^{\otimes 2} \end{diagram}
\end{equation}

For $R \in \iDGA_{k/k}$, recall that the bar construction $\bB(R)$ is the cofree coalgebra $T^c(\overline{R}[1])$ on $\overline{R}[1]$ equipped with the degree $-1$ codifferential $d_1+d_2$ where the coderivation $d_1$ is induced by the differential on $\overline{R}[1]$ and where the corestriction of $d_2$ to $\overline{R}[1]$ is given by the composite map
\begin{diagram} \overline{R}[1]^{\otimes 2} \,\cong\, \overline{R} \otimes \overline{R} \otimes k[1] \otimes k[1] & \rTo^{m \otimes \mu} & \overline{R} \otimes k[1]\,\cong\, \overline{R}[1] \,\text{.} \end{diagram}
Here, $m$ denotes the multiplication on $R$ and $\mu\,:\,k[1] \otimes k[1] \rar k[1]$ is the degree $-1$ map taking $1 \otimes 1$ to $1$.

Further recall that the hom-complex $\HOM(\overline{C}, \overline{R})$ has the structure of a DG algebra, with product given by
$$ f \ast g \,:=\, m_R \circ (f \otimes g) \circ \Delta_C \,\text{.}$$
Let $\Tw(\overline{C},\overline{R})$ denote the set of Maurer-Cartan elements in $\HOM(\overline{C}, \overline{R})$, i.e, the set of degree $-1$ elements in $\HOM(\overline{C}, \overline{R})$ satisfying the Maurer-Cartan equation
$$ df+\frac{1}{2}[f,f] \,=\, df+ f \ast f \,=\,0 \,\text{.}$$
It can be shown without difficulty, a good reference is ~\cite{LV}, that one has natural bijections of sets
\begin{equation} \la{twbij} \Hom_{\DGA_{k/k}}(\cb(C), R)\,\cong \,\Tw(\overline{C},\overline{R})\,\cong\, \Hom_{\DGC_{k/k}}(C, \bB(R)) \,\text{.}\end{equation}
Thus, $\cb\,:\,  \DGC_{k/k} \rightleftarrows \DGA_{k/k}\,:\,\bB$ are adjoint functors.

The category $\DGA_{k/k}$ has a natural model structure. The weak equivalences in this model structure are the quasi-isomorphisms and the fibrations are the degree-wise surjections. The cofibrations are characterised as morphisms satisfying the left lifting property with respect to acyclic fibrations. More explicitly, every cofibrant object in $\DGA_{k/k}$ is isomorphic to a retract of $\cb(C)$ for some $C \in \DGC_{k/k}$ (see~\cite[Theorem 4.3]{Ke}). The category $\DGC_{k/k}$ has a dual model structure, where the weak equivalences are the morphisms $f$ such that $\cb(f)$ is a quasi-isomorphism. The following theorem is well known (see, for example,~\cite{LV}).

\begin{theorem}
{\cite[Theorem 4.3]{Ke}}
The pair of adjoint functors
\begin{equation} \la{cbb} \cb\,:\,  \DGC_{k/k} \rightleftarrows \DGA_{k/k}\,:\,\bB \end{equation}
is a Quillen pair. In particular, the functors~\eqref{cbb} induce mutually inverse equivalences between homotopy categories
$$ \L \cb\,:\,\Ho(\DGC_{k/k}) \,\cong\, \Ho(\DGA_{k/k})\,:\, \R \bB \,\text{.}$$
\end{theorem}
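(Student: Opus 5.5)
The plan is to prove the statement in three stages: the adjunction, the Quillen property, and the equivalence of homotopy categories. The adjunction is the chain of natural bijections \eqref{twbij}. Given $f\in\Tw(\overline{C},\overline{R})$, restricting an algebra map $\cb(C)=T(\overline{C}[-1])\to R$ to the generators $\overline{C}[-1]$ gives a degree $-1$ map $\overline{C}\to\overline{R}$. Compatibility with $d_1+d_2$ is then exactly the Maurer--Cartan equation $df+f\ast f=0$. Dually, a coalgebra map $C\to T^c(\overline{R}[1])$ is determined by its corestriction to $\overline{R}[1]$, and compatibility with the codifferential is again the same equation. Naturality in $C$ and $R$ is formal.

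For the Quillen property I would use the model structure on $\DGC_{k/k}$ described above, in which weak equivalences are the maps $f$ with $\cb(f)$ a quasi-isomorphism. I take cofibrations to be the degreewise injections and fibrations to be determined by the right lifting property. With these choices, $\cb$ preserves weak equivalences by definition. I would check that $\cb$ sends injections to cofibrations by writing $\cb(C)\to\cb(C')$ as a transfinite composite of cell attachments along the conilpotent (coradical) filtration of $C'$ relative to $C$. At each stage the differential of a new generator lands in the previously built subalgebra, because $d_2$ lowers coradical filtration. This shows $\cb$ is left Quillen, so $\bB$ is right Quillen.

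For the Quillen equivalence I would show two things. First, the counit $\cb\bB R\to R$ is a quasi-isomorphism for every $R$; this is the classical bar--cobar resolution, proved by filtering by tensor length and observing that the associated graded is the acyclic Koszul-type complex of the tensor (co)algebra. Second, for $C$ cofibrant (every object is) and $R$ fibrant (every object is), a map $\cb C\to R$ is a weak equivalence if and only if its adjoint $C\to\bB R$ is. Writing the adjoint as $C\to\bB R$, the map $\cb$ of it factors the original map through the counit, so the criterion reduces to the 2-out-of-3 property together with the counit result.

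The main obstacle is the model structure on $\DGC_{k/k}$ itself, namely the existence of factorizations and of the lifting axioms. I would not reprove these. Since the theorem is quoted from \cite[Theorem 4.3]{Ke}, with \cite{LV} as a further reference, I would cite Keller's transfer argument (as in Lefèvre-Hasegawa) for them, and verify only the counit quasi-isomorphism in detail.
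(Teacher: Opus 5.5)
Your outline is correct and is essentially the standard proof from the source the paper cites (Lef\`evre-Hasegawa's thesis, via Keller). The paper itself gives no argument beyond that citation, and your reduction of the Quillen-equivalence criterion, via 2-out-of-3, to the counit $\cb\bB R\to R$ being a quasi-isomorphism is exactly right; for that counit step, filter by the total number of $\overline{R}$-factors, which is exhaustive and bounded below, so no convergence issue arises.

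One step needs tightening: the internal differential $d_1$ preserves but does not lower the coradical filtration, so the differential of a new generator does not land in the previously built subalgebra as you claim. The fix is routine over a field. Within each layer, first attach lifts of a basis of the $d_1$-cycles, then attach a complement. After that, your cell-attachment argument for $\cb$ preserving cofibrations goes through.
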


\subsubsection{Skew involutive coalgebras and involutive algebras}

Modifying a dualized version of the definition of an involution on a DG algebra, we define a {\it skew involution} $\sigma:C \rar C$ on a (non-counital) DG coalgebra $C \in \DGC$ to be a morphism of complexes such that $\sigma^2=\id_C$ and
$$\Delta \circ \sigma \,=\, -\tau \circ (\sigma \otimes \sigma) \circ \Delta \,,$$
where $\tau: C \otimes C \rar C \otimes C$ denotes the swap map. Clearly, when $C=k$, the only possible skew involution on $C$ is the linear map such that $\sigma(1)=-1$. We remark that if $\sigma$ is a skew involution on $C$, then $-\sigma$ is an involution on $C$. 

\begin{definition} \la{skc}
A counital (resp., coaugmented) DG coalgebra $C \in \DGC_k$ (resp., $C \in \DGC_{k/k}$) is {\it skew involutive} if $C$ is equipped with a skew involution such that the counit $C \to k$ is a morphism of skew involutive DG coalgebras (resp., the counit $C \to k$ and the coaugmentation $k \to C$ are morphisms of skew involutive DG coalgebras).
\end{definition}

Clearly, if $C \in \DGC_{k/k}$ is skew involutive with skew involution $\sigma$ then $\sigma(1)=-1$. The proof of the following lemma is an easy exercise that we leave to the reader.

\begin{lemma} \la{basskc}
$(i)$ The skew involution on a skew involutive coaugmented DG coalgebra $C \in \DGC_{k/k}$ restricts to a skew involution on $\overline{C}\in \DGC$.\\

$(ii)$ A skew involution on (the non-counital) DG coalgebra $\overline{C}\in \DGC$ extends by linearity to a skew involutive structure on $C \in \DGC_{k/k}$ such that $\sigma(1)=-1$.

\end{lemma}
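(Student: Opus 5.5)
Both parts come down to checking identities on the coaugmentation line $k\cdot 1$ and on $\overline{C}$, using the splitting $C = k\cdot 1 \oplus \overline{C}$ that $C \in \DGC_{k/k}$ provides. Throughout we use the reduced comultiplication $\overline{\Delta}(c) = \Delta(c) - 1\otimes c - c\otimes 1$, which is the coalgebra structure on $\overline{C}$.

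For $(i)$, we first show that $\sigma$ preserves $\overline{C} = \ker(\varepsilon)$. Since $\varepsilon$ is a morphism of skew involutive coalgebras, $\varepsilon\circ\sigma = \sigma_k\circ\varepsilon = -\varepsilon$. Hence $\varepsilon(c)=0$ implies $\varepsilon(\sigma c)=0$. Similarly, the coaugmentation $\eta$ being a morphism gives $\sigma(1) = -1$, so $\sigma$ preserves the splitting.

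Next we verify the skew identity for $\overline{\Delta}$. For $c\in\overline{C}$, apply $\Delta\circ\sigma = -\tau(\sigma\otimes\sigma)\Delta$ to $c$ and subtract the primitive parts. We have
\[
-\tau(\sigma\otimes\sigma)(1\otimes c + c\otimes 1) = -\tau(-1\otimes\sigma c - \sigma c\otimes 1) = 1\otimes\sigma c + \sigma c\otimes 1 .
\]
There is no sign from $\tau$ here, since $1$ has degree $0$. This is exactly the primitive part of $\Delta(\sigma c)$, so the remaining terms give $\overline{\Delta}\sigma = -\tau(\sigma\otimes\sigma)\overline{\Delta}$ on $\overline{C}$. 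The conditions $\sigma^2=\id$ and compatibility with the differential restrict automatically.

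For $(ii)$, we define $\sigma$ on $C = k\oplus\overline{C}$ by $\sigma(1)=-1$ and the given map on $\overline{C}$. Then $\sigma^2=\id$, $\sigma$ commutes with $d$ (as $d(1)=0$), $\varepsilon\sigma=-\varepsilon$, and $\sigma\eta=\eta\sigma_k$. It remains to check the skew comultiplicativity. On $1$, we have $\Delta\sigma(1) = -1\otimes 1$ and $-\tau(\sigma\otimes\sigma)(1\otimes 1) = -1\otimes 1$. On $\overline{C}$, the computation in $(i)$ run backwards shows that the primitive terms match and the reduced terms match by hypothesis.

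The only delicate point is the sign bookkeeping. The sign $\sigma(1)=-1$ is precisely what makes the primitive terms $1\otimes c + c\otimes 1$ compatible with the overall minus sign in the skew identity, and this is what forces the convention $\sigma(1)=-1$ in Definition \ref{skc}.
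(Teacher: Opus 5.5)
Your proposal is correct: the paper gives no proof here and leaves the lemma to the reader as an easy exercise, and your verification is the natural one. You split $C = k\cdot 1\oplus\overline{C}$, note that $\sigma$ preserves both summands because $\varepsilon\sigma=-\varepsilon$ and $\sigma(1)=-1$, and check that $-\tau(\sigma\otimes\sigma)$ carries the primitive part $1\otimes c + c\otimes 1$ to $1\otimes\sigma c+\sigma c\otimes 1$; the remaining identity for the reduced coproduct then follows, and the same computation run in reverse gives part $(ii)$.
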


\begin{prop} \la{skinvtoinv}
For $C \in \DGC_{k/k}$ skew involutive, $\cb(C)$ is an involutive $k$-augmented DG algebra. Dually, for $A \in \iDGA_{k/k}$, $\bB(A)$ is a skew involutive $k$-augmented DG coalgebra. 
\end{prop}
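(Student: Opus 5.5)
The plan is to define the involution on $\cb(C)$ explicitly on generators and extend it as an anti-automorphism, then check compatibility with the differential; the coalgebra statement for $\bB(A)$ is the formal dual. Let $\sigma$ be the skew involution on $C$. By Lemma~\ref{basskc}(i) it restricts to a skew involution on $\overline{C}$, hence induces a degree $0$ map on $\overline{C}[-1]$. Define $\sigma_{\cb}$ on the generators $s^{-1}c \in \overline{C}[-1]$ by
$$\sigma_{\cb}(s^{-1}c)\,:=\,-\,s^{-1}\sigma(c)\,,$$
the sign compensating for the skewness. Since $T(\overline{C}[-1])$ is free, there is a unique graded anti-automorphism extending this, namely $\sigma_{\cb}(x_1\cdots x_m)=\pm\,\sigma_{\cb}(x_m)\cdots\sigma_{\cb}(x_1)$ with the Koszul sign. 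Clearly $\sigma_{\cb}^2=\id$ on generators, hence everywhere. It also respects the augmentation, because $\sigma_{\cb}$ preserves the ideal generated by $\overline{C}[-1]$.

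Next I check that $\sigma_{\cb}$ commutes with $d=d_1+d_2$. Both $d\circ\sigma_{\cb}$ and $\sigma_{\cb}\circ d$ are $\sigma_{\cb}$-twisted derivations, i.e. they satisfy $D(xy)=(-1)^{|x||y|}D(y)\sigma(x)\pm\cdots$. It therefore suffices to compare them on generators. For $d_1$ this is immediate, since $\sigma$ is a chain map. For $d_2$, write $\Delta(c)=\sum c'\otimes c''$ (reduced). Then $d_2(s^{-1}c)=\pm\sum s^{-1}c'\, s^{-1}c''$, and applying the anti-automorphism gives $\pm\sum \sigma_{\cb}(s^{-1}c'')\,\sigma_{\cb}(s^{-1}c')$ with a Koszul sign from reversing. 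On the other side, $d_2(\sigma_{\cb}(s^{-1}c))=-d_2(s^{-1}\sigma c)$. The identity $\Delta\sigma=-\tau(\sigma\otimes\sigma)\Delta$ rewrites $\overline{\Delta}(\sigma c)$ as $-\sum \pm\sigma c''\otimes\sigma c'$. The main obstacle is this sign bookkeeping: the extra minus in the skewness condition, the two minus signs from $\sigma_{\cb}$ on two generators, the Koszul sign from the swap $\tau$ and from reversing the word, and the sign in $\Delta_{-1}$ and Sw in \eqref{d2}. I would check carefully that these cancel, that is, that $|s^{-1}c'||s^{-1}c''|$ versus $|c'||c''|$ differ exactly by the shift discrepancy absorbed by the skewness sign. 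If the naive choice $-s^{-1}\sigma(c)$ fails, I would try $+s^{-1}\sigma(c)$ instead.

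Dually, for $A\in\iDGA_{k/k}$ I define $\sigma_{\bB}$ on $T^c(\overline{A}[1])$ by
$$\sigma_{\bB}[a_1|\cdots|a_m]\,=\,\pm\,[\sigma a_m|\cdots|\sigma a_1]\,,$$
with overall sign $-1$ on each letter times the Koszul sign of reversal, and $\sigma_{\bB}(1)=-1$. The deconcatenation coproduct then satisfies $\Delta\sigma_{\bB}=-\tau(\sigma_{\bB}\otimes\sigma_{\bB})\Delta$ by direct inspection; the minus sign is forced by the value on $1$, consistent with Lemma~\ref{basskc}(ii). For the differential, both sides are coderivations twisted by $\sigma_{\bB}$, so it suffices to check equality after corestriction to $\overline{A}[1]$. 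For $d_1$ this uses that $\sigma$ is a chain map. For $d_2$ it uses $\sigma(ab)=(-1)^{|a||b|}\sigma(b)\sigma(a)$, which again reduces to the same sign verification as above.

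Alternatively, I could obtain the dual statement with less computation by noting that $\sigma_{\bB}$ corresponds under \eqref{twbij} to transporting twisting cochains via $f\mapsto\sigma_A\circ f\circ\sigma_C$. However, the direct check is cleaner to present.
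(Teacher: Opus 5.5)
\textbf{The sign check is the whole content of this proposition, and you defer it with the wrong sign.} Your strategy is the paper's: extend on generators to a graded anti-automorphism of $T(\overline{C}[-1])$, then reduce $d\sigma=\sigma d$ to generators because both sides are twisted (co)derivations. But you leave the sign check undone, and the sign you commit to fails. Put $\sigma_{\cb}(s^{-1}x)=\epsilon\,s^{-1}\sigma(x)$. By \eqref{d2}, $d_2(s^{-1}x)=c\,(-1)^{|x'|}s^{-1}x'\cdot s^{-1}x''$ for a fixed sign $c$, using Sweedler summation and the reduced coproduct. Since $\sigma_{\cb}d_2$ involves $\epsilon^2$, it does not depend on $\epsilon$:
\[ \sigma_{\cb}d_2(s^{-1}x)=c\,(-1)^{|x'|+(|x'|-1)(|x''|-1)}\,s^{-1}\sigma(x'')\cdot s^{-1}\sigma(x')=-c\,(-1)^{|x'||x''|+|x''|}\,s^{-1}\sigma(x'')\cdot s^{-1}\sigma(x'). \]
Skewness reads $\overline{\Delta}\sigma(x)=-(-1)^{|x'||x''|}\sigma(x'')\otimes\sigma(x')$, and it gives $d_2\sigma_{\cb}(s^{-1}x)=-\epsilon\,c\,(-1)^{|x'||x''|+|x''|}\,s^{-1}\sigma(x'')\cdot s^{-1}\sigma(x')$. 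Hence $\epsilon=+1$ is forced.

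The minus sign in the skewness condition absorbs exactly the shift discrepancy you mention. That discrepancy is the reversal sign $(-1)^{(|x'|-1)(|x''|-1)}$ versus $(-1)^{|x'||x''|}$, together with the Sweedler sign moving from $x'$ to $x''$. So nothing is left to compensate on generators. With your $\epsilon=-1$, the map commutes with $d_1$ but anticommutes with $d_2$, so it is not a chain map. Your fallback $+s^{-1}\sigma(c)$ is the paper's choice (restriction to $\overline{C}[-1]$ equal to $\sigma$), but the proposal never establishes it.

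\textbf{The bar half has the same problem, plus a false claim.} Write $\sigma_{\bB}[a_1|\cdots|a_m]=\eta_m\,\kappa\,[\sigma a_m|\cdots|\sigma a_1]$, with $\kappa$ the Koszul sign of reversal. Skewness for deconcatenation then requires $\eta_m=-\eta_i\eta_{m-i}$ for every $0\le i\le m$. The value $\sigma_{\bB}(1)=-1$ only handles the outer terms $i=0,m$. So ``direct inspection'' does not go through for an arbitrary sign pattern; for example $\eta_m=(-1)^m$ fails at $m=2$, $i=1$.

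The constraint gives $\eta_m=-\lambda^m$ with $\lambda=\pm1$. Comparing $\sigma_{\bB}d_2$ and $d_2\sigma_{\bB}$ on $[a|b]$, after corestriction to $\overline{A}[1]$, gives $\eta_1=-\eta_2$. Hence $\lambda=-1$ and $\eta_m=(-1)^{m+1}$, so the corestriction must be $+\sigma$, as in the paper. If the corestriction is $-\sigma$, skewness forces $\eta_m=-1$ for all $m$, and then compatibility with $d_2$ fails. The repair is easy, but as written the proposal asserts verifications that are false for the maps it actually defines.
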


\begin{proof}
Let $\sigma$ denote the skew involution on $C$. One can define an involution on the graded algebra $T(\overline{C}[-1])$ whose restriction to the (graded) generating space $\overline{C}[-1]$ is $\sigma$. Denote this extended involution on $T(\overline{C}[-1])$ by $\sigma$ as well. It remains to check that $\sigma \circ d=d \circ \sigma$, where $d$ is the differential on $\cb(C)$.
Since $\sigma \circ d$ and $d \circ \sigma$ are both derivations from $\cb(C)$ to $\cb(C)^{\rm op}$ with respect to the graded algebra homomorphism $\sigma$, it suffices to check that they coincide on generators, i.e, when restricted to $\overline{C}[-1]$. Recall that $d=d_1+d_2$. That $\sigma$ (restricted to $\overline{C}[-1]$ commutes with $d_1$ follows from the fact that $\sigma:C \rar C$ is a map of complexes. That $\sigma$ (restricted to $\overline{C}[-1]$ commutes with $d_2$ follows from the fact that it is a skew-involution on $C$: indeed, for $x \in \overline{C}$ we have (using Sweedler notation for coproducts)

\begin{align*}
\sigma(d_2(s^{-1}x)) &=\sigma \big({\rm Sw}(-s^{-1} \otimes s^{-1} \otimes x^{(1)} \otimes x^{(2)})\big) \text{ (by \eqref{d2})} \\
 &= -(-1)^{|x^{(1)}|}\sigma(s^{-1}x^{(1)}\otimes s^{-1}x^{(2)}) \\   
 &= (-1)^{(|x^{(1)}|-1)|x^{(2)}|}(\sigma(s^{-1}x^{(2)})\otimes \sigma(s^{-1}x^{(1)}))\\
 &= (-1)^{|x^{(1)}||x^{(2)}|}{\rm Sw}\big(s^{-1} \otimes s^{-1} \otimes \sigma(x^{(2)}) \otimes \sigma(x^{(1)})\big)\\
&= {\rm Sw}\big(-s^{-1} \otimes s^{-1} \otimes -\tau \circ (\sigma \otimes \sigma)(x^{(1)} \otimes x^{(2)})\big)\\
&= {\rm Sw}\big(-s^{-1} \otimes s^{-1} \otimes \Delta(\sigma(x))\big) \text{ (since $C$ is skew involutive)}\\
&=  d_2(s^{-1}\sigma(x))=d_2(\sigma(s^{-1}x)) \ .\\
\end{align*}

This proves the first assertion. Similarly, one verifies that $\bB(A)$ has a skew involution whose corestriction to $\bar{A}[1]$ is the involution on $A$ and which commutes with the differential on $\bB(A)$.
\end{proof}

Let $\DGC^{\ob}_{k/k}$ denote the category of skew involutive coaugmented conilpotent  DG coalgebras.  Let $C \in \iDGC_{k/k}$ and let $R \in \iDGA_{k/k}$. Then, the complex $\HOM(\overline{C}, \overline{R})$ is equipped with a $\Z_2$-action given by
$$\gamma(f)\,:=\, \sigma_R \circ f \circ \sigma_C\,,$$
where $\gamma$ is the generator of $\Z_2$. 

\begin{lemma} \la{invHom}
 The $\Z_2$ action on $\HOM(\overline{C}, \overline{R})$ is a skew-involution: $$ \gamma(f \ast g)\,=\, -(-1)^{|f||g|} \gamma(g) \ast \gamma(f)\,, $$
\end{lemma}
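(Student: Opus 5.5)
The identity is a direct computation from the definitions, using only three facts. The first is the convolution product $f\ast g = m_R\circ(f\otimes g)\circ\Delta_C$. The second is the anti-multiplicativity of $\sigma_R$, namely $\sigma_R\circ m_R = m_R\circ\tau\circ(\sigma_R\otimes\sigma_R)$, which is the defining property of an involution on $R$. The third is the skew condition $\Delta_C\circ\sigma_C = -\tau\circ(\sigma_C\otimes\sigma_C)\circ\Delta_C$. By Lemma \ref{basskc}(i) this condition holds on $\overline{C}$, so all maps restrict to $\overline{C}$ and $\overline{R}$. Here $\tau$ is the Koszul-signed swap, and tensor products of graded maps obey the Koszul rule $(f\otimes g)(x\otimes y)=(-1)^{|g||x|}f(x)\otimes g(y)$. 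Note also that $\sigma_R$ and $\sigma_C$ have degree $0$, so $|\gamma(f)|=|f|$.

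The plan is to expand
$$\gamma(f\ast g)=\sigma_R\circ m_R\circ(f\otimes g)\circ\Delta_C\circ\sigma_C$$
from both ends. On the left I rewrite $\sigma_R\circ m_R$ as $m_R\circ\tau\circ(\sigma_R\otimes\sigma_R)$. On the right I rewrite $\Delta_C\circ\sigma_C$ as $-\tau\circ(\sigma_C\otimes\sigma_C)\circ\Delta_C$. This produces
$$-\,m_R\circ\tau\circ(\sigma_R\otimes\sigma_R)\circ(f\otimes g)\circ\tau\circ(\sigma_C\otimes\sigma_C)\circ\Delta_C .$$
Since the $\sigma$'s have degree $0$, I can combine $(\sigma_R\otimes\sigma_R)\circ(f\otimes g)\circ(\sigma_C\otimes\sigma_C) = \gamma(f)\otimes\gamma(g)$ with no sign, after moving $\sigma_C\otimes\sigma_C$ past $\tau$, which is allowed because $\tau$ is natural with respect to degree-$0$ maps.

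The key step is the conjugation identity $\tau\circ(\varphi\otimes\psi)\circ\tau=(-1)^{|\varphi||\psi|}\,\psi\otimes\varphi$ for homogeneous maps $\varphi,\psi$. I would check it on an element $x\otimes y$ by tracking the Koszul signs: the two swaps contribute $(-1)^{|x||y|}$ and $(-1)^{(|x|+|\varphi|)(|y|+|\psi|)}$, and the application of the tensor maps contributes further signs. All the $x,y$-dependent signs should cancel, leaving exactly $(-1)^{|\varphi||\psi|}$. Applying this with $\varphi=\gamma(f)$ and $\psi=\gamma(g)$ gives
$$\gamma(f\ast g)=-(-1)^{|f||g|}\,m_R\circ(\gamma(g)\otimes\gamma(f))\circ\Delta_C=-(-1)^{|f||g|}\,\gamma(g)\ast\gamma(f),$$
which is the claimed identity.

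Finally, $\gamma^2=\id$ is immediate from $\sigma_R^2=\id$ and $\sigma_C^2=\id$. I would also note that $\gamma$ commutes with the differential $d(f)=d_R\circ f-(-1)^{|f|}f\circ d_C$, because both $\sigma$'s are chain maps. The only real obstacle is the sign bookkeeping in the conjugation identity, and in particular confirming that the minus sign in the skew condition is the sole source of the overall minus, with no extra sign coming from the placement of the degree-$0$ involutions.
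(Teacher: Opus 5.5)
Your proposal is correct and follows the paper's proof essentially line by line: you substitute the skew condition on $\Delta_C$ and the anti-multiplicativity of $\sigma_R$, commute the degree-$0$ involutions past $\tau$, and apply $\tau\circ(\varphi\otimes\psi)\circ\tau=(-1)^{|\varphi||\psi|}\psi\otimes\varphi$. The paper applies this identity to $f\otimes g$ before absorbing the $\sigma$'s, and you apply it afterwards, which makes no difference.

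One small bookkeeping correction. The second swap acts on $\varphi(y)\otimes\psi(x)$, so its sign is $(-1)^{(|\varphi|+|y|)(|\psi|+|x|)}$, not $(-1)^{(|x|+|\varphi|)(|y|+|\psi|)}$. With this sign, together with the factor $(-1)^{|\psi||y|}$ from applying $\varphi\otimes\psi$, the total differs from the Koszul sign of $(\psi\otimes\varphi)(x\otimes y)$ by exactly $(-1)^{|\varphi||\psi|}$, as required.
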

\begin{proof}
\begin{align*}
\gamma (f \ast g) \, =\, \sigma_R \circ (f\ast g) \circ \sigma_C &=\, \sigma_R \circ  m_R \circ (f \otimes g) \circ \Delta_C \circ \sigma_C\\
   &= - \sigma_R \circ  m_R \circ \left[ (f \otimes g) \circ \tau \circ (\sigma_C \otimes \sigma_C) \circ \Delta_C  \right]\\
   &= - m_R \circ \tau \circ (\sigma_R\otimes \sigma_R)\circ  (f \otimes g) \circ \tau \circ (\sigma_C \otimes \sigma_C) \circ \Delta_C\\
   &= -m_R \circ (\sigma_R \otimes \sigma_R) \circ \tau \circ (f \otimes g) \circ \tau \circ (\sigma_C \otimes \sigma_C) \circ \Delta_C\\
   &=-(-1)^{|f||g|} m_R \circ (\sigma_R \otimes \sigma_R) \circ (g \otimes f) \circ (\sigma_C \otimes \sigma_C) \circ \Delta_C\\
   &=-(-1)^{|f||g|} m_R \circ [(\sigma_R \circ g \circ \sigma_C) \otimes (\sigma_R \circ f \circ \sigma_C)] \circ \Delta_C\\
   &= -(-1)^{|f||g|} \gamma(g) \ast \gamma(f)\ .
\end{align*}
\end{proof}
By Lemma \ref{invHom}, 
$$ \gamma([f\,,\,g])\,=\, [\gamma(f), \gamma(g)]\,\text{.}$$
Hence, the DG {\it Lie algebra}  $\HOM(\overline{C}, \overline{R})$ has a natural $\Z_2$-action. As a result, $\Tw(C,R)$ has a natural $\Z_2$-action.
\begin{lemma} \la{twbijinv}
There are natural bijections of sets
$$ \Hom_{\iDGA_{k/k}}(\cb(C), R)\,\cong\,\Tw(\overline{C},\overline{R})^{\Z_2}\,\cong\, \Hom_{\iDGC_{k/k}}(C, \bB(R)) \,\text{.}$$
\end{lemma}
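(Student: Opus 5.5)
We will show that the non-involutive bijections \eqref{twbij} are $\Z_2$-equivariant. Once that is done, the involutive morphisms on either side correspond exactly to the $\Z_2$-fixed twisting cochains. We let $\Z_2$ act on $\Hom_{\DGA_{k/k}}(\cb(C),R)$ by $\varphi \mapsto \sigma_R\circ\varphi\circ\sigma_{\cb(C)}$. Here $\sigma_{\cb(C)}$ is the involution of Proposition~\ref{skinvtoinv}, whose restriction to the generators $\overline{C}[-1]$ is $\sigma_C$. Since $\sigma_R$ and $\sigma_{\cb(C)}$ are anti-automorphisms, this conjugate is again an algebra map, and it is augmented and commutes with differentials. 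Its fixed points are exactly the morphisms in $\iDGA_{k/k}$. Similarly, $\Z_2$ acts on $\Hom_{\DGC_{k/k}}(C,\bB(R))$ by $\psi\mapsto \sigma_{\bB(R)}\circ\psi\circ\sigma_C$. Here we use the skew involution on $\bB(R)$ from Proposition~\ref{skinvtoinv}. Composing two skew involutions with a coalgebra map gives a coalgebra map, since the two sign factors $-1$ cancel and so do the two swaps. The fixed points are the morphisms in $\iDGC_{k/k}$.

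The first step is to recall how \eqref{twbij} works. An algebra map $\varphi:\cb(C)\to R$ corresponds to the twisting cochain $f_\varphi = \varphi|_{\overline{C}[-1]}\circ s^{-1}:\overline{C}\to\overline{R}$, a map of degree $-1$. Because $\sigma_{\cb(C)}$ restricts to $\sigma_C$ on generators, the map $\sigma_R\circ\varphi\circ\sigma_{\cb(C)}$ restricts on generators to $\sigma_R\circ f_\varphi\circ\sigma_C=\gamma(f_\varphi)$. An algebra map out of $\cb(C)$ is determined by its values on generators. So the first bijection intertwines the two actions, and it restricts to a bijection $\Hom_{\iDGA_{k/k}}(\cb(C),R)\cong \Tw(\overline{C},\overline{R})^{\Z_2}$. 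We also note that $\gamma$ preserves $\Tw$. Indeed, $\gamma$ commutes with $d$, and by the identity after Lemma~\ref{invHom} we have $\gamma(f\ast f)=\tfrac12\gamma([f,f])=\tfrac12[\gamma f,\gamma f]=\gamma f\ast\gamma f$, so $\gamma$ is compatible with the Maurer--Cartan equation.

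The dual step concerns the second bijection. A coalgebra map $\psi:C\to\bB(R)$ corresponds to the composite $f_\psi=s\circ \mathrm{pr}_{\overline{R}[1]}\circ\psi|_{\overline{C}}$. By Proposition~\ref{skinvtoinv}, $\sigma_{\bB(R)}$ corestricts to $\sigma_R$ on $\overline{R}[1]$. Hence $\mathrm{pr}\circ\sigma_{\bB(R)}=\sigma_R\circ\mathrm{pr}$, and the corestriction of the conjugated map $\sigma_{\bB(R)}\circ\psi\circ\sigma_C$ is $\gamma(f_\psi)$. A coalgebra map into the cofree conilpotent coalgebra is determined by its corestriction. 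So the second bijection is also equivariant, and taking fixed points finishes the proof. Naturality in $C$ and $R$ is inherited from \eqref{twbij}.

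The main obstacle is getting the signs right in the claim that the involutions on $\cb(C)$ and $\bB(R)$ really restrict or corestrict to $\sigma_C$ and $\sigma_R$ on the (de)suspended generators, rather than to $-\sigma$ or to a degree-dependent twist. The check that conjugating a (co)algebra map by two (skew) anti-automorphisms yields a (co)algebra map is sign-sensitive in the same way. We would fix the suspension conventions exactly as in the proof of Proposition~\ref{skinvtoinv}, where the generator-level involution was chosen to be $\sigma$. We would then verify once, on elements of the form $s^{-1}x$ and on words of length two, that the Koszul signs coming from $\tau$ match those in Lemma~\ref{invHom}.
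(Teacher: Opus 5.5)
Your proposal is correct and is essentially the paper's argument. The paper observes directly that $\phi:\cb(C)\to R$ commutes with the involutions iff its restriction $\overline{\phi}$ to the generators $\overline{C}[-1]$ satisfies $\overline{\phi}\circ\sigma_C=\sigma_R\circ\overline{\phi}$, i.e.\ $\gamma(\overline{\phi})=\overline{\phi}$, using that $\sigma_{\cb(C)}$ restricts to $\sigma_C$, and it treats the bar side analogously. Your formulation as $\Z_2$-equivariance of \eqref{twbij} under conjugation repackages exactly this, and adds the correct checks that $\gamma$ preserves $\Tw$ and that conjugation preserves (co)algebra maps; the sign worry in your last paragraph is moot, since Proposition~\ref{skinvtoinv} constructs the involutions so that they (co)restrict to $\sigma_C$ and $\sigma_R$ by definition.
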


\begin{proof}
Let $C \in \iDGC_{k/k}$ and let $R \in \iDGA_{k/k}$. Then, the augmented DGA homomorphisms $\cb(C) \to R$ are in natural bijection with elements of $\Tw(\overline{C},\overline{R})$ (see \eqref{twbij}). This bijection takes a homomorphism $\phi:\cb(C) \to R$ to its restriction to the generating complex $\overline{C}[-1]$, viewed as an element of $\HOM(\overline{C},\overline{R})_{-1}$. We denote this twisting cochain by $\overline{\phi}$. Now, $\phi$ is a homomorphism of involutive DG algebras if and only if 
\begin{align*}
 \phi \circ \sigma_{\cb(C)} = \sigma_R \circ \phi & \iff \phi \circ \sigma_{\cb(C)}|_{\overline{C}[-1]} = \sigma_R \circ \phi |_{\overline{C}[-1]}\\
  & \iff \overline{\phi} \circ \sigma_C = \sigma_R \circ \overline{\phi}\\
  & \iff \gamma(\overline{\phi})= \overline{\phi}\ .
\end{align*}
Thus, the involution preserving (augmented) DG algebra homomorphisms $\cb(C) \to R$ are in natural bijection with $\Tw(\overline{C},\overline{R})^{\Z_2}$. This proves the first natural bijection in the desired lemma. The second natural bijection in the desired lemma is shown similarly.
\end{proof}

Recall (see Section \ref{MoStr}) that $\iDGA_{k/k}$ has a model structure where the weak equivalences are the quasi-isomorphisms and where the fibrations are the degree wise surjections. Following~\cite[Section 1.3]{Le}, one can show that $\iDGC_{k/k}$ has a dual model structure whose weak equivalences are the morphisms $f$ such that $\cb(f)$ is a quasi-isomorphism. It can also be shown without difficulty that every cofibrant object in $\iDGA_{k/k}$ is a retract of $\cb(C)$ for some  $C \in \iDGC_{k/k}$. Further following~\cite[Section 1.3]{Le}, one can show that

\begin{theorem} \la{kosdualinv}
The pair of adjoint functors
\begin{equation} \la{icbb} \cb\,:\,  \iDGC_{k/k} \rightleftarrows \iDGA_{k/k}\,:\,\bB \end{equation}
is a Quillen pair. In particular, the functors~\eqref{cbb} induce mutually inverse equivalences between homotopy categories
$$ \L \cb\,:\,\Ho(\iDGC_{k/k}) \,\cong\, \Ho(\iDGA_{k/k})\,:\, \R \bB \,\text{.}$$
\end{theorem}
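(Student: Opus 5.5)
The proof will follow Lefèvre-Hasegawa's argument in the non-involutive case (\cite[Section 1.3]{Le}, \cite[Theorem 4.3]{Ke}), with involutions carried along throughout. The adjunction $\cb \dashv \bB$ on $\iDGC_{k/k}$ and $\iDGA_{k/k}$ is Lemma~\ref{twbijinv}. The model structure on $\iDGC_{k/k}$ is transferred so that weak equivalences are the maps $f$ with $\cb(f)$ a quasi-isomorphism and cofibrations are the degreewise injections. With this choice, $\cb$ preserves weak equivalences by definition, so it suffices to show that $\cb$ preserves cofibrations. Then $(\cb,\bB)$ is Quillen, and the equivalence of homotopy categories reduces to checking that the derived unit and counit are weak equivalences.

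First, cofibrations. If $f\colon C\into C'$ is an injection of skew involutive conilpotent coalgebras, then $\cb(f)$ is a map of involutive algebras whose underlying DG algebra map is a cofibration in $\DGA_{k/k}$, by the non-involutive theory. I also need it to be a cofibration in $\iDGA_{k/k}$. For this I will filter $C'$ by the coradical filtration relative to $C$; each step is $\sigma$-stable, since $\sigma$ is a (skew) coalgebra map. The map $\cb(f)$ is then a transfinite composite of cell attachments $T(\,\cdot\,)\ast T(V[-1])$ along $\sigma$-stable subcomplexes $V$. Because $\operatorname{char} k=0$, $k[\Z_2]$ is semisimple. So $V$ splits $\Z_2$-equivariantly as a sum of free cells, and each step is a pushout of generating cofibrations of the operadic model structure on $\iDGA_{k/k}$ (Section~\ref{MoStr}). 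This shows $\cb(f)$ is a cofibration.

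Second, the counit and unit. For the counit $\cb\bB(R)\to R$ and the unit $C\to \bB\cb(C)$, the underlying maps of DG (co)algebras are exactly the non-involutive counit and unit. These are known to be a quasi-isomorphism and a weak equivalence, respectively \cite{Ke,Le}. Weak equivalences in both involutive categories are detected on the underlying objects: quasi-isomorphisms for algebras, and for coalgebras by the definition through $\cb$, which commutes with forgetting the involution. Hence the derived unit and counit are weak equivalences, and Quillen's theorem gives the equivalence $\L\cb \dashv \R\bB$. Every object of $\iDGA_{k/k}$ is fibrant, and every object of $\iDGC_{k/k}$ is cofibrant, so no replacements are needed in this last step.

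The main obstacle is establishing the model structure on $\iDGC_{k/k}$ itself, namely the factorization and lifting axioms. This is the part that is only asserted above, by ``Following \cite[Section 1.3]{Le}''. I would first try to transport Lefèvre-Hasegawa's proof verbatim, working in the category of $k[\Z_2]$-equivariant complexes with the sign-twisted swap. Semisimplicity of $k[\Z_2]$ makes all the splittings he uses (of the coradical filtration, and of acyclic injections) equivariant. The remaining verification is that $\sigma$-stable sub-coalgebras generated by finite-dimensional $\sigma$-stable subspaces are still finite-dimensional. This holds because the subcoalgebra generated by $W+\sigma W$ is $\sigma$-stable.
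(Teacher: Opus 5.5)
Your proposal takes the same route as the paper, which gives no separate proof and simply asserts the result by transporting Lefèvre-Hasegawa's argument \cite[Section 1.3]{Le} to the involutive setting. Your sketch fills in that transport along the expected lines: equivariant complements from semisimplicity of $k[\Z_2]$, the $\sigma$-stable relative coradical filtration, and checking the unit and counit on underlying objects. One small correction: semisimplicity makes each attached $\sigma$-stable piece only a direct summand of a free $k[\Z_2]$-module, not a sum of free cells (a $(+1)$-eigenline is not free). Each attachment is therefore a retract of a pushout of generating cofibrations of $\iDGA_{k/k}$, which still suffices because cofibrations are closed under retracts.
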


\subsection{Lie algebras and Lie coalgebras from involutions} \la{CEThm}

If $\sigma$ is a skew involution on $C \in \DGC$, $C$ breaks into a direct sum $C^{+} \oplus C^{-}$ of complexes such that $\sigma$ acts by multiplication by $1$ (resp., $-1$) on $C^{+}$ (resp., $C^{-}$). Note that $\sigma$ commutes with the Lie cobracket $] \mbox{--}[:= (\id -\tau) \circ \Delta$: indeed, 
\begin{align*}
(\id -\tau) \circ \Delta \circ \sigma &= -(\id -\tau) \circ \tau \circ (\sigma \otimes \sigma) \circ \Delta\\
&= (\id -\tau) \circ (\sigma \otimes \sigma) \circ \Delta\\
&= (\sigma \otimes \sigma) \circ (\id -\tau) \circ \Delta\ .
\end{align*}
Hence, the projection $C \otimes C \onto C_+ \otimes C_+$ composed with the restriction of the Lie cobracket $]\mbox{--}[$ to $C_+$ defines a Lie cobracket on $C^{+}$. More generally, if $\M_n(k)$ is equipped with an involution, its dual $\M_n^{\ast}(k)$ becomes a coalgebra equipped with an involution. Tensoring this involution with a given skew involution on $C$ makes
${\M}_n^{\ast}(C)$ a (DG) coalgebra with skew involution $\sigma$. We let $\gl_n^{\ast}(C)$ denote $\M_n^\ast(C)$ equipped with the Lie cobracket $]\mbox{--}[:=(\id-\tau)\circ \Delta$, of which $\sigma$ is an automorphism of order $2$. We let $\gl_n^{+,\ast}(C)$ denote the $+1$-eigenspace of $\sigma$. As described above, $\gl_n^{+,\ast}(C)$ acquires the structure of a (DG) Lie coalgebra.
Note that if $C=k$ equipped with trivial skew involution, $\gl_n^{+,\ast}(k)$ is the linear dual of $\gl_n^{-}(k)$. One always has a surjection $\gl_n^{+,\ast}(C) \twoheadrightarrow \gl_n^{+,\ast}(k)$ of DG coalgebras for $C \in \iDGC_{k/k}$.

Next, we remind the reader that if $\mathfrak{C}$ is any DG Lie coalgebra, then one has the {\it Chevelley-Eilenberg algebra} $\C(\mathfrak{C};k):=(\Sym_k(\mathfrak{C}[-1]),d_1+d_2)$ of $C$, whose differential if the sum of two derivations $d_1$ and $d_2$. The restriction of $d_1$ to $\mathfrak{C}[-1]$ is the differential on $\mathfrak{C}[-1]$, while the restriction of $d_2$ to $\mathfrak{C}[-1]$ is the composite map
\[ \begin{diagram}k[-1] \otimes \mathfrak{C} & \rTo^{\Delta_{-1} \otimes ]\mbox{--}[} & k[-1] \otimes k[-1] \otimes \wedge^2 \mathfrak {C} \cong \Sym^2(k[-1] \otimes \mathfrak{C}) & \rInto & \Sym(k[-1] \otimes \mathfrak{C}) \end{diagram}\ .\]
If $\mathfrak{C}=\g^\ast$, where $\g$ is any finite dimensional Lie algebra, then $\C(\g^{\ast};k)$ is precisely the Chevalley-Eilenberg cochain algebra of $\g$. We are now ready to state the mai theorem of this section.

Let $A \in \iDGA_{k/k}$ be an involutive algebra. Suppose that there exists a quasi-isomorphism $\cb(C) \stackrel{\sim}{\twoheadrightarrow} A$ in $\iDGA_{k/k}$ for some skew involutive $C$ in $\DGC_{k/k}$. Fix an involution on $\M_n(k)$. Then,

\begin{theorem}
\la{invmain}
For any $n \geq 1$ there are isomorphisms in $\Ho(\cDGA_{k/k}$:
$$\DRep_n^{\ob}(A)\,\cong\, \C({\gl}^{+,\ast}_n(\overline{C});k)\,,\qquad \DRep_n^{\ob}(A)^G \,\cong\, \C(\gl^{+,\ast}_n(C), \gl^{+,\ast}_n(k);k)\ .$$
Consequently,
$$ {\rm HR}^{\ob}_\bullet(A,k^n)\,\cong\, \H_\bullet({\gl}^{+,\ast}_n(\overline{C});k)\,,\qquad {\rm HR}^{\ob}_\bullet(A,k^n)^G\,\cong\, \H_\bullet(\gl^{+,\ast}_n(C), \gl^{+,\ast}_n(k);k)\ . $$
\end{theorem}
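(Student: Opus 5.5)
Since $\cb(C)\sonto A$ is a quasi-isomorphism and $\cb(C)$ is cofibrant in $\iDGA_{k/k}$ (it is of the form $\cb$ of an object of $\iDGC_{k/k}$, cf. Theorem~\ref{kosdualinv}), we may compute $\DRep^{\ob}_n(A)$ as $\cb(C)^{\ob}_n$. The plan is to identify the commutative DG algebra $\cb(C)^{\ob}_n$ \emph{on the nose} with $\C(\gl^{+,\ast}_n(\overline{C});k)$. This is done by showing that both corepresent the same functor on $\cDGA_{k/k}$, following the proof of \cite[Theorem 3.1]{BFPRW}.

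For $B\in\cDGA_{k/k}$, the adjunction for augmented algebras together with Lemma~\ref{twbijinv} gives
$$\Hom_{\cDGA_{k/k}}(\cb(C)^{\ob}_n,B)\cong \Hom_{\iDGA_{k/k}}(\cb(C),\bar\M_n(B))\cong \Tw\big(\overline{C},\M_n(\overline{B})\big)^{\Z_2}.$$
Here $\gamma$ acts by $f\mapsto \sigma_{\M_n}\circ f\circ\sigma_C$, with $B$ carrying the trivial involution. Next, rewrite $\HOM(\overline{C},\M_n(\overline{B}))\cong \HOM(\M_n^\ast(\overline{C}),\overline{B})$, where $\M_n^\ast(\overline C)=\M_n^\ast(k)\otimes\overline{C}$ and $\M_n(k)$ is identified with the dual of $\M_n^\ast(k)$. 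Under this identification the convolution product corresponds to composition with the coproduct of $\M_n^\ast(\overline{C})$. Since $B$ is commutative, the Maurer--Cartan equation $df+f\ast f=0$ only sees the antisymmetrized coproduct. Hence it becomes the Maurer--Cartan equation for a degree $-1$ map $\gl_n^\ast(\overline{C})\to\overline{B}$ with respect to the Lie cobracket $]\mbox{--}[$.

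The $\Z_2$-invariance condition says exactly that $f$ commutes with $\sigma$ on the source and the trivial involution on $\overline{B}$. Equivalently, $f$ vanishes on the $(-1)$-eigenspace, so $f$ factors through the projection $\gl_n^\ast(\overline{C})\onto\gl_n^{+,\ast}(\overline{C})$. By the commutation of $\sigma$ with the cobracket established in Section~\ref{CEThm}, this projection is compatible with the cobracket. So the invariant twisting cochains are precisely Lie Maurer--Cartan elements $\gl^{+,\ast}_n(\overline C)[-1]\to\overline B$. These are in turn exactly the augmented CDGA maps $\C(\gl^{+,\ast}_n(\overline{C});k)=\Sym(\gl^{+,\ast}_n(\overline C)[-1])\to B$, because a map out of a free graded commutative algebra is determined by its generators and compatibility with $d_1+d_2$ is the Maurer--Cartan equation. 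Yoneda then gives the first isomorphism, and taking homology gives the first statement about ${\rm HR}^{\ob}_\bullet$.

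The main obstacle is the sign bookkeeping. One must check that, under the dualization $\M_n(\overline B)\leftrightarrow \HOM(\M_n^\ast(k),\overline B)$, the $\Z_2$-action $\sigma_{\M_n}\circ f\circ\sigma_C$ corresponds to precomposition with the skew involution on $\M_n^\ast(\overline{C})$ used to define $\gl_n^{+,\ast}$. One must also check that the skew sign $-\tau$ in the definition of skew involutions produces exactly the $(+1)$-eigenspace rather than the $(-1)$-eigenspace. I would verify this first for $C=k$, where the answer must reduce to $\gl_n^{+,\ast}(k)=(\gl_n^-(k))^\ast$, and then for general $C$ by a computation like that in Lemma~\ref{invHom}.

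For the invariant part, Theorem~\ref{inv} gives $\DRep^{\ob}_n(A)^G\cong(\cb(C)^{\ob}_n)^G$. The $G$-action on $\C(\gl^{+,\ast}_n(\overline C);k)$ is induced by the coadjoint action on $\M_n^\ast$. The infinitesimal action of the Lie algebra of $G$ relates to $\gl_n^{+,\ast}(k)$ through the surjection $\gl^{+,\ast}_n(C)\onto\gl^{+,\ast}_n(k)$ (using $C=k\oplus\overline C$). The invariants are then the relative Chevalley--Eilenberg complex $\C(\gl^{+,\ast}_n(C),\gl^{+,\ast}_n(k);k)$, exactly as in the non-involutive argument of \cite{BFPRW}. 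Since $G$ is reductive in characteristic $0$, invariants commute with homology, which yields the last isomorphism.
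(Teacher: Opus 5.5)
Your proposal is correct and follows the paper's route: the adjunction plus Lemma~\ref{twbijinv}, the dualization $\HOM(\overline{C},\M_n(\overline{B}))\cong\HOM(\M_n^{\ast}(\overline{C}),\overline{B})$, passage to $\Z_2$-invariants to get Maurer--Cartan elements on $\gl_n^{+,\ast}(\overline{C})$, identification of these with augmented CDGA maps out of the Chevalley--Eilenberg algebra, Yoneda, and the invariant part handled as in \cite{BFPRW}. The sign check you flag works out, since the $\Z_2$-action becomes precomposition with the tensor product of the dual involution on $\M_n^{\ast}(k)$ and the skew involution on $\overline{C}$; your remarks on the antisymmetrized coproduct and on the Maurer--Cartan equation holding automatically on the $(-1)$-eigenspace fill in details the paper leaves implicit.
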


\begin{proof}
We begin by proving the first part. Since $\cb(C)$ is cofibrant in $\iDGA_{k/k}$ (see the preliminaries before Theorem \ref{kosdualinv}), it suffices to check that $(\cb(C))^{\ob}_n\,\cong\, \C({\gl}^{+,\ast}_n(\overline{C});k)$ in $\cDGA_{k/k}$. Now, for any $B \in \cDGA_{k/k}$,
\begin{equation} \la{eqinvadj1} \Hom_{\cDGA_{k/k}}((\cb(C))^{\ob}_n\,,\,B)\,\cong\, \Hom_{\iDGA_{k/k}}( \cb(C)\,,\,\widetilde{\M}_n(B)) \,\text{.} \end{equation}
On the other hand, by Lemma~\ref{twbijinv}
\begin{equation} \la{eqinvadj2}\Hom_{\DGA_{k/k}}( \cb(C)\,,\,\widetilde{\M}_n(B))\,\cong\,\mathtt{Tw}(\overline{C}\,,\,\M_n(\bar{B}))^{\Z_2} \,,\end{equation}
where the $\Z_2$-action on $\mathtt{Tw}(\overline{C}\,,\,\M_n(\bar{B}))$ is induced by the $\Z_2$-action on $\HOM(\overline{C}\,,\,\M_n(\bar{B}))$ (see Lemma \ref{invHom}). Since the natural isomorphism 
$$\HOM(\overline{C}\,,\,\M_n(\bar{B})) \,\cong\, \HOM(\M_n^{\ast}(\overline{C}),\overline{B})$$
is an isomorphism of involutive DG-algebras (and therefore, an isomorphism of DG Lie algebras with $\Z_2$-action), there are natural bijections 
\begin{equation} \la{eqinvadj3} \mathtt{Tw}(\overline{C}\,,\,\M_n(\bar{B}))^{\Z_2} \,\cong\, \mathtt{Tw}(\M_n^{\ast}(\overline{C})\,,\,\bar{B})^{\Z_2}\,\cong\, \mathtt{Tw}(\gl_n^{+,\ast}(\overline{C})\,,\,\bar{B}) \,,\end{equation}
where the last bijection is a consequence of the DG Lie algebra isomorphism
$$\HOM(\M_n^{\ast}(\overline{C}),\overline{B})^{\Z_2} \,\cong\, \HOM(\gl_n^{+,\ast}(\overline{C}),\bar{B})\ .$$
It follows from \eqref{eqinvadj1}, \eqref{eqinvadj2} and \eqref{eqinvadj3} that there is a natural bijection 
\begin{equation} \la{eqinvadj4} \Hom_{\cDGA_{k/k}}((\cb(C))^{\ob}_n\,,\,B)\,\cong\, \mathtt{Tw}(\gl_n^{+,\ast}(\overline{C})\,,\,\bar{B})\ . \end{equation}
The right hand side of \eqref{eqinvadj4} is in natural bijection with $\Hom_{\cDGA_{k/k}}(\C({\gl}^{+,\ast}_n(\overline{C});k), B)$ (see, for example, \cite[Sec. 6.3.4]{BFPRW}). The formula for $\DRep_n^{\ob}(A)$ then follows from the Yoneda lemma. To prove the formula for $\DRep_n^{\ob}(A)^G$, note that the Lie algebra of $G$ is precisely $\gl_n^{-}(k)$ (which is the $k$-linear dual of
$\gl_n^{+,*}(k)$). Then, as in the proof of~\cite[Theorem 3.2]{BFPRW}, note that $\C({\gl}^{+,*}_n(\overline{C});k)^G$ is precisely $\C(\gl^{+,*}_n(C), \gl^{+,*}_n(k);k)$. 
The remaining formulas for representation homologies then follow immediately.
\end{proof}





\section{main results} \la{S3}

\subsection{The dihedral and hyperoctahedral bar constructions} \la{bardihhoct} 

Recall that the {\it dihedral (resp., hyperoctahedral) crossed simplicial category} $\Delta D$ (resp., $\Delta H$) is defined to be an extension of the simplicial category $\Delta$ that has the same objects as $\Delta$ and morphisms characterized by the following two properties (see \cite[6.1.4,6.3]{L}):\\

$(1)$ For each $n \geq 0$, ${\rm Aut}_{\Delta D}([n]) \cong D_{n+1}$ (resp., ${\rm Aut}_{\Delta H}([n]) \cong H_{n+1}$, where $D_{n+1}$ (resp., $H_{n+1}$) is the $(n+1)$-st dihedral (resp., hyperoctahedral) group.\\

$(2)$ Any morphism $f:[n] \to [m]$ in $\Delta D$ (resp., $\Delta H$) can be uniquely written as a composite $g \circ \sigma$ where $g \in \Hom_{\Delta}([n],[m])$ and $\sigma \in \Hom_{\Delta D}([n],[n])\cong D_{n+1}$ (resp., $\sigma \in \Hom_{\Delta H}([n],[n])\cong H_{n+1}$.\\

Note that the generating maps of the category $\Delta D^{\rm op}$ are the face maps $d_i:[n] \to [n-1], 0 \leq i \leq n$ for all $n \geq 1$, the degeneracies $s_j:[n] \to [n+1], 0 \leq j \leq n$ for all $n \geq 0$ and the generators $r_n, t_n$ of the dihedral group $D_{n+1}$ (satisfying $r_n^2=t_n^{n+1}=1$, $r_nt_nr_n=t_n^{-1}$ and viewed as automorphisms of $[n]$ for all $n \geq 0$). For $A \in \iAlg_k$, the {\it dihedral bar construction} of $A$ (see \cite[Defn. 1.4.1]{Lo}) is the functor $B^{\rm dih}A:\Delta D^{\rm op} \to {\rm Mod}_k$ given on objects by $[n] \mapsto A^{\otimes (n+1)}$ and on the generating morphisms by 
\begin{align*}
s_j(a_0 \otimes \ldots \otimes a_n) &=  a_0 \otimes \ldots \otimes a_j \otimes 1 \otimes a_{j+1} \otimes \ldots \otimes a_n\,,\qquad 0 \leq j \leq n\\
d_i(a_0 \otimes \ldots \otimes a_n) &= a_0 \otimes \ldots \otimes a_ia_{i+1} \otimes \ldots \otimes a_n\,,\qquad 0 \leq i \leq n-1\\
d_n(a_0 \otimes \ldots \otimes a_n) &= a_na_0 \otimes a_1 \otimes \ldots \otimes a_{n-1}\\
t_n(a_0 \otimes \ldots \otimes a_n) &= a_n \otimes a_0 \otimes \ldots \otimes a_{n-1}\\
r_n(a_0 \otimes \ldots \otimes a_n) &= \sigma(a_0) \otimes \sigma(a_n) \otimes \ldots \otimes \sigma(a_1)\,,
\end{align*}
where $\sigma$ is the involution on $A$. By \cite[Sec. 6.3.7]{L}, 
\begin{equation} \la{hdfh} \HD_\bullet(A)\,\cong\, {\rm Tor}^{\Delta D^{\rm op}}_\bullet(k, B^{\rm dih}A)\ . \end{equation}
For $A \in \s\iAlg_k$, the dihedral bar construction $B^{\rm dih}A$ defines a simplicial left $\Delta D^{\rm op}$-module, i.e, a functor $\Delta D^{\rm op} \to \s{\rm Mod}_k$. In this case, \eqref{hdfh} holds with $\HD_\bullet(A)$ understood to be the dihedral homology of the associated chain complex ${\rm C}(A)$ of $A$, and the ${\rm Tor}$ on the right defined as the homology $\H_\bullet[k \otimes^{\L}_{\Delta D^{\rm op}} {\rm C}(B^{\rm dih}A)]$.

In order to describe the hyperoctahedral bar construction, it convenient to use the following notation for morphisms in $\Delta H$ mimicking \cite[Sec. 2]{Au1}. Recall that $H_{n+1}:=(C_2)^{n+1} \rtimes S_{n+1}$, with $S_{n+1}$ acting on $(C_2)^{n+1}$ on the right by permuting factors. Explicitly, any element in $H_{n+1}$ is of the form $(a_0,\ldots, a_n;\tau)$ with $a_i \in C_2$ and $\tau \in S_{n+1}$, with 
$$(a_0,\ldots, a_n;\tau) \cdot (b_0,\ldots,b_n;\mu):= (a_{\mu(0)}b_0,\ldots, a_{\mu(n)}b_n; \tau\mu) \ .$$
Any morphism $f:[n] \to [m]$ in $\Delta H$ can be uniquely written as a ``tensor product" of $m+1$ noncommutative monomials in the $2m+2$ formal variables $x_0,\ldots,x_n,x_0^s,\ldots,x_n^s$. Explicitly, if $f=g \circ \sigma$ where $g \in \Hom_{\Delta}([n],[m])$ and $\sigma=(a_0,\ldots,a_n;\tau)$ then we write $f=X_0 \otimes X_1 \otimes \ldots\otimes X_m$, where 
$$X_i:= \begin{cases}
       x_{\tau^{-1}(j_1)}^{a_{\tau^{-1}(j_1)}} \ldots  x_{\tau^{-1}(j_r)}^{a_{\tau^{-1}(j_r)}} & \text{ if } g^{-1}(i)=\{j_1< \ldots <j_r\} \subseteq [n]\\
       1 & \text{ if } g^{-1}(i) = \emptyset \ .
\end{cases}$$
For example, if $f:[4] \to [3]$ is given by $f=g \circ \sigma$, where $g:[4] \to [3]$ is given by $g(0)=g(1)=0$, $g(2)=g(3)=1$ and $g(4)=3$ and $\sigma=(1,s,s,s,1;\tau)$ where $\tau=(0 \,1)(2\,4\,3)$ (as a product of disjoint cycles in $S_5$), then 
$f$ is represented by $x_1^sx_0 \otimes x_3^sx_4 \otimes 1 \otimes x_2^s$. Composition is then defined by a substitution rule ( keeping in mind that if $X=x_{i_1}^{\alpha_{i_1}}\ldots x_{i_r}^{\alpha_{i_r}}$ where $\alpha_{i_j} \in C_2$ for all $j$, then $X^s=x_{i_r}^{s\alpha_{i_r}} x_{i_{r-1}}^{s\alpha_{i_{r-1}}}\ldots x_{i_1}^{s\alpha_{i_1}}$): for instance, if $h:[3] \to [3]$ is defined by $h=l \circ \varphi$, where $l(0)=l(1)=1$, $l(2)=l(3)=2$ and $\varphi=(1,s,1,s;(0\,1\,2))$ then $h=1 \otimes x_2x_0 \otimes x_1^sx_3^s \otimes 1$, and 
$$h \circ f = (1 \otimes X_2X_0 \otimes X_1^sX_3^s \otimes 1) \circ (\overbrace{x_1^sx_0}^{X_0} \otimes \overbrace{x_3^sx_4}^{X_1} \otimes \overbrace{1}^{X_2} \otimes \overbrace{x_2^s}^{X_3}) = 1 \otimes x_1^sx_0 \otimes x_4^sx_3x_2 \otimes 1 \ .$$

Using the above notation, the {\it hyperoctahedral bar construction} of an involutive algebra $A \in \iAlg_k$ is defined as the functor $B_{\rm hOct}(A):\Delta H \to {\rm Mod}_k$ that is given on objects by
\begin{equation} \la{hobarobj} B_{\rm hOct}(A)([n]) = A^{\otimes (n+1)}\,, \qquad \forall\, n \geq 0\,,\end{equation}
and on morphisms by the following `substitution formula'
\begin{equation} \la{hobarmor} B_{\rm hOct}(A)(f)(a_0 \otimes \ldots \otimes a_n) = a_{i_1}^{\alpha_{i_1}}\ldots a_{i_r}^{\alpha_{i_r}} \otimes \ldots \otimes a_{k_1}^{\alpha_{k_1}}\ldots a_{k_s}^{\alpha_{k_s}}  \end{equation}
if $f \in \Hom_{\Delta H}([n],[m])$ is represented by $x_{i_1}^{\alpha_{i_1}}\ldots x_{i_r}^{\alpha_{i_r}} \otimes \ldots \otimes x_{k_1}^{\alpha_{k_1}}\ldots x_{k_s}^{\alpha_{k_s}}$. Here, the exponents $\alpha_j$ lie in $C_2$, and for $a \in A$, $a^s:=\sigma(a)$, where $\sigma$ is the involution on $A$. More generally, for $A \in \s\iAlg_k$, $B_{\rm hOct}(A):\Delta H \to \s{\rm Mod}_k$ is a {\it simplicial} $\Delta H$-module. 

\begin{definition} \la{defho}
For $A$ in $\s\iAlg_k$, the {\it hyperoctahedral homology} ${\rm HO}_{\bullet}(A)$ is the homology 
$${\rm HO}_{\bullet}(A)\,:=\,{\rm Tor}^{\Delta H}_{\bullet}(k, B_{\rm hOct}(A)) \ . $$
Recall that for an arbitrary $A \in \s\iAlg_k$, the latter Tor is defined as the homology $\H_\bullet[k \otimes^{\L}_{\Delta H} {\rm C}(B_{\rm hOct}(A))]$, where ${\rm C}$ stands for the associated chain complex.
\end{definition}

\subsection{Derived character maps revisited}
\la{S3.1}
We now construct derived character maps relating hyperoctahedral homology to (involutive) representation homology. 

Let $R \stackrel{\sim}{\to} A$ be a semi-free simplicial resolution of $A \in  \s\iAlg_k$. Applying the unit of the adjunction of Proposition \ref{simpadj} $(i)$ to $R$, we obtain a map of involutive simplicial  $k$-algebras $\,\pi_n: R \to M_n[(R)_n]\,$. Composing $\pi_n$ with the trace $\,M_n[(R)_n] \to (R)_n\,$ gives a map of simplicial $k$-modules $\,R \to (R)_n \,$. The latter
factors through the quotient $R_{\natural}:=\,R/\left( [R,R]\oplus (R-\sigma R)\right)\,$, inducing the simplicial $k$-linear map
\begin{equation}\la{TrR}
\Tr_n(R):\ R_{\natural} \to (R)^{\ob}_n \ .
\end{equation}
\begin{lemma} \la{hc0}
There is a natural isomorphism of simplicial $k$-modules 
$$
k \otimes_{\Delta D^{\rm op}} B^{\rm dih}R\, \cong\, R_{\natural} \ .
$$
\end{lemma}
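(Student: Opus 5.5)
The plan is to compute the coinvariants $k \otimes_{\Delta D^{\rm op}} B^{\rm dih}R$ levelwise in the simplicial direction. Since $R$ is a simplicial involutive algebra and every construction involved is applied degreewise, it suffices to prove a natural isomorphism $k \otimes_{\Delta D^{\rm op}} B^{\rm dih}A \cong A_\natural$ for an ordinary involutive algebra $A \in \iAlg_k$. Naturality in $A$ will then make these isomorphisms compatible with the simplicial structure maps of $R$.

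For a functor $E:\Delta D^{\rm op}\to {\rm Mod}_k$, the tensor product with the constant functor $k$ is the colimit of $E$. Concretely, it is the quotient of $\bigoplus_n E([n])$ by the relations $x \sim E(f)(x)$ for all morphisms $f$. Every object $[n]$ receives a morphism from $[0]$ in $\Delta D^{\rm op}$ (an iterated degeneracy). Hence the colimit is a quotient of $E([0]) = A$ by the images of $E(f)(x)-E(g)(x)$ for pairs of parallel morphisms. Because morphisms factor uniquely as a simplicial map and an automorphism, it suffices to impose the relations coming from the generators $d_i, s_j, t_n, r_n$.

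The relations come in three groups.
\begin{itemize}
\item In degree $1$, the two face maps give $d_0(a\otimes b)=ab$ and $d_1(a\otimes b)=ba$, which forces $ab\sim ba$, i.e.\ it kills $[A,A]$.
\item The reflection $r_0$ on $[0]$ acts on $A$ by $\sigma$, which forces $a\sim \sigma(a)$, i.e.\ it kills ${\rm Im}(\id-\sigma)$.
\item The remaining relations in higher degrees are already implied. To see this, use the map $A^{\otimes(n+1)}\to A$, $a_0\otimes\cdots\otimes a_n\mapsto a_0\cdots a_n$ followed by the projection to $A_\natural$. This map identifies the class of any element of $E([n])$ with its image in $E([0])$ via iterated face maps. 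It is compatible with all $d_i$ and $s_j$, with $t_n$ by cyclicity modulo commutators, and with $r_n$, since $\sigma(a_0)\sigma(a_n)\cdots\sigma(a_1)=\sigma(a_1\cdots a_n a_0)$, which is equivalent to $a_0\cdots a_n$ modulo $[A,A]+{\rm Im}(\id-\sigma)$.
\end{itemize}

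It follows that this product map descends to a well-defined map from the colimit to $A_\natural$. Conversely, the inclusion $A=E([0])\to \colim E$ kills $[A,A]+{\rm Im}(\id-\sigma)$, so it factors through $A_\natural$. These two maps are mutually inverse: the composite on $A_\natural$ is the identity, and the colimit is generated by the image of $E([0])$.

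The main point of care is the compatibility check with $r_n$ and $t_n$ in general degree. There the signs do not appear, since $B^{\rm dih}$ is an unsigned functor into modules; the signs in \eqref{absdih} only enter after passing to the chain complex. With that observed, naturality in $A$ gives the simplicial statement.
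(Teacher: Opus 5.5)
Your proof is correct, but it takes a different route from the paper. The paper also reduces to a single simplicial level $R_m$. It then identifies $k\otimes_{\Delta D^{\rm op}}B^{\rm dih}(R_m)$ with ${\rm Tor}_0^{\Delta D^{\rm op}}(k,B^{\rm dih}R_m)$, invokes the comparison \eqref{hdfh} between this Tor and dihedral homology (\cite[Sec.~6.3.7]{L}), and reads off $\HD_0(B)\cong B_\natural$ from the complex \eqref{absdih}.

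You instead compute the colimit over $\Delta D^{\rm op}$ directly from the generators $d_i,s_j,t_n,r_n$, and construct the multiplication map to $A_\natural$ and its inverse by hand. Your argument is self-contained and uses only the definition of $B^{\rm dih}$. As you observe, the signs in \eqref{absdih} and the full strength of \eqref{hdfh} never enter. It also exhibits the isomorphism explicitly as $a_0\otimes\cdots\otimes a_n\mapsto [a_0\cdots a_n]$. That is exactly the description used later in the proof of Proposition~\ref{factorTr}, where the map on non-derived coinvariants must be identified with $\Tr_1(R)$. The paper's citation is shorter but leaves that identification implicit.

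One small slip: morphisms $[0]\to[n]$ in $\Delta D^{\rm op}$ (iterated degeneracies) do not show that $E([0])\to \colim E$ is surjective. What you need are the morphisms $[n]\to[0]$ given by iterated face maps, $a_0\otimes\cdots\otimes a_n\mapsto a_0\cdots a_n$. You do use these correctly in your third bullet and final paragraph.
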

\begin{proof}
It suffices to check that $\,k \otimes_{\Delta D^{\rm op}} B^{\rm dih}R_m \cong (R_m)_{\natural}\,$ for each $m$, but this follows immediately from formula \eqref{hdfh} and the fact that $\HD_0(B) \cong (B)_{\natural}$ for any $B \in \iAlg_k$.
\end{proof}
By Lemma \ref{hc0}, we obtain the following composition of morphisms in $\mathscr{D}(k)$, where ${\rm C}$ stands for the associated chain complex,
\begin{equation} \la{derivedch1} k \otimes^{\L}_{\Delta D^{\rm op}} {\rm C}(B^{\rm dih}R) \to  k \otimes_{\Delta D^{\rm op}} {\rm C}(B^{\rm dih}R) \cong {\rm C}(R_{\natural}) \, \xrightarrow{\Tr_n(R)}\, {\rm C}[(R)^{\ob}_n] \,,
\end{equation}
%
where the last map is defined from functoriality of $\rm C$ by applying it to \eqref{TrR}. Note that, if $k$ is a field, $B^{\rm dih}R$ is weakly equivalent to $B^{\rm dih}A$ by K\"{u}nneth's Theorem. In this case, \eqref{derivedch1} induces on homology a natural graded $k$-linear map
\begin{equation} 
\la{derivedch2}
\Tr_n(A)_\bullet: 
\HD_\bullet(A) \to \HRi(A, k^n)\ , 
\end{equation}
which is the map \eqref{char2}.
On the other hand, there is an inclusion functor $i:\Delta D^{\rm op} \hookrightarrow \Delta H$ defined in  \cite[3.3]{G21}. On the generating morphisms of $\Delta D^{\rm op}$ (and with morphisms in $\Delta H$ described as in Section \ref{bardihhoct} above, it is given by 
\begin{align*}
s_j & \mapsto   x_0 \otimes \ldots \otimes x_j \otimes 1 \otimes x_{j+1} \otimes \ldots \otimes x_n\,,\qquad 0 \leq j \leq n\\
d_i &\mapsto x_0 \otimes \ldots \otimes x_ix_{i+1} \otimes \ldots \otimes x_n\,,\qquad 0 \leq i \leq n-1\\
d_n &\mapsto x_nx_0 \otimes x_1 \otimes \ldots \otimes x_{n-1}\\
t_n  &\mapsto x_n \otimes x_0 \otimes \ldots \otimes x_{n-1}\\
r_n  &\mapsto x_0^s \otimes x_n^s \otimes \ldots \otimes x_1^s\ .
\end{align*}
Clearly,
\begin{equation} \la{cyctosym}B^{\rm dih}A = i^{*} \HA  \end{equation}
for any algebra $A \in \s\iAlg_k$. Hence, the functor $i$ induces a natural map 
\begin{equation} \la{hctohs} i^{*}:\ \HD_\bullet(A) = {\rm Tor}^{\Delta D^{\rm op}}_{\ast}(k, B^{\rm dih}A) \,\to\, {\rm Tor}^{\Delta H}_\bullet(k, \HA) = {\rm HO}_\bullet(A)\ .\end{equation}
relating dihedral homology to hyperoctahedral homology. 

Next, we recall that, for any involutive algebra $ A \in \iAlg_k$,
 Graves \cite[Corollary 4.5]{G22} constructed a graded commutative $k$-algebra structure on $\HO_\bullet(A)$. The following lemma extends this structure to simplicial involutive algebras, i.e., to any $A \in \s\iAlg_k$.
\begin{lemma} \la{algstr}
Graves' structure of a graded commutative algebra on ${\rm HO}_\bullet(A)$ extends naturally to simplicial involutive $k$-algebras $($defined over an arbitrary commutative ring $k)$.  
\end{lemma}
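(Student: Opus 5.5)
The plan is to realize Graves' product at the chain level by a natural, symmetric external product, and then extend it degreewise to simplicial algebras. Graves' construction rests on the tensor product $A\otimes B$ of involutive algebras being again involutive, with involution $\sigma_A\otimes\sigma_B$. I will use the following relation between the hyperoctahedral bar constructions:
\[
\HH(A\otimes B)\,\cong\,\HH(A)\otimes\HH(B),
\]
where the right-hand side is the pointwise tensor product of covariant $\Delta H$-modules. Pointwise this is $(A\otimes B)^{\otimes(n+1)}\cong A^{\otimes(n+1)}\otimes B^{\otimes(n+1)}$, and the substitution formula \eqref{hobarmor} is compatible with it because $\sigma$ and multiplication act factorwise. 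Together with the multiplication $A\otimes A\to A$, which is a map of involutive algebras when $A$ is commutative, this gives the product.

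For noncommutative $A$, however, I expect Graves' product to come from a different source. It should arise from the $E_\infty$/$\Gamma$-structure: his Corollary 4.5 identifies $\HO_\bullet(A)$ with the homology of a space or chain complex carrying an action of an $E_\infty$-type operad (the $C_\infty E$ bar construction). So the honest plan is to extract from \cite{G22} the chain-level pairing
\[
\mu:\ \big(k\otimes^{\L}_{\Delta H}{\rm C}(\HA)\big)^{\otimes 2}\to k\otimes^{\L}_{\Delta H}{\rm C}(\HA),
\]
which is built functorially from a fixed projective resolution $P_\bullet\to k$ of right $\Delta H$-modules together with a diagonal-type map $P_\bullet\to P_\bullet\otimes P_\bullet$ of $\Delta H$-modules. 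It must also use a natural map $\HA\otimes\HA\to \HA$ of $\Delta H$-modules, namely the pointwise ``concatenation/shuffle'' product that exists because morphisms of $\Delta H$ act by monomials.

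The key steps, in order, are as follows.

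\textbf{(1)} Check that every ingredient of Graves' construction (the resolution $P_\bullet$, the diagonal, and the pairing at the level of $\Delta H$-modules) is natural in $A$ and uses only the $k$-module structure. Each ingredient is then a natural transformation of functors $\iAlg_k\to$ ($\Delta H$-modules), so it prolongs degreewise to $A\in\s\iAlg_k$. This yields a map of bisimplicial $k$-modules.

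\textbf{(2)} Pass to chain complexes using the Eilenberg--Zilber (shuffle) map ${\rm C}(X)\otimes{\rm C}(Y)\to{\rm C}(X\otimes Y)$. This map is associative and graded commutative up to natural homotopy, so on homology $\HO_\bullet(A)$ for simplicial $A$ acquires a product. The totalization of the resolution direction and the simplicial direction is handled by the standard diagonal/Dold--Puppe argument.

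\textbf{(3)} Verify associativity, graded commutativity and unitality on homology. Each identity holds in Graves' setting up to natural chain homotopy; naturality lets those homotopies be applied degreewise, and Eilenberg--Zilber preserves them.

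\textbf{(4)} Confirm that for discrete $A$ the construction recovers Graves' product. This is immediate, since ${\rm C}$ of a constant simplicial object is the object itself.

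Note that no flatness or field hypothesis is needed for the extension itself. We never claim $\HA\otimes^{\L}$-compatibility with weak equivalences; the extension only uses the tensor products as given, degreewise.

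The main obstacle I expect is step (1): making sure Graves' product, constructed in \cite{G22} possibly via homotopy-theoretic (spectral or $\Gamma$-space) identifications rather than an explicit chain formula, admits a strictly natural chain-level model with homotopies that are themselves natural in $A$. I would first try to write the product directly on the complex $k\otimes_{\Delta H}P_\bullet\otimes\HA$ using a diagonal on the bar resolution of $k$ over $\Delta H$. If that fails, I would instead use the $E_\infty$-algebra structure on $k\otimes^{\L}_{\Delta H}(-)$, which is functorial by construction.
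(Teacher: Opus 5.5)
\textbf{Your primary plan fails; your fallback is the paper's proof.} The chain-level mechanism you make your main route does not exist.

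A map of $\Delta H$-modules $\HA\otimes\HA\to\HA$ (pointwise tensor product) has components $A^{\otimes(n+1)}\otimes A^{\otimes(n+1)}\to A^{\otimes(n+1)}$. Concatenation is not of this form. Factorwise multiplication is of this form, but it is not $\Delta H$-equivariant for noncommutative $A$: for $x_0x_1\in\Hom_{\Delta H}([1],[0])$, the two composites send $(1\otimes a)\otimes(b\otimes 1)$ to $ab$ and to $ba$.

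Even granting such a map, a pointwise diagonal $P_\bullet\to P_\bullet\otimes P_\bullet$ only yields a map $P\otimes_{\Delta H}(M\otimes N)\to(P\otimes_{\Delta H}M)\otimes(P\otimes_{\Delta H}N)$. This points the wrong way for a product on ${\rm Tor}^{\Delta H}_\bullet(k,-)$; it is coproduct-type. The same objection applies to your commutative-case idea via $\HH(A\otimes B)\cong\HH(A)\otimes\HH(B)$.

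Graves' product is external rather than pointwise. It comes from the symmetric monoidal (PROP) structure on the augmented category $\Delta H_+$, given by concatenation $[n]\sqcup[m]=[n+m+1]$ with unit $[-1]$, under which $B_{{\rm hOct},+}A$ is monoidal. So $\Delta H_+$, which never appears in your plan, is essential.

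Your fallback, "use the functorial $E_\infty$-structure", is exactly the paper's argument, and it needs none of your steps (2)--(3). The paper proceeds as follows:
\begin{itemize}
\item Take Graves' functor $G(A)=k[N(-\downarrow\Delta H_+)]\otimes_{\Delta H_+}B_{{\rm hOct},+}A$, which lands in simplicial $E_\infty$-algebras, as a black box.
\item Apply it levelwise to $A\in\s\iAlg_k$ to obtain a bisimplicial $E_\infty$-algebra.
\item Take the diagonal. This is again a simplicial $E_\infty$-algebra on the nose, because the diagonal commutes with levelwise tensor products. No Eilenberg--Zilber maps or degreewise homotopy bookkeeping is required.
\end{itemize}

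The one thing to verify is that the homology of this diagonal is ${\rm HO}_\bullet(A)$. Since $k[N(-\downarrow\Delta H_+)]$ is a levelwise projective resolution of $k$, it computes ${\rm Tor}^{\Delta H_+}_\bullet(k,B_{{\rm hOct},+}A)$. This agrees with ${\rm Tor}^{\Delta H}_\bullet(k,\HA)$ by \cite[Thm 6.4]{G21}.

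Discard the pointwise construction and make the fallback, with these two points, your main argument.
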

\begin{proof}
Recall \cite[Def. 5.2, Thm. 5.4]{G22} that Graves constructed a functor that we will call $G(\mbox{--}):\iAlg_k \to \mathrm{sE}_\infty\Alg_k$, taking $A\to C(\Delta H_+,\H_{A+})$, such that  $\pi_\bullet[GA] \cong {\rm HO}_\bullet(A)$ for any
$A \in \iAlg_k$. Here, $\mathrm{sE}_\infty\Alg_k$ denotes the category of $E_{\infty}$-algebra objects in the category of simplicial $k$-modules (which we shall refer to as {\it simplicial $E_{\infty}$-algebras}). Here, the term `$E_{\infty}$-algebra' refers to an algebra over the simplicial $E_{\infty}$-operad $\mathcal{D}_{\bf Mod}$, an analogue of the Barratt-Eccles operad in \cite{G21} (with notation mirroring \cite[Remark 2.8]{Au2}). Now let $A \in \s\iAlg_k$ be an arbitrary simplicial involutive $k$-algebra. Then, $GA$ is a bisimplicial $E_\infty$-algebra whose diagonal $k$-module is an $E_{\infty}$-algebra. Let $\Delta H_+$ be the category obtained by adjoining an initial object $[-1]$ to $\Delta H$. Now recall that, by definition (see \cite[Defn. 2.5, Defn. 6.3]{G21} and \cite[Theorem 5.4]{G22}),
$$GA = k[N(\mbox{--}\downarrow \Delta H_+)] \otimes_{\Delta H_+} B_{\rm hOct,+}A \,,
$$
where $N(\mbox{--})$ stands for the simplicial nerve functor and $B_{\rm hOct,+}A$ is the extension of $\HA$ to a $\Delta H_+$-module as in \cite[Def. 6.3]{G21}. Since the simplicial right $\Delta H_+$-module $k[N(\mbox{--}\downarrow \Delta H_+)]$ is a levelwise projective resolution of $k$, the homology of (the Dold-Kan normalization of) the diagonal of $GA$ is precisely
$${\rm Tor}^{\Delta H_+}_\bullet(k, B_{\rm hOct,+}A)\, \cong\, {\rm Tor}^{\Delta H}_\bullet(k, \HA) = : \HO_\bullet(A)\ . $$
The above isomorphism is \cite[Thm 6.4]{G21}. This proves the desired lemma.
\end{proof}

Now, we come to the main result of this section.
\begin{proposition} \la{factorTr}
Assume that $k$ is a field. The one-dimensional derived character map \eqref{derivedch2} factors through \eqref{hctohs}:
\begin{equation*}
 \la{HSR2} 
 \begin{diagram}[small]
 \HD_\bullet(A)& & \rTo^{\ \Tr(A)_\bullet\ } & & \HRi(A, k)\\
& \rdTo_{i^*} &                         & \ruDotsto &\\
&                  & \HO_\bullet(A)        & &
 \end{diagram}
 \end{equation*}
inducing a {\rm homomorphism} of graded commutative $k$-algebras with respect to the Graves structure on $\HO_\bullet(A)$:
\begin{equation}
\la{SRAA}
\HOR(A)_\bullet:\ \HO_\bullet(A)\, \to\, \HRi(A, k) \ 
\end{equation}
\end{proposition}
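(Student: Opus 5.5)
We replace $A$ by a semi-free simplicial resolution $R \stackrel{\sim}{\to} A$ in $\s\iAlg_k$. Both sides are invariant under this replacement: $B^{\rm dih}R \simeq B^{\rm dih}A$ and $\HA \simeq \HR$ by K\"unneth, since $k$ is a field. So we may assume $A = R$ is levelwise free. The argument then mirrors the symmetric case of \cite{BR22c}, with the key input being the $n=1$ specialization of \eqref{TrR}. For $n=1$ the target $(R)^{\ob}_1 = (R_m)_{\rm ab,\sigma}$ is commutative with trivial involution. The trace map $\Tr_1(R)$ is then simply the quotient map $R_\natural \onto (R)^{\ob}_1$.

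\textbf{Step 1: an underived factorization.} We compute $k \otimes_{\Delta H} \HR$ levelwise. The coinvariants of a covariant functor on $\Delta H$ are the quotient of $R_m = \HR([0])$ by the relations $f_*(x) \sim x$ coming from all morphisms. Using the substitution formula \eqref{hobarmor} for the maps $[1] \to [0]$ given by $x_0x_1$ and $x_1x_0$, together with the maps $[0] \to [0]$ given by $x_0^s$, one checks that every element of $\HR([n])$ is identified with an image in $\HR([0])$. The relations imposed are exactly $ab = ba$ and $a = \sigma(a)$. Hence
$$k \otimes_{\Delta H} \HR \cong (R)^{\ob}_1,$$
naturally in $R$, as simplicial $k$-modules.

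\textbf{Step 2: compatibility with the dihedral side.} Via $i^*$ in \eqref{cyctosym}, the natural map $k \otimes_{\Delta D^{\rm op}} B^{\rm dih}R \to k \otimes_{\Delta H} \HR$ is the quotient $R_\natural \onto (R)^{\ob}_1$, i.e. $\Tr_1(R)$. Now define $\HOR(A)_\bullet$ as the map on homology of
$$k \otimes^{\L}_{\Delta H} {\rm C}(\HR) \to k \otimes_{\Delta H} {\rm C}(\HR) \cong {\rm C}[(R)^{\ob}_1].$$
The diagram built from \eqref{derivedch1} and this map commutes, because the map $k\otimes^{\L}_{\Delta D^{\rm op}}(-) \to k\otimes^{\L}_{\Delta H}(-)$ defining \eqref{hctohs} is compatible with the augmentations to the underived tensor products. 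This gives the factorization $\Tr(A)_\bullet = \HOR(A)_\bullet \circ i^*$. Independence of the choice of resolution follows by naturality in $R$.

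\textbf{Step 3: multiplicativity (the main obstacle).} Using the model from Lemma \ref{algstr}, $\HOR$ is induced by the augmentation
$$GR = k[N(\mbox{--}\downarrow\Delta H_+)] \otimes_{\Delta H_+} B_{\rm hOct,+}R \;\to\; k \otimes_{\Delta H_+} B_{\rm hOct,+}R,$$
which collapses the nerve to a point. The target is $(R)^{\ob}_1$; the extra object $[-1]$ only contributes the unit $k$. It remains to show this collapse is a map of simplicial $E_\infty$-algebras, where $(R)^{\ob}_1$ is viewed as a strictly commutative algebra through the operad map $\mathcal{D}_{\bf Mod} \to \mathrm{Com}$. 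Graves' product is induced by the monoidal structure on $\Delta H_+$ given by disjoint union $[p]\sqcup[q] = [p+q+1]$, together with $\HR([p]) \otimes \HR([q]) \cong \HR([p+q+1])$. On coinvariants this product becomes the multiplication of $(R)^{\ob}_1$: a tensor $a \otimes b$ is identified with $ab$ via the map $x_0x_1$. The action of $\mathcal{D}_{\bf Mod}$ through nerve simplices is killed by the collapse, so it factors through the commutative operad. Checking this carefully against Graves' explicit definition is where I expect the real work. Passing to homotopy then yields a homomorphism of graded commutative algebras, with the target carrying the induced multiplication on $\pi_\bullet$, which is the standard one on $\HRi(A,k)$.
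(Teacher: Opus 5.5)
Your proposal is correct and follows the paper's proof essentially step for step: pass to a semi-free resolution $R$, define the map via the augmentation $k\otimes^{\L}_{\Delta H}{\rm C}(B_{\rm hOct}(R))\to k\otimes_{\Delta H}{\rm C}(B_{\rm hOct}(R))$, compare with $\Tr_1(R)$ on the underived level, and get multiplicativity from the collapse of the diagonal of Graves' $E_\infty$-algebra $G(R_\bullet)$ onto $R_{\rm ab,\sigma}$; the only difference is that the paper cites \cite[Thm.~5.8]{G22} for your Step~1 and \cite[Thm.~6.8]{G22} for the $E_\infty$-compatibility of the collapse map, which is exactly the step you flag as the main obstacle. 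One small correction to Step~1: the morphisms $x_0x_1$, $x_1x_0$ and $x_0^s$ alone only yield the subspace $[R,R]+{\rm Im}(\id-\sigma)$, i.e.\ $R_\natural$, so to obtain the ideal defining $R_{\rm ab,\sigma}$ you must also use morphisms such as $x_0x_1x_2,\,x_1x_0x_2:[2]\to[0]$ and $x_0x_1^s:[1]\to[0]$.
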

\begin{proof}
Given an algebra $A$, we can construct the map \eqref{SRAA} as follows. Fix a semi-free simplicial resolution $R \stackrel{\sim}{\to} A$ in $\s\iAlg_k $ and consider the natural map
\begin{equation}\la{dttot} k \otimes^{\L}_{\Delta H} {\rm C}(\HR) \to  k \otimes_{\Delta H} {\rm C}(\HR)  \end{equation}
where ${\rm C}$ stands for the associated chain complex. By the K\"{u}nneth Theorem, $\HR \xrightarrow{\sim} \HA$ is a weak equivalence. Hence $\H_\bullet[k \otimes^{\L}_{\Delta H} {\rm C}(\HR)] \cong \HO_\bullet(A)$. Note that there are natural isomorphisms of simplicial $k$-vector spaces $k \otimes_{\Delta H} \HR \cong R_{\rm ab,\sigma} \cong (R)_1^{\ob}$, where $R_{\rm ab,\sigma}$ denotes the quotient of the abelianization $R_{\rm ab}$ of $R$ by the ideal generated by the image of $\id -\sigma$ ($\sigma$ being the induced involution on $R_{\rm ab}$). The first of these isomophisms is by \cite[Thm 5.8]{G22}, while the second isomorphism is by definition (see Section \ref{SDRep}). Hence $\H_\bullet[k \otimes_{\Delta H} {\rm C}(\HR)] \cong \HRi(A,k)$.Thus, \eqref{dttot} induces on homology a graded $k$-linear map $\SR(A)_\bullet:\HO_\bullet(A) \to \HRi(A,k)$. 
 
 Next, note that the map $\Tr_1(R):R_{\natural} \to (R)^{\ob}_1$ inducing $\Tr(A)_\bullet$ is just the canonical quotient to $(R)^{\ob}_1:=R_{\rm ab, \sigma}$, and the map $i^{\ast}$ coincides with $ \Tr_1(R) : k \otimes_{\Delta D^{\rm op}} B^{\rm dih}R \to k \otimes_{\Delta H} \HR$ when restricted to the non-derived tensor products. Hence the following diagram commutes in 
 $ {\mathscr D}(k) $: 
 $$  
 \begin{diagram}[small]
   k \otimes^{\L}_{\Delta D^{\rm op}} {\rm C}(B^{\rm dih}R)  & \rTo^{i^{\ast}} & k \otimes^{\L}_{\Delta H} {\rm C}(\HR)\\
    \dTo & & \dTo\\
   k \otimes_{\Delta D^{\rm op}} {\rm C}(B^{\rm dih}R) & \rTo^{\Tr_1(R)} &  k \otimes_{\Delta H} {\rm C}(\HR)\\
 \end{diagram}
 $$
 which gives the natural isomorphism $ \HOR(A)_\bullet \circ i^* \cong \Tr(A)_\bullet $ on homology.

 It remains to prove that the map \eqref{SRAA} constructed above is an algebra homomorphism. To this end,
consider the bisimplicial $E_{\infty}$-algebra $G{R}_\bullet$, where $G(\mbox{--})_\bullet:\iAlg_k \to \mathrm{sE}_\infty\Alg_k$ is Grave's functor (see the proof of Lemma \ref{algstr}) .The total (i.e. diagonal) simplicial vector space of $GR_\bullet$ is a (simplicial) $E_{\infty}$-algebra.
Recall that 
$$GR_{\bullet}  = k[(N(\mbox{--}\downarrow \Delta H_+))_i] \otimes_{\Delta H_+} B_{\rm hOct,+}R \ .
$$
This is a (bi)simplicial vector spaceoverl, and as in (the proof of) \cite[Lemma 3.1]{Au2}, 
$$GR_{i} \,\cong\, \bigoplus_{[n]\stackrel{\id}{\to} [n] \stackrel{f_1}{\to} [n_1] \stackrel{f_2}{\to} \ldots \stackrel{f_i}{\to} [n_i]} R^{\otimes n}\,,$$
where the direct sum runs over chains of morphisms $[n]\stackrel{\id}{\to} [n] \stackrel{f_1}{\to} [n_1] \stackrel{f_2}{\to} \ldots \stackrel{f_i}{\to} [n_i]$ of length $i+1$ in $\Delta H_+$. Since 
$$ GA_{i} \,\cong\, \bigoplus_{[n]\stackrel{\id}{\to} [n] \stackrel{f_1}{\to} [n_1] \stackrel{f_2}{\to} \ldots \stackrel{f_i}{\to} [n_i]}  A^{\otimes n} \,,$$
$GR_i \to GA_i$ is a weak equivalence for all $i$ by the K\"{u}nneth theorem. Hence, $GR_\bullet \to GA_\bullet$ is a weak equivalence of simplicial $E_{\infty}$-algebras.
Hence, ${\tt diag}(G{R}_\bullet) \xrightarrow{\sim} {\tt diag}(GA_\bullet)$ is a quasi-isomorphism of $E_{\infty}$-algebras. On the other hand, the natural map $G(R_i) \to \HO_0(R_i) \cong (R_i)_{\rm ab,\sigma}$ is an $E_{\infty}$-algebra homomorphism by \cite[Thm 6.8]{G22}, where $R_i$ is the algebra of $i$-simplices on $R$. 
Hence the map ${\tt diag}(GR_\bullet) \to R_{\rm ab, \sigma}$ is a map of $E_{\infty}$-algebras.
Since the map $C(R_i) \to \HO_0(R_i) \cong (R_i)_{\rm ab,\sigma}$ represents the map $k \otimes^{\L}_{\Delta H} \HR_i \to k \otimes_{\Delta H} \HR_i$, the map induced on homology (of the associated chain complexes) by the map ${\tt diag}(GR_\bullet) \to R_{\rm ab, \sigma}$ is the same as that induced on homologies by \eqref{dttot}, which is $\HOR(A)_\bullet$. Since the map ${\tt diag}(GR) \to R_{\rm ab, \sigma}$ is a homomorphism of simplicial $E_{\infty}$-algebras, $\HOR(A)_\bullet$ is a morphism of graded commutative algebras. This finishes the proof of the proposition.
\end{proof}
%

%
%

\subsection{Hyperoctahedral homology vs representation homology} 
\la{S3.2}
{\it From now on, we will assume once again that $k$ is a field of characteristic $0$.}

\begin{theorem} \la{T1a}
For any simplicial algebra $A \in \s\iAlg_k$, the map $\HOR(A)_\bullet$ of Proposition~\ref{factorTr} is an isomorphism of graded commutative $k$-algebras.
\end{theorem}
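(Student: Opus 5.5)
The approach follows the proof of \cite[Theorem 3.1]{BR22}, which treats the symmetric case. Proposition~\ref{factorTr} already shows that $\HOR(A)_\bullet$ is a homomorphism of graded commutative algebras. So it only remains to prove that it is bijective. Both sides are invariant under weak equivalences in $A$: the left side by the K\"unneth argument used in Proposition~\ref{factorTr}, the right side by definition. Hence we may replace $A$ by a semi-free resolution $R$ and show that the natural map
$$ k\otimes^{\L}_{\Delta H}{\rm C}(\HR)\,\to\, k\otimes_{\Delta H}{\rm C}(\HR) $$
is a quasi-isomorphism. It suffices to prove this levelwise in the simplicial direction: the source and target are diagonals of bisimplicial objects, so a spectral sequence (or the diagonal lemma) reduces the claim to each $R_m$. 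Each $R_m$ is a free involutive algebra, $R_m\cong T(V\oplus \sigma V)$ with $\sigma$ exchanging the two copies, that is, the free involutive algebra on a vector space $V$. The key claim, the analogue of the relevant proposition of \cite{BR22c}, is therefore the following: for a free involutive algebra $T_\sigma V$ in characteristic $0$,
$$ {\rm HO}_i(T_\sigma V)=0 \ \text{ for } i>0\,, $$
with ${\rm HO}_0(T_\sigma V)\cong (T_\sigma V)_{{\rm ab},\sigma}\cong \Sym(V)$ by \cite[Thm 5.8]{G22}.

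To prove the vanishing, I would first use naturality in $V$ to decompose $B_{\rm hOct}(T_\sigma V)$ by polynomial degree. Under the action of $\GL(V)$, the degree-$d$ part of $T_\sigma V^{\otimes(n+1)}$ has the form $V^{\otimes d}\otimes_{k[S_d]} M_d([n])$, where $M_d([n])$ is the linear span of the words realising a degree-$d$ monomial. Concretely, $M_d([n])$ has a basis of $(n+1)$-tuples of words in the letters $x_1,\dots,x_d,x_1^s,\dots,x_d^s$ in which each index occurs exactly once. Since $\Delta H$ acts compatibly, this identifies $M_d$ with the representable functor $k[\Hom_{\Delta H}([d-1],-)]$ tensored over $k[H_d]$ with the sign-trivial module that kills $C_2^d$. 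Indeed, each letter may appear as $x_i$ or $x_i^s$, and these are identified because we use $V\oplus\sigma V$ with $\sigma$ swapping; equivalently, $V^{\otimes d}$ carries the $H_d$-action in which $C_2^d$ acts trivially.

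Given this identification, $\Tor^{\Delta H}_\bullet$ of the degree-$d$ piece becomes
$$ V^{\otimes d}\otimes^{\L}_{k[H_d]}\big(k\otimes^{\L}_{\Delta H}k[\Hom_{\Delta H}([d-1],-)]\big)\,. $$
The inner $\Tor$ of a representable functor is $k$ concentrated in degree $0$ (Yoneda). The outer derived tensor over $k[H_d]$ has no higher terms, since $H_d$ is finite and $\mathrm{char}\,k=0$. This yields vanishing in positive degrees and $\Sym^d(V)$ in degree $0$.

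The main obstacle I expect is the identification of the degree-$d$ summand of $B_{\rm hOct}(T_\sigma V)$ as a representable $\Delta H$-module induced from $H_d$. This requires checking that the substitution formula \eqref{hobarmor} exactly matches composition in $\Delta H$. In particular, the reversal $X^s$ of a word must correspond to applying $\sigma$ to products in $T_\sigma V$, and the choice of generators (free on $V$, with $\sigma$-images as the separate letters) must make the $C_2^d$-factor act as described. If the direct representability fails, my fallback is to adapt Ault's argument \cite{Au1} for free algebras in the symmetric case, or Graves' computation for free involutive algebras \cite{G21}. Finally, the levelwise quasi-isomorphisms assemble to a quasi-isomorphism on diagonals, and the identification with $(R)_1^{\ob}$ from the proof of Proposition~\ref{factorTr} finishes the argument.
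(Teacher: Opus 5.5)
Your reduction matches the paper's. You replace $A$ by a semi-free resolution $R$ and note that each $B_{\rm hOct}(R_m)$ is the hyperoctahedral bar construction of a free involutive algebra. You then deduce the theorem from $\HO_{>0}(T_\sigma V)=0$, which is the paper's Proposition~\ref{pC}, with multiplicativity coming from Proposition~\ref{factorTr}.

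\textbf{Where you differ.} The difference lies in how you prove that key proposition.
\begin{itemize}
\item The paper argues topologically, quoting Graves: $\HO_\bullet(k[J(X)])\cong \H_\bullet(\mathcal{N}X;k)$ and $\mathcal{N}X\simeq\coprod_n E_\ast H_n\times_{H_n}(C_2\times X)^n$. It then uses $\H_{>0}\big(H_n,(k[C_2]\otimes V)^{\otimes n}\big)=0$ in characteristic $0$.
\item You split $B_{\rm hOct}(T_\sigma V)$ by total word length into induced representable $\Delta H$-modules, then apply Yoneda and Maschke.
\end{itemize}
Your argument is the chain-level counterpart of Graves' spaces. The contractible free $H_{n+1}$-spaces $N([n]\downarrow\Delta H_+)$ play the role of ``Tor of a representable is trivial''. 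The advantage is that it is self-contained: it needs only a basis count, not Graves' (and ultimately Ault's) combinatorial lemmas.

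The obstacle you worried about is not real. The map $f\otimes w\mapsto B_{\rm hOct}(T_\sigma V)(f)(w)$, from $k[\Hom_{\Delta H}([d-1],-)]\otimes(V\oplus\sigma V)^{\otimes d}$, is natural simply because $B_{\rm hOct}$ is a functor. Bijectivity onto the length-$d$ part, after passing to $H_d$-orbits, is a count of bases, since $H_d$ acts freely on $\Hom_{\Delta H}([d-1],[n])$.

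\textbf{Two points need correcting.}

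First, your description of the coefficient module is wrong. The letters $x_i$ and $x_i^s$ are \emph{not} identified in $T(V\oplus\sigma V)$; they correspond to the distinct basis vectors $v$ and $\sigma v$.
\begin{itemize}
\item Your first formula, $V^{\otimes d}\otimes_{k[S_d]}k[\Hom_{\Delta H}([d-1],-)]$, is correct.
\item It equals $k[\Hom_{\Delta H}([d-1],-)]\otimes_{k[H_d]}W_d$ with $W_d=(V\oplus\sigma V)^{\otimes d}\cong\mathrm{Ind}_{S_d}^{H_d}V^{\otimes d}$. Here $C_2^d$ swaps the two summands; it does not act trivially.
\item With $C_2^d$ acting trivially on $V^{\otimes d}$ you get a module of the wrong size. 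Already for $d=1$, $n=0$ it gives $V$ instead of $V\oplus\sigma V$.
\end{itemize}
Your final answer survives only because $(W_d)_{H_d}=(V^{\otimes d})_{S_d}=\Sym^d V$ in both cases.

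Second, the $d=0$ summand is the constant functor $k$. It is not representable in $\Delta H$, since there is no object $[-1]$, so Yoneda does not apply. You need $\Tor^{\Delta H}_\bullet(k,k)=k$, that is, $k$-acyclicity of the nerve of $\Delta H$. The cleanest fix is to work over $\Delta H_+$ via \cite[Thm 6.4]{G21}, as the paper does in the proof of Lemma~\ref{algstr}. There the constant functor is represented by the initial object $[-1]$. The pieces with $d\ge 1$ stay induced from representables, because $\Hom_{\Delta H_+}([d-1],[-1])=\emptyset$ and $B_{\rm hOct,+}(T_\sigma V)([-1])=k$ has length $0$.
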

The proof of Theorem~\ref{T1a} is based on the following key proposition.
\begin{proposition} \la{pC}
Let V be a k-vector space. Then
$$\Tor_i^{\Delta H}(k, \HH(F(V)))\simeq  \begin{cases}
\Sym(V)
   & \text{ if }  i=0 \\
0
   & \text{ if } i\not = 0
\end{cases}$$
\end{proposition}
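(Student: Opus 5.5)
The plan is to imitate the symmetric case of \cite{BR22c} (going back to Fiedorowicz and Ault \cite{F,Au1}). The idea is to decompose $\HH(F(V))$ into a direct sum of summands of representable functors on $\Delta H$. Here $F(V)$ is the free involutive algebra on $V$, i.e. the tensor algebra $T(V\oplus V^s)$ with the involution that swaps $v$ and $v^s$ and reverses words.

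\textbf{Step 1: monomial basis.} Choose a basis $\{v_\lambda\}$ of $V$. Then $F(V)^{\otimes(m+1)}$ has a basis of tensor products of $m+1$ words in the letters $v_\lambda, v_\lambda^s$. Fix such a tensor with $n$ letters in total, and record the positions of these letters, ordered by position, as formal variables $x_0,\dots,x_{n-1}$ together with their $s$-exponents. This datum is exactly a morphism $f\in\Hom_{\Delta H}([n-1],[m])$, written in the ``tensor of noncommutative monomials'' notation of Section~\ref{bardihhoct}. The remaining data is an $n$-tuple of basis vectors, i.e. an element of $V^{\otimes n}$.

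\textbf{Step 2: equivariance and the isomorphism of $\Delta H$-modules.} Comparing with the substitution formula \eqref{hobarmor}, composition in $\Delta H$ corresponds to applying $\HH(F(V))$ to the word data. Reindexing by an element of $H_n={\rm Aut}([n-1])$ permutes the letters and applies $s$ to some of them. Since $s$ acts on the letters $v_\lambda^{\pm}$ through the generator $x^s$, one can absorb $C_2^n$, so that $C_2^n$ acts trivially on $V^{\otimes n}$ while $S_n$ permutes the factors. This should give an isomorphism of left $\Delta H$-modules
\[
\HH(F(V))\;\cong\;k\;\oplus\;\bigoplus_{n\ge 1} V^{\otimes n}\otimes_{k[H_n]} k\big[\Hom_{\Delta H}([n-1],\mbox{--})\big],
\]
where the summand $k$ is the constant functor spanned by $1\otimes\cdots\otimes 1$. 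It is cleanest to work over $\Delta H_+$ as in the proof of Lemma~\ref{algstr}: there the $n=0$ term is the representable functor $k[\Hom([-1],\mbox{--})]$, and ${\rm Tor}^{\Delta H_+}\cong{\rm Tor}^{\Delta H}$ by \cite[Thm 6.4]{G21}. The key point to verify is that $H_n$ acts freely on $\Hom_{\Delta H}([n-1],[m])$ by precomposition. This follows from the unique factorization $g\circ\sigma$ of morphisms in $\Delta H$, and it is what makes the displayed map bijective on bases.

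\textbf{Step 3: computing Tor.} Representable functors $k[\Hom([n-1],\mbox{--})]$ are projective, and $k\otimes_{\Delta H}k[\Hom([n-1],\mbox{--})]\cong k$ by Yoneda. Since ${\rm char}\,k=0$, the module $V^{\otimes n}\otimes_{k[H_n]}P$ is a direct summand of $V^{\otimes n}\otimes P$ (averaging over $H_n$), so each summand is projective. Hence the higher Tor vanish, and
\[
{\rm Tor}_0\;\cong\;\bigoplus_{n\ge 0}V^{\otimes n}\otimes_{k[H_n]}k\;=\;\bigoplus_n (V^{\otimes n})_{S_n}\;\cong\;\Sym(V).
\]
This is consistent with $k\otimes_{\Delta H}\HH(F(V))\cong F(V)_{{\rm ab},\sigma}=\Sym(V)$, as used in the proof of Proposition~\ref{factorTr}.

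\textbf{Main obstacle.} The main difficulty is Step 2: checking carefully that the identification is natural in $\Delta H$ and that the $C_2$-parts of the exponents match the action of $\sigma$ on $F(V)$, including the reversal of words when $s$ is applied to a monomial. I would first verify this on the generators $d_i$, $s_j$, $t_n$, $r_n$ and on the $C_2$-factors of $H_{m+1}$, and then appeal to the substitution rule for general morphisms.
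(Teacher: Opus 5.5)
\textbf{Gap in Step~2.} Your strategy is sound: split $B_{\rm hOct,+}F(V)$ into summands of representable $\Delta H_+$-modules and read off Tor. It is also genuinely different from the paper, which identifies $F(V)\cong k[J(X)]$ and uses Graves' space-level results to reach $\bigoplus_n\H_\bullet(H_n,(k[C_2]\otimes V)^{\otimes n})$. However, the decomposition displayed in Step~2 is false as stated.

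$H_n$ acts freely on $\Hom_{\Delta H}([n-1],[m])$ with orbit set $\Hom_{\Delta}([n-1],[m])$. So if $C_2^n$ acts trivially on $V^{\otimes n}$, then
$$V^{\otimes n}\otimes_{k[H_n]}k[\Hom_{\Delta H}([n-1],[m])]\cong V^{\otimes n}\otimes k[\Hom_{\Delta}([n-1],[m])],$$
and the $s$-exponents are lost. For $n=1$, $m=0$ this is $V$, whereas the length-one part of $\HH(F(V))([0])=F(V)$ is $V\oplus V^s$. Correspondingly, the evident map $v\otimes f\mapsto \HH(F(V))(f)(v)$ is not well defined on these coinvariants. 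For $f=\id_{[0]}$ and $\tau=(s;\id)\in H_1$, the elements $f\otimes v$ and $(f\circ\tau)\otimes v$ are identified, but they map to $v$ and $\sigma(v)=v^s$ respectively.

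So the $C_2$-labels are not absorbed; dividing out by $C_2^n$ kills them. The free $H_n$-action, far from making your map bijective, shows the two sides differ by a factor $2^n$. Your $\Tor_0$ came out right only because $C_2^n$ also acts trivially on $k\otimes_{\Delta H}k[\Hom_{\Delta H}([n-1],\mbox{--})]\cong k$, which hid the discrepancy.

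\textbf{The repair.} Divide out only by $S_n$. This is in fact what your Step~1 produces: a morphism with trivial permutation part and arbitrary exponents is a representative of an $S_n$-orbit. The correct statement is
\[
B_{\rm hOct,+}F(V)\;\cong\;\bigoplus_{n\ge 0}V^{\otimes n}\otimes_{k[S_n]}k\big[\Hom_{\Delta H_+}([n-1],\mbox{--})\big],\qquad v\otimes f\mapsto \HH(F(V))(f)(v).
\]
Equivalently, use $(k[C_2]\otimes V)^{\otimes n}\otimes_{k[H_n]}(\cdots)$ with $C_2^n$ acting regularly on the $k[C_2]$ factors; this is exactly the coefficient module in the paper's proof.

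Naturality is automatic, since the map is defined through the functor $\HH(F(V))$ itself, so the generator-by-generator check you planned is unnecessary. Bijectivity on bases follows because $S_n$ acts freely with orbit set $\Hom_\Delta([n-1],[m])\times C_2^n$, which is precisely your position-and-exponent data.

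Step~3 then goes through, averaging over $S_n$, and gives $\Tor_0=\bigoplus_n(V^{\otimes n})_{S_n}=\Sym(V)$. Replacing averaging by a $k[S_n]$-projective resolution of $V^{\otimes n}$, the same argument gives $\Tor_i=\bigoplus_n\H_i(S_n;V^{\otimes n})$ in general. By Shapiro's lemma this matches the paper's formula. The corrected route is thus purely algebraic and avoids the appeal to \cite[Lemmas 7.13, 7.16]{G21} on which the paper's proof relies.
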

Our proof mirrors \cite{BR22c} and relies heavily on results of Graves in \cite{G21}, themselves referencing rather technical computations of Lemma~33 and Lemma~36 of \cite{Au1}.
\begin{proof}
Choose a basis in V, say a set $X\subset V$, so that $F(V)\cong F(k[X]) \cong k[J(X)]$, where $J(X)$ denotes the free involutive monoid generated by the set, that is, the classical James construction applied to the (based) set $(C_2 \times X)_+:= (C_2 \times X) \sqcup \{\ast\}$ (with $\ast$ regarded as the basepoint). We regard $J(X)$ as the $C_2$-James construction of the discrete (based) space $X_+$, where, as in \cite{G21}, the classical James construction as an endofunctor of $Top_*$ has been extended an endofunctor of $C_2-Top_*$ according to the rule $t(x_1,...,x_n)=(tx_n,...,tx_1)$. Then, by \cite[Lemma 7.13]{G21} and (the proof of) \cite[Lemma 15]{G21}, we have 
$$
{\rm HO}_\bullet(F(V))\, \cong\, {\rm HO}_\bullet(k[J(X)])\, \cong \, \H_\bullet[\mathcal{N}X; k]\ ,
$$ 
where 
$$\mathcal{N}X \,:=\, \coprod_{n \geq -1} N([n] \downarrow \Delta H_+) \times_{H_{n+1}} (C_2 \times X)^{n+1}\ . $$
%
%
Here, $\Delta H_+$ is the category obtained by adding to $\Delta H$ the initial object $[-1]$, and $N(\mbox{--})$ in the formula above stands for the nerve. By \cite[Lemma 7.16]{G21}, there is a weak equivalence $\mathcal{N}X \simeq \mathcal{D}X$, where 
$$\mathcal{D}X\,:=\, \coprod_{n \geqslant 0} E_{\ast}H_{n} \times_{H_{n}} (C_2 \times  X)^{n}\ . $$
Hence,
\begin{align*} 
{\rm HO}_\bullet(F(V)) &\cong\,\H_\bullet[\mathcal{D}X; k] \\
                       & \cong\, \Moplus_{n=0}^{\infty} \H_\bullet\left(k[E_{\ast}H_{n}] \otimes_{k[H_n]} k[C_2 \times X]^{\otimes n}\right)\\
                       & \cong\,  \Moplus_{n=0}^{\infty} \H_\bullet \left( k[E_{\ast}H_{n}] \otimes_{k[H_n]} k[k[C_2] \otimes V]^{\otimes n}\right)\\
                       & \cong\, \Moplus_{n=0}^{\infty} \H_\bullet \left( k \otimes^{\L}_{k[H_n]} k[k[C_2] \otimes V]^{\otimes n}\right)\\
                      & \cong\,  \Moplus_{n=0}^{\infty} \H_\bullet \left(H_n,(k[C_2] \otimes V)^{\otimes n}\right)\\
                      &\cong\, \Moplus_{n=0}^{\infty} \H_0 \left(H_n,(k[C_2] \otimes V)^{\otimes n}\right)\\
                      & \cong\, {\rm HO}_0(F(V))\\
                      & \cong\,\Sym(V)\ .\\
\end{align*}
The third isomorphism is above is because $X$ is a discrete basis of $V$. Since $E_{\ast}H_n$ is contractible for all $n$, $k[E_\ast H_n]$ is weakly equivalent to $k$. Further, $k[E_\bullet \H_n]$ is a levelwise projective simplicial right $\Sigma_n$-module. This explains the fourth isomorphism above, from which the fifth isomorphism above follows by definition.
The sixth isomorphism above holds because  $k$ is a field of characteristic $0$ (and the groups $H_n$ are finite), and the last isomorphism above is by \cite[Theorem 6.8]{G22}. This completes the proof of the desired lemma.
\end{proof}
Now, we can proceed with

\begin{proof}[Proof of Theorem \ref{T1a}]
Let $R \stackrel{\sim}{\to} A$ be a semi-free simplicial resolution. Then, the (left) $\Delta H$-module of $n$-simplices in $\HR$ is of the form $\HH(F(V))$ for some involutive vector space $V$. It follows from Proposition \ref{pC} that the following natural chain map is a quasi-isomorphism:
\begin{equation} \la{srr} k \otimes^{\L}_{\Delta H} {\rm C}(\HR) \to k \otimes_{\Delta H} {\rm C}(\HR) \cong {\rm C}(R_{\rm ab, \sigma})\,, \end{equation}
where last isomorphism is a consequence of \cite[Theorem 6.8]{G22}. Since \eqref{srr} is precisely the map that induces  $\HOR(A)_\bullet$ on homology, the $\HOR(A)_\bullet$ is an isomorphism of graded $k$-vector spaces.
By Proposition \ref{factorTr}, we then conclude that this isomorphism of vector spaces
is actually an isomorphism of $k$-algebras.
\end{proof}
For further applications, it will be convenient for us to restate the result of Theorem~\ref{T1a} for differential graded algebras. Recall once again (see proof of Proposition \ref{simp}) that since $\iAlg_k$ is the category of algebras over an operad, the monoidal Dold-Kan correspondence (see \cite[Theorem A.1]{BRYII}) implies that there is a Quillen equivalence 

$$ N^{\ast}\,:\,{\iDGA_k}^+ \rightleftarrows \s\iAlg_k\,:\,N\,,\qquad N^{\ast}\,:\,\cDGA_k^+ \rightleftarrows \s\cAlg_k\,:\,N$$

where ${\iDGA_k}^+$ (resp., $\cDGA_k^+$) stands for the category of involutive DG algebras (resp., commutative DG algebras) concentrated in nonnegative homological degree and $N$ stands for the normalized chain complex functor.
\begin{proposition}\la{tA}
For any $A \in {\iDGA_k}^+$, there is an isomorphism of graded $k$-vector spaces $\HO_\bullet(A) \cong \HO_\bullet(N^{\ast}A)$. As a consequence, there is an isomorphism of graded commutative $k$-algebras
$${\rm HO}_\bullet(A) \cong \HRi(A,k)\,,$$
where $\HO_\bullet(A)$ inherits its algebra structure from $\HO_\bullet(N^*A)$.
\end{proposition}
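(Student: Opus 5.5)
The plan is to reduce the DG statement to Theorem~\ref{T1a} through the monoidal Dold--Kan correspondence. The main point is to compare the hyperoctahedral bar constructions of $A$ and $N^{\ast}A$.

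\emph{Step 1: comparing the bar constructions.} For $A \in {\iDGA_k}^+$, $\HO_\bullet(A)$ is defined via the DG hyperoctahedral bar construction $[n]\mapsto A^{\otimes(n+1)}$, a functor $\Delta H \to \Com_k$, with the derived tensor product $k\otimes^{\L}_{\Delta H}(\mbox{--})$. The simplicial involutive algebra $N^{\ast}A$ gives the simplicial $\Delta H$-module $B_{\rm hOct}(N^{\ast}A)$ with $[n]\mapsto (N^{\ast}A)^{\otimes(n+1)}$, the tensor product taken levelwise. I would apply the normalized chain functor and the Eilenberg--Zilber (shuffle) map to get a natural quasi-isomorphism
\[
N\big((N^{\ast}A)^{\otimes (n+1)}\big)\;\simeq\; (NN^{\ast}A)^{\otimes(n+1)}\;\cong\; A^{\otimes(n+1)} .
\]
Here the shuffle map is only associative and unital, and it is not symmetric. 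Compatibility with the $\Delta$-structure (multiplications and units) follows from the lax monoidal structure that underlies the monoidal Dold--Kan correspondence \cite[Theorem A.1]{BRYII}.

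\emph{Main obstacle.} The difficulty is the $H_{n+1}$-action, which permutes tensor factors and applies $\sigma$. The shuffle map is commutative only up to homotopy, and the Alexander--Whitney map is not commutative at all. So a strict natural transformation of $\Delta H$-modules is not available. To get around this I would pass through a zig-zag. In characteristic $0$ the shuffle map is $S_{n+1}$-equivariant; it is the symmetric monoidal Eilenberg--Zilber map $\nabla$, which commutes with the symmetry up to the usual Koszul sign. It is also compatible with $\sigma$, because $N^{\ast}$ and $N$ intertwine the involutions. This makes $\nabla$ strictly compatible with all generators of $\Delta H$, so it is a morphism of $\Delta H$-modules in chain complexes. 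If that verification fails, I would instead use the fact that $k[H_{n+1}]$ is semisimple in characteristic $0$ and compare levelwise, via the spectral sequence of the standard resolution $k[N(\mbox{--}\downarrow\Delta H)]$ from the proof of Lemma~\ref{algstr}. Either way, a levelwise quasi-isomorphism of $\Delta H$-modules induces an isomorphism on $\Tor^{\Delta H}_\bullet(k,\mbox{--})$, which gives $\HO_\bullet(A)\cong\HO_\bullet(N^{\ast}A)$.

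\emph{Step 2: assembling the algebra isomorphism.} By Theorem~\ref{T1a}, $\HO_\bullet(N^{\ast}A)\cong \HRi(N^{\ast}A,k)$ as graded commutative algebras. Next, as in the proof of Proposition~\ref{simp}, I take a cofibrant resolution $QA\sonto A$ in ${\iDGA_k}^+$. Then $N^{\ast}QA$ is cofibrant, and $N^{\ast}$ preserves weak equivalences, so $N^{\ast}QA$ resolves $N^{\ast}A$. The commutative square $(N^{\ast}(\mbox{--}))^{\ob}_1\cong N^{\ast}((\mbox{--})^{\ob}_1)$ then gives
\[
\HRi(N^{\ast}A,k)\cong\pi_\bullet N^{\ast}((QA)^{\ob}_1)\cong \H_\bullet((QA)^{\ob}_1)=\HRi(A,k).
\]
This chain of isomorphisms respects products, because $N^{\ast}$ is compatible with the commutative algebra structures. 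Finally, the algebra structure on $\HO_\bullet(A)$ is by definition the one transported from $\HO_\bullet(N^{\ast}A)$, so the composite is an isomorphism of graded commutative $k$-algebras.
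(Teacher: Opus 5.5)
Your route is essentially the paper's. You compare $B_{\rm hOct}(A)$ with $N(B_{\rm hOct}(N^{\ast}A))$ using K\"{u}nneth and the Eilenberg--Zilber (shuffle) map, and then use the square $(N^{\ast}(\mbox{--}))_{\rm ab,\sigma}\cong N^{\ast}((\mbox{--})_{\rm ab,\sigma})$ together with Theorem~\ref{T1a}. Your Step~2 is exactly the paper's argument. Step~1, however, contains two errors that need fixing.

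\textbf{First: $NN^{\ast}A\cong A$ is false.} $N^{\ast}$ is the left adjoint of $N$ on (involutive) algebras, not the Dold--Kan inverse $\Gamma$ on underlying complexes. Already for a tensor algebra, $NN^{\ast}T(W)=\bigoplus_m N((\Gamma W)^{\otimes m})$, which is only quasi-isomorphic to $T(W)$. What you need, and what the paper uses, is that the unit $A\to NN^{\ast}A$ is a quasi-isomorphism of involutive DG algebras. This follows from the facts you already invoke in Step~2: $QA\to NN^{\ast}QA$ is a weak equivalence for cofibrant $QA$, both $N^{\ast}$ and $N$ preserve weak equivalences, and then two-out-of-three applies. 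The unit induces a morphism of $\Delta H$-modules $B_{\rm hOct}(A)\to B_{\rm hOct}(NN^{\ast}A)$, which is a levelwise quasi-isomorphism by K\"{u}nneth. Composing with the shuffle map gives the paper's chain $B_{\rm hOct}(A)\to B_{\rm hOct}(NN^{\ast}A)\to N(B_{\rm hOct}(N^{\ast}A))$.

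\textbf{Second: your ``main obstacle'' is not an obstacle.} The shuffle map is strictly symmetric, with the Koszul sign, over any commutative ring; it is the Alexander--Whitney map that fails to be symmetric. So the opening sentence of that paragraph is wrong, and the appeal to characteristic~$0$ is unnecessary. Symmetry, associativity, unitality and naturality (which handles $\sigma$) together make the shuffle map strictly a morphism of $\Delta H$-modules. That is why the paper can cite it without further comment.

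You should also drop the fallback via semisimplicity of $k[H_{n+1}]$. A levelwise quasi-isomorphism that is not $\Delta H$-linear induces no map on $\Tor^{\Delta H}_\bullet(k,\mbox{--})$, so that argument would not work as stated.
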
 
\begin{proof}
There are natural quasi-isomorphisms of complexes of $\Delta H$-modules
$$\HA \stackrel{\sim}{\to} \HH (NN^*A) \stackrel{\sim}{\to} N(\HH (N^* A))\,, $$
where the the first arrow --- induced by the natural weak equivalence $A \to NN^*A$ of DG algebras --- is a weak equivalence by the K\"{u}nneth Theorem, and the second arrow
is a Eilenberg-Zilber map, which is also known to be a quasi-isomorphism by \cite[Sect. A.2]{BRYII}.  Thus, on homology, we get natural isomorphisms of graded $k$-vector spaces
$$\HO_\bullet(A):= \H_\bullet[k \otimes^{\L}_{\Delta H} \HA] \,\cong\, \H_\bullet[k \otimes^{\L}_{\Delta H} N(\HH (N^* A))] =: \HO_\bullet(N^*A)\ . $$
proving the first statement in the desired proposition. 

Next, since the functor $(\,\mbox{--}\,)_{\rm ab,\sigma}$ is left adjoint, it commutes with $N^*$: i.e., there is a commutative diagram of functors 
\begin{equation} \la{abnst}
\begin{diagram}[small]
{\iDGA_k}^{+} & \rTo^{N^*} & \s\iAlg_k\\
 \dTo^{(\mbox{--})_{\rm ab,\sigma}} & &  \dTo_{(\mbox{--})_{\rm ab,\sigma}}\\
 \cDGA_k^{+} & \rTo^{N^*} & \s\cAlg_k
 \end{diagram}\ .
\end{equation}
Let $R \stackrel{\sim}{\to} A$ be a semifree resolution of $A$ in ${\iDGA_k}^{+}$. There are isomorphisms (that are natural in $R$) 
\begin{equation} \la{nnstrh} \HRi(A, k) \,\cong\, \H_\bullet(R_{\rm ab,\sigma}) \,\cong\, \pi_{\bullet}[N^*(R_{\rm ab,\sigma})] \,\cong\, \pi_\bullet[(N^* R)_{\rm ab,\sigma}]\,\cong\,\HRi(N^*A, k)\ . \end{equation}
The first isomorphism above is by definition (see Section \ref{SDRep}), the second isomorphism above is by \eqref{abnst}, and the last isomorphism above is by definition (see Definition \ref{defsrep}) and the fact that $N^*R \to N^*A$ is a semi-free resolution of $A$ (by \cite[Prop. A.2]{BRYII} and since $N^*$ preserves weak-equivalences).  This shows that there is an isomorphism of graded commutative $k$-algebras
\begin{equation} \la{hrlnast} \HRi(A,k) \cong \HRi(N^*A, k)\ . \end{equation}
The second statement of the desired proposition now follows immediately from Theorem \ref{T1a}.
\end{proof}

\subsection{Hyperoctahedral homology of universal enveloping algebras}
\la{S3.3}

Let $\mfa \in \DGL_k$ be a DG Lie algebra. 
\begin{definition} \la{iliealg}
An {\it involution} $\sigma: \mfa \to \mfa$ is a map of complexes such that $\sigma^2=\id$, and 
$\sigma \circ [\mbox{--},\mbox{--}]\,=\,- [\mbox{--},\mbox{--}] \circ (\sigma \otimes \sigma)\,:\,\mfa \otimes \mfa \to \mfa$. Explicitly, for all $a,b \in \mfa$, we have 
$$\sigma([a,b])\,=\,-[\sigma(a), \sigma(b)]\ .$$
\end{definition}

Note that if $\mfa \in \DGL_k$ is equipped with an involution $\sigma$, then the $-1$-eigenspace $\mfa^{-}$ of the involution on $\mfa$ is a DG Lie subalgebra of $\mfa$. For example, if $\mfa=\mathfrak{gl}_n(k)$ with the transpose as involution, then $\mfa^{-}=\mathfrak{o}_n(k)$.

\begin{remark} \la{indinv}
Clearly, if $\sigma$ is an involution on $\mfa \in \DGL_k$, then the involution on the tensor algebra $T_k \mfa \in \DGA_k$ whose restriction on $\mfa$ is $\sigma$ descends to an involution on the universal enveloping algebra $U\mfa$ of $\mfa$. Thus, $U\mfa \in \iDGA_k$. Abusing notation, we shall continue to denote this notation by $\sigma$.
\end{remark}

\begin{lemma} \la{indinvs}
Let $\mfa \in \DGL_k$. Then, the Chevalley-Eilenberg coalgebra $\C(\mfa;k)$ is a co-commutative skew involutive DG coalgebra. Consequently, the map $\cb(\C(\mfa;k)) \stackrel{\sim}{\to} U\mfa$ is a (cofibrant) resolution in the category $\iDGA_{k/k}$ of (augmented) involutive DG $k$-algebras.
\end{lemma}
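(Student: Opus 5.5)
The lemma has two parts: a structural one (the skew involution on $\C(\mfa;k)$) and a homotopical one (the resolution $\cb(\C(\mfa;k)) \to U\mfa$ respects involutions and is cofibrant in $\iDGA_{k/k}$). In the statement $\mfa$ should be read as carrying an involution $\sigma$ in the sense of Definition~\ref{iliealg}.

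For the first part, recall that $\C(\mfa;k) = (\Sym^c(\mfa[1]), d_1+d_2)$ is the cofree cocommutative conilpotent coalgebra on $\mfa[1]$. Here $d_1$ is induced by the differential of $\mfa$, and $d_2$ is the coderivation whose corestriction $\Sym^2(\mfa[1]) \to \mfa[1]$ is the shifted bracket. I define $\sigma_{\C}$ on $\Sym^c(\mfa[1])$ by
$$\sigma_{\C}(sx_1\cdots sx_p) := (-1)^{p+1}\, s\sigma(x_1)\cdots s\sigma(x_p)\,,$$
so that it is $-\id$ on $k$ and $\sigma$ on the cogenerators $\mfa[1]$; this matches Lemma~\ref{basskc}. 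The operator $\Sym(\sigma[1])$ is a coalgebra automorphism, since the deconcatenation/shuffle coproduct is natural in the generating space, and it squares to the identity. Multiplying by $(-1)^{p+1}$ turns this into the skew condition: if $\Delta(w)$ has components of weights $(q,p-q)$, then $(-1)^{p+1} = -(-1)^{q+1}(-1)^{p-q+1}$. Cocommutativity lets me insert the swap $\tau$ for free, so $\Delta\circ\sigma_{\C} = -\tau\circ(\sigma_{\C}\otimes\sigma_{\C})\circ\Delta$. Because $\C(\mfa;k)$ is cocommutative, I could equally take $-\sigma_{\C}$ to be an honest involution; the skew version is the one the cobar construction requires (Proposition~\ref{skinvtoinv}).

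It remains to check that $\sigma_{\C}$ commutes with $d_1+d_2$. For $d_1$ this is clear, because $\sigma$ is a chain map and $d_1$ preserves weight. For $d_2$, both $\sigma_{\C}\circ d_2$ and $d_2\circ\sigma_{\C}$ are coderivations along the same coalgebra map up to the weight sign. It therefore suffices to compare their corestrictions to $\mfa[1]$ on $\Sym^2(\mfa[1])$. There the sign $(-1)^{2+1}=-1$ exactly matches $\sigma([a,b]) = -[\sigma a,\sigma b]$. This weight-parity bookkeeping is the one place where real care is needed, and it is where I expect the main sign difficulty: the coderivation identity is twisted by the weight-dependent sign, so I will check it directly on a weight-$p$ element using the explicit formula for $d_2$ as a sum over pairs.

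For the second part, Proposition~\ref{skinvtoinv} makes $\cb(\C(\mfa;k))$ an augmented involutive DG algebra, with involution extending $\sigma_{\C}$ on generators. The classical twisting cochain $\C(\mfa;k) \to \mfa[1] \to U\mfa$, shifted appropriately, gives the standard quasi-isomorphism $\cb(\C(\mfa;k)) \to U\mfa$. On generators, this map kills weights $\geq 2$ and sends $s^{-1}sx \mapsto x$. I will verify that it commutes with the involutions on generators: on weight $1$ both sides give $\sigma(x)$, and on higher weights both sides give $0$. Equivalently, by Lemma~\ref{twbijinv}, the twisting cochain is $\Z_2$-fixed. Being an algebra map along anti-automorphisms, the compatibility then extends to all of $\cb$. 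Quasi-isomorphism is a statement about the underlying DG algebras, so it is the classical fact and needs no new argument. Cofibrancy in $\iDGA_{k/k}$ follows from the remark before Theorem~\ref{kosdualinv} that cobar constructions of skew involutive coaugmented coalgebras are cofibrant. Alternatively, I can filter $\cb(\C)$ by the weight and coradical filtration and exhibit it as a cell object on $\Z_2$-stable generating complexes; in characteristic $0$ these are projective $k[\Z_2]$-modules.
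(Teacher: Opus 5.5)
Your proposal is correct and follows the same route as the paper. You equip $\C(\mfa;k)$ with the skew involution extending $\sigma$ on $\mfa[1]$, check compatibility with the differential, note that the twisting cochain is $\Z_2$-fixed so that the classical quasi-isomorphism $\cb(\C(\mfa;k))\to U\mfa$ respects involutions, and deduce cofibrancy from Theorem~\ref{kosdualinv}.

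Two small remarks. First, your $\sigma_{\C}$ is exactly $-\C(-\sigma;k)$, minus the DG coalgebra automorphism induced functorially by the DG Lie automorphism $-\sigma$. This gives compatibility with $d_1+d_2$ and the skew condition at once, with no weight-parity bookkeeping. Second, the remark preceding Theorem~\ref{kosdualinv} states the converse of what you need (cofibrant objects are retracts of cobar constructions). Cofibrancy of $\cb(\C(\mfa;k))$ is better justified as in the paper: $\cb$ is left Quillen and every coalgebra is cofibrant in the Lef\`evre-Hasegawa-type structure. Your cell-filtration argument also works.
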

\begin{proof}
There is a skew involution on the graded (co-commutative) coalgebra $\C(\mfa;k)$ whose corestriction to $\mfa[1]$ is the involution on $\mfa$. That this skew involution is compatible with the differential is shown by a direct computation similar to (the proof of) Proposition \ref{skinvtoinv}. It follows that the twisting cochain $\tau: \C(\mfa;k)[-1] \to \mfa$ intertwines the involution on $\mfa$ with the skew-involution on $\C(\mfa;k)$. Consequently, 
$T_k\tau: \cb(\C(\mfa;k)) \to T_k\mfa$ is a morphism of involutive graded algebras. Since the map $\cb(\C(\mfa;k)) \to U\mfa$ factors (as a map of graded algebras, forgetting differentials) through the map $T_k\tau$, it is a map of involutive graded algebras. That this map is a quasi-isomorphism of DG algebras is well known (see, e.g., \cite[Theorem 6.1]{BFPRW} and the references therein). Finally $\cb(C)$ is cofibrant in $\iDGA_{k/k}$ by Theorem \ref{kosdualinv}. 
\end{proof}

It is easily seen that if $\mfa \in \DGL_k$ is involutive, then the induced skew involution on $C:=\C(\mfa;k)$ restricts to a skew involution on its co-augmentation coideal $\overline{C}$. There is therefore an induced linear map on the (reduced) Chevalley-Eilenberg homology $\rH_\bullet(\mfa;k)$ which squares to the identity. We let $\rH^{\rm ev}_{\bullet}(\mfa;k)$ denote the $+1$-eigenspace of this operator induced by the involution on $\mfa$. We now prove the following important consequence of Theorem \ref{T1a}.
\begin{theorem} 
\la{houainv}
Let $\mfa \in \DGL^+_k$ be an involutive DG Lie algebra over $k$ and let $ U\mfa \in \DGA^+_k $ denote its universal enveloping algebra (equipped with the involution induced by that on $\mfa$). 
There is an isomorphism of graded commutative $k$-algebras 
$$
{\rm HO}_\bullet(U\mfa) \,\cong\, \left(\Lambda_k[\rH^{\rm ev}_{\bullet+1}(\mfa;k)]\right)\,\ .
$$
\end{theorem}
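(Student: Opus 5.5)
By Proposition~\ref{tA}, applied to $A = U\mfa \in {\iDGA_k}^+$, there is an isomorphism of graded commutative algebras ${\rm HO}_\bullet(U\mfa)\cong \HRi(U\mfa,k)$. So the plan is to compute the one-dimensional involutive representation homology of $U\mfa$ via Koszul duality, namely Theorem~\ref{invmain} with $n=1$. By Lemma~\ref{indinvs}, $C:=\C(\mfa;k)$ is a cocommutative skew involutive coaugmented DG coalgebra, and $\cb(C)\sonto U\mfa$ is a cofibrant resolution in $\iDGA_{k/k}$. Theorem~\ref{invmain} therefore gives
$$\HRi(U\mfa,k)\,\cong\,\H_\bullet\big(\gl_1^{+,\ast}(\overline{C});k\big)\,,$$
where $\M_1(k)=k$ carries the identity involution, which is its only involution.

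Next I would identify the Lie coalgebra $\gl_1^{+,\ast}(\overline{C})$. For $n=1$ it is just $\overline{C}$ with cobracket $(\id-\tau)\circ\Delta$, restricted to the $+1$-eigenspace $\overline{C}^{+}$ of the skew involution. Since $C$ is cocommutative, $\tau\circ\Delta=\Delta$, so this cobracket vanishes identically. Hence $\gl_1^{+,\ast}(\overline{C})=\overline{C}^{+}$ is an abelian DG Lie coalgebra. Its Chevalley--Eilenberg algebra has $d_2=0$, so it is the free graded commutative algebra $\Lambda_k(\overline{C}^{+}[-1])$ with differential induced by that of $\overline{C}^{+}$. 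In characteristic $0$, homology commutes with graded symmetric powers, so
$$\H_\bullet\big(\C(\overline{C}^{+};k)\big)\,\cong\,\Lambda_k\big[\H_\bullet(\overline{C}^{+})[-1]\big]\,.$$

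Finally, the skew involution commutes with the differential of $\overline{C}$, so taking the $+1$-eigenspace is exact (again since $2$ is invertible). This gives $\H_\bullet(\overline{C}^{+})=\H_\bullet(\overline{C})^{+}=\rH^{\rm ev}_\bullet(\mfa;k)$, and after the shift $[-1]$ the generators in degree $m$ are $\rH^{\rm ev}_{m+1}(\mfa;k)$. This yields $\Lambda_k[\rH^{\rm ev}_{\bullet+1}(\mfa;k)]$. The isomorphism is one of algebras because Theorem~\ref{invmain} gives an isomorphism in $\Ho(\cDGA_{k/k})$ and Proposition~\ref{tA} is multiplicative.

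The main obstacle I expect is bookkeeping of conventions. The sign in the skew involution ($\sigma(1)=-1$, and the relation between skew involutions and involutions, $-\sigma$) must make the $+1$-eigenspace appearing in $\gl_1^{+,\ast}(\overline{C})$ agree with the eigenspace defining $\rH^{\rm ev}$. The grading shifts between $\overline{C}[-1]$ and the homological indexing must also match. I would check both against the degree-$0$ case: there $\HO_0(U\mfa)=(U\mfa)_{{\rm ab},\sigma}=\Sym(\mfa/([\mfa,\mfa]+{\rm Im}(\id-\sigma)))$. The homology $\rH_1$ is $\mfa/[\mfa,\mfa]$, and with the paper's convention its $\sigma$-fixed part matches this quotient.
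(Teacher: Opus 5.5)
Your proposal is correct and follows the paper's proof essentially step for step. You reduce to $\HRi(U\mfa,k)$ via Proposition~\ref{tA}, apply Theorem~\ref{invmain} with $n=1$ to the resolution $\cb(\C(\mfa;k))\sonto U\mfa$ of Lemma~\ref{indinvs}, use cocommutativity to kill the cobracket so that the Chevalley--Eilenberg algebra is $\Lambda_k(\overline{C}^{+}[-1])$, and commute $\Lambda_k$ with homology in characteristic $0$; your extra remarks (that taking $+1$-eigenspaces commutes with homology, and the degree-$0$ consistency check) just make explicit what the paper leaves implicit.
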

\begin{proof}
Since $\overline{C}$ is cocommutative, the Lie cobracket on $\overline{C}$ vanishes (see Section \ref{CEThm}). Hence, the Lie cobracket on $\overline{C}^+$ vanishes. Further note that $\gl^{+,\ast}_1(\overline{C}) \cong \overline{C}^+$ (as Lie coalgebras). Hence, there is an isomorphism of graded commutative algebras $\C(\gl_1^{+,\ast}(\overline{C});k) \cong \Lambda_k(\overline{C}^+[-1])$. By Theorem \ref{invmain},
\begin{equation} \la{hr1ua} \HRi(U\mfa,k) \,\cong\, \H_\bullet[\Lambda_k(\overline{C}^+[-1])]\ .\end{equation}
 It follows that there is a chain of natural isomorphisms 
\begin{equation}
\la{isoLie}
{\rm HO}_\bullet(U\mfa) \,\cong\, \HRi(U\mfa,k) \,\cong\,  \H_\bullet[\Lambda_k(\overline{C}^+[-1])] \,\cong\, 
\Lambda_k[\rH^{\rm ev}_{\bullet+1}(\mfa;k)]\ .
\end{equation}
The first isomorphism in \eqref{isoLie} follows from Proposition \ref{tA}, the second is \eqref{hr1ua}, and the last one is a consequence of the well-known fact that the graded symmetric algebra functor $\Lambda_k$ commutes with homology when $k$ is a field of characteristic $0$ (see, e.g. \cite[Appendix B, Prop. 2.1]{Q}). This proves the desired theorem.
\end{proof}

\begin{example}
Let $V$ be an finite dimensional $k$-vector space equipped with an involution (i.e., a linear map $\sigma: V \to V$ such that $\sigma^2=\id$). Let $V_+$ and $V_{-}$ denote the $+1$ and $-1$ eigenspaces of $\sigma$ respectively. Then $\Sym(V)$ acquires an involution whose restriction to $V$ is $\sigma$. For any (graded) $k$-vector space $W$, let $\Lambda^{\rm odd}(W)$ (resp., $\Lambda^{\rm ev}(W)$) denote the direct sum of all odd (resp., even) graded symmetric powers of $W$.
\begin{cor} \la{hosym}
There is an isomorphism of graded commutative $k$-algebras
$${\rm HO}_\bullet(\Sym(V))\,\cong\, \bigwedge_k\left[\big(\Lambda^{\rm odd}(V_+[1]) \otimes \Lambda(V_-[1])\big)[-1]\right]\ .$$    
\end{cor}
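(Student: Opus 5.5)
We apply Theorem~\ref{houainv} to the abelian DG Lie algebra $\mfa = V$, concentrated in degree $0$, with the involution $\sigma$ (which is an involution in the sense of Definition~\ref{iliealg}, since the bracket is zero). Then $U\mfa = \Sym(V)$, and the involution induced on $U\mfa$ as in Remark~\ref{indinv} is the one extending $\sigma$. Theorem~\ref{houainv} therefore gives
$$\HO_\bullet(\Sym(V)) \cong \Lambda_k[\rH^{\rm ev}_{\bullet+1}(V;k)].$$
It remains to identify $\rH^{\rm ev}_\bullet(V;k)$ as a graded vector space.

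For abelian $\mfa$ the Chevalley--Eilenberg differential vanishes. Hence $\C(V;k) = \Lambda_k(V[1])$ and
$$\rH_\bullet(V;k) = \Lambda^{\ge 1}(V[1]).$$
The key step is to determine the skew involution on $\C(V;k)$ from Lemma~\ref{indinvs}. Its corestriction to $V[1]$ is $\sigma$, and it satisfies $\Delta\circ\sigma = -\tau\circ(\sigma\otimes\sigma)\circ\Delta$. Equivalently, $-\sigma$ is an honest coalgebra involution composed with the swap. Since $\C(V;k)$ is graded cocommutative, $\tau\circ\Delta = \Delta$, so $-\sigma$ is a coalgebra automorphism. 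A coalgebra automorphism of the cofree conilpotent cocommutative coalgebra $\Lambda(V[1])$ that is linear on cogenerators is $\Lambda(-\sigma|_{V[1]})$. Therefore $\sigma = -\Lambda(-\sigma)$, and on $\Lambda^p(V[1])$ it acts by $(-1)^{p+1}\Lambda^p(\sigma)$. As a sanity check, $p=1$ recovers $\sigma$. This sign bookkeeping is the step I expect to need the most care; I would verify it directly on $p=2$ by computing $\Delta(v\wedge w)$.

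Now decompose $V[1] = V_+[1]\oplus V_-[1]$, so that
$$\Lambda^p(V[1]) = \bigoplus_{a+b=p}\Lambda^a(V_+[1])\otimes\Lambda^b(V_-[1]),$$
and $\Lambda^p(\sigma)$ acts on the $(a,b)$ summand by $(-1)^b$. The total sign is $(-1)^{a+b+1+b} = (-1)^{a+1}$. Hence the $+1$-eigenspace consists exactly of the summands with $a$ odd (which automatically excludes $p=0$). This gives
$$\rH^{\rm ev}_\bullet(V;k) \cong \Lambda^{\rm odd}(V_+[1])\otimes\Lambda(V_-[1]).$$

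Substituting into the first display, the shift $\bullet+1$ becomes the desuspension $[-1]$. Since $V$ sits in degree $0$, everything lies in nonnegative degrees, so $\Lambda_k$ is the graded symmetric algebra, which the statement writes as $\bigwedge_k$. This yields the claimed isomorphism of graded commutative algebras.
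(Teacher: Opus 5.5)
Your proposal is correct and follows the paper's proof: both apply Theorem~\ref{houainv} to the abelian Lie algebra $\mfa=V$ and identify $\rH^{\rm ev}_\bullet(V;k)$ from the sign $(-1)^{a-1}$ on $\Lambda^a(V_+[1])\otimes\Lambda^b(V_-[1])$. The only difference is in how that sign is obtained. The paper dualizes to the graded commutative algebra $\Lambda_k(V^\ast[-1])$ and uses $\sigma(x_1\cdots x_n)=(-1)^{n-1}\sigma(x_1)\cdots\sigma(x_n)$, whereas you work directly on the coalgebra $\Lambda(V[1])$ via cocommutativity and cofreeness, which is the same computation in dual form.
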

\begin{proof}
Let $\mfa=V$ equipped with the trivial Lie bracket. Then, $\mfa$ is an involutive Lie algebra, and $U\mfa \cong \Sym(V)$ as involutive algebras. $\C(\mfa;k)=\Lambda_k(V[1])$, with trivial differential and skew involution whose corestriction to $V[1]$ is the involution $\sigma$ on $V$. To compute $\C(\mfa;k)_+$, we consider its graded $k$-linear dual $E:=\Lambda_k(V^{\ast}[-1])$. This graded commutative $k$-algebra is equipped with a skew involution whose restriction to $V^{\ast}[-1]$ is the operator induced by $\sigma$ on $V^{\ast}$. Abusing notation, denote this skew involution on $E$ by $\sigma$ as well. If $V^\ast_+$ and $V^\ast_-$ denote the $+1$ and $-1$ eigenspaces of $\sigma$ on $V^\ast$ respectively, $E \cong \Lambda(V^{\ast}_+[-1]) \otimes \Lambda(V^{\ast}_-[-1])$. Since $\sigma$ is a skew involution on the graded commutative algebra $E$, 
$$\sigma(x_1 \ldots x_n)\,=\,(-1)^{n-1}\sigma(x_1)\ldots \sigma(x_n)\,,\qquad \forall \,x_1,\ldots\,x_n\,\in\,E\ .$$
Hence, on the direct summand $\Lambda^i(V^{\ast}_+[-1]) \otimes \Lambda^j(V^{\ast}_-[-1])$ of $E$, $\sigma$ acts by multiplication by $(-1)^{i+j-1}(-1)^j=(-1)^{i-1}$. It follows that the $+1$-eigenspace of the skew involution on $E$ is $\Lambda^{\rm odd}(V^{\ast}_+[-1]) \otimes \Lambda(V^{\ast}_-[-1])$. Dually, $\C(\mfa;k)_+\,=\,\Lambda^{\rm odd}(V_+[1]) \otimes \Lambda(V_-[1])$. Hence, $\rH^{\rm ev}_\bullet(\mfa;k) \,\cong\, \Lambda^{\rm odd}(V_+[1]) \otimes \Lambda(V_-[1])$. The desired lemma now follows from Theorem \ref{houainv}.
\end{proof}

\begin{remark} Note that the commutative square of functors
$$
\begin{diagram}[small]
\Delta C^{\rm op} &\rTo^{i}&\Delta S\\
 \dInto & & \dInto\\
 \Delta D^{\rm op} &\rTo^{i}& \Delta H\\
\end{diagram}
$$
induces a commutative diagram of homologies 
\begin{equation} \la{hcdso}
\begin{diagram}[small]
\overline{{\rm HC}}_\bullet(A) &\rTo^{i^*} & {\rm HS}_\bullet(A)\\
\dTo & & \dTo\\
\overline{{\rm HD}}_\bullet(A) &\rTo^{i^*} & {\rm HO}_\bullet(A)\\
\end{diagram}
\end{equation}
for any $A \in \iDGA_{k/k}$. For $A=\Sym(V)$ where $V$ is equipped with an involution as above, by the Hochschild-Kostant-Rosenberg theorem we have 
\begin{equation*}
\overline{\rm HC}_\bullet(A)\,\cong\, \Omega^{\bullet}_{A/k}/d\Omega^{\bullet-1}_{A/k}\,,\qquad \overline{\rm HD}_\bullet(A)\,\cong\,(\Omega^{\bullet}_{A/k})_+/(d\Omega^{\bullet-1}_{A/k})_+\,,\end{equation*}
where $(\Omega^p_{A/k})_+$ (resp., $(d\Omega^{p-1}_{A/k})_+$) stands for the subspace of $\Omega^p_{A/k}$ (resp., $d\Omega^{p-1}_{A/k}$) on which the involution on $\Omega^{\bullet}_{A/k}$ induced by the involution on $V$ acts with eigenvalue $(-1)^p$ (see \cite{SV} for the second isomorphism in the dihedral case). With these identifications, and by \cite{BR22c} and Corollary \ref{hosym} above, the diagram \eqref{hcdso} is identified with 
\begin{equation} \la{hcdso2}
\begin{diagram}
\Omega^{\bullet}_{A/k}/d\Omega^{\bullet-1}_{A/k} & \rTo^{\Tr} & \bigwedge_k\left[\big(\Lambda^{\geq 1}(V[1])\big)[-1]\right]\\
\dTo & & \dTo\\
(\Omega^{\bullet}_{A/k})_+/(d\Omega^{\bullet-1}_{A/k})_+ & \rTo & \bigwedge_k\left[\big(\Lambda^{\rm odd}(V_+[1]) \otimes \Lambda(V_-[1])\big)[-1]\right]\\
\end{diagram}
\end{equation}

where the left vertical arrow is a projection to the direct summand, while the right vertical arrow is a quotient map: note that the space of generators $\big(\Lambda^{\rm odd}(V_+[1]) \otimes \Lambda(V_-[1])\big)[-1]$ of ${\rm HO}_\bullet(A)$ is a direct summand of the space of generators $\big(\Lambda^{\geq 1}(V[1])\big)[-1]$ of ${\rm HS}_\bullet(A)$. It follows from \eqref{hcdso2} and \cite[Theorem 4.2]{BFPRW2} that the map $i^{\ast}\,:\,\overline{\rm HD}_\bullet(\Sym(V)) \to {\rm HO}_\bullet(\Sym(V))$ can be expressed in terms of certain explicit differential operators on $\Omega^{\bullet}(\Sym(V))$.
 
\end{remark}

\end{example}
 
\begin{example}
 Recall that as graded algebras, $\H^\bullet(\gl_n;k)\,\cong\,\Lambda_k(\omega_1,\omega_3,\ldots,\omega_{2n-1})$ and $\H^{\bullet}(\mathfrak{sl}_n;k)\,\cong\,\Lambda_k(\omega_3,\omega_5,\ldots,\omega_{2n-1})$, where $\omega_{2i-1}$ is the class of the cocycle 
$$\tilde{\omega}_{2i-1}: (X_1,\ldots,X_{2i-1}) \mapsto \sum_{\tau \in S_{2i-1}} {\rm sgn}(\tau)\Tr(X_{\tau(1)}X_{\tau(2)}\ldots X_{\tau(2i-1)})\ .$$
Dually, $\H_\bullet(\gl_n(k);k) \,\cong\,\Lambda_k(\xi_1,\xi_3,\ldots,\xi_{2n-1})$ and $\H_\bullet(\mathfrak{sl}_n(k);k)\,\cong\,\Lambda_k(\xi_3,\xi_5,\ldots,\xi_{2n-1})$ (as graded cocommutative coalgebras), where the cogenerator $\xi_{2i-1}$ in degre $2i-1$ is dual to the cohomology class $\omega_{2i-1}$. Note that any involution $\sigma$ on $\gl_n(k)$ that is (the restriction of) an involution on the associative algebra $\M_n(k)$ of $n \times n$-matrices with entries in $k$ restricts to an involution on $\mathfrak{sl}_n(k)$.
\begin{cor} \la{hougl}
For any involution on $\gl_n(k)$ that is the restriction of an involution on $\M_n(k)$ as above,
\begin{align*}
   {\rm HO}_\bullet(U\gl_n(k)) &\cong\,\bigwedge_k\left[\big(\Lambda^{\rm odd}(\xi_1,\xi_5,\ldots,\xi_{4\lfloor \frac{n-1}{4}\rfloor+1}) \otimes \Lambda(\xi_3,\xi_7,\ldots,\xi_{4\lfloor \frac{n-3}{4}\rfloor+3})\big)[-1]\right]\,,\\
    {\rm HO}_\bullet(U\mathfrak{sl}_n(k)) &\cong\,\bigwedge_k\left[\big(\Lambda^{\rm odd}(\xi_5,\ldots,\xi_{4\lfloor \frac{n-1}{4}\rfloor+1}) \otimes \Lambda(\xi_3,\xi_7,\ldots,\xi_{4\lfloor \frac{n-3}{4}\rfloor+3})\big)[-1]\right]\ .
\end{align*}
\end{cor}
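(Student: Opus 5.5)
The plan is to apply Theorem~\ref{houainv} to $\mfa=\gl_n(k)$ and to $\mfa=\mathfrak{sl}_n(k)$, with the given involution. Both are involutive Lie algebras concentrated in degree $0$. The whole problem then reduces to identifying the $(+1)$-eigenspace $\rH^{\rm ev}_\bullet(\mfa;k)$ of the operator induced on reduced Chevalley--Eilenberg homology. As in the proof of Corollary~\ref{hosym}, I would work dually, in cohomology. The skew involution on the coalgebra $\C(\mfa;k)$ dualizes to a skew involution $\sigma^*$ on the graded commutative algebra $\H^\bullet(\mfa;k)$. Skewness forces
$$\sigma^*(x_1\cdots x_m)=(-1)^{m-1}\sigma^*(x_1)\cdots\sigma^*(x_m).$$
So it suffices to determine how $\sigma^*$ acts on the generators $\omega_{2i-1}$.

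\textbf{Step 1: the action on generators.} By \cite[Thm. 1]{AA}, the involution has the form $\sigma(X)=J^{-1}X^TJ$. This is an anti-automorphism of $\M_n(k)$ that preserves traces. Evaluating the cocycle $\tilde\omega_{2i-1}$ on $\sigma X_1,\ldots,\sigma X_{2i-1}$, I would use
$$\Tr(\sigma(X_{\tau(1)})\cdots\sigma(X_{\tau(2i-1)}))=\Tr(\sigma(X_{\tau(2i-1)}\cdots X_{\tau(1)}))=\Tr(X_{\tau(2i-1)}\cdots X_{\tau(1)}).$$
Reindexing by the order-reversing permutation of $2i-1$ letters, whose sign is $(-1)^{(2i-1)(i-1)}=(-1)^{i-1}$, gives $\sigma^*\tilde\omega_{2i-1}=(-1)^{i-1}\tilde\omega_{2i-1}$. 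I must also check that the extra sign $(-1)^{m-1}$ coming from the skew structure on $\Lambda^m(\mfa[1])$ is trivial here. It is, because $m=2i-1$ is odd, so the corestriction convention introduces no further sign. Hence $\omega_1,\omega_5,\omega_9,\ldots$ are $(+1)$-eigenvectors and $\omega_3,\omega_7,\ldots$ are $(-1)$-eigenvectors. Dually, the same holds for the $\xi$'s.

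\textbf{Step 2: the eigenspace decomposition.} Set $W_+=\mathrm{span}\{\xi_{4j+1}\}$ and $W_-=\mathrm{span}\{\xi_{4j+3}\}$, with the indices ranging over odd numbers up to $2n-1$ in the two residue classes mod $4$ appearing in the statement. Exactly as in Corollary~\ref{hosym}, on $\Lambda^p(W_+)\otimes\Lambda^q(W_-)$ the operator acts by
$$(-1)^{p+q-1}(-1)^q=(-1)^{p-1}.$$
Thus $\rH^{\rm ev}_\bullet(\gl_n;k)\cong\Lambda^{\rm odd}(W_+)\otimes\Lambda(W_-)$, and the reduced condition is automatic because $p$ is odd. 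Inserting this into Theorem~\ref{houainv} gives the first formula. For $\mathfrak{sl}_n$ the same computation applies with $\xi_1$ removed from $W_+$, which gives the second formula.

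\textbf{Main obstacle.} The main difficulty is justifying that the operator on homology is determined multiplicatively by its values on generators. This requires $\sigma^*$ to be a skew involution of the algebra $\H^\bullet(\mfa;k)$, and that holds because it is induced by the skew involution on the coalgebra $\C(\mfa;k)$. One also has to keep the sign conventions of Step~1 straight. I would settle both points by realising $\H^\bullet(\gl_n;k)$ as the algebra of invariant alternating forms, and computing $\sigma^*$ directly there via the formula above.
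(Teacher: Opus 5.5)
Your proposal is correct and follows the paper's proof. You apply Theorem~\ref{houainv}, compute $\sigma^*\tilde\omega_{2i-1}=(-1)^{i-1}\tilde\omega_{2i-1}$ from trace-preservation and the sign of the order-reversing permutation, and read off the $(+1)$-eigenspace $\Lambda^{\rm odd}(W_+)\otimes\Lambda(W_-)$ as in Corollary~\ref{hosym}; your remark that the skew sign $(-1)^{m-1}$ is trivial in odd degree $m=2i-1$ makes explicit a point the paper skips. One caveat: taking all odd indices up to $2n-1$, as you do, gives top indices $4\lfloor\frac{n-1}{2}\rfloor+1$ and $4\lfloor\frac{n-2}{2}\rfloor+3$, not the printed $4\lfloor\frac{n-1}{4}\rfloor+1$ and $4\lfloor\frac{n-3}{4}\rfloor+3$ (which omit $\xi_5$ for $n=3$ and appear to be a typo in the statement), so say this explicitly rather than calling your ranges the ones appearing in the statement.
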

\begin{proof}
Abusing notation, let $\sigma$ denote the skew involution on $\H^{\bullet}(\gl_n(k);k)$ induced by the involution on $\gl_n(k)$. Let $s$ denote the permutation $j \mapsto 2i-j$ in $S_{2i-1}$. Note that ${\rm sgn}(s)=(-1)^{i-1}$. Then, 
\begin{align*}
\sigma\tilde{\omega}_{2i-1}(X_1,\ldots,X_{2i-1}) &= \tilde{\omega}(\sigma(X_1),\ldots,\sigma(X_{2i-1}))\\
 &=\sum_{\tau \in S_{2i-1}} {\rm sgn}(\tau)\Tr(\sigma(X_{\tau(1)})\sigma(X_{\tau(2)})\ldots \sigma(X_{\tau(2i-1)}))\\
 &= \sum_{\tau \in S_{2i-1}} {\rm sgn}(\tau)\Tr(\sigma(X_{\tau(2i-1)}X_{\tau(2i-2)}\ldots X_{\tau(1)}))\\
 &= \sum_{\tau \in S_{2i-1}} {\rm sgn}(\tau)\Tr(X_{\tau(2i-1)}X_{\tau(2i-2)}\ldots X_{\tau(1)})\\
 &= \sum_{\tau \in S_{2i-1}} {\rm sgn}(\tau)\Tr(X_{\tau \circ s(1)}X_{\tau \circ s(2)}\ldots X_{\tau \circ s(2i-1)})\\
 &= (-1)^{i-1}\tilde{\omega}(X_1,\ldots,X_{2i-1})\ .
\end{align*}
The fourth equality above is because any involution on $\M_n(k)$ is of the form $X \mapsto J^{-1}X^TJ$ for some $J \in \GL_n(k)$ satisfying $J^T=\pm J$ (see \cite[Thm. 1]{AA}), which implies that any involution on $\M_n(k)$ is trace preserving. Hence, $\sigma(\omega_{4i-3})=\omega_{4i-3}$ and $\sigma(\omega_{4i-1})=-\omega_{4i-1}$ for all $i \geq 1$. As in the proof of Corollary \ref{hosym}, it follows that the $+1$-eigenspace of the induced skew involution on $\H^{\bullet}(\gl_n(k);k)$ is $\Lambda^{\rm odd}(\omega_1,\omega_5,\ldots,\omega_{4\lfloor \frac{n-1}{4}\rfloor+1}) \otimes \Lambda(\omega_3,\omega_7,\ldots,\omega_{4\lfloor \frac{n-3}{4}\rfloor+3})$. Taking graded linear duals, it follows that 
$$\rH^{\rm ev}_{\bullet}(\gl_n(k);k) \,\cong\, \Lambda^{\rm odd}(\xi_1,\xi_5,\ldots,\xi_{4\lfloor \frac{n-1}{4}\rfloor+1}) \otimes \Lambda(\xi_3,\xi_7,\ldots,\xi_{4\lfloor \frac{n-3}{4}\rfloor+3})\ .$$ The desired formula for ${\rm HO}_\bullet(U\gl_n(k))$ then follows immediately from Theorem \ref{houainv}. The proof of the formula for ${\rm HO}_\bullet(U\mathfrak{sl}_n(k))$ is identical but for one minor modification: the (co)generators $\omega_1$ and $\xi_1$ are omitted.
\end{proof}
\end{example}

\section{Representation homology of groups and group algebras with involution} \la{Sgroups}

In this section, we extend the results of \cite[Sections 3 and 4]{BR22} to the involutive setting. The group algebra $k[\Gamma]$ of a simplicial group $\Gamma$ is equipped with the canonical involution $g \mapsto g^{-1}$ and the resulting dihedral and hyperoctahedral bar constructions naturally extend to homotopy simplicial groups. Cyclic and symmetric homology are replaced by dihedral and hyperoctahedral homology respectively, and the multiplicative group $\mathbb{G}_m = {\rm GL}_1$ is replaced by $\mtwo = \GL^{\!\sigma}_1$. The main results are Theorem~\ref{Tgroups} and Corollary~\ref{Cgroups} stated in the Introduction. Throughout this section, $k$ is an arbitrary commutative ring unless stated otherwise. We begin by recalling basic definitions and
some results from \cite{BR22}.

\subsection{Preliminaries}
\la{Sgr1}
Let $\fG$ denote the small category whose objects are the finitely generated free groups $\langle n \rangle := F\langle x_1, \ldots, x_n\rangle$, one for each $n \geq 0$, and whose morphisms are arbitrary group homomorphisms. Following \cite{Ba02}, we define a {\it homotopy simplicial group} to be a functor $\Gamma: \fG^{\rm op} \to \sSet$ that preserves finite products up to weak equivalence. Every (strict) simplicial group $\Gamma \in \sGr$ defines such a functor, $\langle n \rangle \mapsto \Gamma^n$, and by Badzioch's Rigidification Theorem (\cite[Theorem 6.4]{Ba02}), every homotopy simplicial group is weakly equivalent to a strict one. Hence, the homotopy simplicial groups provide natural algebraic models for pointed connected spaces.

We write $\sGrh$ for the category of homotopy simplicial groups, and 
denote by $B\Gamma$ for the classifying space of $\Gamma \in \sGrh$ (see \cite[Definition 3.4]{BR22}). Composing $\Gamma$ with the free $k$-module functor, we get a simplicial $\fG^{\rm op}$-module $k[\Gamma]: \langle n \rangle \mapsto k[\Gamma\langle n \rangle]$. On the other hand, any commutative Hopf algebra $H$ over $k$ defines a (covariant) $\fG$-module $\underline{H}: \langle n \rangle \mapsto H^{\otimes n}\,$. If $H = \mathcal{O}(G)$ is the coordinate ring of an affine group scheme $G$, then $\underline{H}(\langle n\rangle) = \mathcal{O}[{\rm Rep}_G(\langle n \rangle)] \,\cong\, \mathcal{O}(G^n)$. 
The {\it representation homology of $\Gamma$ with coefficients in $H$} is defined by (see \cite[Definition 3.6]{BR22})
\begin{equation} \la{HRgr} {\rm HR}_\bullet(\Gamma, H)\,:=\, {\rm Tor}^{\fG}_\bullet(k[\Gamma], \underline{H})\,, \end{equation}
and we write ${\rm HR}_\bullet(\Gamma, G)$ for ${\rm HR}_\bullet(\Gamma, \mathcal{O}(G))$. By \cite[Lemma 3.7]{BR22}, ${\rm HR}_\bullet(\Gamma, H)$ depends only on the homotopy type of $B\Gamma$.

We will use the following form of the Shapiro Lemma \cite[Proposition 2.6, Corollary 2.9]{BR22}. Let $F: \fG \to \Sets$ be a functor, $\fG_F := \fG\!\int\! F$ its category of elements (objects are pairs $(\langle n \rangle, a)$ with $a \in F\langle n \rangle$, morphisms are the $\varphi \in \Hom_{\fG}(\langle n\rangle, \langle m \rangle)$ with $F(\varphi)(a)=b$) and $p: \fG_F \to \fG$ the projection. Then, for any $\Gamma \in \sGrh$,
\begin{equation} \la{Shapiro} \hocolim_{\fG_F^{\rm op}}\, p^\ast\Gamma\, \simeq\, \int^{\langle n \rangle \in \fG}_{\L} \Gamma\langle n \rangle \times F\langle n\rangle\,,\qquad \H_\bullet\big(\hocolim_{\fG_F^{\rm op}} p^\ast\Gamma;\,k\big)\,\cong\,{\rm Tor}^{\fG}_\bullet(k[\Gamma], k[F])\,, \end{equation}
where $k[F]: \langle n \rangle \mapsto k[F\langle n \rangle]$ and the first homotopy colimit is taken in $\sSet$.

Finally, recall that for $\Gamma \in \sGr$, the involution $g \mapsto g^{-1}$ on $k[\Gamma]$ is already encoded in the category $\fG$: the automorphism $\iota: \langle 1 \rangle \to \langle 1 \rangle$, $x \mapsto x^{-1}$, is a morphism of $\fG$, and $\Gamma(\iota)$ is the inversion on $\Gamma\langle 1\rangle$. This is why no extra structure on $\Gamma$ is needed in the involutive theory, and why the hyperoctahedral category $\Delta H$, whose morphisms carry $C_2$-labels, maps to $\fG^{\rm op}$ (see Section~\ref{Sgr5}).

\subsection{Coefficients in $\mtwo$ and $\GLm$} \la{Sgr2}
Let $\mtwo := {\rm Spec}\,k[x]/(x^2-1)$, so that $\mathcal{O}(\mtwo) = k[\Z/2]$ as a commutative Hopf algebra, and let $\underline{\mathcal{O}(\mtwo)}$ denote the corresponding left $\fG$-module (see Section \ref{Sgr1}). Equivalently, $\underline{\mathcal{O}(\mtwo)} = k[\Ztwo]$ is the linearization of the set-valued functor
\begin{equation} \la{Z2} \Ztwo:\ \fG \to \Sets\,,\qquad \langle n \rangle \mapsto \langle n \rangle_{\rm ab} \otimes \Z/2 = (\Z/2)^n\,, \end{equation}
the mod $2$ abelianization. Thus, for $\Gamma \in \sGrh$,
\begin{equation} \la{HRmu2} {\rm HR}_\bullet(\Gamma, \mtwo)\,=\,{\rm Tor}^{\fG}_\bullet(k[\Gamma], k[\Ztwo])\,,\qquad {\rm HR}_0(\Gamma,\mtwo)\,\cong\,\mathcal{O}[\Hom(\pi_0\Gamma, \mtwo)]\,=\,k[\pi_0(\Gamma)_{\rm ab} \otimes \Z/2]\ . \end{equation}

The relation to the involutive representation homology of $k[\Gamma]$ is as follows. Let $\sigma$ be the involution on $\M_n(k)$ fixed in Section~\ref{Sec2}. Recall that every $k$-linear involution of $\M_n(k)$ has the form $\sigma_J(X) = J^{-1}X^{T}J$ with $J^{T} = \pm J$, and put
$$ \GLm\,:=\,\{g \in {\rm GL}_n\,:\,\sigma(g) = g^{-1}\}\ , $$
which is the orthogonal group ${\rm O}(J)$ if $J$ is symmetric and the symplectic group ${\rm Sp}(J)$ if $J$ is skew-symmetric. For $n=1$, $\GLm = \mtwo$. For a commutative algebra $B$, an involution preserving representation $k[\Gamma] \to \M_n(B)$ is the same as a representation $\rho: \Gamma \to {\rm GL}_n(B)$ with $\rho(g^{-1}) = \sigma(\rho(g))$, i.e. a group homomorphism $\Gamma \to \GLm(B)$. Hence $(k[\Gamma])^{\ob}_n = \mathcal{O}[{\rm Rep}_{\GLm}(\Gamma)]$, which is the involutive analogue of \cite[(1-3)]{BR22}. Deriving this yields the following analogue of \cite[Theorem 2.1]{BR22c}, whose proof we outline.
\begin{proposition}
\la{grcomp}
Let $k$ be a field of characteristic $0$ and $\Gamma \in \sGr$ a simplicial group. Then there is a natural isomorphism $\,\HRi(k[\Gamma], k^n) \cong {\rm HR}_\bullet(\Gamma, \GLm)$. In particular, $\HRi(k[\Gamma], k) \cong {\rm HR}_\bullet(\Gamma, \mtwo)$.
\end{proposition}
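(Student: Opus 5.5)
We follow the strategy of \cite[Theorem 2.1]{BR22c}, adapted to involutions. The plan has four steps: replace $k[\Gamma]$ by a cofibrant model built from free groups, identify the representation functor on that model with the coordinate ring of $(\GLm)^{\times}$, reduce to a derived coend over $\fG$, and handle the case $n=1$ separately.

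\emph{Step 1: resolution.} By Kan's theory, $\Gamma$ is weakly equivalent to a simplicial group $F_\bullet$ that is free in each degree, with bases closed under all degeneracies. In fact $\Gamma$ may be taken semi-free, so we may assume each $\Gamma_m$ is a free group $F\langle X_m\rangle$. Then $k[F\langle X_m\rangle]$ is the free involutive algebra on the invertibles, i.e. the localization of a free algebra with involution $x\mapsto x^{-1}$. Over a field of characteristic $0$, the map $k[\Gamma]\to A$ from any weak equivalence of simplicial groups is a weak equivalence of simplicial involutive algebras, by K\"unneth. We must also show that $k[F_\bullet]$ may be used in place of a cofibrant resolution, i.e. that $(k[F_\bullet])^{\ob}_n$ computes $\L(k[\Gamma])^{\ob}_n$. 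For this, resolve each $k[F\langle X\rangle]$ by the semi-free involutive algebra $R$ on generators $x, \bar x$ with $\sigma(x)=\bar x$, adjoining inverses via the standard "localization" semi-free construction. Then check that the map $R^{\ob}_n\to (k[F\langle X\rangle])^{\ob}_n=\mathcal O[(\GLm)^{X}]$ is a weak equivalence. Since $\GLm$ is smooth in characteristic $0$, this reduces to flatness and regularity of the defining equations $\sigma(g)g=1$, giving a Koszul-type argument degreewise. A levelwise weak equivalence of simplicial commutative algebras then yields the global one via the diagonal/realization lemma.

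\emph{Step 2: degreewise identification.} Using the adjunction of Lemma~\ref{prep1}, we get natural isomorphisms $(k[\Gamma_m])^{\ob}_n\cong \mathcal O[\Rep_{\GLm}(\Gamma_m)]\cong \mathcal O(\GLm)^{\otimes X_m}$. These are natural in group homomorphisms, so $\pi_\bullet[(k[F_\bullet])^{\ob}_n]$ is the homotopy of the simplicial commutative algebra $m\mapsto \underline{\mathcal O(\GLm)}(F_m)$.

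\emph{Step 3: coend formula.} Next we compare $\underline{H}(F_m)=k[\Gamma_m]\otimes_{\fG}\underline H$ with the derived tensor product defining ${\rm HR}_\bullet(\Gamma,\GLm)$ in \eqref{HRgr}. Write $F_m=\langle n_m\rangle$ (an infinite rank is handled by a filtered colimit). By Yoneda, $k[\Hom_\fG(\langle n\rangle,-)]\otimes_\fG \underline H\cong \underline H(\langle n\rangle)$ and representables are projective. Hence the simplicial $\fG^{\rm op}$-module $k[\Gamma]$, for $\Gamma$ strictly given by a free simplicial group viewed as $\langle n\rangle\mapsto \Gamma^n$, is levelwise $k[\Hom_\fG(-,F_m)]$ and so levelwise projective. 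The derived tensor product is therefore computed levelwise, giving ${\rm HR}_\bullet(\Gamma,\GLm)\cong\pi_\bullet[\underline{\mathcal O(\GLm)}(F_\bullet)]$. This is the argument of \cite[Lemma 3.7]{BR22}. Naturality in $\Gamma$ follows from the canonical nature of these identifications.

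\emph{Step 4: the case $n=1$ and the main obstacle.} For $n=1$, $\GL_1^{\sigma}=\mtwo$, which gives the second claim. The main obstacle is Step 1. One must verify that the group algebra of a free simplicial group is "good enough", meaning either cofibrant up to a weak equivalence that $(-)^{\ob}_n$ preserves, or directly that $(-)^{\ob}_n$ preserves the weak equivalence from a semi-free involutive resolution. I would try first to prove that $\mathcal O(\GLm)$ is the derived involutive representation algebra of $k[\Z]$, using smoothness of $\GLm$ in characteristic $0$. If that fails, I would instead compare both sides as left derived functors on $\sGr$ via Brown's lemma, checking that both preserve weak equivalences between free simplicial groups.
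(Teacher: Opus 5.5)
Your overall architecture is sound, and it is essentially a repackaging of the paper's argument. The paper computes $\HRi(k[\Gamma],k^n)$ as a ${\rm Tor}$ over the category $\fM^{\sigma}$ of free involutive monoids, then uses $\L p_!\,\underline{\mathcal{O}(\M_n)}\cong\underline{\mathcal{O}(\GLm)}$ along group completion. You instead pass to a free simplicial group and compare degreewise; your Steps 2 and 3 are fine. Both routes, however, rest on the same nontrivial input, and your proposal does not supply it. For a free group $F_m$ one needs ${\rm HR}^{\ob}_i(k[F_m],k^n)=0$ for $i>0$, i.e.\ that $(\mbox{--})^{\ob}_n$ of a cofibrant resolution of $k[F_m]$ is quasi-isomorphic to $\mathcal{O}((\GLm)^m)$. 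The paper takes this from \cite[Theorem 1.7]{BR14}, using that $k[F_m]$ is formally smooth and has a semi-free involutive resolution finitely generated in each degree. For $n=1$ it also indicates an independent route through Theorem~\ref{T1a}, Remark~\ref{grrational} and Example~\ref{grex}(4). Your Brown's-lemma fallback does not avoid this input: the comparison of the two derived functors must still be checked on free simplicial groups, and by the realization lemma that is the same vanishing statement.

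The argument you sketch for this step does not work as stated. First, smoothness of $\GLm$ only concerns ${\rm HR}^{\ob}_0$. For $A=\Sym(V)$ with $\dim V_+=\dim V_-=1$, $\Rep^{\ob}_1(A)$ is an affine line, yet ${\rm HR}^{\ob}_1(A,k)\neq 0$ by Theorem~\ref{T1a} and Corollary~\ref{hosym}. Second, and more to the point, the Koszul complex on (the $\sigma$-invariant coordinates of) $\sigma(X)X-1$ is $(R)^{\ob}_n$ for $R=k\langle x,\bar{x},s\rangle$ with $ds=\bar{x}x-1$, $\sigma(s)=s$. This $R$ resolves the Toeplitz-type algebra $k\langle x,\bar{x}\rangle/(\bar{x}x-1)$, not $k[\Z]$. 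The two have the same classical involutive representation scheme $\GLm$ only because one-sided inverses of matrices over a commutative ring are two-sided.

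A resolution of $k[\Z]$ built on $x,\bar{x}$ must also impose $x\bar{x}=1$. The two relations overlap, $x(\bar{x}x-1)=(x\bar{x}-1)x$, so higher generators are needed. Moreover, their images $X\sigma(X)-1$ and $\sigma(X)X-1$ no longer form a regular sequence; for $n=1$ both equal $X^2-1$.

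A Koszul argument can be rescued with a different model, for example by adjoining a separate left inverse. Take generators $x,\bar{x},z,\bar{z}$ in degree $0$ and $t=\sigma(t)$, $s$, $\bar{s}=\sigma(s)$ in degree $1$, with $dt=x\bar{x}-1$ and $ds=zx-1$. Then $(R)^{\ob}_n$ is the Koszul complex on $n^2+\dim\M_n(k)^{\sigma}$ functions cutting out a copy of $\GLm$ in $\M_n\times\M_n$ in the expected codimension, hence it is acyclic in positive degrees. You would still have to prove that this finite cell algebra really resolves $k[\Z]$, which is a genuine derived-localization statement; free products and filtered colimits would then handle arbitrary free groups.

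Finally, per-degree resolutions of this kind are not functorial in the face maps of $F_\bullet$, which do not preserve bases. So the realization-lemma step needs a functorial cofibrant replacement, such as the cotriple resolution, combined with the levelwise vanishing.
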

\begin{proof}
The proof follows that of \cite[Theorem 2.1]{BR22c}, with the category $\fM$ of finitely generated free monoids replaced by the category $\fM^{\sigma}$ of finitely generated free involutive monoids $\langle m \rangle^\sigma := J\big((C_2 \times \underline{m})_+\big)$ (the free monoid on the free $C_2$-set with $m$ orbits $\{x_i, \bar{x}_i\}$, with involution induced by the $C_2$-action), and $p: \fM^\sigma \to \fG$ the group completion functor, $\bar{x}_i \mapsto x_i^{-1}$. First, for any simplicial involutive monoid $N$, one has 
\begin{equation} \la{torhr}  {\rm Tor}^{\fM^\sigma}_\bullet(k[N], \underline{\mathcal{O}(\M_n)}) \cong \HRi(k[N], k^n)\,,\end{equation}
where $\underline{\mathcal{O}(\M_n)}$ is the left $\fM^{\sigma}$-module given by $\underline{\mathcal{O}(\M_n)}(\langle m \rangle^\sigma) := (k\langle m\rangle^\sigma)^{\ob}_n = \mathcal{O}(\M_n^{\,m})$. The isomorphism \eqref{torhr} is analogous to \cite[Proposition 2.1]{BR22c}, whose proof goes through in the involutive setting with the obvious modifications using a semi-free resolution $R \sonto N$ in the category if simplicial involutive algebras. 
Next, note that there is an isomorphism in the derived category of $\fG$-modules
\begin{equation} \la{lpsho} \L p_!\,\underline{\mathcal{O}(\M_n)} \cong \underline{\mathcal{O}(\GLm)}\ .\end{equation}
Indeed, since $k\langle m \rangle$ is a projective right $\fG$-module and by \cite[Lemma 2.1]{BR22c},
$$ \L p_!\,\underline{\mathcal{O}(\M_n)}(\langle m \rangle)\,\cong\, k[\langle m\rangle] \otimes^{\L}_{\fG} \L p_! \,\underline{\mathcal{O}(\M_n)}\,\cong\, p^\ast k[\langle m \rangle] \otimes^{\L}_{\fM^{\sigma}} \underline{\mathcal{O}(\M_n)}\ .$$
By \eqref{torhr},
$$\H_\bullet[p^\ast k[\langle m \rangle] \otimes^{\L}_{\fM^{\sigma}} \underline{\mathcal{O}(\M_n)}]\,\cong\, \HRi(k[\langle m\rangle], k^n)\ .$$
Verifying \eqref{lpsho} then amounts to the vanishing of ${\rm HR}^{\ast}_i(k[\langle m\rangle ], k^n)$ for all $i > 0$ and ${\rm HR}^{\ast}_0(k[\langle m \rangle], k^n) = \mathcal{O}\big((\GLm)^m\big)$. This is \cite[Theorem 1.7]{BR14} applied to $A = k[\langle m \rangle]$, which is formally smooth and admits a semi-free involutive resolution that is finitely finitely generated in each degree (also see Remark \ref{grrational} and Example \ref{grex} (4) below for a different proof of this vanishing for $n=1$). Finally, 
$$ {\rm HR}_\bullet(\Gamma, \GLm):={\rm Tor}^{\fG}_\bullet(k[\Gamma], \underline{\mathcal{O}(\GLm)})\cong{\rm Tor}^{\fG}_\bullet(k[\Gamma], \L p_!\,\underline{\mathcal{O}(\M_n)})\cong {\rm Tor}^{\fM^{\sigma}}_\bullet(k[\Gamma], \underline{\mathcal{O}(\M_n)})\cong \HRi(k[\Gamma],k^n)\ . $$
The second isomorphism above is by \eqref{lpsho}, the third isomorphism is by \cite[Lemma 2.1]{BR22c} and the last isomorphism is by \eqref{torhr}. This proves the desired proposition.
\end{proof}

%
\subsection{Dihedral bar construction of homotopy simplicial groups} \la{Sgr3}
Recall from \cite[(3-23), (3-24)]{BR22} the functor $\Psicyc: \Delta C \to \fG$, $[n] \mapsto \langle n+1 \rangle$, through which the cyclic bar construction of a group factors: $B^{\rm cyc}\Gamma = \Psicyc^\ast(\Gamma)$. We index the generators of $\langle n+1\rangle$ by $x_0, \ldots, x_n$ and write $D_{n+1} = \langle t_n, r_n \rangle$ for the dihedral group as in Section~\ref{bardihhoct}.
\begin{lemma} \la{grPsidih}
The functor $\Psicyc$ extends to a functor $\Psidih: \Delta D \to \fG$, $[n] \mapsto \langle n+1\rangle$, by
$$ \Psidih(r_n):\ \langle n+1 \rangle \to \langle n+1 \rangle\,,\qquad (x_0, x_1, \ldots, x_n) \mapsto (x_0^{-1}, x_n^{-1}, \ldots, x_1^{-1})\ . $$
For a discrete group $\Gamma$, $\Psidih^\ast(\underline{\Gamma}): \Delta D^{\rm op} \to \Sets$ is the dihedral bar construction of $\Gamma$, $\,r_n(g_0, \ldots, g_n) = (g_0^{-1}, g_n^{-1}, \ldots, g_1^{-1})$, so that $k[\Psidih^\ast\underline{\Gamma}] = B^{\rm dih}(k[\Gamma])$ is the dihedral bar construction of the involutive algebra $k[\Gamma]$ of Section~\ref{bardihhoct}.
\end{lemma}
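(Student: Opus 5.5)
Since $\Delta D$ is generated by $\Delta C$ together with the automorphisms $r_n$, it suffices to define $\Psidih(r_n)$ as stated and check the relations that involve $r_n$. Concretely, $\Delta D$ is presented by the generators of $\Delta C$ (faces $d_i$, degeneracies $s_j$, cyclic operators $t_n$) together with $r_n$, subject to the cyclic relations, which $\Psicyc$ already respects by \cite[(3-23), (3-24)]{BR22}, and the additional dihedral relations. Following \cite[6.3]{L}, these are
\begin{align*}
&r_n^2=1\,,\qquad r_n t_n r_n = t_n^{-1}\,,\\
&d_i r_n = r_{n-1} d_{n-i}\,,\qquad s_j r_n = r_{n+1} s_{n-j}\,,
\end{align*}
written in the variance of $\Delta D^{\rm op}$, where $d_0$ and $d_n$ are related through $t_n$. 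I would take these relations verbatim from Loday's presentation and verify each one after applying $\Psidih$.

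The plan is to compute everything on the free generators $x_0,\ldots,x_n$, using the convention of \cite{BR22} that $\Psicyc(d_i)$ sends $x_i\mapsto x_ix_{i+1}$ (with the wrap-around $x_n x_0$ for $i=n$), $\Psicyc(s_j)$ inserts the unit $1$, and $\Psicyc(t_n)$ cyclically permutes the generators. First, $\Psidih(r_n)^2=\id$ holds because inversion is involutive and the index map $0\mapsto 0$, $i\mapsto n+1-i$ is an involution. Next, $r_nt_nr_n=t_n^{-1}$ follows by conjugating the cyclic shift by the index reversal, since the inversions cancel in pairs. For the face relations, the key identity is $(x_ix_{i+1})^{-1}=x_{i+1}^{-1}x_i^{-1}$. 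This is exactly the antihomomorphism property that makes $r_n$ reverse the order of the tensor factors, so that reversing indices matches $d_i$ with $d_{n-i}$. The degeneracy relations are immediate, since $1^{-1}=1$. Because $\fG$ has all group homomorphisms as morphisms, each verification reduces to comparing two homomorphisms on generators.

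The step I expect to be the main obstacle is the bookkeeping of variance. $\Psidih$ is covariant on $\Delta D$, but the relations are usually stated in $\Delta D^{\rm op}$, and the wrap-around face $d_n$ mixes with $r_n$ in an index-dependent way. I would check it on the explicit low case $n=1,2$ first, then write down the general index formula. Careful attention to the $x_0$ slot will be needed, since $x_0$ is fixed by the index reversal but still gets inverted.

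Finally, for the second claim, the functor $\underline\Gamma\colon \langle m\rangle\mapsto \Gamma^m=\Hom(\langle m\rangle,\Gamma)$ is contravariant in $\fG$. Hence $\Psidih^\ast\underline\Gamma$ is a $\Delta D^{\rm op}$-set whose structure maps are given by precomposition. Evaluating $r_n$ on a tuple $(g_0,\ldots,g_n)$ gives $(g_0^{-1},g_n^{-1},\ldots,g_1^{-1})$, and the faces, degeneracies and cyclic operators recover the cyclic bar construction of $\Gamma$ by \cite{BR22}. After linearization, $g\mapsto g^{-1}$ is precisely the involution $\sigma$ on $k[\Gamma]$. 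Comparing with the formulas for $B^{\rm dih}$ in Section~\ref{bardihhoct} therefore yields $k[\Psidih^\ast\underline\Gamma]=B^{\rm dih}(k[\Gamma])$.
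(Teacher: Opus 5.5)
Your proposal is correct and takes the same approach as the paper. You check the dihedral relations $r_n^2=1$, $r_nt_nr_n=t_n^{-1}$, $d_ir_n=r_{n-1}d_{n-i}$ and $s_jr_n=r_{n+1}s_{n-j}$ by direct substitution on the free generators, with $(x_ix_{i+1})^{-1}=x_{i+1}^{-1}x_i^{-1}$ handling the faces. You then obtain the dihedral bar construction by precomposing with $\Psidih(r_n)$ in $\underline{\Gamma}$, exactly as the paper does.
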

\begin{proof}
The dihedral relations $r_n t_n = t_n^{-1} r_n$, $r_n^2 = 1$, $r_n d_i = d_{n-i} r_{n-1}$ and $r_n s_j = s_{n-j} r_{n+1}$ (in $\Delta D$) are verified on the free groups $\langle n+1 \rangle$ by direct substitution, exactly as the cyclic relations in \cite[Lemma 3.10]{BR22}. The second statement follows by evaluating $\underline{\Gamma}$ on the inversions in $\fG$.
\end{proof}
\begin{definition} \la{grHDdef}
For $\Gamma \in \sGrh$, we define $B^{\rm dih}\Gamma := \Psidih^\ast(\Gamma): \Delta D^{\rm op} \to \sSet$ and
$$ \HD_\bullet(k[\Gamma])\,:=\,{\rm Tor}^{\Delta D^{\rm op}}_\bullet\big(k, k[B^{\rm dih}\Gamma]\big)\,\cong\,{\rm Tor}^{\Delta D}_\bullet\big(k[B^{\rm dih}\Gamma], k\big)\ . $$
\end{definition}
As in \cite[Lemma 3.7]{BR22}, $\HD_\bullet(k[\Gamma])$ depends only on the homotopy type of $B\Gamma$. For a discrete group $\Gamma$, it is the dihedral homology of the involutive algebra $k[\Gamma]$ in the sense of Section~\ref{bardihhoct}, and $\HD_0(k[\Gamma]) = k[\Gamma]_\natural$ is the quotient of $k[\Gamma]$ by the linear span of the commutators and of $g - g^{-1}$, $g \in \Gamma$. The inclusion $\Delta C^{\rm op} \into \Delta D^{\rm op}$ induces the natural map ${\rm HC}_\bullet(k[\Gamma]) \to \HD_\bullet(k[\Gamma])$, which for $2 \in k^\times$ is the projection onto the $(+1)$-eigenspace of the involution induced by $r$.
\begin{remark}
For a discrete group $\Gamma$ and $k$ a field of characteristic $\neq 2$, Lodder's theorem \cite[Theorem 3.3.3]{Lo} identifies $\HD_\bullet(k[\Gamma])$ with the homology of the Borel construction $EO(2) \times_{O(2)} \mathcal{L}(B\Gamma)$, where $O(2)$ acts on the free loop space of $B\Gamma$ by rotation and reversal of loops. This is the dihedral analogue of Goodwillie's theorem used in \cite[(4-42)]{BR22}.
\end{remark}

\subsection{Dihedral character maps} \la{Sgr4}
The construction of \cite[Section 3.4]{BR22} extends verbatim to the functor $\Psidih: \Delta D \to \fG$. For a commutative Hopf algebra $H$, we put $B_{\rm dih}H := \Psidih^\ast(\underline{H}): \Delta D \to {\rm Mod}_k$. Then $\Psidih^\ast k[\Gamma] = k[B^{\rm dih}\Gamma]$, and $\Psidih$ induces a natural map
\begin{equation} \la{Psidihstar} \Psidih^\ast:\ {\rm Tor}^{\Delta D}_\bullet\big(k[B^{\rm dih}\Gamma], B_{\rm dih}H\big) \,\to\, {\rm Tor}^{\fG}_\bullet(k[\Gamma], \underline{H})\,=\,{\rm HR}_\bullet(\Gamma, H)\ . \end{equation}
The next result is an extension of Lemma~3.14 of \cite{BR22}.
\begin{lemma} \la{grinv}
For an affine group scheme $G$ over $k$,
$$ \Hom_{{\rm Mod}_k(\Delta D)}\big(k, B_{\rm dih}\mathcal{O}(G)\big)\,\cong\,\mathcal{O}(G)^{G,\iota}\,:=\,\{P \in \mathcal{O}(G)^G\,:\, P(g^{-1}) = P(g)\}\,, $$
the algebra of conjugation invariant functions on $G$ that are also invariant under inversion.
\end{lemma}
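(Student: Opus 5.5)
The plan is to reduce the computation of natural transformations $k \to B_{\rm dih}\mathcal{O}(G)$ to the cyclic case of \cite[Lemma 3.14]{BR22}, and then impose the extra invariance coming from the generators $r_n$.

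\emph{Step 1 (reduction to degree $0$).} A morphism of left $\Delta D$-modules $k \to B_{\rm dih}\mathcal{O}(G)$ is a compatible family of elements $P_n \in \mathcal{O}(G)^{\otimes(n+1)} = \mathcal{O}(G^{n+1})$, one for each $[n]$, fixed by every endomorphism of $[n]$ and carried to each other by every morphism of $\Delta D$. Every object $[n]$ receives a morphism from $[0]$, for instance the iterated coface $[0]\to[n]$. Under $\Psidih$ this coface corresponds to a homomorphism $\langle 1\rangle \to \langle n+1\rangle$. Hence the whole family is determined by $P := P_0 \in \mathcal{O}(G)$, and the assignment $\{P_n\}\mapsto P_0$ is injective.

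\emph{Step 2 (the cyclic conditions).} Restricting along $\Delta C \into \Delta D$ gives a natural transformation $k \to B_{\rm cyc}\mathcal{O}(G)$. Since $B_{\rm cyc}\mathcal{O}(G) = \Psicyc^\ast \underline{\mathcal{O}(G)}$ and $\Psidih$ extends $\Psicyc$, \cite[Lemma 3.14]{BR22} identifies such transformations with $\mathcal{O}(G)^G$. Concretely, $P$ must be a class function, and conversely every class function $P$ extends uniquely to a compatible family. In that family $P_n$ is the pullback of $P$ along the map $G^{n+1}\to G$, $(g_0,\dots,g_n)\mapsto g_0g_1\cdots g_n$, i.e. the image of $P$ under the comultiplication iterated to $\mathcal{O}(G)^{\otimes (n+1)}$.

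\emph{Step 3 (the reflection conditions).} It remains to impose invariance under $r_n$, acting through $\Psidih(r_n)$, which substitutes $(x_0,\dots,x_n)\mapsto (x_0^{-1},x_n^{-1},\dots,x_1^{-1})$. For $n=0$, $r_0$ acts on $\langle 1\rangle$ by $x_0\mapsto x_0^{-1}$, so we need $P(g^{-1})=P(g)$. This gives the inclusion of $\Hom(k, B_{\rm dih}\mathcal{O}(G))$ into $\mathcal{O}(G)^{G,\iota}$.

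For the converse, take $P\in\mathcal{O}(G)^{G,\iota}$ with its cyclic extension $\{P_n\}$ from Step 2. Applying $r_n$ to $P_n$ gives the function
$$(g_0,\dots,g_n)\mapsto P(g_0^{-1}g_n^{-1}\cdots g_1^{-1}) = P\big((g_1\cdots g_ng_0)^{-1}\big) = P(g_1\cdots g_n g_0) = P(g_0g_1\cdots g_n),$$
using inversion invariance and then conjugation invariance. So $P_n$ is $r_n$-invariant for every $n$. Since $\Delta D$ is generated by $\Delta C$ together with the $r_n$, the family $\{P_n\}$ is compatible with all of $\Delta D$. This proves the isomorphism.

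\emph{Expected obstacle.} The main care needed is in Step 3: the direction of functoriality (covariant $\Delta D$-module), the sign and ordering conventions in $\Psidih(r_n)$, and the statement that the $r_n$ together with $\Delta C$ generate $\Delta D$. This last fact follows from the unique factorization property of the crossed simplicial group $\Delta D$ recalled in Section~\ref{bardihhoct}. None of these steps requires more than direct substitution.
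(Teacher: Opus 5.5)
Your proposal is correct and follows essentially the same route as the paper. The paper also cites \cite[Lemma 3.14]{BR22} to identify the cyclic part with class functions via the family $P(g_0\cdots g_n)$; your Step 1 is already contained in that lemma. It then imposes invariance under the extra generators $r_n$, obtaining $P(g^{-1})=P(g)$ from $n=0$, and proves the converse from $g_0^{-1}g_n^{-1}\cdots g_1^{-1}=(g_1\cdots g_n g_0)^{-1}$ together with conjugation invariance, exactly as you do.
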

\begin{proof}
By \cite[Lemma 3.14]{BR22}, there is an isomorphism $\Hom_{{\rm Mod}_k(\Delta C)}(k, B_{\rm cyc}\mathcal{O}(G)) \cong \mathcal{O}(G)^G$, with an invariant $P$ corresponding to the family $(\Delta_G^{(m)}P)_{m \geq 0}$, where $\Delta_G^{(m)}P(g_0, \ldots, g_m) = P(g_0 \cdots g_m)$. The naturality with respect to the additional generators $r_m$ of $\Delta D$ reads  $\,\Delta^{(m)}_GP \circ \Psidih(r_m) = \Delta^{(m)}_GP\,$, which is
explicitly the following condition
\begin{equation} \la{invpg} P(g_0^{-1}g_m^{-1}\cdots g_1^{-1}) = P(g_0 \cdots g_m) \end{equation} 
for all $g_i \in G$. For $m=0$, this yields $P(g^{-1}) = P(g)$ for all $g \in G$. Conversely, the condition $P(g^{-1}) = P(g)$ for all $g \in G$ together with conjugation invariance implies the condition \eqref{invpg} for all $m$, since $g_0^{-1}g_m^{-1}\cdots g_1^{-1} = (g_1 \cdots g_m g_0)^{-1}$ is conjugate to $(g_0 \cdots g_m)^{-1}$.
\end{proof}
\begin{definition} \la{grchardef}
For $P \in \mathcal{O}(G)^{G,\iota}$, the {\it dihedral character map} associated with $P$ is the composite
$$ \chi^{\rm dih}_{G,P}(\Gamma)_\bullet:\ \HD_\bullet(k[\Gamma]) \xrightarrow{(\Delta_G P)_\ast} {\rm Tor}^{\Delta D}_\bullet\big(k[B^{\rm dih}\Gamma], B_{\rm dih}\mathcal{O}(G)\big) \xrightarrow{\ \Psidih^\ast\ } {\rm HR}_\bullet(\Gamma, G)\ . $$
\end{definition}
Explicitly, if $Q \sonto k[\Gamma]$ is a projective resolution in ${\rm Mod}_k(\fG^{\rm op})$, then $\chi^{\rm dih}_{G,P}(\Gamma)_\bullet$ is represented by 
$$\chi^{\rm dih}_{G,P}(\Gamma)_\bullet\,:\,\Psidih^\ast Q \otimes_{\Delta D} k \to Q \otimes_{\fG} \underline{\mathcal{O}(G)} $$ 
which, in turn, is induced by the chain map ({\it cf.} \cite[(3-32), (3-33)]{BR22}):
$$\bigoplus_{[m] \in \Delta D} Q\langle m+1\rangle \to \bigoplus_{\langle n \rangle \in \fG} Q\langle n \rangle \otimes \mathcal{O}(G)^{\otimes n}\,,\qquad \,v_{m+1} \mapsto v_{m+1} \otimes \Delta_G^{(m)}(P)\,.
$$
\begin{example} 
\la{grtraces}
We illustrate Definition~\ref{grchardef} in some special cases:
\begin{itemize}
\item[$(1)$] Let $G = {\rm GL}_n$, $P = \Tr_n$. Since $\Tr_n(g^{-1}) \neq \Tr_n(g)$, the derived character map $\Tr_n(\Gamma)_\bullet: {\rm HC}_\bullet(k[\Gamma]) \to {\rm HR}_\bullet(\Gamma, {\rm GL}_n)$ of \cite{BR22} does not factor through $\HD_\bullet(k[\Gamma])$ ({\it cf.} \cite[Remark 3.2]{BR22c}).

\item[$(2)$] Let $G = \GLm$, $P = \Tr_n|_{G}$. Now $\Tr_n(g^{-1}) = \Tr_n(\sigma(g)) = \Tr_n(g)$, since $\sigma$ is trace preserving, so that $\Tr_n$ defines a dihedral character map $\Tr^\sigma_n(\Gamma)_\bullet: \HD_\bullet(k[\Gamma]) \to {\rm HR}_\bullet(\Gamma, \GLm)^{\GLm}$. The isomorphism of Proposition~\ref{grcomp} identifies this with the involutive derived character map \eqref{DTr} of the involutive algebra $k[\Gamma]$ (cf. \cite[Remark 3.16]{BR22} in the cyclic case).

\item[$(3)$] Let $n = 1$, $G = \mtwo$, $\mathcal{O}(\mtwo) = k \cdot 1 \oplus k \cdot x$. Here $\iota = \id$ on $\mtwo$, so that $\mathcal{O}(\mtwo)^{\mtwo,\iota} = \mathcal{O}(\mtwo)$; the function $P = x$ defines the {\it one-dimensional involutive derived character map} 
\begin{equation} \la{grTr} 
\Tr^\sigma(\Gamma)_\bullet:\ \HD_\bullet(k[\Gamma]) \to {\rm HR}_\bullet(\Gamma, \mtwo)\ .
\end{equation}
%
\end{itemize}
\end{example}

\subsection{Representation homology with coefficients in diagonalizable group schemes} \la{Sgr5}
For an abelian group $\cA$, let $D_k(\cA) := {\rm Spec}\,k[\cA]$ denote the diagonalizable group scheme with character group $\cA$ (so that $\, D_k(\Z) = \mathbb{G}_m \,$ and $\, D_k(\Z/n) = \mu_n\,$ for all $n \ge 2$). Define the $\cA$-abelianization functor by
\begin{equation} \la{FA} F_{\cA}:\ \fG \to \Sets\,,\qquad \langle n\rangle \mapsto \langle n\rangle_{\rm ab}\otimes_{\Z} \cA \,=\, \cA^{n}\,. 
\end{equation}
Since $k[F_{\cA}] \cong \underline{\mathcal{O}(D_k(\cA))}$ as $\fG$-modules, there is a natural isomorphism
$$
{\rm HR}_\bullet(\Gamma, D_k(\cA)) \,\cong\, {\rm Tor}^{\fG}_\bullet(k[\Gamma], k[F_{\cA}])\,.
$$
Let $\fG_{\cA} := \fG_{F_{\cA}}$ be the category of elements of the functor $F_{\cA}$, with $p: \fG_{\cA} \to \fG$ being the projection. Writing $ p^*: \sSet^{\fG} \to \sSet^{\fG_{\cA}}$ for the restriction functor, we set for any 
$\Gamma \in \sGrh$: 
\begin{equation} \la{XA} X^{\cA}_\Gamma\,:=\,\big|\hocolim_{\fG_{\cA}^{\rm op}}\, (p^\ast\Gamma)\big|\ . \end{equation}
In the special case $\cA = \Z$, this is precisely the space $X_\Gamma$ defined in \cite[(4-1)]{BR22}. By \eqref{Shapiro}, we have
\begin{equation} \la{HXA} \H_\bullet(X^{\cA}_\Gamma;\, k)\,\cong\,{\rm HR}_\bullet(\Gamma, D_k(\cA))\,. \end{equation}

Now, on the topological side, for any pointed connected CW complex (represented by a reduced simplicial set) $X$ and any abelian group $\cA$, we define 
$$
\SPi(X;\cA) := |\widetilde{\Z}[X] \otimes \cA| \,
$$ 
where $\widetilde{\Z}[X] := \Z[X]/\Z[\ast]$ is the reduced free simplicial abelian group on $X$. 
By the Dold-Thom Theorem, the space $\SPi(X;\cA)$ is weakly equivalent to the McCord space $B(\cA, X)$ of finitely supported $\cA$-valued functions on $X$ vanishing at the basepoint (see \eqref{spxa}); in particular, we have the homotopy decomposition
$$
\SPi(X;\cA) \simeq \prod_{n \geq 1} K(\tilde{\H}_n(X;\cA), n)
$$
For $\cA = \Z$, the space $\,\SPi(X;\Z) $ is equivalent to $ \SPi(X)$, the classical infinite symmetric product. 

Next, recall the classical Quillen equivalence
$\,\mathbb{G}: \sSet_0 \leftrightarrows \sGr: \overline{W} \,$ between
the model categories of reduced simplicial sets and simplicial groups, where the left adjoint $ \mathbb{G} $ is usually called the Kan Loop Group functor and the right adjoint $\overline{W}$ is the classifying complex functor. Following \cite[(4-12), (4-13)]{BR22}, we denote by $\mathbb{A}(X) := (\mathbb{G}X)_{\rm ab}$ the abelianization of ${\mathbb G}(X)$, and observe 
that the simplicial set $ \overline{W} \mathbb{A}(X) $ is naturally a simplicial abelian group. Then, the canonical map of simplicial sets 
$\, X \to \overline{W}\mathbb{G}(X) \onto  
\overline{W} \mathbb{A}(X) \,$ extends to a homomorphism of simplicial abelian groups
\begin{equation} 
\la{hurw}
\widetilde{\Z}[X] \,\xrightarrow{\sim} \,\overline{W} \mathbb{A}(X)
\end{equation}
which is a (weak) equivalence by the Dold-Kan correspondence.
Consequently, $|\mathbb{A}(X)| \simeq \Omega\,\SPi(X)$. 
\begin{theorem} \la{Tgroups}
Let $k$ be a commutative ring and $\cA$ an abelian group. For any $\Gamma \in \sGrh$, there is a weak equivalence of spaces, natural in $\Gamma$,
\begin{equation} \la{XAeq} X^{\cA}_\Gamma\,\simeq\,\Omega\,\SPi(B\Gamma;\cA)\,, \end{equation}
and consequently a natural isomorphism
\begin{equation} \la{HRDA} {\rm HR}_\bullet(\Gamma, D_k(\cA))\,\cong\,\H_\bullet\big(\Omega\,\SPi(B\Gamma;\cA);\,k\big)\,\cong\,\H_\bullet\Big(\prod_{n \geq 1} K\big(\tilde{\H}_n(B\Gamma;\cA), n-1\big);\,k\Big)\ . \end{equation}
In particular, ${\rm HR}_0(\Gamma, D_k(\cA)) = k[\pi_0(\Gamma)_{\rm ab} \otimes \cA]$, and if $k \supseteq \Q$ and $\cA$ is a torsion group, then ${\rm HR}_\bullet(\Gamma, D(\cA)) \cong k[\H_1(B\Gamma; \cA)]$ is concentrated in degree $0$.
\end{theorem}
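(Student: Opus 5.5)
We follow the strategy of \cite[Section 4]{BR22} for $\cA=\Z$, replacing the abelianization functor $\langle n\rangle\mapsto\Z^n$ by $F_{\cA}$. The first step is to compute the homotopy colimit \eqref{XA} by a Grothendieck-construction argument. By \eqref{Shapiro}, $X^{\cA}_\Gamma$ is the derived coend $\int_{\L}^{\langle n\rangle}\Gamma\langle n\rangle\times F_{\cA}\langle n\rangle$. Both sides of \eqref{XAeq} depend only on the homotopy type of $B\Gamma$: for the left side this is \cite[Lemma 3.7]{BR22}, and for the right side it holds because $\widetilde{\Z}[-]\otimes\cA$ preserves weak equivalences, since $\widetilde{\Z}[X]$ is levelwise free. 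By Badzioch rigidification we may therefore assume that $\Gamma=\mathbb{G}X$ is a Kan loop group of a reduced simplicial set $X$, which is levelwise free.

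For such $\Gamma$, we evaluate the coend levelwise. Fix a simplicial degree $q$ and write $\Gamma_q=F\langle S_q\rangle$ for the free group on the set $S_q$. The functor $\Gamma_q=\Hom_{\fG}(\langle 1\rangle,-)$-type representable colimit (a filtered colimit of representables $\Hom_{\fG}(-,\langle m\rangle)$) makes the coend over $\fG$ underived, by the co-Yoneda lemma:
$$\int^{\langle n\rangle}\Gamma_q^n\times F_{\cA}\langle n\rangle\;\cong\;F_{\cA}(\Gamma_q)\;=\;(\Gamma_q)_{\rm ab}\otimes\cA .$$
Here we have extended $F_{\cA}$ to all free groups by filtered colimits. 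The justification is that representables, and filtered colimits of them, are projective/flat for the homotopy coend, exactly as in \cite[(4-14)]{BR22}. Assembling over $q$ gives a natural weak equivalence
$$X^{\cA}_\Gamma\simeq|\mathbb{A}(X)\otimes\cA|.$$
Note that $\mathbb{A}(X)\otimes\cA$ is the levelwise tensor product of a levelwise free simplicial abelian group.

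It remains to identify $|\mathbb{A}(X)\otimes\cA|$ with $\Omega\,\SPi(X;\cA)$, where $X\simeq B\Gamma$. We tensor the equivalence \eqref{hurw}, $\widetilde{\Z}[X]\simeq\overline{W}\mathbb{A}(X)$, with $\cA$. Since $\overline{W}$ on simplicial abelian groups commutes with $-\otimes\cA$ up to natural isomorphism (it is built from finite products), and both sides are levelwise free so that the tensor product preserves the equivalence, we get $\overline{W}(\mathbb{A}(X)\otimes\cA)\simeq\widetilde{\Z}[X]\otimes\cA$. Using $\Omega|\overline{W}M|\simeq|M|$ for a simplicial group $M$, we obtain
$$X^{\cA}_\Gamma\simeq\Omega|\widetilde{\Z}[X]\otimes\cA|=\Omega\,\SPi(B\Gamma;\cA).$$
Naturality in $\Gamma$ follows because every map used is functorial.

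The homology statement \eqref{HRDA} then follows from \eqref{HXA}, together with the product decomposition of $\SPi(X;\cA)$ and $\Omega K(M,n)=K(M,n-1)$. For $H_0$, $\pi_0$ of the product is $\tilde{\H}_1(B\Gamma;\cA)=\pi_0(\Gamma)_{\rm ab}\otimes\cA$, using the universal coefficient theorem with $\tilde{\H}_0=0$. For the final claim, if $\cA$ is torsion and $\Q\subseteq k$, then each $\tilde{\H}_n(B\Gamma;\cA)$ is torsion. Hence $K(M,m)$ with $m\ge1$ is $k$-acyclic, by the Serre spectral sequence and the fact that torsion groups have vanishing rational homology in positive degrees. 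The Künneth formula for the weak product then leaves only $k[\H_1(B\Gamma;\cA)]$, the group ring of the discrete factor $K(\tilde{\H}_1,0)$.

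The main obstacle is the levelwise coend computation. One must check carefully that the homotopy coend with the free group $\Gamma_q$ on a possibly infinite set reduces to the strict one (flatness of filtered colimits of representables), and that it is compatible with the simplicial structure, so that the identification is natural. Everything else is a formal consequence of \eqref{hurw}.
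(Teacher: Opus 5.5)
Your argument is correct, but its central step differs from the paper's.

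\textbf{The paper's route.} It first treats $\cA=R$, the additive group of a commutative ring. There $(-)_{{\rm ab},R}:\sGr\to\s{\rm Mod}_R$ is left Quillen; the ring structure is what makes the right adjoint the forgetful functor. Since derived left Quillen functors commute with homotopy coends, applying it to the objectwise cofibrant bifunctor $\Gamma\otimes\mathbb{F}$ of \cite{BR22} identifies the homotopy coend with $\mathbb{A}(X)\otimes R$, where $X=\overline{W}\L K(\Gamma)$. Finitely generated $\cA\cong\Z^r\oplus\bigoplus_i\Z/m_i$ are then handled by viewing $\cA$ as the additive group of $\Z^r\times\prod_i\Z/m_i$. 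Arbitrary $\cA$ follow by a filtered colimit over finitely generated subgroups.

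\textbf{Your route.} You rigidify to a Kan loop group $\mathbb{G}X$ and evaluate the homotopy coend degreewise. Each $\fG^{\rm op}$-set $\langle n\rangle\mapsto\Gamma_q^n$ is a filtered colimit of representables. Hence the bar construction $B(\Gamma_q,\fG,F_{\cA})$ maps by a weak equivalence to the strict coend $(\Gamma_q)_{\rm ab}\otimes\cA$, for three reasons:
\begin{itemize}
\item representables have an extra degeneracy;
\item the bar construction commutes with colimits in its first variable;
\item filtered colimits of weak equivalences of simplicial sets are weak equivalences.
\end{itemize}
The diagonal lemma then assembles these into $X^{\cA}_\Gamma\simeq|\mathbb{A}(X)\otimes\cA|$. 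So the step you flag as the main obstacle is routine.

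\textbf{Comparison.} Your route is more elementary and handles every abelian group at once. It needs no ring trick and no check that both sides of \eqref{XAeq} commute with filtered colimits in $\cA$. It even gives $\hocolim_{\fG_F^{\rm op}}p^\ast\Gamma\simeq\widetilde{F}(\mathbb{G}X)$ for any $F:\fG\to\Sets$, with $\widetilde{F}$ the filtered-colimit extension of $F$ to free groups. The paper's route instead reuses the model-categorical machinery of \cite{BR22} and stays parallel to the case $\cA=\Z$ treated there. Your remaining steps coincide with the paper's: tensoring \eqref{hurw} with $\cA$, commuting $\overline{W}$ with $-\otimes\cA$, and using $|M|\simeq\Omega|\overline{W}M|$.

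\textbf{A minor point.} The homotopy invariance you need for the left side of \eqref{XAeq} is just invariance of homotopy colimits under objectwise weak equivalences. For naturality you also need a functorial rigidification such as $\L K$ of \cite{BR22}. It is not \cite[Lemma 3.7]{BR22}, which is a statement about homology.
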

\begin{proof}
The isomorphism \eqref{HRDA} follows from \eqref{XAeq} and \eqref{HXA}, using the fact that a simplicial abelian group is a product of Eilenberg-MacLane spaces and that $\pi_i\,\Omega\,\SPi(X;\cA) = \tilde{\H}_{i+1}(X;\cA)$. To prove \eqref{XAeq}, we follow the proof of \cite[Theorem 4.2]{BR22}.

First, we assume that  $\cA = R$ is the additive group of a commutative ring $R$. Since the functor $\Gamma \mapsto \Gamma_{\rm ab} \otimes_\Z R$ from groups to $R$-modules is left adjoint to the forgetful functor, its degreewise extension $(\,\mbox{--}\,)_{{\rm ab},R}: \sGr \to \s{\rm Mod}_R$ is a left Quillen functor (as the right adjoint does not change the underlying simplicial sets). By \eqref{Shapiro},
\begin{equation} \la{hcl1} \hocolim_{\fG_R^{\rm op}} (p^\ast\Gamma) \simeq \int_{\L}^{\langle n \rangle \in \fG}\Gamma\langle n\rangle \times R^n\ .\end{equation} 
Next, the bifunctor $\Gamma \times F_R: \fG^{\rm op}\times \fG \to \sSet$ factors as 
$$ 
\fG^{\rm op}\times\fG \xrightarrow{\Gamma \otimes \mathbb{F}} \sGr \xrightarrow{(\mbox{--})_{{\rm ab},R}} \s{\rm Mod}_R \to \sSet
$$
where $\Gamma \otimes \mathbb{F}$ is the objectwise cofibrant bifunctor of \cite[(4-16)]{BR22}. Since derived left Quillen functors commute with homotopy coends (see \cite[(2-20)]{BR22}),
\begin{equation} \la{hcl2} \L[\L K(\Gamma)]_{{\rm ab},R} \simeq \int_L^{\langle n \rangle \in \fG} \L(\Gamma\langle n\rangle \otimes \mathbb{F}\langle n \rangle)_{{\rm ab},R} \simeq \int_L^{\langle n \rangle \in \fG}(\Gamma\langle n\rangle \otimes \mathbb{F}\langle n \rangle)_{{\rm ab},R} \simeq \int_{\L}^{\langle n \rangle \in \fG}\Gamma\langle n\rangle \times R^n  \end{equation}
where $\L K(\Gamma)$ is the simplicial group associated to $\Gamma$ by \cite[Theorem 3.2]{BR22}. The first weak equivalence above uses \cite[(3-11)]{BR22} and the second is because $\Gamma \otimes \mathbb{F}$ is objectwise cofibrant. On the other hand,
$$\L[\L K(\Gamma)]_{{\rm ab},R} \simeq [\mathbb{G}\overline{W} \L K(\Gamma)]_{{\rm ab},R} \simeq \mathbb{A}(\overline{W} \L K(\Gamma))\otimes_{\Z} R\,, $$
since the Kan loop group $\mathbb{G}\overline{W} \L K(\Gamma)$ is cofibrant and weakly-equivalent to $\L K(\Gamma)$. Therefore, 
\begin{equation} \la{hcl3} \hocolim_{\fG_R^{\rm op}} p^\ast\Gamma \simeq \mathbb{A}(X) \otimes R \end{equation}
where $X := \overline{W}\L K(\Gamma) \simeq B\Gamma$.  Note that the weak-equivalence \eqref{hurw} between levelwise free simplicial abelian groups remains a weak-equivalence after $\otimes_\Z R$. Further, by the very definition of $\overline{W}$,  $\overline{W}\mathbb{A}(X)\otimes R = \overline{W}(\mathbb{A}(X)\otimes R)$. Since, in addition,  $|V| \simeq \Omega|\overline{W}V|$ for any simplicial group $V$, we conclude
$$ |\mathbb{A}(X)\otimes R|\,\simeq\,\Omega\,|\bar{W}(\mathbb{A}(X)\otimes R)|\,\simeq\,\Omega\,|\widetilde{\Z}[X]\otimes R|\,=\,\Omega\,\SPi(X;R)\,, $$
naturally in $X$. This proves \eqref{XAeq} for $\cA = R$.

Now, if $\cA$ is finitely generated, then $\cA \cong \Z^r \oplus \bigoplus_i \Z/m_i$ is the additive group of the commutative ring $\Z^r \times \prod_i \Z/m_i$, hence the above argument applies, proving \eqref{XAeq} for finitely generated abelian groups.

Finally, for an arbitrary abelian group $\cA$, we write $\cA = \varinjlim \cA_\alpha$ as the filtered colimit of its finitely generated subgroups. Both sides of \eqref{XAeq} commute with filtered colimits in $\cA$: $F_{\cA} = \varinjlim F_{{\cA}_\alpha}$, homotopy colimits commute with filtered colimits, $\mathbb{A}(X)\otimes \cA = \varinjlim \mathbb{A}(X)\otimes \cA_\alpha$, and geometric realization commutes with filtered colimits. Naturality in $\Gamma$ is clear at every step. This proves \eqref{XAeq} in general. \eqref{HRDA} follows immediately from \eqref{XAeq}.

For the last assertion, note that $K(V, j)$ is rationally trivial for $j \geq 1$ when $V$ is a torsion group, while $\pi_0\,\Omega\,\SPi(B\Gamma;\cA) = \H_1(B\Gamma;\cA) = \pi_0(\Gamma)_{\rm ab} \otimes \cA$.
\end{proof}
\begin{cor} \la{grchar0}
For any $\Gamma \in \sGrh$ and any commutative ring $k$, 
$${\rm HR}_\bullet(\Gamma, \mtwo) \cong \H_\bullet\big(\Omega\,\SPi(B\Gamma;\Z/2);k\big)\ .$$ 
If $k \supseteq \Q$, then 
$${\rm HR}_\bullet(\Gamma,\mtwo) \cong k[\H_1(B\Gamma;\Z/2)] = k[\pi_0(\Gamma)_{\rm ab}\otimes \Z/2]\,,$$ 
concentrated in degree $0$. If $k = \mathbb{F}_2$, then ${\rm HR}_\bullet(\Gamma,\mtwo)$ is the mod $2$ homology of the generalized Eilenberg-MacLane space $\prod_{i \geq 1}K(\tilde{\H}_i(B\Gamma;\mathbb{F}_2), i-1)$.
\end{cor}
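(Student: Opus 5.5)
This corollary is the case $\cA = \Z/2$ of Theorem~\ref{Tgroups}, combined with the identification $D_k(\Z/2) = \mtwo$. First I will note that $\mathcal{O}(\mtwo) = k[x]/(x^2-1) = k[\Z/2]$ as commutative Hopf algebras. Hence $\mtwo = {\rm Spec}\,k[\Z/2] = D_k(\Z/2)$, and the functor $\Ztwo$ of \eqref{Z2} is exactly $F_{\Z/2}$ of \eqref{FA}. Theorem~\ref{Tgroups} with $\cA = \Z/2$, which holds over an arbitrary commutative ring $k$, then gives the first isomorphism ${\rm HR}_\bullet(\Gamma,\mtwo) \cong \H_\bullet(\Omega\,\SPi(B\Gamma;\Z/2);k)$ at once.

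For $k \supseteq \Q$ I will use the homotopy decomposition
\[
\Omega\,\SPi(B\Gamma;\Z/2) \simeq \prod_{n\ge 1} K\big(\tilde{\H}_n(B\Gamma;\Z/2),\, n-1\big).
\]
The factors with $n \ge 2$ are connected Eilenberg--MacLane spaces of $\mathbb{F}_2$-vector spaces. They are therefore rationally acyclic, since $\tilde{\H}_\bullet(K(V,j);\Q)=0$ for $V$ torsion and $j\ge1$. A filtered colimit argument over finite subgroups handles the case where the vector space is infinite-dimensional. By the K\"unneth theorem over $k \supseteq \Q$, the homology of the product reduces to that of the discrete factor $K(\H_1(B\Gamma;\Z/2),0)$, which is $k[\H_1(B\Gamma;\Z/2)]$ in degree $0$. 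This is exactly the last assertion of Theorem~\ref{Tgroups}, so I may simply cite it.

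The identification $\H_1(B\Gamma;\Z/2) = \pi_0(\Gamma)_{\rm ab}\otimes\Z/2$ follows from $\pi_1(B\Gamma)=\pi_0\Gamma$, Hurewicz, and universal coefficients, using that $\H_0(B\Gamma;\Z)=\Z$ is free. One subtlety is that the K\"unneth step needs care for infinite products. I will handle it by noting that $\Omega\,\SPi$ is the realization of a simplicial abelian group $\mathbb{A}(X)\otimes\Z/2$, whose rational homology can be computed as a filtered colimit over finite truncations. Alternatively, I can use the fact that each homotopy group is $2$-torsion in positive degrees, so the rational homology of the identity component vanishes by mod-$\mathcal{C}$ Hurewicz theory (Serre class of torsion groups).

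For $k=\mathbb{F}_2$ the claim is a restatement of the second isomorphism in \eqref{HRDA} with $\tilde{\H}_i(B\Gamma;\Z/2) = \tilde{\H}_i(B\Gamma;\mathbb{F}_2)$, so nothing further is needed. I expect the only step requiring genuine attention to be the rational acyclicity of the positive-degree part of the infinite product. I will settle it via the Serre class argument applied to the connected component of the simplicial abelian group, which avoids infinite K\"unneth issues entirely.
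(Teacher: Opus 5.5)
Your proposal is correct and matches the paper. The paper states this corollary without a separate proof, as the case $\cA=\Z/2$ of Theorem~\ref{Tgroups} (via $\mathcal{O}(\mtwo)=k[\Z/2]$, i.e.\ $\mtwo=D_k(\Z/2)$), with the rational and $\mathbb{F}_2$ statements read off from the last assertion of that theorem and from the second isomorphism in \eqref{HRDA}. Your extra care with the infinite product fills in a step the paper passes over in one line: you split off $\pi_0$ and observe that the identity component is a connected simplicial abelian group with $2$-torsion homotopy groups, hence rationally acyclic.
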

%
%

%
\begin{remark} \la{grm2}
Theorem~\ref{Tgroups} is a statement about representation homology alone, valid for every abelian group $\cA$. What is special about $\cA = \Z/2$ is the existence of the dihedral/hyperoctahedral factorization of Section~\ref{Sgr6}: for $m \geq 3$, $\mu_m = \ker(g \mapsto g^m)$ is not the fixed point scheme of any (anti-)automorphism of $\mathbb{G}_m$, there is no involution on $k[\Gamma]$ with $(k[\Gamma])^{\ob}_1 = k[\Gamma_{\rm ab}\otimes\Z/m]$, and the lifting problem of Proposition~\ref{grlift} below has no solution modulo $m$. For general $m$, one only has the composite of the map of \cite[Proposition 4.8]{BR22} with the map induced on representation homology by the inclusion $\mu_m \to \mathbb{G}_m$ of coefficients, ${\rm HS}_\bullet(k[\Gamma]) \to {\rm HR}_\bullet(\Gamma,\mathbb{G}_m) \to {\rm HR}_\bullet(\Gamma, \mu_m)$, realized by 
$$ \Omega\,\Omega^\infty\Sigma^\infty(B\Gamma) \to \Omega\,\SPi(B\Gamma) \to \Omega\,\SPi(B\Gamma;\Z/m)
$$
\end{remark}

\subsection{Hyperoctahedral bar construction of homotopy simplicial groups} \la{Sgr6}
Recall the tensor notation for morphisms of $\Delta H$ introduced in Section~\ref{bardihhoct}: a morphism $f: [n] \to [m]$ of $\Delta H$ is a tensor product $X_0 \otimes \cdots \otimes X_m$ of $m+1$ noncommutative monomials in the letters $x_i^{\alpha_i}$, $\alpha_i \in C_2 = \{1, s\}$, each of $x_0, \ldots, x_n$ occurring exactly once, and composition is substitution of monomials, with 
$$(x_{i_1}^{\alpha_{i_1}}\cdots x_{i_r}^{\alpha_{i_r}})^s = x_{i_r}^{s\alpha_{i_r}}\cdots x_{i_1}^{s\alpha_{i_1}}\ .$$ 
Also recall from \cite[(4-23), (4-24)]{BR22} the functor 
$$\Psisym: \Delta S \to \fG^{\rm op}\,, \qquad [n] \mapsto \langle n+1 \rangle\, ,$$ sending the morphism $f = X_0 \otimes \cdots \otimes X_m$ (with all $\alpha_i = 1$) to the group homomorphism $\langle m+1\rangle \to \langle n+1\rangle$ mapping the generator $x_j$ of $\langle m+1 \rangle$ to the element $X_j(x_0, \ldots, x_n) \in \langle n+1 \rangle$.
\begin{lemma} \la{grPsihyp}
The functor $\Psisym$ extends to a functor 
$$\Psihyp: \Delta H \to \fG^{\rm op}\,, \qquad [n] \mapsto \langle n+1\rangle\, $$ sending $f = X_0 \otimes \cdots \otimes X_m$ to the homomorphism $\langle m+1 \rangle \to \langle n+1 \rangle$ given by 
$$ \Psihyp(f):\ (x_j \in \langle m+1 \rangle) \mapsto (x_{i_1}^{\epsilon_{i_1}} \cdots x_{i_r}^{\epsilon_{i_r}} \in \langle n+1\rangle)\quad\text{if}\quad X_j = x_{i_1}^{\alpha_{i_1}}\cdots x_{i_r}^{\alpha_{i_r}}\,,$$
where $\epsilon_i= \pm 1$ depending on whether $\alpha_i=1$ ot $\alpha_i=s$. 
There is a commutative diagram of functors 
$$
\begin{diagram}[small]
\Delta C^{\rm op} & \rTo^{i} & \Delta S & \rTo^{\Psi_{\rm sym}} & \fG^{\rm op}\\
 \dInto & & \dInto & &  \dTo^{\rm Id}\\
\Delta D^{\rm op}  & \rTo^{i} & \Delta H & \rTo^{\Psi_{\rm hOct}} & \fG^{\rm op}\\
\end{diagram}
$$
where the first two vertical arrows are the obvious inclusions, the functor $i:\Delta D^{\rm op} \to \Delta H$ is the inclusion defined in \cite[3.3]{G21}, and the functor $i:\Delta C^{\rm op} \to \Delta S$ is the inclusion defined in \cite[6.1.11]{L}. For a discrete group $\Gamma$, $\Psihyp^\ast(\underline{\Gamma})$ is the hyperoctahedral bar construction of the group $\Gamma$ with $\bar{g} = g^{-1}$ \cite[Definition 7.12]{G21}, so that $k[\Psihyp^\ast\underline{\Gamma}] = B_{\rm hOct}(k[\Gamma])$.
\end{lemma}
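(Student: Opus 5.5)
The plan is to verify directly that the substitution rule defines a functor, using the ``tensor of monomials'' description of $\Delta H$ from Section~\ref{bardihhoct}. There, a morphism $f=X_0\otimes\cdots\otimes X_m:[n]\to[m]$ is a word decomposition in the letters $x_i^{\pm}$ (with $x_i^s$ read as $x_i^{-1}$), and composition is substitution. Define $\Psihyp(f)$ on generators by $x_j\mapsto X_j(x_0,\dots,x_n)|_{x_i^s\mapsto x_i^{-1}}$. Since $\langle m+1\rangle$ is free, this extends uniquely to a group homomorphism $\langle m+1\rangle\to\langle n+1\rangle$, so $\Psihyp$ is well defined on morphisms. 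Identities are the monomials $x_0\otimes\cdots\otimes x_n$, and they clearly go to identities.

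The key step is contravariant compatibility with composition: $\Psihyp(h\circ f)=\Psihyp(f)\circ\Psihyp(h)$. Take $h=Y_0\otimes\cdots\otimes Y_l$ in the letters $x_0,\dots,x_m$. Then $h\circ f$ is obtained by substituting $X_j$ for $x_j$ and $X_j^s$ for $x_j^s$ in each $Y_k$. On the group side, $\Psihyp(f)\circ\Psihyp(h)$ sends $x_k$ to $Y_k$ with $x_j^{\pm1}$ replaced by $\Psihyp(f)(x_j)^{\pm1}$.

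The check therefore reduces to one identity. The map $\phi:X\mapsto X|_{x^s\mapsto x^{-1}}$, taken from monomials to $\langle n+1\rangle$, must satisfy $\phi(X^s)=\phi(X)^{-1}$ and $\phi(XY)=\phi(X)\phi(Y)$. The first holds because
\[
(x_{i_1}^{\alpha_{1}}\cdots x_{i_r}^{\alpha_{r}})^s=x_{i_r}^{s\alpha_r}\cdots x_{i_1}^{s\alpha_1}
\]
maps to $x_{i_r}^{-\epsilon_r}\cdots x_{i_1}^{-\epsilon_1}=(x_{i_1}^{\epsilon_1}\cdots x_{i_r}^{\epsilon_r})^{-1}$, using $s^2=1$. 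The second is immediate. This is the main point where care is needed: the reversal of order in the $C_2$-action on monomials is exactly what matches group inversion. I expect no real obstacle beyond keeping track of the $\alpha_{\tau^{-1}(j)}$ indexing in the factorization $f=g\circ\sigma$, which I will avoid by working purely with the monomial calculus.

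For the diagram, the right square commutes by construction, since $\Psihyp$ restricted to morphisms with all $\alpha_i=1$ is literally $\Psisym$. The left square consists of the inclusions of \cite[6.1.11]{L} and \cite[3.3]{G21}, and it commutes because Graves' $i$ sends the generators $s_j,d_i,t_n$ to the same monomials as the cyclic inclusion. Only $r_n$ is new, and it goes to $x_0^s\otimes x_n^s\otimes\cdots\otimes x_1^s$. As a consistency check, $\Psihyp\circ i$ then agrees with $\Psidih$ of Lemma~\ref{grPsidih} (viewed contravariantly), since $r_n\mapsto(x_0^{-1},x_n^{-1},\dots,x_1^{-1})$.

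Finally, for a discrete group $\Gamma$, $\underline{\Gamma}(\langle n+1\rangle)=\Hom(\langle n+1\rangle,\Gamma)=\Gamma^{n+1}$. Precomposition with $\Psihyp(f)$ sends $(g_0,\dots,g_n)$ to the tuple whose $j$-th entry is $X_j$ evaluated at $x_i^{\alpha}\mapsto g_i$ or $g_i^{-1}$. This is exactly \eqref{hobarmor} with $a^s=\sigma(a)=a^{-1}$, i.e.\ Graves' \cite[Definition 7.12]{G21}. Linearizing gives $k[\Psihyp^\ast\underline{\Gamma}]=B_{\rm hOct}(k[\Gamma])$, because the involution $g\mapsto g^{-1}$ on $k[\Gamma]$ restricts to inversion on the basis $\Gamma$.
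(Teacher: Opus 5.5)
Your proposal is correct and takes essentially the same approach as the paper. The paper also reduces functoriality to the observation that, under $x_i^s\mapsto x_i^{-1}$, the reversal rule for $(\,\cdot\,)^s$ on monomials becomes the inversion identity in the free group, so substitution of monomials matches composition of the homomorphisms $\Psihyp(f)$; it treats the commutative diagram and the identification with $B_{\rm hOct}(k[\Gamma])$ as immediate from the definitions, which you simply spell out in more detail.
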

\begin{proof}
Composition in $\Delta H$ is substitution of monomials, with the reversal rule for $(\,\mbox{--}\,)^s$ recalled above: under $x_i^s = x_i^{-1}$ this rule becomes the identity $(x_{i_1}^{\epsilon_{i_1}}\cdots x_{i_r}^{\epsilon_{i_r}})^{-1} = x_{i_r}^{-\epsilon_{i_r}}\cdots x_{i_1}^{-\epsilon_{i_1}}$ in the free group. Hence substitution of monomials corresponds to composition of the group homomorphisms $\Psihyp(f)$. The commutativity of the square and the last statement are immediate from the definitions (cf. \cite[Lemma 4.3]{BR22} and \cite[Definition 7.12]{G21}).
\end{proof}
\begin{definition} \la{grHOdef}
For $\Gamma \in \sGrh$, put $B_{\rm hOct}\Gamma := \Psihyp^\ast(\Gamma): \Delta H \to \sSet$ and
$$ \HO_\bullet(k[\Gamma])\,:=\,{\rm Tor}^{\Delta H}_\bullet\big(k, k[B_{\rm hOct}\Gamma]\big)\ . $$
\end{definition}
As in \cite[Remark 4.5]{BR22}, $\HO_\bullet(k[\Gamma])$ depends only on the homotopy type of $B\Gamma$. For a discrete (or simplicial) group $\Gamma$ it is the hyperoctahedral homology of the involutive algebra $k[\Gamma]$ (Definition~\ref{defho}), and it is a graded commutative algebra by \cite[Corollary 4.5]{G22}. Since for any $\Gamma \in \sGrh$, $B\Gamma \simeq B\L K\Gamma$, where $\L K\Gamma$ is the simplicial group associated to $\Gamma$ by \cite[Theorem 3.2]{BR22},  $\HO_\bullet(k[\Gamma])$ is a graded commutative algebra for all $\Gamma \in\sGrh$. As in \cite[Remark 4.7]{BR22}, $\Psihyp$ extends to $\Delta H_+ := \Delta H \sqcup \{[-1]\}$ by $[-1] \mapsto \langle 0 \rangle$; under Graves' identification of $\Delta H_+$ with the PROP of involutive monoids, $\Psihyp^{\rm op}$ is the morphism of PROPs that forgets the algebra structure of a commutative Hopf algebra and retains the coalgebra with its antipode.

The inclusion $\Delta D^{\rm op} \cong \Delta D \into \Delta H$ induces, as in Section~\ref{S3.1}, the natural map 
$$i^\ast: \HD_\bullet(k[\Gamma]) \to \HO_\bullet(k[\Gamma])\,,$$ since $k[B^{\rm dih}\Gamma] = i^\ast k[B_{\rm hOct}\Gamma]$ by Lemma~\ref{grPsihyp}.
\begin{proposition} \la{grlift}
Let $\fG_{\Z/2} := \fG_{F_{\Z/2}}$ be the category of elements of \eqref{Z2}. The functor $\Psihyp$ lifts to a functor
\begin{equation} \la{tPsihyp} \tPsihyp:\ \Delta H \to \fG_{\Z/2}^{\rm op}\,,\qquad [n] \mapsto (\langle n+1 \rangle;\, \bar{1}, \ldots, \bar{1})\,, \end{equation}
and hence, for every $\Gamma \in \sGrh$, induces a natural map $\tPsihyp^\ast: \HO_\bullet(k[\Gamma]) \to {\rm HR}_\bullet(\Gamma, \mtwo)$ such that $\Tr^\sigma(\Gamma)_\bullet = \tPsihyp^\ast \circ i^\ast$, where $\Tr^\sigma(\Gamma)_\bullet$ is the one-dimensional involutive derived character map \eqref{grTr}.
\end{proposition}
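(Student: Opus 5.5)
We construct the lift $\tPsihyp$ directly from the tensor description of morphisms of $\Delta H$. An object of $\fG_{\Z/2}^{\rm op}$ is a pair $(\langle n\rangle, a)$ with $a \in (\Z/2)^n$. A morphism $(\langle n\rangle,a) \to (\langle m\rangle,b)$ in $\fG_{\Z/2}^{\rm op}$ is a homomorphism $\varphi:\langle m\rangle \to \langle n\rangle$ with $F_{\Z/2}(\varphi)(b) = a$. Here $F_{\Z/2}(\varphi)$ is the map $(\Z/2)^m \to (\Z/2)^n$ induced by mod $2$ abelianization, so it sends $b$ to the image of $\varphi(\sum b_j x_j)$.

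For $f = X_0\otimes\cdots\otimes X_m : [n]\to[m]$, the homomorphism $\Psihyp(f)$ sends $x_j$ to a monomial in which each $x_i$ with $i\in g^{-1}(j)$ appears exactly once, with exponent $\pm1$. Since each $x_0,\dots,x_n$ appears exactly once in $f$, the mod $2$ abelianization of $\Psihyp(f)$ sends $\sum_j \bar 1\, x_j$ to $\sum_i \bar 1\, x_i$, because $\epsilon_i \equiv 1 \pmod 2$. This holds regardless of whether some $X_j = 1$. Hence $\Psihyp(f)$ carries $(\bar1,\dots,\bar1)$ to $(\bar1,\dots,\bar1)$. So setting $\tPsihyp(f) := \Psihyp(f)$ on morphisms and using \eqref{tPsihyp} on objects defines a functor, with $p\circ\tPsihyp = \Psihyp$. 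Functoriality is inherited from Lemma~\ref{grPsihyp}, because morphisms in the category of elements compose as in $\fG^{\rm op}$.

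To get the map on homology, we use the Shapiro lemma \eqref{Shapiro}: ${\rm HR}_\bullet(\Gamma,\mtwo) \cong {\rm Tor}^{\fG}_\bullet(k[\Gamma],k[\Ztwo]) \cong \H_\bullet(\hocolim_{\fG_{\Z/2}^{\rm op}} p^\ast\Gamma;k)$. On the other side, $\HO_\bullet(k[\Gamma]) = \H_\bullet(\hocolim_{\Delta H} \Psihyp^\ast\Gamma;k)$, where ${\rm Tor}^{\Delta H}(k,-)$ of a linearized simplicial functor is the homology of the homotopy colimit. Since $\Psihyp^\ast\Gamma = \tPsihyp^\ast p^\ast\Gamma$, the canonical map of homotopy colimits along the functor $\tPsihyp$ gives a natural map
$$\hocolim_{\Delta H}\tPsihyp^\ast(p^\ast\Gamma) \to \hocolim_{\fG_{\Z/2}^{\rm op}} p^\ast\Gamma.$$
Taking $k$-homology of this map defines $\tPsihyp^\ast$. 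It is natural in $\Gamma\in\sGrh$, and weak equivalences are preserved on both sides.

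It remains to check $\Tr^\sigma(\Gamma)_\bullet = \tPsihyp^\ast\circ i^\ast$, and this compatibility is the main obstacle. We plan to use the commutative diagram of Lemma~\ref{grPsihyp}, which says $\Psihyp\circ i = \Psidih$, so that $\tPsihyp\circ i$ is the lift of $\Psidih$ to $\fG_{\Z/2}^{\rm op}$ at the element $(\bar1,\dots,\bar1)$. Under the identification $k[\Ztwo] = \underline{\mathcal O(\mtwo)}$, the group-like element $\bar 1\in\Z/2$ corresponds to the function $x$. So the point $(\bar1,\dots,\bar1)\in(\Z/2)^{m+1}$ corresponds to $x^{\otimes(m+1)} = \Delta^{(m)}_{\mtwo}(x)$, which is exactly the family that defines the character map of Definition~\ref{grchardef} with $P=x$ (Example~\ref{grtraces}(3)). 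On chain level, with a projective resolution $Q\sonto k[\Gamma]$, the map $\tPsihyp^\ast\circ i^\ast$ is induced by $v_{m+1}\mapsto v_{m+1}\otimes \bar1^{\,m+1}$. This is the chain map $v_{m+1}\mapsto v_{m+1}\otimes\Delta^{(m)}(x)$ representing $\Psidih^\ast\circ(\Delta_{\mtwo}x)_\ast$.

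The delicate step is making precise that the Shapiro isomorphism \eqref{Shapiro}, restricted along $\tPsihyp\circ i$, agrees with the composite of $(\Delta_G P)_\ast$ and $\Psidih^\ast$. The approach we will try first is to mimic the corresponding argument for $\Psisym$ and $\mathbb G_m$ in \cite[Proposition 4.8]{BR22}. There, the map $\Delta C\to\Delta S\to\fG_{\Z}^{\rm op}$ is identified with the inclusion of the category of elements at the point given by the character $P$. The same bookkeeping should go through with $\Z$ replaced by $\Z/2$ and $\Delta C, \Delta S$ replaced by $\Delta D, \Delta H$.
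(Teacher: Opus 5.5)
Your proposal is correct and follows essentially the same route as the paper. The paper checks the lift by noting that each $x_i$ occurs exactly once, with exponent $\pm1$, in the mod $2$ abelianization of $\Psihyp(f)$. It obtains $\tPsihyp^\ast$ from the functor $\tPsihyp$ together with the Shapiro isomorphism; you use the equivalent homotopy-colimit map where the paper cites \cite[Lemma 3.13]{BR22}. It then reduces the factorization to the bookkeeping of \cite[Proposition 4.8]{BR22}, where $(\bar1,\ldots,\bar1)$ corresponds to $\Delta^{(m)}(x)$, exactly as you plan.
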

\begin{proof}
For $f \in \Hom_{\Delta H}([n], [m])$ in tensor notation, the abelianization $\Psihyp(f)_{\rm ab}: \Z^{m+1} \to \Z^{n+1}$ is given by (left multiplication by) the integral $(n+1) \times (m+1)$-matrix whose $j$-th column has entries $\epsilon_i = 1$ in the rows $i$ with $x_i$ occurring in $X_j$, $\epsilon_i=-1$ in the rows with $x_i^s$ occurring in $X_j$ and $0$ elsewhere. Since for each $i$, exactly one of the pair $\{x_i,x_i^s\}$ occurs exactly once among the $X_0,\ldots,X_m$, every row contains exactly one nonzero entry, equal to $\pm 1$. Hence $\Psihyp(f)_{\rm ab}$ maps $(1, \ldots, 1)$ to $(\pm 1, \ldots, \pm 1)$, which is $(\bar{1}, \ldots, \bar{1})$ modulo $2$. This shows that \eqref{tPsihyp} is a well-defined functor lifting $\Psihyp$ through $p^{\rm op}: \fG_{\Z/2}^{\rm op} \to \fG^{\rm op}$, and that 
$$ k[B_{\rm hOct}\Gamma] = \Psihyp^\ast k[\Gamma] = \tPsihyp^\ast(k[p^\ast\Gamma])\ .$$ 
By \cite[Lemma 3.13]{BR22}, $\tPsihyp$ induces a map 
$$\HO_\bullet(k[\Gamma]) \to {\rm Tor}^{\fG_{\Z/2}^{\rm op}}_\bullet(k, k[p^\ast\Gamma])\,\cong \, {\rm HR}_\bullet(\Gamma,\mtwo)\,,$$ where the last isomorphism is  by \eqref{Shapiro}. The factorization $\Tr^\sigma(\Gamma)_\bullet = \tPsihyp^\ast\circ i^\ast$ is verified exactly as in \cite[Proposition 4.8]{BR22}: with a projective resolution $Q \sonto k[\Gamma]$, the Shapiro isomorphism is represented by \cite[(4-37)]{BR22} with $\Z^n$ replaced by $(\Z/2)^n$, and the composite $\tPsihyp^\ast\circ i^\ast$ by \cite[(4-38)]{BR22} with $(\langle m+1\rangle; 1, \ldots, 1)$ replaced by $(\langle m+1 \rangle; \bar{1}, \ldots, \bar{1})$: the resulting map $v_{m+1} \mapsto v_{m+1}\otimes[(\bar{1},\ldots,\bar{1})]$ is \eqref{grTr}.
\end{proof}
%
%
\begin{cor} 
\la{grspaces}
The maps $\HD_\bullet(k[\Gamma]) \xrightarrow{i^\ast} \HO_\bullet(k[\Gamma]) \xrightarrow{\tPsihyp^\ast} {\rm HR}_\bullet(\Gamma,\mtwo)$ are induced on homology by the natural maps of spaces
\begin{equation} \la{grmaps} \big|\hocolim_{\Delta D^{\rm op}}B^{\rm dih}\Gamma\big| \xrightarrow{\ \ i\ \ } \big|\hocolim_{\Delta H}B_{\rm hOct}\Gamma\big| \xrightarrow{\ \tPsihyp\ } \big|\hocolim_{\fG_{\Z/2}^{\rm op}}\, p^\ast\Gamma\big|\,\simeq\,\Omega\,\SPi(B\Gamma;\Z/2)\,, \end{equation}
the last equivalence being Theorem~\ref{Tgroups}. For a discrete group $\Gamma$, the middle space is the bar construction $B(C_\infty E, C_1, \Gamma)$ of \cite[Theorem 7.20]{G21}, where $E := EC_{2+}\wedge_{C_2}(\,\mbox{--}\,)$, and the left space is $EO(2)\times_{O(2)}\mathcal{L}(B\Gamma)$ \cite{Lo}.
\end{cor}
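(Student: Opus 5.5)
The plan is to realize each map in \eqref{grmaps} by a map of homotopy colimits in $\sSet$, and then identify the induced map on $k$-homology with the corresponding map of Tor groups.

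\emph{Step 1: homotopy colimits compute the Tor groups.} For any small category $\mathcal{I}$ and any functor $Y:\mathcal{I}\to\sSet$ there is a natural isomorphism
\[
\H_\bullet\big(\hocolim_{\mathcal{I}}Y;k\big)\,\cong\,{\rm Tor}^{\mathcal{I}}_\bullet\big(k,k[Y]\big).
\]
This holds because $\hocolim_{\mathcal{I}}Y$ is the diagonal of the simplicial replacement $\coprod_{i_0\to\cdots\to i_p}Y(i_0)$. Linearizing it gives $k[N(\mathcal{I}/-)]\otimes_{\mathcal{I}}k[Y]$, and $k[N(\mathcal{I}/-)]$ is a projective resolution of $k$. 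Applying this to $\mathcal{I}=\Delta D^{\rm op}$ and $Y=B^{\rm dih}\Gamma$ recovers $\HD_\bullet(k[\Gamma])$ (Definition \ref{grHDdef}). Applying it to $\mathcal{I}=\Delta H$ and $Y=B_{\rm hOct}\Gamma$ recovers $\HO_\bullet(k[\Gamma])$ (Definition \ref{grHOdef}).

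\emph{Step 2: the maps come from functors over the indexing categories.} If $u:\mathcal{I}\to\mathcal{J}$ is a functor and $Z:\mathcal{J}\to\sSet$, there is a canonical map $\hocolim_{\mathcal{I}}u^\ast Z\to\hocolim_{\mathcal{J}}Z$, given on simplicial replacements by pushing chains forward along $u$. On homology it induces the map $u^\ast:{\rm Tor}^{\mathcal{I}}(k,k[u^\ast Z])\to{\rm Tor}^{\mathcal{J}}(k,k[Z])$. I check this by comparing the resolution $k[N(\mathcal{I}/-)]$ with $u^\ast k[N(\mathcal{J}/-)]$.

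\emph{Step 3: apply this to $i$ and to $\tPsihyp$.} For $u=i:\Delta D^{\rm op}\to\Delta H$, we have $B^{\rm dih}\Gamma=i^\ast B_{\rm hOct}\Gamma$ by Lemma \ref{grPsihyp}, so Step 2 gives the map $i^\ast$. For $u=\tPsihyp:\Delta H\to\fG^{\rm op}_{\Z/2}$, we have $B_{\rm hOct}\Gamma=\tPsihyp^\ast(p^\ast\Gamma)$ by Proposition \ref{grlift}, so Step 2 gives the map $\tPsihyp^\ast$. Here I use that the map of \cite[Lemma 3.13]{BR22} appearing in that proof is exactly the map of Step 2.

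\emph{Step 4: identify the target space.} By Step 1, the homology of $\hocolim_{\fG_{\Z/2}^{\rm op}}p^\ast\Gamma$ is the Shapiro Tor group \eqref{Shapiro}, which equals ${\rm HR}_\bullet(\Gamma,\mtwo)$. Its realization is $X^{\Z/2}_\Gamma$, which is equivalent to $\Omega\,\SPi(B\Gamma;\Z/2)$ by Theorem \ref{Tgroups}. Naturality in $\Gamma$ holds throughout.

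\emph{Step 5: the identifications for discrete $\Gamma$.} Identifying the middle space with $B(C_\infty E,C_1,\Gamma)$ is exactly \cite[Theorem 7.20]{G21}, together with Graves' identification of $\hocolim_{\Delta H}$ of the hyperoctahedral bar construction of $\Gamma$ (\cite[Lemma 7.13]{G21}). Identifying the left space with $EO(2)\times_{O(2)}\mathcal{L}(B\Gamma)$ is Lodder's result \cite{Lo}. I will quote both rather than reprove them.

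The main obstacle is Step 3 for $\tPsihyp^\ast$. I must make sure that the map $\tPsihyp^\ast$ on Tor, defined algebraically in Proposition \ref{grlift} via \cite[Lemma 3.13]{BR22} and the Shapiro isomorphism, agrees with the map induced on homotopy colimits, rather than differing by the Shapiro identification. The first thing I would try is to work with the explicit Bousfield–Kan models. There, Shapiro's isomorphism is literally the identification of simplicial replacements (the chain-level formula \cite[(4-37)]{BR22} with $\Z/2$ in place of $\Z$), so compatibility reduces to checking that a pushforward of chains commutes with linearization.
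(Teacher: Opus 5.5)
Your proposal is correct and follows the paper's argument. The paper's proof is a single line deferring to \cite[Corollary 4.10]{BR22}: the three homotopy colimits are Bousfield--Kan models whose $k$-chains compute the three Tor groups, the two maps are induced by the functors $i$ and $\tPsihyp$, and the spaces are identified via \cite[Lemma 7.13, Remark 7.14, Theorem 7.20]{G21}, \cite{Lo} and Theorem~\ref{Tgroups}. The compatibility you flag as the main obstacle is automatic, because the definition of $\tPsihyp^\ast$ in Proposition~\ref{grlift} and the identification \eqref{HXA} of $\H_\bullet(X^{\Z/2}_\Gamma;k)$ with ${\rm HR}_\bullet(\Gamma,\mtwo)$ both go through the same Shapiro isomorphism \eqref{Shapiro}.
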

\begin{proof}
Same as \cite[Corollary 4.10]{BR22}: the three homotopy colimits are the Bousfield-Kan models whose $k$-chains compute the three Tor's, and the two maps are induced by the functors $i$ and $\tPsihyp$. The identification of the middle space is \cite[Lemma 7.13, Remark 7.14 and Theorem 7.20]{G21} and the last identification is by Theorem \ref{Tgroups}.
\end{proof}
%
\begin{cor} \la{Cgroups}
Let $k$ be a field of characteristic $0$ and $\Gamma \in \sGrh$. Then $\tPsihyp^\ast: \HO_\bullet(k[\Gamma]) \to {\rm HR}_\bullet(\Gamma,\mtwo)$ is an isomorphism of graded commutative $k$-algebras. Consequently,
$$ \HO_\bullet(k[\Gamma])\,\cong\,k[\H_1(B\Gamma;\Z/2)]\,, $$
concentrated in degree $0$, with no assumption on $B\Gamma$.
\end{cor}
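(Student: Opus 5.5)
The plan is to reduce to the case of a strict simplicial group and then assemble results already proved: Theorem~\ref{T1a}, Proposition~\ref{grcomp}, Proposition~\ref{grlift} and Theorem~\ref{Tgroups}.

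\emph{Reduction.} By Badzioch's rigidification theorem, $\Gamma$ is weakly equivalent to a strict simplicial group, for instance $\L K\Gamma$. Both $\HO_\bullet(k[\Gamma])$ and ${\rm HR}_\bullet(\Gamma,\mtwo)$ depend only on the homotopy type of $B\Gamma$: for the former this is the remark after Definition~\ref{grHOdef}, for the latter \cite[Lemma 3.7]{BR22}. The map $\tPsihyp^\ast$ is natural in $\Gamma$. So it suffices to treat $\Gamma\in\sGr$. Then $B_{\rm hOct}\Gamma$ linearizes to $B_{\rm hOct}(k[\Gamma])$ by Lemma~\ref{grPsihyp}, and $\HO_\bullet(k[\Gamma])$ agrees with Definition~\ref{defho}.

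\emph{Identifying the map.} By Theorem~\ref{T1a}, $\HOR(k[\Gamma])_\bullet:\HO_\bullet(k[\Gamma])\to\HRi(k[\Gamma],k)$ is an isomorphism of graded commutative algebras. By Proposition~\ref{grcomp} with $n=1$, the target is ${\rm HR}_\bullet(\Gamma,\mtwo)$. The key step is to show that, under this identification, $\HOR(k[\Gamma])_\bullet$ equals $\tPsihyp^\ast$. I expect this compatibility check to be the main obstacle.

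Both maps are induced by the comparison $k\otimes^{\L}_{\Delta H}(-)\to(\text{underived } \Delta H\text{-coinvariants})$. For $\HOR$, the underived coinvariants are $R_{{\rm ab},\sigma}$; for $\tPsihyp^\ast$, they are the zero-th term $k[\pi_0\Gamma_{\rm ab}\otimes\Z/2]$. I would check agreement on the explicit chain-level formulas $v_{m+1}\mapsto v_{m+1}\otimes[(\bar1,\dots,\bar1)]$ from Proposition~\ref{grlift}. I would then use the fact that the isomorphism of Proposition~\ref{grcomp} is built from the same group-completion comparison $\fM^\sigma\to\fG$, so it is compatible with the trace $\Tr_1$.

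If this direct comparison proves messy, a fallback avoids it. It is enough to know that both sides are abstractly isomorphic to $k[\H_1(B\Gamma;\Z/2)]$, concentrated in degree $0$, and that $\tPsihyp^\ast$ is an isomorphism in degree $0$. In degree $0$ both sides are $k[\pi_0(\Gamma)_{\rm ab}\otimes\Z/2]$: on the left this is $\HO_0=(k[\pi_0\Gamma])_{{\rm ab},\sigma}$ by \cite[Thm 6.8]{G22}, on the right it is \eqref{HRmu2}. The map $\tPsihyp^\ast$ sends the class of $g$ to the class of $g$, so it is the identity there. Vanishing in positive degrees on the left would follow from the right via Theorem~\ref{T1a} and Proposition~\ref{grcomp}, and vanishing on the right is Theorem~\ref{Tgroups}.

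\emph{Conclusion.} Theorem~\ref{Tgroups} (equivalently Corollary~\ref{grchar0}) with $\cA=\Z/2$ and $k\supseteq\Q$ gives ${\rm HR}_\bullet(\Gamma,\mtwo)\cong k[\H_1(B\Gamma;\Z/2)]$. The reason is that each Eilenberg--MacLane factor $K(\tilde\H_n(B\Gamma;\Z/2),n-1)$ with $n\ge2$ has a $2$-torsion homotopy group and so is rationally acyclic. Only $\pi_0=\H_1(B\Gamma;\Z/2)$ survives, and it contributes its group algebra.

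Multiplicativity comes for free. An isomorphism of graded vector spaces concentrated in degree $0$ that is the identity map of $k[\pi_0(\Gamma)_{\rm ab}\otimes\Z/2]$ in degree $0$ respects the products on both sides. In the first route it is in any case an algebra map by Proposition~\ref{factorTr}. This yields $\HO_\bullet(k[\Gamma])\cong k[\H_1(B\Gamma;\Z/2)]$ with no hypothesis on $B\Gamma$.
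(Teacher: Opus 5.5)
Your main route is the paper's own argument. You reduce to $\Gamma\in\sGr$ by Badzioch's theorem and homotopy invariance, compose the algebra isomorphism $\HOR(k[\Gamma])_\bullet$ of Theorem~\ref{T1a} with Proposition~\ref{grcomp}, identify the composite with $\tPsihyp^\ast$, and finish with Corollary~\ref{grchar0}. The paper also only asserts that identification, ``as in \cite[Corollary 3.1]{BR22c}''.

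Your sketch of that identification is imprecise. The underived coinvariants relevant to $\HOR$ are those of $B_{\rm hOct}R$ for a semi-free resolution $R\sonto k[\Gamma]$, i.e.\ ${\rm C}(R_{{\rm ab},\sigma})$. The map $\tPsihyp^\ast$ is induced by the functor $\tPsihyp$ followed by the Shapiro isomorphism, not by a projection onto a ``zero-th term''.

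Your fallback, however, makes the check unnecessary and is a genuinely different, complete argument. There you use Theorem~\ref{T1a}, Proposition~\ref{grcomp} and Theorem~\ref{Tgroups} only abstractly, to get $\HO_i(k[\Gamma])=0$ for $i>0$. You then examine the specific map only in degree $0$, where it is explicit:
\begin{itemize}
\item $\HO_0(k[\Gamma])=\HO_0(k[\pi_0\Gamma])$ by right exactness of $\pi_0$ and of $k\otimes_{\Delta H}$; note that \cite[Thm 6.8]{G22} is stated for discrete algebras, so this step is needed.
\item The class of $g_0\otimes\cdots\otimes g_n$ is sent to the class of $g_0\cdots g_n$. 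To see this, use the morphism $x\mapsto x_0\cdots x_n$ from $(\langle 1\rangle;\bar1)$ to $(\langle n+1\rangle;\bar1,\ldots,\bar1)$ in $\fG_{\Z/2}$, then the Shapiro identification.
\end{itemize}
So in degree $0$ the map is the identity of $k[\pi_0(\Gamma)_{\rm ab}\otimes\Z/2]$.

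Multiplicativity is then automatic, because the degree-$0$ identifications (\cite[Thm 6.8]{G22} and \eqref{HRmu2}) are algebra isomorphisms. You never need the isomorphism of Proposition~\ref{grcomp} to be multiplicative; Proposition~\ref{factorTr} covers only $\HOR$, so your remark about the first route needs that extra input.

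What the paper's route buys in exchange is a structural fact: $\tPsihyp^\ast$ coincides with $\HOR$ transported along Proposition~\ref{grcomp}, so the space-level map of Corollary~\ref{grspaces} realizes the hyperoctahedral trace. Your fallback proves the stated isomorphism without establishing that identification.
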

\begin{proof}
Assume first that $\Gamma$ is a simplicial group. By Theorem~\ref{T1a} applied to the simplicial involutive algebra $k[\Gamma]$, $\HOR(k[\Gamma])_\bullet: \HO_\bullet(k[\Gamma]) \to \HRi(k[\Gamma], k)$ is an isomorphism of graded commutative algebras, and by Proposition~\ref{grcomp}, $\HRi(k[\Gamma], k) \cong {\rm HR}_\bullet(\Gamma,\mtwo)$. One checks, as in the proof of \cite[Corollary 3.1]{BR22c}, that the composite of these two isomorphisms is $\tPsihyp^\ast$: both maps are induced by the augmentation $k\otimes^{\L}_{\Delta H} \to k \otimes_{\Delta H}$ followed by the Shapiro identification. Note that $\sGrh$ reduces to $\sGr$ by Badzioch's theorem \cite[Theorem 3.2]{BR22} and homotopy invariance. The last assertion is Corollary~\ref{grchar0}.
\end{proof}
\begin{remark} \la{grrational}
For a discrete group $\Gamma$, the isomorphism $\tPsihyp^\ast$ over $\Q$ can also be seen directly on the spaces of Corollary~\ref{grspaces}, without using Proposition~\ref{grcomp}. The map $\tPsihyp$ in \eqref{grmaps} is induced by the natural transformation of functors on based $C_2$-sets
$$ C_\infty\big(EC_{2+}\wedge_{C_2}X\big) \longrightarrow \SPi\big(X/C_2\big) = \N\big[(X\smallsetminus\ast)/C_2\big]\,, $$
composed of the map of monads $C_\infty \to \SPi$ (from $E_\infty$ to commutative monoids) and the collapse $EC_2 \to {\rm pt}$, applied inside the bar construction; the target is $B\big(\SPi\circ(\,\mbox{--}\,/C_2), C_1, \Gamma\big) = |B(J, J, \Gamma)_{{\rm ab},\sigma}|$, the derived involutive abelianization of the monoid $\Gamma$ (the involutive counterpart of \cite[Proposition 4.11]{BR22}). By the Cartan-Leray theorem, $\H_\bullet\big(C_\infty(EC_{2+}\wedge_{C_2}X);\Q\big) \cong \Sym\big(\Q[(X\smallsetminus\ast)/C_2]\big)$ for a discrete based $C_2$-set $X$ (cf. the proof of Proposition~\ref{pC}), so the map is a rational homology isomorphism levelwise in the bar construction, and hence on realizations.
\end{remark}
\begin{example} \la{grex}
We illustrate Corollary~\ref{Cgroups} in some interesting spacial cases:
\begin{itemize}
\item[$(1)$]
 $\Gamma = \Z$: $\HO_\bullet(k[x^{\pm1}])$, with $\bar{x} = x^{-1}$, is $k[x]/(x^2-1) \cong k^2$ concentrated in degree $0$. Directly: $\HO_0 = k[x^{\pm1}]/(x - x^{-1})$ by \cite[Theorem 6.8]{G22}, and ${\rm HR}_\bullet(\Z,\mtwo) = \mathcal{O}(\mtwo)$ since $k[\Z]$ is a representable $\fG^{\rm op}$-module. 

\item[$(2)$] $\Gamma = \Z/2$: the involution on $k[\Z/2]$ is trivial, and $\HO_\bullet(k[\Z/2]) = k[\Z/2]$ in degree $0$, matching $k[\H_1(B\Z/2;\Z/2)] = k[\Z/2]$.

\item[$(3)$] $B\Gamma$ simply connected of finite type: then $k[\Gamma] \simeq U\mfa$ for a Quillen Lie model $\mfa$ of $B\Gamma$, with the inversion corresponding to the involution $\sigma = -\id$ on $\mfa$, and Theorem~\ref{houainv} gives $\HO_\bullet(U\mfa) = k$, in agreement with $k[\H_1(B\Gamma;\Z/2)] = k$. Note that in comparison with \cite[Theorem 1.3]{BR22c}, ${\rm HS}_\bullet(U\mfa) = \Lambda_k[\rH_{\bullet+1}(\mfa;k)]$ is large, while its involutive shadow is trivial, the sign $(-1)^{p-1}\Lambda^p(-\id) = -1$ killing all of the reduced Chevalley-Eilenberg homology.

\item[$(4)$] $\Gamma = F_m$ a free group: $\HO_\bullet(k[F_m]) = k[(\Z/2)^m] = \mathcal{O}(\mtwo^m)$ in degree $0$. Combined with Theorem~\ref{T1a}, this is the vanishing ${\rm HR}^{\ast}_i(k[F_m], k) = 0$, $i > 0$, used in the proof of Proposition~\ref{grcomp}. Hence, using Remark~\ref{grrational}, the case $n=1$ of Proposition~\ref{grcomp} can be obtained without appealing to \cite[Theorem 1.7]{BR14}.
\end{itemize}
\end{example}
\begin{remark}
For $n > 1$ and $G = \GLm$ (an orthogonal or symplectic group), Example~\ref{grtraces}$(2)$ gives dihedral character maps $\Tr^\sigma_n(\Gamma)_\bullet: \HD_\bullet(k[\Gamma]) \to {\rm HR}_\bullet(\Gamma,\GLm)^{\GLm}$, which assemble into a map of graded commutative algebras $\Lambda\Tr^\sigma_n: \Lambda_k[\overline{\HD}_\bullet(k[\Gamma])] \to {\rm HR}_\bullet(\Gamma,\GLm)^{\GLm}$. In can be shown, in analogy with \cite[Theorem 1.4]{BR22} (also see \cite[Sec. 8.5]{BFPRW}), that $\Lambda\Tr^\sigma_\infty$ becomes an isomorphism in the stable range $n \to \infty$ for $B\Gamma$ simply connected of finite rational type --- a derived (Koszul dual) form of the Loday-Procesi theorem expressing the homology of $\mathfrak{o}_\infty(A)$ and $\mathfrak{sp}_\infty(A)$ in terms of dihedral homology. We do not pursue this here.
\end{remark}


\begin{thebibliography}{34}
%
%
\bibitem{AA} 
A. Albert.
\newblock Involutorial simple algebras and real Riemann matrices.
\newblock Proc. Natl. Acad. Sci. U.S.A. 20(12):676--681, 1934.
%
\bibitem{AMP}
G. Angelini-Knoll, M. Merling and M. P\'{e}roux.
\newblock Topological $\Delta G$-homology of rings with twisted $G$-action. \newblock {\em Adv. Math.}, Paper no. 110907, 75pp, 2026.
%
\bibitem{ACGMP}
G. Angelini-Knoll, D. Chan, T. Gerhardt, M. Merling and M. P\'{e}roux.
\newblock Topological symmetric and braid homologies.
\newblock  preprint {\tt arxiv:2605.22946}
%
%
\bibitem{Au1}
S.~Ault.
\newblock Symmetric homology of algebras.
\newblock {\em Algebr. Geom. Topol.}, 10(4):2343--2408, 2010.
%
\bibitem{Au2}
S.~Ault.
\newblock Homology operations in symmetric homology.
\newblock {\em Homology Homotopy Appl.}, 16(2):239--261, 2014.
%
\bibitem{AF07}
S.~Ault and Z~Fiedorowicz.
\newblock Symmetric homology of algebras.
\newblock preprint {\tt arXiv:0708.1575}.
%
\bibitem{Ba02}
B. Badzioch, \textit{Algebraic theories in homotopy theory},
Ann. Math. \textbf{155} (2002), 895--913.
%
%
\bibitem{BFPRW}
Yu. Berest, G.~Felder, S.~Patotski, A.~C. Ramadoss, and T.~Willwacher.
\newblock Representation homology, {L}ie algebra cohomology and the derived
  {H}arish-{C}handra homomorphism.
\newblock {\em J. Eur. Math. Soc. (JEMS)}, 19(9):2811--2893, 2017.
%
\bibitem{BFPRW2}
Yu. Berest, G.~Felder, S.~Patotski, A.~C. Ramadoss, and T.~Willwacher.
\newblock Chern-Simons forms and higher character maps of Lie representations.
\newblock {\em Int. Math. Res. Not. (IMRN)}, 217(1):158--212.
%
\bibitem{BFR}
Yu. Berest, G.~Felder, and A.~Ramadoss.
\newblock Derived representation schemes and noncommutative geometry.
\newblock In {\em Expository lectures on representation theory}, volume 607 of
  {\em Contemp. Math.}, pages 113--162. A. M. S., Providence, RI, 2014.
%
\bibitem{BKR}
Yu. Berest, G.~Khachatryan, and A.~Ramadoss.
\newblock Derived representation schemes and cyclic homology.
\newblock {\em Adv. Math.}, 245:625--689, 2013.
%
\bibitem{BR}
Yu. Berest and A.~Ramadoss.
\newblock Stable representation homology and {K}oszul duality.
\newblock {\em J. Reine Angew. Math.}, 715:143--187, 2016.
%
\bibitem{BR14}
Yu. Berest and A.~C. Ramadoss.
\newblock Derived character varieties,
\newblock {\em preprint}, March 2014.
%
\bibitem{BR22c}
Yu. Berest and A.~C. Ramadoss.
\newblock Symmetric homology and representation homology.
\newblock {\em Trans. Amer. Math. Soc.}, 376(9): 6475--6496, 2023.
%
\bibitem{BR22}
Yu. Berest and A.~C. Ramadoss.
\newblock Derived character maps of group representations.
\newblock {\em Algebr. Geom. Topol.}, 24(9): 4991--5044, 2024.
%
%
\bibitem{BRYI}
Yu. Berest, A.~C. Ramadoss, and W.-K. Yeung.
\newblock Representation homology of topological spaces.
\newblock {\em Int. Math. Res. Not. IMRN}, (6):4093--4180, 2022.
%
\bibitem{BRYII}
Yu. Berest, A.~C. Ramadoss, and W.-K. Yeung.
\newblock Representation homology of simply connected spaces.
\newblock {\em J. Topol.}, 15(2):692--744, 2022.
%
\bibitem{B12}
C. Braun,
\newblock Moduli Spaces of Klein surfaces and related operads.
\newblock {\em Algebr. Geom. Topol.}, 12(3):1831-1899, 2012.
%
\bibitem{B14}
C. Braun,
\newblock {Involutive $A_{\infty}$-algebras and dihedral cohomology}
\newblock {\em Journal of Homotopy and Related Structures}, 9(2):317--337, 2014.
%
\bibitem{BH}
G. W. Brumfiel and H. M. Hilden,
\newblock ${\rm SL}_2$-representations of finitely presented groups.
\newblock {\em Contemp. Math.}, 187:viii+196pp, 1995.
%
%
  North-Holland, Amsterdam, 1995.
%
%
\bibitem{Co79}
P.~M. Cohn.
\newblock The affine scheme of a general ring.
\newblock {\em Lecture Notes in Math.}, 753:197--211, 1979.
%
\bibitem{C83}
A.~Connes.
\newblock Cohomologie cyclique et foncteurs {${\rm Ext}^n$}.
\newblock {\em C. R. Acad. Sci. Paris S\'{e}r. I Math.}, 296(23):953--958,
  1983.
%
%
%
%
\bibitem{DS} W. G. Dwyer and J. Spalinski,
{\it Homotopy theories and model categories} in \textit{Handbook of Algebraic Topology},
Elsevier, 1995, pp. 73--126.
%
%
\bibitem{FT85}
B.~L. Fe\u{\i}gin and B.~L. Tsygan.
\newblock Additive $K$-theory and crystalline cohomology.
\newblock {\em Funct. Anal. Appl.}, 19(2):124--132, 1985.
%
\bibitem{FT87}
B.~L. Fe\u{\i}gin and B.~L. Tsygan.
\newblock Additive {$K$}-theory.
\newblock {\em Lecture Notes in Math.}, 1289:67--209, 1987.
%
\bibitem{FHT}
Y.~Felix, S.~Halperin and J.-C.~Thomas, {\it Differential graded algebras in topology} in
{\it Handbook of Algebraic Topology}, Elsevier, 1995, pp. 829--865.
%
\bibitem{F}
Z.~Fiedorowicz.
\newblock Symmetric bar construction, (1991).
\newblock url{http://www.math.ohio-state.edu/~fiedorow/symbar.ps.gz}.
%
\bibitem{FL91}
Z.~Fiedorowicz and J.-L. Loday.
\newblock Crossed simplicial groups and their associated homology.
\newblock {\em Trans. Amer. Math. Soc.}, 326(1):57--87, 1991.
%
%
%
\bibitem{G21} D.~Graves.
\newblock Hyperoctahedral homology of involutive algebras.
\newblock {\em Homology Homotopy Appl.} 24(1):1--26, 2022.
%
\bibitem{G22} D.~Graves.
\newblock $E_{\infty}$ Structure in Hyperoctahedral homology.
\newblock {\em Homology Homotopy Appl.} 25(1):1--19, 2023.
%
\bibitem{Go} Y.~Gouda.
\newblock The relative dihedral homology of involutive algebras.
\newblock {\em Int. J. Math. and Math. Sci.} 22(4):807--815, 1999.
%
\bibitem{Hat02}
A. Hatcher, \textit{Algebraic Topology}, Cambridge Univ. Press, 2002.
%
\bibitem{H}
V. Hinich.
\newblock Homological algebra of homotopy algebras.
\newblock {\em Comm. Algebra} 25(10):3291--3323, 1997.
%
 %
%
%
\bibitem{Ke}
B.~Keller, \textit{A-infinity algebras, modules and functor categories}, Contemp. Math. \textbf{406} (2006), 67--93.
%
\bibitem{Kr87}
R.~Krasauskas,
\newblock Skew-simplicial groups.
\newblock {\em Litovsk. Mat. Sb.}, 27(1):89--99, 1987.
%
\bibitem{Ku03}
N. Kuhn, \textit{
The McCord model for the tensor product of a space and a commutative ring spectrum}, Progr. Math. \textbf{215}, Birkh\"auser, Basel, 2004, pp.
213–236.
%
\bibitem{Le}
K.~Lef\`{e}vre-Hasagawa, \textit{Sur les $A_{\infty}$ cat\'{e}gories}, Th\`{e}se de doctorat, Universit\'{e} Denis Diderot-Paris 7, November 2003.
%
\bibitem{L}
J.-L. Loday.
\newblock {\em Cyclic homology}.
\newblock Springer-Verlag, Berlin, second edition, 1998.
%
\bibitem{LV}
J.-L. Loday and B.~Vallette.
\newblock {\em Algebraic operads}.
\newblock Springer, Heidelberg, 2012.
%
\bibitem{Lo}
G. M. Lodder, {\it Dihedral homology and the free loop space}, Proc. London Math. Soc. (3), \textbf{60} (1990), no. 1, 201-224.
%
%
\bibitem{McC} J.~McCleary, {\it A User's Guide to Spectral Sequences},
Cambridge University Press, 2001.
%
\bibitem{McC69}
M.~C. McCord.
\newblock Classifying spaces and infinite symmetric products.
\newblock {\em Trans. Amer. Math. Soc.}, 146:273--298, 1969.
%
\bibitem{PS}
J. Przytycki and A. Sikora,
\textit{On skein algebras and ${\rm SL}_2(\mathbb C)$-character varieties}, Topology \textbf{39} (2000), no. 1, 115--148.
%
%

%
%
\bibitem{Q}
D.~Quillen.
\newblock Rational homotopy theory.
\newblock {\em Ann. of Math. (2)}, 90:205--295, 1969.
%
\bibitem{Q1}
D.~G. Quillen.
\newblock {\em Homotopical algebra}.
\newblock Lecture Notes in Mathematics, No. 43. Springer-Verlag, Berlin-New
  York, 1967.
%
\bibitem{R20}
B.~Richter.
\newblock {\em From Categories to Homotopy Theory}.
\newblock Cambridge University Press, Cambridge, 2020.
%
\bibitem{SV}
A.~Solotar and M.~Vigu\'{e}-Poirrier.
\newblock Dihedral homology of commutative algebras.
\newblock {\em J. Pure Appl. Algebra}, 109(1):97--106, 1996.
%
\bibitem{T83}
B.~L. Tsygan.
\newblock Homology of matrix {L}ie algebras over rings and the {H}ochschild
  homology.
\newblock {\em Uspekhi Mat. Nauk}, 38(2(230)):217--218, 1983.
%
%
%
\end{thebibliography}
\end{document}